\documentclass[12pt, reqno]{amsart}
\usepackage{amssymb, mathrsfs}
\usepackage{latexsym}
\usepackage{amsmath}
\usepackage{amsthm}
\usepackage{amsfonts}
\usepackage{dsfont, upgreek}
\usepackage{mathtools}
\usepackage{epsfig}
\usepackage{cite}
\usepackage{amscd}
\usepackage{graphicx}
\usepackage{tikz-cd}
\usepackage{enumerate}
\usepackage{color}
\usepackage{tikz}
\usetikzlibrary{patterns}

\usepackage[left=1.4in,right=1.4in,top=1.2in,bottom=1.2in]{geometry}

\usepackage{tikz}
\usetikzlibrary{decorations.markings}
\usetikzlibrary{arrows.meta}

\usepackage{extarrows}

\usepackage[colorlinks=true,linkcolor=blue,citecolor=blue]{hyperref}

\usepackage[colorinlistoftodos,prependcaption,textsize=tiny]{todonotes}  

\usepackage[all,cmtip]{xy}
\bibliographystyle{abbrv}

\theoremstyle{plain}% default 
\newtheorem{theorem}{Theorem}[section]
\newtheorem{proposition}[theorem]{Proposition}
\newtheorem{lemma}[theorem]{Lemma}
\newtheorem{corollary}[theorem]{Corollary}
\newtheorem{question}[theorem]{Question}

\theoremstyle{definition} 
\newtheorem{definition}[theorem]{Definition}

\newtheorem{example}[theorem]{Example}
\newtheorem*{claim*}{Claim}

\theoremstyle{remark} 
\newtheorem{remark}[theorem]{Remark}

\numberwithin{equation}{section}

\newcommand{\ilim}{\varinjlim}
\newcommand{\ind}{\textup{Ind}}
\newcommand{\id}{\mathrm{id}}
\newcommand{\K}{\mathcal{K}}
\newcommand{\cK}{\mathcal{K}}

\newcommand{\R}{\mathbb{R}}

\newcommand{\C}{\mathbb{C}}
\newcommand{\bH}{\mathbb{H}}
\newcommand{\supp}{\mathrm{supp}}
\newcommand{\prop}{\mathrm{prop}}
\newcommand{\diam}{\mathrm{diam}}
\newcommand{\Z}{\mathbb{Z}}
\newcommand{\N}{\mathbb{N}}
\newcommand{\sC}{\mathcal{C}}

\newcommand{\sN}{\mathcal{N}}

\newcommand{\ev}{\mathrm{ev}}

\newcommand{\fR}{\mathfrak{R}}
\newcommand{\fD}{\mathfrak{D}}

\newcommand\descitem[1]{\item{\bfseries #1}}

\begin{document}
	
\title[Decay of scalar curvature]{Decay of scalar curvature on uniformly contractible manifolds with finite asymptotic dimension}

\author{Jinmin Wang}
\address[Jinmin Wang]{School of Mathematical Sciences and Shanghai Center for Mathematical Sciences, Fudan University}
\email{wangjinmin@fudan.edu.cn}
\thanks{The first author is partially supported by NSFC11420101001.}
\author{Zhizhang Xie}
\address[Zhizhang Xie]{ Department of Mathematics, Texas A\&M University }
\email{xie@math.tamu.edu}
\thanks{The second author is partially supported by NSF 1800737 and 1952693.}
\author{Guoliang Yu}
\address[Guoliang Yu]{ Department of
	Mathematics, Texas A\&M University}
\email{guoliangyu@math.tamu.edu}
\thanks{The third author is partially supported by NSF 1700021, 2000082, and the Simons Fellows Program.}

\subjclass[2010]{Primary 53C23, 19D55; Secondary 58B34, 46L80}

%\date{}	
%	
\begin{abstract}
  Gromov  proved a quadratic decay inequality of scalar curvature for a class of complete manifolds \cite{Gromovinequalities2018}.   In this paper, we prove that for any uniformly contractible manifold with finite asymptotic dimension, its scalar curvature decays to zero at a rate depending only on the contractibility radius of the manifold and the diameter control of the asymptotic dimension.   We  construct examples of uniformly contractible manifolds with finite asymptotic dimension whose scalar curvature functions decay arbitrarily slowly. This shows that our result is the best possible.   We prove our result by studying the index pairing between Dirac operators and compactly supported vector bundles with Lipschitz control. A key technical ingredient for the proof of our main result is a Lipschitz control for the topological $K$-theory of finite dimensional simplicial complexes.
\end{abstract}
\maketitle
\section{Introduction}
The scalar curvature of a Riemannian manifold measures the deviation of the volume of a local geodesic ball from the volume of the ball of the same radius in Euclidean space. There has been great progress on the existence and non-existence of positive scalar curvature on closed manifolds or complete manifolds \cite{MR535700,MR541332,GromovLawson,MR569070,MR720934,MR1189863}. In a recent article \cite{Gromovinequalities2018}, Gromov proved that for any complete metric on the manifold $X = T^{n}\times \R^2$ and any $x_0\in X$, there exists a constant $r_0$ such that 
\[ \inf_{x\in B(x_0, r)} k(x) \leq \frac{4\pi^2}{(r-r_0)^2} \]
for all $r\geqslant r_0$, where $\inf_{x\in B(x_0, r)} k(x)$ denotes the infimum of the scalar curvature $k$ of $X$ on the ball $B(x_0, r)$ of radius $r$ centered at $x_0$. Subsequently,  Zeidler proved a similar  decay inequality for manifolds with a certain codimension-two obstruction  \cite{Rudolf2019}. The goal of this article is to establish a general decay inequality for scalar curvature on a large class of complete Riemannian manifolds. Our main result is the following.
 \begin{theorem}\label{thm:main}
 	Let $X$ be a complete Riemannian manifold with bounded geometry. Let $\fR,\fD$ be non-decreasing functions on $\R$. Assume that $X$ is uniformly contractible with contractibility radius $\mathfrak R$, and has asymptotic dimension $\leqslant m$ with diameter control $\mathfrak D$. Then there exists a non-decreasing function $F\colon \R_{\geqslant 0}\to\R_{\geqslant 0}$ \textup{(}depending only on $\fR$, $\fD$ and $m$\textup{)} such that $F(r) \to \infty$,  as $r\to \infty$,  and for any $x_0\in X$ and $r>0$, we have
 	$$\inf_{x\in B(x_0,r)}k(x)\leqslant \frac{1}{F(r)},$$
 	where $k(x)$ is the scalar curvature  at $x$ and $B(x_0,r)$ is the ball of radius $r$ centered at $x_0$. 
 \end{theorem}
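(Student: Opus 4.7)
The plan is to argue by contradiction via a twisted-Dirac index pairing. Suppose the conclusion fails: then for some $r>0$ and some $x_0\in X$, the infimum $\kappa$ of the scalar curvature on $B(x_0,r)$ exceeds $1/F(r)$ for any candidate function $F$. The aim is to construct, for each large scale $R$, a compactly supported virtual bundle $E_R$ on $X$ whose twisted-Dirac pairing with the fundamental $K$-homology class of $X$ is simultaneously forced to be nonzero (by topology) and zero (by the Lichnerowicz--Weitzenböck formula under positive scalar curvature). These two conclusions will contradict each other once $R$ is large relative to $r$, and tracking the dependencies will yield the function $F$.

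\textbf{Construction of $E_R$.} Using asymptotic dimension at scale $R$ with diameter control $\fD$, I would first build a continuous nerve map
$$\phi_R\colon X\longrightarrow N_R$$
into a simplicial complex of dimension $\leq m$ with simplices of diameter $\leq \fD(R)$, and with Lipschitz constant controlled in terms of $R$ and the bounded geometry of $X$. Uniform contractibility with radius $\fR$ then provides a partial inverse $\psi_R\colon N_R^{\mathrm{loc}}\to X$ defined on a finite subcomplex, landing in a ball of radius $\fR(\fD(R))$ around $x_0$, with $\phi_R\circ\psi_R$ homotopic to the identity through a homotopy whose tracks have controlled length. Applying the paper's Lipschitz-controlled $K$-theory result for finite-dimensional simplicial complexes, I would represent a nontrivial compactly supported $K$-theory class on $N_R^{\mathrm{loc}}$ by a virtual bundle with connection whose curvature is bounded by a constant $L(m,\fD(R))$. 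Pulling this back through $\phi_R$ and trivializing outside the support yields $E_R$, supported in $B(x_0,\fR(\fD(R)))$, with operator norm of its Clifford-contracted curvature $\|\mathcal{R}^{E_R}\|_\infty \leq C(m,\fD(R))$, where $C\to 0$ as $R\to\infty$.

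\textbf{The contradiction.} The twisted Dirac operator satisfies the Lichnerowicz--Weitzenböck identity
$$D_{E_R}^2 = \nabla^*\nabla + \tfrac{k}{4} + \mathcal{R}^{E_R}.$$
If $r \geq \fR(\fD(R))$ and $\kappa/4 > C(m,\fD(R))$ on $B(x_0,r)$, then $D_{E_R}^2$ is strictly positive everywhere the twist is nontrivial, so the Gromov--Lawson relative index theorem for compactly supported perturbations of $D$ forces the relative index to vanish. On the other hand, the same relative index is computed topologically: the homotopy $\phi_R\circ\psi_R\simeq \mathrm{id}_{N_R^{\mathrm{loc}}}$ identifies it, up to sign, with the fundamental pairing on $N_R^{\mathrm{loc}}$ against the chosen nontrivial class, which is nonzero. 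This forces $\kappa \leq 4 C(m,\fD(R))$ whenever $r \geq \fR(\fD(R))$, producing the desired function $F(r) := 1/\bigl(4C(m,\fD(R_r))\bigr)$ for the largest admissible $R_r$. Since $C(m,\fD(R))\to 0$ as $R\to\infty$, we get $F(r)\to\infty$, depending only on $\fR$, $\fD$, and $m$.

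\textbf{Main obstacle.} The central technical point, and where I expect the real work to lie, is the \emph{Lipschitz-controlled representation of $K$-theory} for finite-dimensional simplicial complexes invoked in the construction of $E_R$: every class must be representable by a bundle with connection whose curvature is bounded by $L(m,d)$ depending only on the dimension $m$ and the simplex size $d$, and \emph{uniformly in the number of simplices}. This is precisely the technical ingredient flagged in the abstract, and it is what makes the curvature bound on $E_R$ depend only on $m$ and $\fD$ rather than on the (growing) size of $N_R^{\mathrm{loc}}$. I would approach it by induction on skeleta, representing a class on the $j$-skeleton by a bundle glued along boundaries from explicit universal bundles on each simplex (for instance, pullbacks from a classifying space at a fixed finite stage) whose Lipschitz constants can be estimated purely in terms of $j$ and $d$; the delicate part is controlling the gluing cocycles so that compatibility and extension over faces does not accumulate unbounded Lipschitz data.
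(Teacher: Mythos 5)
Your overall strategy is the one the paper follows: the nerve map $f_r\colon X\to \sN_r$ (your $\phi_R$) with Lipschitz constant $\sim 1/r$, a partial inverse $g_r\colon \sN_r\to X$ (your $\psi_R$) built from uniform contractibility, Theorem \ref{thm:introLipschitzC(X)} to get Lipschitz control on the relevant $K$-theory class, and a Lichnerowicz argument to convert this into a scalar-curvature bound. The stylistic differences (bundles with connection and curvature bounds instead of Lipschitz projections; the Gromov--Lawson relative index theorem instead of the quantitative operator estimate of Proposition \ref{prop:pairing-scalarcurvature}; a framing by contradiction rather than a direct bound) are not essential.

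However, there is a substantive error in how you close the topological side. You assert that $\phi_R\circ\psi_R$, i.e.\ $f_r g_r$, is homotopic to the identity on $N_R^{\mathrm{loc}}\subset\sN_r$, and you use this to identify the index with a ``fundamental pairing on $N_R^{\mathrm{loc}}$.'' This is the wrong composition: the remark immediately following Lemma \ref{lemma:homotopy-id} states explicitly that $f_r g_r\colon\sN_r\to\sN_r$ is in general \emph{not} homotopic to the identity, because $\sN_r$ is merely a nerve complex and need not be uniformly contractible; and since $\sN_r$ is a simplicial complex rather than a manifold, there is no intrinsic fundamental pairing to invoke. What the argument needs, and what Lemma \ref{lemma:homotopy-id} actually proves (using bounded geometry in an essential way), is $g_r f_r\simeq\mathrm{id}_X$. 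The nontriviality then comes from fixing the Bott element $\beta\in K^0(X)$ supported near $x_0$ with $\langle[D],\beta\rangle\neq 0$ and writing $\langle[D],\beta\rangle=\langle[D],f_r^*g_r^*\beta\rangle$; it is the specific class $g_r^*\beta\in K^0(\sN_r)$ to which Theorem \ref{thm:introLipschitzC(X)} is applied. Your version leaves the choice of class and the reason it pairs nontrivially with $(\phi_R)_*[D]$ unjustified.

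A smaller quantitative gap: you claim $\psi_R$ lands in a ball of radius $\fR(\fD(R))$, but constructing $g_r$ proceeds by induction over simplices of dimension $1,\dots,m$, and each inductive extension costs one further application of $\fR$; the correct control is the iterated function $F_m$ of line \eqref{eq:F_k}, roughly an $m$-fold composition of $\fR$ with $\fD$. This does not change the shape of the argument, but it is precisely where the dependence on $m$ enters the final decay function $F$, and it matters if one wants the sharpness asserted in Proposition \ref{prop:example1}.
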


Let us recall the definitions of uniform contractibility and asymptotic dimension. We say a Riemannian manifold is uniformly contractible with contractibility radius $\fR$ if every ball with radius $r$ is contractible within the $\fR(r)$-ball of the same center. We say that a metric space $X$ has asymptotic dimension no more than $m$ with diameter control $\fD$ if for any $r>0$ there exists a cover $\sC_r=\{U_i\}_{i\in I}$ such that each member has diameter no more than $\fD(r)$, and the cover has $r$-multiplicity $(m+1)$, i.e., any ball with radius $r$ intersects with at most $(m+1)$ members of $\sC_r$.
The concept of asymptotic dimension was introduced by Gromov \cite{Gromov1993asymptotic} to study asymptotic behavior of discrete groups. Unlike the Lebesgue covering dimension, the asymptotic dimension is a large scale invariant that neglects the local structure of a metric space, cf. \cite{BellDrannishnikovasymdim} for a survey of its history. 

A complete Riemannian manifold is said to have bounded geometry if the injective radius (i.e. the radius of a ball where the exp map is a diffeomorphism) is  uniformly bounded below by a positive number, and the curvature tensor is uniformly bounded.
Gromov conjectured that the scalar curvature on a uniformly contractible complete Riemannian manifold with bounded geometry cannot be uniformly positive. This conjecture implies the Gromov-Lawson-Rosenberg conjecture, which states that a closed aspherical manifold cannot admit a positive scalar curvature metric. Gromov's conjecture holds for manifolds with finite asymptotic dimension \cite{Yu} and for manifolds that can be coarsely embedded into a Hilbert space \cite{Yucoarseembed}.

A uniformly contractible manifold can have a complete metric whose scalar curvature is positive everywhere but tends to zero at infinity. For instance, the Euclidean plane $\R^2$ admits a complete metric with positive (but not uniformly positive) scalar curvature inherited from a paraboloid of revolution. This was probably one of the most basic examples that motivated Gromov to study the phenomenon of quadratic decay of scalar curvature in general  \cite{Gromovinequalities2018}.

 Let $X$ be a complete Riemannian manifold and $k$ the associated  scalar curvature function. We say the metric on $X$ satisfies the quadratic decay inequality if for any point $x_0\in X$,  there exist positive constants $C$ and $r_0$ such that
$$\inf_{x\in B(x_0,r)}k(x)\leqslant \frac{C}{(r-r_0)^2},\ \forall r>r_0.$$
Gromov proved the quadratic decay inequality for $X=T^n\times \R^2$ \cite{Gromovinequalities2018}. In \cite{Rudolf2019}, Zeidler proved a similar quadratic decay inequality when $X$ admits a certain codimension-two obstruction. We will see from the explicit  construction of the function $F$ in Theorem \ref{thm:main} that if both $\fR$ and $\fD$  are linear functions, then $F$ is quadratic, hence  the scalar curvature satisfies the quadratic decay inequality in this case.

% More precisely, we shall construct explicit examples in Section $\ref{subsec:counterexample}$ to show that  for any prescribed non-decreasing function $G\colon \R_{\geqslant 0}\to \R_{\geqslant 0}$ such that $G(r)\to \infty$ as $r\to \infty$,  there exists uniformly contractible manifold $X$ with finite asymptotic dimension such that $k_{x_0}(r) \coloneqq \inf_{x\in B(x_0,r)}k(x)$ decays slower than $\frac{1}{G(r)}$ as $r\to \infty$, or equivalently
%$$\lim_{r\to+\infty} G(r)\cdot k_{x_0}(r) =+\infty,$$
%for every $x_0\in X$. Moreover, the construction of these examples will also give information on how the decay rate of scalar curvature depends explicitly on the contractibility radius function $\mathfrak R$. 

In general, the decay of scalar curvature on a uniformly contractible manifold with finite asymptotic dimension can be much slower than quadratic decay. More precisely, we have the following proposition, which shows that our Theorem \ref{thm:main} is the best possible.
 
 \begin{proposition}[{cf. Proposition \ref{prop:example1}}]
 	Given $m\geqslant 3$ and for any prescribed non-decreasing function $G\colon \R_{\geqslant 0}\to \R_{\geqslant 0}$ such that $G(r)\to \infty$ as $r\to \infty$, there exists a complete Riemannian metric with bounded geometry on $\R^m$ satisfying the following: 
 	\begin{enumerate}[$(a)$]
 		\item $\R^m$ with this metric is uniformly contractible;
 		\item $\R^m$ with this metric has asymptotic dimension $\leq m$ whose diameter control function is linear and independent of $G$;
 		\item the scalar curvature function $k$ of this metric is positive everywhere and satisfies
 		$$\lim_{r\to+\infty}\big( G(r)\cdot\inf_{x\in B(x_0,r)}k(x)\big)=+\infty$$
 		for any $x_0\in\R^m$.
 	\end{enumerate}
 \end{proposition}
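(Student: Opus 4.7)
The plan is to construct the metric as a rotationally symmetric warped product on $\R^m$, namely $g_f = dr^2 + f(r)^2 \, g_{S^{m-1}}$, where $f\colon [0,\infty)\to [0,\infty)$ is a smooth profile function satisfying $f(0) = 0$, $f'(0) = 1$, and $f(r) = r$ for $r$ near $0$ (so that $g_f$ extends smoothly across the origin). The scalar curvature of such a warped product equals
\[ k(r) = -2(m-1)\frac{f''(r)}{f(r)} + (m-1)(m-2)\frac{1 - f'(r)^2}{f(r)^2}. \]
I would arrange that, for large $r$, $f$ is concave and increases to $+\infty$ very slowly, with $0 < f'(r) \ll 1$ and $|f''(r)|/f(r)$ much smaller than $1/f(r)^2$. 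The second term then dominates and forces $k(r) \approx (m-1)(m-2)/f(r)^2 > 0$ throughout the region of interest.

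Given the prescribed $G$, the plan is to choose $f$ so that $f(r)^2 \leqslant \tfrac{1}{2}(m-1)(m-2)\sqrt{G(r)}$ outside a compact set, which yields $k(r) \geqslant 2/\sqrt{G(r)}$ eventually and hence $\lim_{r\to\infty} G(r)\cdot \inf_{x\in B(x_0, r)} k(x) \geqslant \lim_{r\to\infty} 2\sqrt{G(r)} = +\infty$, giving condition $(c)$. Bounded geometry follows from the warped product sectional curvature formulas: the radial and tangential sectional curvatures are $-f''/f$ and $(1-(f')^2)/f^2$, both uniformly bounded by our choice of $f$, and the injectivity radius is uniformly controlled for the same reason. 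Uniform contractibility with linear $\fR$ follows from rotational symmetry and monotonicity of $f$: any ball $B(x_0, r)$ admits an explicit contraction, first radially toward the ray through $x_0$ and then through the origin, remaining inside a ball of radius comparable to $r$.

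The most delicate point, which I expect to be the main obstacle, is verifying condition $(b)$: asymptotic dimension at most $m$ with a \emph{linear} diameter control \emph{independent of} $G$. My plan is to construct, at each scale $s > 0$, an explicit cover by ``shell-wedges,'' namely sets of the form $\{R_n \leqslant r \leqslant R_{n+1}\} \times W$ where $R_{n+1}-R_n \approx s$ and $W$ runs through a simplicial cover of $S^{m-1}$ at angular scale $\approx s/f(R_{n+1})$. The $g_f$-diameter of each such shell-wedge is then $O(s)$, linear in $s$ with a constant depending only on $m$. Since $S^{m-1}$ has covering dimension $m-1$, standard simplicial covers of the sphere can be arranged to have $s$-multiplicity at most $m$ on each shell; combined with the constant overlap multiplicity of adjacent shells (which can be made $\leqslant 2$ by suitable offsetting), this gives $s$-multiplicity at most $m+1$ for the full cover. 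Crucially, the constants depend only on $m$, not on $f$ and hence not on $G$, so both the asymptotic dimension bound and the linearity of $\fD$ are independent of the prescribed $G$. Once $(b)$ is in hand, $(a)$ and $(c)$ follow directly from the warped product computations sketched above.
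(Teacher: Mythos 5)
Your strategy --- a rotationally symmetric warped product $dr^2 + f(r)^2\, g_{S^{m-1}}$ with a slowly growing concave profile, scalar curvature controlled by $1/f^2$, and a shell-wedge cover for asymptotic dimension --- is essentially the same as the paper's (the paper carries out the $m=3$ case with $dt^2 + \varphi^2 g_{S^2}$ and multiplies by $\mathbb{R}^{m-3}$; working in all dimensions at once is also fine, and your scalar curvature formula is correct). However, there is a genuine error in your choice of profile near the origin: requiring $f(r) = r$ for $r$ near $0$ makes the metric exactly Euclidean there, and by your own formula $k \equiv 0$ on that neighborhood, violating the requirement in $(c)$ that $k$ be positive everywhere. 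Smoothness across the origin only requires $f$ to extend to an odd function with $f'(0) = 1$; it does not force $f(r) = r$. The paper instead takes $\varphi(t) = \sin t$ near $t = 0$, which gives the round-sphere metric near the origin with $k \equiv m(m-1) > 0$. You need to make the same kind of choice.

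A second, less fatal issue: the claim that the contractibility radius $\mathfrak{R}$ is linear, together with the proposed contraction ``first radially toward the ray through $x_0$,'' is not correct. Once $B(x_0, r)$ around a point at radial coordinate $t_0$ contains the entire sphere $S_{t_0}$ --- which happens as soon as $r \geqslant \pi f(t_0)$, hence for $t_0$ as large as roughly $f^{-1}(r/\pi)$ --- any contraction must land in a ball that contains the origin, since $S_{t_0}$ is a nontrivial $(m-1)$-cycle in $\mathbb{R}^m \setminus \{0\}$. The required radius is therefore at least $t_0 \approx f^{-1}(r)$, which for a slowly growing $f$ is far from linear; the paper's estimate $\mathfrak{R}(r) \approx \varphi^{-1}(r)$ is the right order. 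Since the proposition only asks for uniform contractibility and not for a linear $\mathfrak{R}$, this does not sink the proof, but the claim should be retracted. Finally, your shell-wedge sketch for asymptotic dimension is in the right spirit, but the content is in controlling multiplicity across shell interfaces; the paper does this by choosing shell boundaries $s_n = \varphi^{-1}(10\cdot 2^n r)$ so that the warping factor doubles across each shell, and your ``suitable offsetting'' would need comparable care to yield multiplicity $m+1$ with a diameter constant depending only on $m$.
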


For a metric space $X$, a subspace $Y$ is called a net (more precisely, a  $C$-net for some $C>0$) if the $C$-neighborhood of $Y$ is equal to $X$, here the  $C$-neighborhood of $Y$ is defined to be the subset of all points in $X$ whose distance from $Y$ is less than $C$. We emphasize that the function $F$ in Theorem \ref{thm:main} is independent of the base point $x_0$. As a consequence, we see that given any $\varepsilon >0$, there exists an $r>0$ such that the ball $B(x, r)$ of radius $r$ centered at any point $x\in X$ contains a point $y$ with scalar curvature $k(y) <\varepsilon$.  Let us summarize this in the  following corollary.
\begin{corollary}\label{coro:net}
	Let $X$ be a complete Riemannian manifold with bounded geometry. If $X$ is uniformly contractible and has finite asymptotic dimension, then for any $\varepsilon>0$, the points of which the scalar curvature is smaller than $\varepsilon$ form a net in $X$. 
\end{corollary}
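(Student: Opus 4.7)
The statement follows almost immediately from Theorem \ref{thm:main}, so the plan is simply to unpack it carefully, emphasizing the role of base-point uniformity (which is precisely the feature flagged in the paragraph preceding the corollary). The only structural point is to translate the inequality for $\inf_{y \in B(x,r)} k(y)$ into the existence of a point $y$ in $B(x,r)$ with $k(y) < \varepsilon$, and then observe that this holds simultaneously for every $x \in X$.

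More concretely, I would proceed as follows. Fix $\varepsilon > 0$. Theorem \ref{thm:main}, applied to $X$, furnishes a non-decreasing function $F \colon \R_{\geqslant 0} \to \R_{\geqslant 0}$ with $F(r) \to \infty$ as $r \to \infty$, depending only on the contractibility radius $\fR$, the diameter control $\fD$, and the asymptotic dimension bound $m$ of $X$. Using $F(r) \to \infty$, I would choose $r_0 > 0$ large enough that $1/F(r_0) < \varepsilon$. For an arbitrary $x \in X$, Theorem \ref{thm:main} then yields
$$\inf_{y \in B(x, r_0)} k(y) \leqslant \frac{1}{F(r_0)} < \varepsilon.$$
Since the infimum is strictly less than $\varepsilon$, there must exist some $y \in B(x, r_0)$ with $k(y) < \varepsilon$.

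Let $S_\varepsilon := \{ y \in X : k(y) < \varepsilon\}$. The previous step shows that every $x \in X$ satisfies $\dist(x, S_\varepsilon) < r_0$, i.e., the $r_0$-neighborhood of $S_\varepsilon$ is all of $X$. By the definition of a net recalled before the corollary, $S_\varepsilon$ is an $r_0$-net in $X$, which is exactly what we wanted.

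There is no genuine obstacle in this argument; all the analytic and geometric content has already been absorbed into Theorem \ref{thm:main}. The only thing that could fail a naive attempt is the base-point dependence of the estimate: if $F$ were allowed to depend on $x_0$, then different choices of $x_0$ might require different $r_0$'s and the set $S_\varepsilon$ would not be a net for a single uniform constant. This is why the proof essentially amounts to recording that the $F$ produced in Theorem \ref{thm:main} depends only on $\fR$, $\fD$, and $m$, and hence a single $r_0$ works for all base points simultaneously.
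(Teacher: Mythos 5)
Your argument is correct and matches the paper's reasoning exactly: the paper states the corollary immediately after remarking that $F$ is independent of the base point, and the deduction you record (choose $r_0$ with $1/F(r_0) < \varepsilon$, then every $x$ has a point of small scalar curvature within distance $r_0$) is precisely the intended unpacking. Nothing is missing.
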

This corollary motivates the following question.
\begin{question}
	Given a uniformly contractible complete Riemannian manifold with bounded geometry, for any $\varepsilon>0$, does the collection of the points with scalar curvature smaller than $\varepsilon$ form a net?
\end{question}

Our main strategy for proving Theorem $\ref{thm:main}$ is to relate the rate of decay for scalar curvature to the index pairing between Dirac operators and compactly supported vector bundles with Lipschitz control.  Some of the ideas in the proof are inspired by the work of Connes \cite{Connes}, Connes--Gromov--Moscovici \cite{MR1204787,CGM90}, Gromov--Lawson \cite{MR569070,GromovLawson}, and Kasparov \cite{GK88}.   In order to prove Theorem $\ref{thm:main}$, we shall further develop a Lipschitz controlled topological $K$-theory of simplicial complexes, which is of independent interest. More precisely, let $(X,d)$ be a locally compact metric space. We denote by $C_0(X)$ the algebra  of continuous functions on $X$ that vanish at infinity. We equip  $C_0(X)$ with the usual supremum norm.  An element in $M_n(C_0(X))$ is said to be $L$-Lipschitz if 
$$\|f(x)-f(y)\|\leqslant L\cdot d(x,y),\ \forall x,y\in X, $$
where the norm on the left hand side is the operator norm of matrices. By the Serre-Swan theorem, the topological $K$-theory of $X$ is naturally isomorphic to the $C^\ast$-algebraic $K$-theory of $C_0(X)$.

%  Before we give more details on how this is achieved in the general case, let us state the following theorem on the quadratic decay of scalar curvature for Riemannian manifolds that are bi-Lipschitz equivalent to CAT(0) spaces, whose proof is elementary but still illustrates some of the key ideas of our approach. 
%
%\begin{theorem}\label{thm:cat0intro}
%	Let $X$ be a complete Riemannian manifold. If $X$ is bi-Lipschitz equivalent to a \textup{CAT(0)} space, then the scalar curvature of $X$ has quadratic decay, i.e., for any $x_0\in X$ there exists $C>0$ such that 
%	$$\inf_{x\in B(x_0,r)} k(x)\leqslant \frac{C}{r^2}.$$
%\end{theorem}
%
%Note that there is no assumption on the finite asymptotic dimensionality of $X$ in Theorem $\ref{thm:cat0intro}$ above. Although clearly every   CAT(0) space is uniformly contractible, it is still an open question whether every CAT(0) manifold has finite asymptotic dimension.  In particular,  Theorem $\ref{thm:cat0intro}$ is not a direct consequence of Theorem $\ref{thm:main}$. On the other hand, the proof of Theorem $\ref{thm:cat0intro}$ makes use of certain special geometric properties of CAT(0) spaces, and does not generalize to general uniformly contractible spaces with finite asymptotic dimension.   

Let us now assume that $X$ is a simplicial complex endowed with the standard simplicial metric. This means if $x$ and $y$ are in the same simplex expressed by convex combinations of its vertices $x=\sum_{j}t_jv_j$ and $y=\sum_j t_j'v_j$, then the distance from $x$ to $y$ is defined to be $\sum_j|t_j-t_j'|$;  if two points are in different simplices, their distance is defined to be the length of the shortest path between them. If there does not exist any path connecting two points, that is, the two points are in two different connected components, then we define their distance to be infinity. We have the following theorem on Lipschtiz control for topological $K$-theory of  simplicial complexes. 
\begin{theorem}\label{thm:introLipschitzC(X)}
	Let $X$ be a locally compact $m$-dimensional simplicial complex equipped with the standard simplicial metric. There exists a constant $L_m$ \textup{(}depending only on $m$\textup{)} such that every class in $K_*(C_0(X))$ admits an $L_m$-Lipschitz representative. Furthermore, if  $\alpha \in K_*(C_0(X))$ can be represented by an element that is constant outside a compact set $K$, then one can choose an $L_m$-Lipschitz representative of $\alpha$ that is  constant outside the $1$-neighborhood of $K$.
\end{theorem}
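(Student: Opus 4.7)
The plan is to prove Theorem \ref{thm:introLipschitzC(X)} by induction on the dimension $m$ of $X$. The base case $m = 0$ is immediate: a $0$-dimensional simplicial complex is discrete, so any two distinct points lie in different connected components and hence have distance $\infty$ by the convention in the statement, which forces every element of $M_n(C_0(X))$ to be vacuously $L$-Lipschitz for any $L \ge 0$, and the constancy clause is automatic. The goal of the inductive step is to produce a constant $L_m = C(m) \cdot L_{m-1}$, with $C(m)$ depending only on $m$, starting from the constant $L_{m-1}$ provided by the inductive hypothesis.

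In the inductive step, represent $\alpha \in K_*(C_0(X))$ by a projection (for $K_0$) or unitary (for $K_1$) $f \in M_n(\widetilde{C_0(X)})$, and restrict to the $(m-1)$-skeleton $X^{(m-1)}$ to obtain a class that, by the inductive hypothesis, has an $L_{m-1}$-Lipschitz representative $g$. After sufficient stabilization, $g$ and $f|_{X^{(m-1)}}$ are connected by a continuous path of projections/unitaries on $X^{(m-1)}$; this path, reparametrized by a smooth cutoff of the distance from $X^{(m-1)}$ measured in barycentric coordinates, can be used to deform $f$ inside a barycentric collar of $X^{(m-1)}$ to produce a new representative $\tilde f$ that agrees with $g$ on $X^{(m-1)}$ and is Lipschitz on this collar. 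The key observation making this deformation Lipschitz-controlled is that the barycentric coordinate functions are $1$-Lipschitz in the standard simplicial metric, so the only Lipschitz cost introduced is of order $L_{m-1}$ together with the inverse collar thickness (both fixed).

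The main step is then to replace $\tilde f$ on the interior of each $m$-simplex $\sigma$ by a Lipschitz extension of $g|_{\partial\sigma}$ inside $\sigma$ that realizes the correct relative $K$-theory class of $\tilde f|_\sigma$. This reduces to the following Lipschitz extension lemma on a single simplex: if $p \colon \partial \Delta^m \to \mathrm{Proj}_k(M_N)$ is $L$-Lipschitz in the standard simplicial metric, then after stabilization (adding a complementary trivial projection in a larger matrix algebra), $p$ admits a Lipschitz extension to $\Delta^m$ with Lipschitz constant bounded by $C(m) \cdot L$, the bound being independent of $k$, $N$ and $p$. The existence of some extension follows from the fact that stabilization trivializes the obstruction in $\pi_{m-1}$ of the stable Grassmannian; the quantitative control should come from an explicit construction in terms of barycentric coordinates, exploiting the compactness and unitary homogeneity of a fixed sufficiently large Grassmannian together with Bott periodicity to produce a null-homotopy of uniformly controlled modulus of continuity.

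Applied simplex by simplex, the extensions are automatically compatible along shared faces since they all restrict there to $g$, yielding a global $L_m$-Lipschitz representative of $\alpha$ with $L_m = C(m) \cdot L_{m-1}$. The constancy clause follows because every modification above is local to simplices meeting the support of $f - \mathrm{const}$: the collar deformation only alters $\tilde f$ within simplices touching $K$, and the interior extension acts inside each $m$-simplex, so the final representative is constant outside the $1$-neighborhood of $K$ in the simplicial metric. The principal obstacle will be the Lipschitz extension lemma itself, since the dimensional constant $C(m)$ must be uniform in the stabilizing dimensions $k$ and $N$; this will require an explicit, barycentric-coordinate-based construction of the extension rather than an abstract appeal to compactness, leveraging the stability of homotopy groups of Grassmannians supplied by Bott periodicity.
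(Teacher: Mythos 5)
Your approach is genuinely different from the paper's, but it contains a gap that is not a detail: the central ``Lipschitz extension lemma'' is asserted without proof, and the suggested route to it does not work.

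On the difference in approach: the paper never attempts a simplex-by-simplex extension. Instead it introduces Lipschitz controlled $K$-groups $K_*^L(C_0(X))$, proves a controlled Mayer--Vietoris six-term sequence (Proposition~\ref{prop:pullbacksixterm1}) and a controlled five-lemma (Lemma~\ref{lemma:fivelemma1}), and then decomposes a compact $X$ as $X_1\cup X_2$ where $X_1$ is a collar of the $(m-1)$-skeleton and $X_2$ is a neighborhood of the barycenters of the $m$-simplices (both controlled by complexes of dimension $\le m-1$ or $0$). The uniform-control property is then transported across the Mayer--Vietoris sequence by the five-lemma; the non-compact and relative cases are handled afterwards. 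This machinery is built precisely to avoid the quantitative extension step you are proposing.

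On the gap: your extension lemma requires a bound $C(m)\cdot L$ on the Lipschitz constant of the extension that is uniform in the ambient matrix size $N$ and the rank $k$. This uniformity is the entire content of the theorem, and your sketch does not reach it. The ``compactness and unitary homogeneity of a fixed sufficiently large Grassmannian'' cannot apply: the input $p$ maps into $\mathrm{Gr}_k(\C^N)$ with $N$ unbounded, and there is no Lipschitz-controlled way to retract an arbitrary $L$-Lipschitz map $\partial\Delta^m\to\mathrm{Gr}_k(\C^N)$ into a Grassmannian of fixed size; an Arzel\`a--Ascoli compactness argument would at best give bounds depending on $N$, and the null-homotopy does not vary continuously with the input map in any obvious way, so even the fixed-$N$ compactness argument is delicate. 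You yourself note at the end that an abstract compactness appeal must be replaced by an explicit barycentric-coordinate construction, but that construction is exactly the hard step that is missing, and it is not clear it is simpler than the paper's controlled-$K$-theory machinery. In addition, even granting an extension, your lemma only supplies \emph{some} Lipschitz extension; you need one in the correct rel-boundary homotopy class determined by $\tilde f|_\sigma$, and the correction argument (adding Lipschitz-bounded Bott projections supported in the interior) is gestured at but not stated as part of the lemma.

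A secondary issue is the collar deformation step. After stabilization, $g$ and $f|_{X^{(m-1)}}$ are connected by a continuous path of projections, but that path is not a priori Lipschitz in the path parameter, and composing with a Lipschitz cutoff of the collar coordinate does not make it so. The paper needs a separate technical statement (Lemma~\ref{lemma:Liphomotopy}) to replace an arbitrary homotopy of projections by a Lipschitz one at the cost of additional stabilization, with a Lipschitz constant depending only on universal data. Your sketch would need that result as well before the collar deformation produces a Lipschitz function.
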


We remark that a possible upper bound for the constant $L_m$ can be given by  $C\cdot m^{C'm}$, where $C$ and $C'$ are independent of $m$. Theorem \ref{thm:introLipschitzC(X)} is inspired by the  quantitative $K$-theory introduced by the third author in \cite{Yu}. See also \cite{Oyono-OyonoYu}. Although for simplicity we have only stated Theorem $\ref{thm:introLipschitzC(X)}$ for simplicial complexes, the same argument used in its proof in fact generalizes to arbitrary $C^\ast$-algebras that are equipped with a certain type of filtrations. We refer the reader to Appendix \ref{app:QKT-lipschitzfiltration} for a discussion of this more general setting.

The paper is organized as follows. In Section \ref{sec:preliminaries}, we review some geometric $C^\ast$-algebras and higher index theory. In Section \ref{sec:pairingandCAT(0)}, we review an explicit construction for the index pairing between $K$-theory and $K$-homology and relate the index pairing to the rate of decay for scalar curvature (Proposition \ref{prop:pairing-scalarcurvature}). As an application, we prove the quadratic decay inequality for scalar curvature of manifolds that are bi-Lipschitz equivalent to  CAT(0) spaces.
 In Section \ref{sec:Lipschitzcontrol}, we develop a Lipschitz controlled $K$-theory for finite dimensional simplicial complexes (Theorem \ref{thm:introLipschitzC(X)}), then apply it to prove the main result of the article (Theorem \ref{thm:main}) in Section \ref{sec:psc-fad}. In  Appendix \ref{app:QKT-lipschitzfiltration}, we develop a Lipschitz controlled $K$-theory for general $C^\ast$-algebras that are equipped with Lipschitz filtrations.
 
\section{Preliminaries}\label{sec:preliminaries}
In this section, we review the construction of the geometric $C^*$-algebras and the higher index. See \cite{Yulocalization,Roecoarse, willett2020higher} for more details.
\subsection{Roe algebras and localization algebras}\label{subsec:Roealgebra}
We will first review the definitions of some geometric $C^*$-algebras. 

Let $X$ be a proper metric space, i.e., every closed ball is compact. Let $C_0(X)$ be the $C^*$-algebra consisting of all complex-valued continuous functions on $X$ that vanish at infinity.
An $X$-module is a separable Hilbert space $H_X$ equipped with a $*$-representation of $C_0(X)$. It is called \emph{nondegenerate} if the $*$-representation is nondegenerate, and \emph{standard} if no nonzero function in $C_0(X)$ acts as a compact operator. 

For example, if $X$ is a complete Riemannian manifold, then $L^2(X)$ is a non-degenerate standard $X$-module.
	\begin{definition}\label{def:prop-locallycompact}
	Let $H_X$ be a nondegenerate standard $X$-module. Let $T$ be a bounded linear operator acting on $H_X$.
	\begin{enumerate}
		\item The \emph{propagation} of $T$ is defined by
		$$
		\prop(T)=\sup\{d(x,y) ~|~ (x,y)\in \text{supp}(T) \},
		$$
		where $\text{supp}(T)$ is the complement (in $X\times X$) of the set of points $(x,y) \in X\times X$ such that there exists $f_1,f_2\in C_0(X)$ such that $f_1Tf_2=0$ and $f_1(x)f_2(y)\neq 0$;
		\item $T$ is said to be \emph{locally compact} if both $fT$ and $Tf$ are compact for all $f\in C_0(X)$.
	\end{enumerate}
\end{definition}

\begin{definition}\label{def roe and localization}
	Let $H_X$ be a standard nondegenerate $X$-module and $B(H_X)$ the set of all bounded linear operators on $H_X$.
	\begin{enumerate}
		\item The \emph{$Roe$ algebra} of $X$, denoted by $C^*(X)$, is the $C^*$-algebra generated by all locally compact operators with finite propagation in $B(H_X)$.
		\item The \emph{localization algebra} $C_L^*(X)$ is the $C^*$-algebra generated by all bounded and uniformly norm-continuous functions $f\colon [1,\infty)\to C^*(X)$ such that 	
		\[
		\prop(f(t))<\infty \text{ and }\prop(f(t))\to 0\  \text{~as~}  t \to \infty.
		\]
	\end{enumerate}
\end{definition}
The Roe algebra and localization algebra of $X$ are independent (up to isomorphisms) of the choice of nondegenerate standard $X$-modules  $H_X$. 
\subsection{Higher index and local higher index}\label{subsec:higher_index}
In this subsection, we recall the definition of  higher index and  local higher index for Dirac operators. 

Let $\chi$ be a continuous function on $\R$. We say that $\chi$ is a \emph{normalizing function} if it is non-decreasing, odd (i.e. $\chi(-x) = -\chi(x)$) and  
\[ \lim_{x\to \pm \infty} \chi(x) = \pm 1. \]  

Let $X$ be a complete spin manifold and $D$ the  associated Dirac operator on $X$, which acts on the spinor bundle of $X$. Let $H$ be the Hilbert space of the $L^2$-sections of the spinor bundle, which is a standard non-degenerate $X$-module in the sense of Section \ref{subsec:Roealgebra}.
We first assume that $\dim X$ is even. In this case, the spinor bundle is naturally $\Z_2$-graded and the Dirac operator is odd and given by
$$D=\begin{pmatrix}
0&D_+\\D_-&0
\end{pmatrix}$$

Let $\chi$ be a normalizing function. As $\chi$ is odd, $\chi(t^{-1}D)$ is also a self-adjoint odd operator for any $t>0$ given by
\begin{equation}\label{eq:chi}
\chi(t^{-1} D)=\begin{pmatrix}
0&U_{t,D}\\V_{t,D}&0
\end{pmatrix}
\end{equation}
Set
$$W_{t,D}=\begin{pmatrix}
1& U_{t,D}\\0&1
\end{pmatrix}
\begin{pmatrix}
1&0\\-V_{t,D}&1
\end{pmatrix}
\begin{pmatrix}
1&U_{t,D}\\0&1
\end{pmatrix}
\begin{pmatrix}
0&-1\\1&0
\end{pmatrix},\ e_{1,1}=\begin{pmatrix}
1&0\\0&0
\end{pmatrix}
$$
and
\begin{equation}\label{eq:PtD}
\begin{split}
P_{t,D} = & W_{t,D}e_{1,1}W_{t,D}^{-1}\\
 = & \begin{pmatrix}
1-(1-U_{t,D}V_{t,D})^2 & (2-U_{t,D}V_{t,D})U_{t,D}(1-V_{t,D}U_{t,D})\\
V_{t,D}(1-U_{t,D}V_{t,D})&(1-V_{t,D}U_{t,D})^2
\end{pmatrix}.
\end{split}
\end{equation}
The path $(P_{t,D})_{t\in[1,+\infty)}$ defines an element in  $M_2(C^*_L(X)^+)$, and the difference $P_{t,D}-e_{1,1}$ lies in $M_2(C^*_L(X))$.
\begin{definition}\label{def:lind}
	 If $M$ is even dimensional, then  the local higher index $\ind_L(D)$ of $D$ is defined to be  
	 \[ \ind_L(D)\coloneqq [P_{t,D}]-[e_{1,1}] \in K_0(C^*_L(X)). \] The higher index $\ind(D)$ of $D$ is defined to be  
	 \[ \ind(D) \coloneqq [P_{1,D}]-[e_{1,1}] \in  K_0(C^*(X)).\]
\end{definition}

The construction of higher index and local higher index for the odd dimensional case is as follows.
\begin{definition}\label{def:ind-odd}
	If $M$ is odd dimensional, then the local higher index  $\ind_L(D)$ of $D$  is defined to be 
	\[ [e^{2\pi i\frac{\chi(t^{-1}D)+1}{2}}] \in K_1(C^*_L(X)). \] The higher index  $\ind(D)$ of $D$ is defined to be  
	\[ [e^{2\pi i\frac{\chi(D)+1}{2}}] \in K_1(C^*(X)). \]
\end{definition}

Note that the higher index and the local higher index are independent of the choice of normalizing functions. The $K$-theory $K_\ast(C^*_L(X))$ of the localization algebra $C^*_L(X)$ is naturally isomorphic to  the $K$-homology of $X$. Under this isomorphism, the local higher index of $D$ is identified with the $K$-homology class of $D$ (cf. \cite{Yulocalization,QiaoRoe}).

Throughout the paper, we  shall fix the following  specific choice of  normalizing function
$$\chi(x)=\frac 2 \pi\int_0^x \frac{1-\cos y}{y^2}dy-1.$$
Note that this specific function $\chi$ satisfies that
\begin{enumerate}
	\item if $x>0$, then we have $|\chi(x)-1|<\frac{4}{\pi x}$,
	\item the distributional Fourier transform of $\chi$ is supported on $[-1,1]$.
\end{enumerate}
The choice of the above normalizing function is essentially only for convenience. A different choice of normalizing function (whose distributional Fourier transform also has compact support) will not change the discussion that follows but only the constants that appear at various steps. 

The following lemma is an easy consequence from the formula of $P_{t,D}$ from line $\eqref{eq:PtD}$ and the above properties of $\chi$.
\begin{lemma}\label{lemma:PtD} With the same notation as above, the followings hold for all $t\geqslant 1$.
	\begin{enumerate}[$(1)$]
		\item $\|P_{t,D}\|\leqslant 64$.
		\item $\prop(P_{t,D})\leqslant \frac{5}{t}$.
		\item $\|P_{t,D}-e_{1,1}\|\leqslant 5\|1-\chi(t^{-1}D)^2\|$. In particular, if $D^2>\lambda^2\cdot \mathrm{I}$ for some $\lambda>0$, then
		$$\|P_{t,D}-e_{1,1}\|\leqslant \frac{20t}{\pi\lambda}.$$
		\item Let $D_n=D\otimes I_n$. Then 
		\[ \lim_{t\to \infty} \|P_{t,D_n}\cdot f-f\cdot P_{t,D_n}\|=0\] for each $f\in M_n(C_0(X))$. Furthermore, if $f\in M_n(C_0(X))$ is $L$-Lipschitz, then
		$$\|P_{t,D_n}\cdot f-f\cdot P_{t,D_n}\|\leqslant 2560t^{-1}L.$$
	\end{enumerate}
\end{lemma}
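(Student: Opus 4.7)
The plan is to derive all four bounds directly from the explicit block formula \eqref{eq:PtD} for $P_{t,D}$, leveraging the two key properties of our fixed normalizing function $\chi$: its distributional Fourier transform is supported in $[-1,1]$, and $|\chi(x)-1|<4/(\pi x)$ for $x>0$. Since $\|\chi\|_\infty\leq 1$, both off-diagonal components $U_{t,D}$ and $V_{t,D}$ of $\chi(t^{-1}D)$ are contractions, a fact I will use throughout.

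For parts (1) and (2), I would start from the presentation $P_{t,D}=W_{t,D}e_{1,1}W_{t,D}^{-1}$. Each of the three triangular $2\times 2$ factors making up $W_{t,D}$ has operator norm at most $2$ (a direct $2\times 2$ estimate using $\|U_{t,D}\|,\|V_{t,D}\|\leq 1$), and the swap factor is an isometry, so $\|W_{t,D}\|\leq 8$ and $\|W_{t,D}^{-1}\|\leq 8$, giving $\|P_{t,D}\|\leq 64$. For propagation, finite propagation speed for the wave operator $e^{i\xi t^{-1}D}$ combined with $\supp\widehat\chi\subseteq [-1,1]$ yields $\prop(\chi(t^{-1}D))\leq 1/t$; the longest monomial appearing in the entries of \eqref{eq:PtD} is a word of length $5$ in $U_{t,D},V_{t,D}$ (namely $U_{t,D}V_{t,D}U_{t,D}V_{t,D}U_{t,D}$ in the $(1,2)$-entry), which gives $\prop(P_{t,D})\leq 5/t$.

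For part (3), I would exploit $\chi(t^{-1}D)^2=\mathrm{diag}(U_{t,D}V_{t,D},V_{t,D}U_{t,D})$ in the $\Z_2$-grading, so $\|1-U_{t,D}V_{t,D}\|$ and $\|1-V_{t,D}U_{t,D}\|$ are each bounded by $\|1-\chi(t^{-1}D)^2\|$. Every entry of $P_{t,D}-e_{1,1}$ contains an explicit factor of $1-U_{t,D}V_{t,D}$ or $1-V_{t,D}U_{t,D}$, so a direct entrywise bound and estimation of the resulting $2\times 2$ operator norm produces the constant $5$. For the spectral-gap claim, the spectrum of $t^{-1}D$ lies in $\{|s|\geq t^{-1}\lambda\}$; on this set, property (1) of $\chi$ combined with the factorization $(1-\chi(s))(1+\chi(s))$ bounds $\|1-\chi(t^{-1}D)^2\|$ by a constant times $t/\lambda$, and combining with the first assertion of (3) yields the claimed $20t/(\pi\lambda)$ bound after a careful tallying of constants.

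For part (4), the technical core is the commutator estimate $\|[\chi(t^{-1}D),f]\|\leq C_0 t^{-1}L$ for $L$-Lipschitz $f$. I would derive this from the Fourier representation $\chi(t^{-1}D)=\int\widehat\chi(\xi)e^{i\xi t^{-1}D}\,d\xi$; although $\widehat\chi$ itself is only a distribution, the derivative $\chi'(y)=\pi^{-1}\mathrm{sinc}^2(y/2)$ is an $L^1$ function whose Fourier transform is the (continuous, compactly supported) triangle function on $[-1,1]$, so $\int|\xi\widehat\chi(\xi)|\,d\xi=\int|\widehat{\chi'}(\xi)|\,d\xi<\infty$. Coupling this with Duhamel's formula $\|[e^{i\xi t^{-1}D},f]\|\leq |\xi|t^{-1}\|[D,f]\|$ and the standard Dirac-operator estimate $\|[D,f]\|\leq L$ for $L$-Lipschitz $f$ produces the desired commutator bound. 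Distributing the Leibniz rule over the polynomial entries of $P_{t,D_n}$ (words of length $\leq 5$) and counting gives the final $2560 t^{-1}L$ estimate. The unconditional asymptotic statement for arbitrary $f\in M_n(C_0(X))$ then follows by approximating $f$ uniformly by smooth compactly supported, hence Lipschitz, matrix-valued functions. The main obstacle is the rigorous interpretation of this distributional Fourier pairing together with the bookkeeping needed to pass from the commutator with $\chi(t^{-1}D)$ to the commutator with $P_{t,D_n}$; the remaining parts are direct calculations from \eqref{eq:PtD}.
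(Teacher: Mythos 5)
Your proof follows essentially the same route as the paper: parts (2)–(4) use exactly the paper's ingredients (finite propagation speed of the wave operator together with $\supp\widehat\chi\subset[-1,1]$ for the $5/t$ propagation bound; the factorization $1-\chi(t^{-1}D)^2=\operatorname{diag}(1-UV,1-VU)$ plus the tail estimate on $1-\chi$ for part (3); and the Duhamel identity $[e^{i\xi D_n},f]=\int_0^\xi ie^{isD_n}[D_n,f]e^{i(\xi-s)D_n}\,ds$ fed through the Fourier representation of $\chi$, with the observation that $\xi\widehat\chi$ is integrable because $\chi'\in L^1$, for part (4)). The only cosmetic deviation is in part (1), where you bound $\|P_{t,D}\|$ via $\|W_{t,D}\|\,\|W_{t,D}^{-1}\|\leq 8\cdot 8$ rather than entrywise from the explicit $2\times 2$ formula, but this is a trivial variant that yields the same constant.
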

\begin{proof}
	As $|\chi|$ is uniformly bounded by $1$, both $U_{t,D}$ and $V_{t,D}$ from line \eqref{eq:chi}  have their norms $\leqslant 1$. Therefore, part (1) follows from the explicit formula of $P_{t,D}$ in line \eqref{eq:PtD}.
	
	The distributional Fourier transform of $\chi(x)$ is supported on $[-1,1]$. By the inverse Fourier transformation formula
	$$\chi(D)=\frac{1}{2\pi }\int \hat{\chi}(\xi) e^{i\xi D}d\xi$$
	and the finite propagation speed of the wave operator $e^{i\xi D}$, we see that $\chi(D)$ has propagation no more than $1$. Replacing $\chi(x)$ by $\chi(t^{-1}x)$, we see that the propagation of $\chi(t^{-1}D)$ is no more than $t^{-1}$. In particular,  $U_{t,D}$ and $V_{t,D}$  from line \eqref{eq:chi} also have propagation no more than $t^{-1}$. Hence part (2) follows from line \eqref{eq:PtD}.

	Each entry of the matrix $P_{t,D}-e_{1,1}$ (cf. line \eqref{eq:PtD}) contains either $(1-U_{t,D}V_{t,D})$ or $(1-V_{t,D}U_{t,D})$ as a factor. Note that from line \eqref{eq:chi}, we have 
	$$1-\chi(t^{-1}D)^2=\begin{pmatrix}
	1-U_{t,D}V_{t,D}&0\\0&1-V_{t,D}U_{t,D}
	\end{pmatrix}.$$
	Hence 
	$$\max\{\|1-U_{t,D}V_{t,D}\|,~\|1-V_{t,D}U_{t,D}\| \}\leqslant \|1-\chi(t^{-1}D)^2\|.$$
Hence follows part (3).
	
 To prove part (4), we consider
	$$g(s)=e^{isD_n}fe^{-isD_n}-f.$$
	We have
	$$g'(s)=e^{isD_n}iD_nf e^{-isD_n}-e^{isD_n}f(iD_n) e^{-isD_n}=ie^{isD_n}[D_n,f] e^{-isD_n}$$
	and $f(0)=0$. Therefore 
	$$[e^{isD_n},f]=g(s)e^{isD_n}=\int_0^s ie^{itD_n}[D_n,f] e^{i(s-t)D_n} dt.$$
	Hence
	$$[\chi(D_n),f]=\frac{1}{2\pi}\int \hat\chi(\xi)\int_0^\xi ie^{itD_n}[D_n,f] e^{i(\xi-t)D_n}dtd\xi.$$
	When $f$ is $L$-Lipschitz, we have 
	$$\|[D_n,f]\|\leqslant L.$$
It follows that 
	$$\max\{\|[U_{1,D_n},f]\|,~\|[V_{1,D_n},f]\|\}=\|[\chi(D_n),f]\|\leqslant 2L$$
Similarly, using $\chi(t^{-1}D_n)$ instead of $\chi(D_n)$,  we have 
	$$\max\{\|[U_{t,D_n},f]\|,~\|[V_{t,D_n},f]\|\}=\|[\chi(t^{-1}D_n),f]\|\leqslant 2t^{-1} L.$$Now part (4) easily follows from this. 

\end{proof}

\section{Decay of scalar curvature on CAT(0)-like spaces}\label{sec:pairingandCAT(0)}
In this section, we discuss how to estimate the scalar curvature via the pairing between Dirac operator and  $K$-theory classes. As an  application, we prove the quadratic decay inequality of scalar curvature for complete manifolds that  are bi-Lipschitz equivalent to CAT(0) spaces.
\subsection{An index pairing formula between $K$-homology and $K$-theory}
In this subsection, we review an explicit construction of the index pairing between $K$-homology and $K$-theory. There are several different ways to formulate the index pairing. They are all equivalent. In particular,  in the case of Dirac operators and compactly supported vector bundles over spin manifolds, the index pairing simply gives the relative indices of the corresponding twisted Dirac operators. The reason for the specific formalism we adopt here is mainly because it  seems to be the most convenient to work with for the geometric applications in this article.

\begin{lemma}\label{lemma:double}
	Let $A$ be a $C^*$-algebra and $I$ an ideal of $A$. Define
	$$D_A(I)=\{(a,a')\in A\oplus A: a-a'\in I \}.$$
	Then there is a natural surjective homomorphism from $K_*(D_A(I))$ to $K_*(I)$. In other words, each $K$-theory class of the ideal $I$ can be expressed by $[\alpha]-[\beta]$ for some  $\alpha,\beta\in M_n(A^+)$ with $\alpha-\beta\in M_n(I)$.
\end{lemma}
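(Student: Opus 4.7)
The plan is to view the desired map $K_\ast(D_A(I)) \to K_\ast(I)$ as the canonical retraction coming from a split short exact sequence of $C^\ast$-algebras. There are two coordinate $\ast$-homomorphisms $\pi_1, \pi_2 \colon D_A(I) \to A$, both split by the diagonal embedding $s \colon A \to D_A(I)$, $s(a) = (a, a)$. The kernel of $\pi_2$ is $\{(x, 0) : x \in I\}$, which is isomorphic to $I$ via $j(x) = (x, 0)$, so one obtains the split short exact sequence
\[
0 \longrightarrow I \xrightarrow{\ j\ } D_A(I) \xrightarrow{\ \pi_2\ } A \longrightarrow 0
\]
with splitting $s$. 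Applying $K$-theory produces a split short exact sequence of abelian groups; in particular $j_\ast$ is injective and $r \coloneqq j_\ast^{-1} \circ (\id - s_\ast (\pi_2)_\ast)$ is a well-defined surjective homomorphism $K_\ast(D_A(I)) \to K_\ast(I)$, which serves as the natural map asserted in the lemma.

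For the concrete reformulation, the next step is to identify the unitization $D_A(I)^+$ with the subalgebra $\{(b, b') \in A^+ \oplus A^+ : b - b' \in I\}$. Writing elements of $A^+$ uniquely in the form $a + \lambda \cdot 1$, the condition $(a + \lambda) - (a' + \mu) \in I$ forces $\lambda = \mu$, which gives the identification. Under it, every class in $K_\ast(D_A(I))$ is represented (via the standard pictures: formal differences of projections for $K_0$, unitaries for $K_1$) by a pair $(\alpha, \beta)$ with $\alpha, \beta \in M_n(A^+)$ and $\alpha - \beta \in M_n(I)$. The retraction $r$ annihilates the image of $s_\ast$, which corresponds to diagonal pairs $(\alpha, \alpha)$, and therefore sends such a representative to $[\alpha] - [\beta] \in K_\ast(I)$. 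Combined with the surjectivity of $r$, this yields the claimed description of every class in $K_\ast(I)$.

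The argument is essentially a standard split-exact-sequence computation together with some bookkeeping on unitizations, so I do not expect any serious obstacle. The one point worth checking carefully is the identification of $D_A(I)^+$ inside $A^+ \oplus A^+$: it is exactly this identification that makes a pair $(\alpha, \beta) \in M_n(A^+) \times M_n(A^+)$ with $\alpha - \beta \in M_n(I)$ a legitimate representative of an element in $M_n(D_A(I)^+)$, and hence of a class in $K_\ast(D_A(I))$ that can then be retracted to $K_\ast(I)$.
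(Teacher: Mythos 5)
Your proposal is correct and follows essentially the same route as the paper: both use the split short exact sequence $0\to I\to D_A(I)\to A\to 0$ with the diagonal splitting $a\mapsto(a,a)$, and define the map as the resulting projection onto the $K_*(I)$ summand. Your explicit formula $r=j_*^{-1}\circ(\mathrm{id}-s_*(\pi_2)_*)$ and the identification of $D_A(I)^+$ inside $A^+\oplus A^+$ are just a more detailed unwinding of the paper's one-line appeal to $K_*(D_A(I))\cong K_*(I)\oplus K_*(A)$.
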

\begin{proof}
	Consider the following short exact sequence
	$$0\longrightarrow I\longrightarrow D_A(I)\longrightarrow A\longrightarrow 0,$$
	where the map $I\to D_A(I)$ is given by $a\mapsto (a,0)$. This exact sequence splits, since  there exists a section $A\to D_A(I)$ given by $a\mapsto (a,a)$. Therefore we have $K_*(D_A(I))\cong K_*(I)\oplus K_*(A)$,  and the required surjective homomorphism from $K_*(D_A(I))$ to $K_*(I)$ is given by the projection from $K_*(D_A(I))$ to the first component.
\end{proof}	

Now suppose $[\alpha]-[\beta]\in K_0(I)$ is a $K$-theory class as in Lemma $\ref{lemma:double}$. We shall review the difference construction \cite{KasparovYuconvex}, which gives an explicit way to construct idempotents of the unitization $I^+$ of $I$ which  represent  the same class as $[\alpha]-[\beta]$ in $K_0(I)$.  Without loss of generality, assume $\alpha,\beta$ are idempotents in $A^+$ such that $\alpha-\beta\in I$. Set
$$Z(\beta)=\begin{pmatrix}
\beta &0&1-\beta &0\\ 1-\beta &0&0&\beta\\0&0&\beta&1-\beta\\0&1&0&0
\end{pmatrix}\in M_4(A^+).$$
It is easy to check that $Z(\beta)$ is invertible with inverse
$$Z(\beta)^{-1}=\begin{pmatrix}
\beta &1-\beta &0&0\\
0&0&0&1\\
1-\beta &0&\beta&0\\0&\beta&1-\beta&0
\end{pmatrix}\in M_4(A^+).$$
Let us define 
\begin{equation}\label{eq:diffe}
\begin{split}
d(\alpha,\beta) \coloneqq  & Z(\beta)^{-1}\begin{pmatrix}
\alpha&&&\\
&1-\beta&&\\
&&0&\\
&&&0
\end{pmatrix}Z(\beta)\\=&\begin{pmatrix}
1+\beta(\alpha-\beta)\beta&0&\beta(\alpha-\beta)&0\\
0&0&0&0\\
(\alpha-\beta)\alpha\beta&0&(1-\beta)(\alpha-\beta)(1-\beta)&0\\
0&0&0&0
\end{pmatrix}\in M_4(I^+)
\end{split}
\end{equation}
and 
$$
e_{1,3} \coloneqq \begin{psmallmatrix}
1&&&\\
&0&&\\
&&0&\\
&&&0
\end{psmallmatrix}.
$$
Then $[\alpha]-[\beta] = [d(\alpha,\beta)]-[e_{1,3}] \in K_0(I)$,
where both $d(\alpha,\beta)$ and $e_{1, 3}$ are idempotents in
$M_4(I^+)$.

For a $C^*$-algebra $A$, let $C_{uc}([1,+\infty),A)$ be the $C^*$-algebra generated by uniformly continuous bounded  functions from $[1,+\infty)$ to $A$. Define $C_{0}([1,+\infty),A)$ to be the ideal of $C_{uc}([1,+\infty),A)$ generated by continuous functions that vanish at infinity. We define
\begin{equation}\label{eq:Ainf}
A_\infty=C_{uc}([1,+\infty),A)/C_{0}([1,+\infty),A).
\end{equation}
There is a natural map $i\colon A\to A_\infty$ by mapping each element $a\in A$ to the constant function on $[0, \infty)$ with the  constant value $a$.
\begin{proposition}\label{lemma:Ainf}
	There is a natural map $j_*\colon K_*(A_\infty)\to K_*(A)$ with $j_*i_*=\id$. Furthermore, $j_0$ and $i_0$ are isomorphisms for $K_0$-groups.
\end{proposition}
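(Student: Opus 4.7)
The plan is to construct $j_*$ by promoting an almost-projection (resp.\ almost-unitary) lift to an honest projection (resp.\ unitary) via continuous functional calculus evaluated at large $t$, verify $j_*i_* = \id$ directly from the construction, and then prove $i_0 \circ j_0 = \id$ by promoting the resulting path of projections to a unitary conjugation inside $M_n((A_\infty)^+)$.

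For the construction of $j_0$, I would take a class in $K_0(A_\infty)$ represented by a projection $\bar p \in M_n((A_\infty)^+)$ and pick any lift $\tilde p \in M_n(C_{uc}([1,+\infty),A)^+)$; since $\tilde p^2 - \tilde p \in M_n(C_0([1,+\infty),A))$, its norm tends to zero, so for $t \geq t_0$ with $t_0$ large, the spectrum of $\tilde p(t)$ stays in a neighborhood of $\{0,1\}$ bounded away from $\tfrac12$. A continuous cutoff $\phi$ equal to $0$ near $0$ and $1$ near $1$ then gives an honest projection $p'(t) := \phi(\tilde p(t)) \in M_n(A^+)$ for $t \geq t_0$, and I set $j_0([\bar p]) := [p'(t_0)] \in K_0(A)$; norm continuity of $p'$ gives independence of $t_0$, and different lifts of $\bar p$ differ by a $C_0$-function whose values tend to zero, giving independence of the lift. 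An analogous polar-decomposition argument for almost-unitaries defines $j_1$, and the whole construction is manifestly functorial. The identity $j_*i_*=\id$ is immediate, since the constant function $t\mapsto a$ already lifts $i(a)$ as an honest projection (resp.\ unitary), so the construction returns $a$.

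For $i_0 \circ j_0 = \id$, note first that $\tilde p(t) - p'(t) \to 0$, so $\bar p$ is represented in $M_n((A_\infty)^+)$ by any uniformly continuous extension of $p'$ from $[t_0,+\infty)$ to $[1,+\infty)$. I would then construct a uniformly continuous path of unitaries $u(t) \in U(M_n(A^+))$ with $u(t_0) = I$ and $u(t)\,p'(t_0)\,u(t)^{-1} = p'(t)$; its class $\bar u$ in $M_n((A_\infty)^+)$ is a unitary conjugating the constant $i(p'(t_0))$ onto $\bar p$, yielding $[\bar p] = i_0([p'(t_0)])$. To build $u$, I would subdivide $[t_0,+\infty)$ into intervals $[s_k,s_{k+1}]$ on which $\|p'(t)-p'(s_k)\| < \tfrac12$ (possible by uniform continuity of $p'$), use the standard invertible $v(t) = p'(t)p'(s_k) + (1-p'(t))(1-p'(s_k))$, which satisfies $v(t)\,p'(s_k)\,v(t)^{-1} = p'(t)$, take its polar part to obtain a unitary, and glue consecutive pieces by multiplying by the unitaries from earlier intervals.

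The main technical obstacle is obtaining uniform continuity of $u(\cdot)$ on the unbounded interval $[t_0,+\infty)$: one cannot shortcut via a straight-line homotopy in the $A_\infty$-norm, because $\|p' - i(p'(t_0))\|_{A_\infty} = \limsup_{t\to\infty}\|p'(t)-p'(t_0)\|$ need not be small, so a conjugation is genuinely necessary. Uniform continuity of $u$ follows because the modulus of continuity of $v$ on each subinterval, and hence of its polar part, is dictated by that of $p'$ alone, while the lower bound on $\|v(t)\|$ coming from $\|p'(t)-p'(s_k)\| < \tfrac12$ is uniform in $k$; carrying out this uniform bookkeeping is the principal calculation.
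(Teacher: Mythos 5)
Your proof is correct and follows essentially the same route as the paper: you unwind the isomorphism $K_0(A_\infty)\cong K_0(C_{uc}([1,\infty),A))$ explicitly via functional calculus on an almost-projection lift, and the core step — building a uniformly continuous unitary path by gluing $v(t)=2p'(t)p'(s_k)-p'(t)-p'(s_k)+1$ (which is exactly the paper's $z(t)$) over subintervals and taking polar parts — is the paper's construction. The only cosmetic difference is that the paper packages the injectivity of $j_0$ as the vanishing of $K_0(C_{uc,0}([1,\infty),A))$ (functions vanishing at $t=1$), whereas you prove $i_0\circ j_0=\id$ head-on; these are equivalent via the split exact sequence.
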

\begin{proof}
	Consider the short exact sequence
	$$0\to C_{0}([1,+\infty),A)\to C_{uc}([1,+\infty),A)\to A_\infty\to 0.$$
	As $C_{0}([1,+\infty),A)$ is contractible, we have $K_*(A_\infty)\cong K_*(C_{uc}([1,+\infty),A))$. The map $j_*\colon K_*(A_\infty) \to K_*(A)$ is defined  by the evaluation map
	$$\ev\colon C_{uc}([1,+\infty),A)\to A,\ f\mapsto f(1),$$
followed by the isomorphisms $K_*(A_\infty)\cong K_*(C_{uc}([1,+\infty),A))$ above. 

	To prove that $j_0$ and $i_0$ are isomorphisms, it suffices to show that the 
	\[ K_0(C_{uc,0}([1,+\infty),A)) = 0, \] where $C_{uc,0}([1,+\infty),A)$ is the ideal of $C_{uc}([1,+\infty),A)$ consisting of  functions  that vanish at $t = 0$. Without loss of generality, we assume that $A$ is unital. Let $[p]-[e]$ be a $K$-theory element in $K_0(C_{uc,0}([1,+\infty),A))$, where both $p$ and $e$ are projections in $M_n(C_{uc}([1,+\infty),A))$ with $p-e\in  M_n(C_{uc,0}([1,+\infty),A))$ and $e$ a matrix-valued constant function on $[1,+\infty)$. In particular, we have $p(1)=e$. As $p$ is uniformly continuous, there exists $\varepsilon>0$ such that $\|p(x)-p(y)\|<1/2$ if $|x-y|\leqslant \varepsilon$.

Let us define the following intervals for $n\in \N$
$$\begin{cases}
I_0=\{1\};\\
I_n=\{x\in[1,+\infty):1+(n-1)\varepsilon\leqslant x\leqslant 1+n\varepsilon \}.
\end{cases}
$$
We claim that there exists a family of equicontinuous unitary-valued maps
$$\{u_n\colon I_n\to M_k(A) \}_{n\in \N}$$
such that
\begin{enumerate}
	\item $u_{n}(1+n\varepsilon)=u_{n+1}(1+n\varepsilon)$,
	\item $p(t)=u_n(t)eu_n(t)^*$, $\forall t\in I_n$.
\end{enumerate}

We will construct this family by induction on $n$. When $n=0$, $u_0(1)=1_A$ is as required, since $p(1)=e$. Assume that $u_{n}$ is already defined. For each $t\in I_{n+1}$, we define
\begin{align*}
z(t)\coloneqq& 2p(t)p(1+n\varepsilon)-p(t)-p(1+n\varepsilon)+1\\
=&1+\big(p(t)-p(1+n\varepsilon)\big)\big(2p(1+n\varepsilon)-1\big).
\end{align*}
Since $\|p(t)-p(1+n\varepsilon)\|<1/2$ for all $t\in I_{n+1}$, we see that $z(t)$ is invertible. Set 
$$v(t)=z(t)\big(z^*(t) z(t)\big)^{-1/2}.$$ 
It follows that $v(t)$ is unitary and $p(t)=v(t)p(1+n\varepsilon)v(t)^*$ for any $t\in I_{n+1}$. Now that $u_n\colon I_n\to M_k(A)$ is already defined and 
$$p(1+n\varepsilon)=u_n(1+n\varepsilon)eu_n(1+n\varepsilon)$$
by the induction hypothesis, we define
$$u_{n+1}(t)=v(t)u_n(1+n\varepsilon).$$	
It is easy to verify that $u_n$ are equicontinuous for all $n$. This completes the construction of the family $\{u_n\}_{n\in \N}$.

Now we define a unitary $u\in M_k(C_{uc,0}([1,\infty),A))^+$ by 
\[ u(t) \coloneqq  u_n(t), \textup{ if } t\in I_n.\] 
Since  we have $p=ueu^*$ by construction, it follows that \[ [p]-[e] = 0 \in K_0(C_{uc,0}([1,\infty),A)). \]
This finishes the proof. 

\end{proof}
\begin{remark}
	For $K_1$-groups, the map $j_1\colon  K_1(A_\infty)\to K_1(A)$ is not an isomorphism in general. However, if $A$ is stable, i.e. $A\otimes \cK\cong A$, then $j_1$ is  an isomorphism. This for example can be proved by a standard Eilenberg swindle argument.
\end{remark}

Let $X$ be a complete spin Riemannian manifold and $D$ the Dirac operator on $X$. As the $K$-theory of $C^*_L(X)$ is naturally isomorphic to the $K$-homology of $X$ (cf. \cite{Yulocalization,QiaoRoe}), the local higher index of $D$ naturally pairs with the $K$-theory $K^\ast(X)$ of $X$. In the following, we recall a concrete pairing formula between  $\ind_L(D)$ and elements of $K^*(X)$ from \cite[\S 9.1]{willett2020higher}. We will only give the details for the even dimensional case. The odd dimensional case can for example be dealt with by a standard suspension argument.

Let $[p]-[q]$ be a class in $K_0(C_0(X))$, where $p,q$ are projections in $M_n(C_0(X)^+)$ such that $p-q$ lies in $M_n(C_0(X))$. Recall that the local higher index $\ind_L(D)$ is represented by the formal difference of two  idempotents 
\[ [P_{t,D}]-[e_{1,1}]\in M_2(C^*_L(X)^+), \] where $e_{1,1}=\begin{psmallmatrix}1&0\\0&0\end{psmallmatrix}$ and 
$P_{t,D}-e_{1,1} \in  M_2(C^*_L(X))$ (cf. Definition $\ref{def:lind}$).  
For simplicity, let us set $P_t = P_{t, D_n}$, where  $D_n=D\otimes I_n$. From part (4) of Lemma \ref{lemma:PtD}, we have
$$\|(P_{t}\cdot p)^2-P_{t}\cdot p\| \to 0 \textup{ and }  \|(P_{t}\cdot q)^2-P_{t}\cdot q\| \to 0,  \textup{ as }   t\to \infty. $$

Recall that for every $C^*$-algebra $A$, we define
$$A_\infty \coloneqq C_{uc}([1,+\infty),A)/C_{0}([1,+\infty),A)$$
as in line \eqref{eq:Ainf}. By the local compactness condition in Definition \ref{def:prop-locallycompact}, we see that $P_{t}\cdot  p$ and $P_{t}\cdot q$ are idempotents in $M_{2n}((\cK+C_0(X)+C^*(X))_\infty^+)$, where   $\cK+C_0(X)+C^*(X)$ is the $C^*$-algebra in $B(H)$ generated by $\cK$, $C_0(X)$ and $C^*(X)$.

Now we apply the difference construction in line \eqref{eq:diffe} to the ideal sequence
$$\cK_\infty \triangleleft (\cK+C_0(X))_\infty \triangleleft 	(\cK+C_0(X)+C^*(X))_\infty.$$
%$P_t$ is in of C^*(X)+\C, so $P_t(p-q)$ lies in (C^*(X)+\C)C_0(X)=\K+C_0(X)
Let $a_{t,p,q} = d(P_t\cdot p,P_t\cdot q)$ be the difference idempotent of  $P_t\cdot p$ and $P_t\cdot q$ obtained by the difference construction in \eqref{eq:diffe}. Similarly, we denote $b_{p,q} = d(e_{1,1}\otimes p,e_{1,1}\otimes q)$, where $e_{1,1}=\begin{psmallmatrix}1&0\\0&0\end{psmallmatrix}$.  By construction, the difference idempotent 
$d(a_{t,p,q}, b_{p,q})$ is an idempotent in $M_{32n}((\cK_\infty)^+)$ and 
$$d(a_{t,p,q}, b_{p,q})-e_{1,3}\otimes I_{2n}\otimes I_4\in M_{32n}(\cK_\infty),$$
where $I_{m}$ denotes the $(m\times m)$ identity matrix  and 
\[ e_{1,3} = \begin{psmallmatrix}
	1 & & & \\ 
	& 0 & & \\
	& & 0 & \\
	& & & 0
\end{psmallmatrix}. \] 
Therefore, the pairing of $[D]$ and $[p]-[q]$ is given by the image of
$$[d(a_{t,p,q}, b_{p,q})]-[e_{1,3}\otimes I_n\otimes I_4]\in
K_0(\cK_\infty)$$
under the isomorphism $j_0\colon K_0(\cK_\infty)\to K_0(\cK)\cong\Z$.

For notional simplicity, let us denote 
$$d_{t,p,q}\coloneqq d(a_{t,p,q}, b_{p,q})\text{  and }e\coloneqq e_{1,3}\otimes I_n\otimes I_4.$$ We have the following lemma. 
\begin{lemma}\label{lemma:key} 
	With the same notation as above, there exist positive numbers\footnote{The constants  $\{\lambda_i:i=1,2,3,4\}$ are independent of $t\in [1, \infty)$ and $n\in \mathbb N$, where $p$ and $q$ are matrices of size $(n\times n)$. } $\{\lambda_i:i=1,2,3,4\}$ such that the following are satisfied. 
	\begin{enumerate}[$(1)$]
		\item $\|d_{t,p,q}\|\leqslant \lambda_1$ for all $t\geq 1$. 
		\item We have 
		$$\lim_{t\to\infty} \|d_{t,p,q}^2-d_{t,p,q}\|=0.$$
		Furthermore, if $p,q \in M_n(C_0(X)^+)$ are Lipschitz functions with Lipschitz constant $L$, then
		$$\|d_{t,p,q}^2-d_{t,p,q}\|\leqslant \frac{\lambda_2L}{t}$$
		for all $t\geqslant 1$. 
		\item If $p-q$ is supported in $B(x_0,R)$ for some $x_0\in X$ and $R>0$, then
		$$\|d_{t,p,q}-e\|\leqslant \frac{\lambda_3t}{\sqrt{\max\{k_{x_0}(R+\frac{\lambda_4}{t}),0\}}}$$
		for all $t\geqslant 1$, 
		where 
		$$k_{x_0}(r)=\inf\{k(x): x\in B(x_0,r) \}$$
		with $k(x)$ the scalar curvature at $x\in X$.
	\end{enumerate}
\end{lemma}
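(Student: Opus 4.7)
The plan is to treat the three parts using the algebraic structure of the difference construction $d(\cdot,\cdot)$ together with the analytic estimates on $P_{t,D_n}$ from Lemma~\ref{lemma:PtD}. Part~(1) is almost immediate: formula~\eqref{eq:diffe} exhibits $d(\alpha,\beta)$ as a fixed polynomial in $\alpha,\beta$, so $\|d(\alpha,\beta)\|$ is controlled by a universal polynomial in $\|\alpha\|,\|\beta\|$. Starting from $\|P_{t,D_n}\|\leq 64$ and $\|p\|,\|q\|\leq 1$ one obtains uniform bounds for $\|a_{t,p,q}\|$ and $\|b_{p,q}\|$, and hence for $\|d_{t,p,q}\|$.

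For part~(2), I use the algebraic identity
\[
d(\alpha,\beta)^2 - d(\alpha,\beta) \;=\; Z(\beta)^{-1}\,\mathrm{diag}(\alpha^2-\alpha,\beta^2-\beta,0,0)\,Z(\beta) \;+\; E(\alpha,\beta),
\]
where the correction $E$, arising because $Z(\beta)Z(\beta)^{-1}\neq I$ when $\beta^2\neq\beta$, satisfies $\|E\|\leq C\|\beta^2-\beta\|$. I first apply this to $a_{t,p,q}=d(P_tp,P_tq)$: since $P_t^2=P_t$ and $p^2=p$, the simplification $(P_tp)^2-P_tp=P_t[p,P_t]p$ combined with Lemma~\ref{lemma:PtD}(4) gives $\|(P_tp)^2-P_tp\|=O(L/t)$ (and similarly for $q$), hence $\|a_{t,p,q}^2-a_{t,p,q}\|=O(L/t)$. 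I then apply the same identity to $d_{t,p,q}=d(a_{t,p,q},b_{p,q})$: since $b_{p,q}$ is an exact idempotent (both $e_{1,1}p$ and $e_{1,1}q$ are), the correction term vanishes and $\|d_{t,p,q}^2-d_{t,p,q}\|\leq C\|a_{t,p,q}^2-a_{t,p,q}\|=O(L/t)$.

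Part~(3) is the main content. Its analytic core is the following localization estimate: for any vector $u\in H\otimes\C^n$ supported in $B(x_0,R)$, if $k_0\coloneqq k_{x_0}(R+2/t)>0$ then
\[
\|(1-\chi(t^{-1}D_n)^2)\,u\|\;\leq\;\frac{16t}{\pi\sqrt{k_0}}\,\|u\|.
\]
To prove this, set $v=(1-\chi(t^{-1}D_n)^2)u$; the support of $\widehat\chi$ in $[-1,1]$ together with the finite propagation speed of the wave operator force $v$ to be supported in $B(x_0,R+2/t)$, so Lichnerowicz's formula applied to $v$ yields $\|Dv\|^2\geq(k_0/4)\|v\|^2$. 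On the other hand, by functional calculus $Dv=(1-\chi(t^{-1}D_n)^2)Du$, and the scalar function $y\mapsto y(1-\chi(y/t)^2)$ has sup-norm at most $8t/\pi$ (using $|1-\chi(y)^2|\leq 8/(\pi|y|)$), giving $\|Dv\|\leq(8t/\pi)\|u\|$; the two inequalities combine to give the estimate.

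To conclude~(3), I factor $P_t-e_{1,1}=A\cdot(1-\chi(t^{-1}D_n)^2)$, where $A$ is the bounded $2\times 2$ operator matrix read off from~\eqref{eq:PtD}; hence $\|(P_t-e_{1,1})f\|=O(t/\sqrt{k_0})$ for every matrix-valued function $f$ supported in $B(x_0,R)$. Formula~\eqref{eq:diffe} then bounds $\|d_{t,p,q}-e\|$ by $\|a_{t,p,q}-b_{p,q}\|$ times a polynomial in $\|a_{t,p,q}\|,\|b_{p,q}\|$, already controlled by~(1). The remaining, and main technical, obstacle is to reorganize each entry of $a_{t,p,q}-b_{p,q}=d(P_tp,P_tq)-d(e_{1,1}p,e_{1,1}q)$ as a sum of terms of the form $X\cdot(P_t-e_{1,1})\cdot Y$, where $X$ is bounded and $Y$ is a matrix-valued function supported in $B(x_0,R)$ (typical choices being $Y=p-q,\,q(p-q),\,(p-q)q,\,q(p-q)q,\,(1-q)(p-q)(1-q)$). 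This is achieved by substituting $P_t=e_{1,1}+(P_t-e_{1,1})$ into each polynomial entry and exploiting that $e_{1,1}$ commutes with $p$ and $q$, so that all terms with no factor of $P_t-e_{1,1}$ cancel precisely against the corresponding entry of $b_{p,q}$. Applying the localization estimate to each resulting term yields $\|a_{t,p,q}-b_{p,q}\|=O(t/\sqrt{k_0})$, and hence~(3) holds with $\lambda_4=2$.
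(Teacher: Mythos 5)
Your proposal is correct and follows the paper's strategy, but your treatment of parts (2) and (3) is routed through more explicit (and partly different) intermediate constructions, so the comparison is worth recording.

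For part (2), the paper's argument is purely structural: it observes that $d_{t,p,q}$ would be an exact idempotent if $P_t$ commuted with $p$ and $q$, so every entry of $d_{t,p,q}^2-d_{t,p,q}$ is a sum of terms containing a commutator $[P_t,p]$ or $[P_t,q]$, and Lemma~\ref{lemma:PtD}(4) finishes. You instead pass through the conjugation formula $Z(\beta)^{-1}\operatorname{diag}(\alpha,1-\beta,0,0)Z(\beta)$ and track a correction term $E$. Be a bit careful here: the paper's $d(\alpha,\beta)$ is defined by the \emph{polynomial} formula in line~\eqref{eq:diffe}, and that polynomial agrees with $Z(\beta)^{-1}\operatorname{diag}(\cdot)Z(\beta)$ only when both $\alpha$ and $\beta$ are idempotents (e.g.\ the $(3,1)$ entries differ by $(\alpha-\alpha^2)\beta$). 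So your correction term is sourced not only from $Z(\beta)Z(\beta)^{-1}\neq I$ but also from $d\neq Z^{-1}\operatorname{diag}Z$, and its bound should read $\|E\|\leq C(\|\alpha^2-\alpha\|+\|\beta^2-\beta\|)$. This does not change your conclusions (at the step where $\beta=b_{p,q}$ is an exact idempotent, you still get $\|d_{t,p,q}^2-d_{t,p,q}\|\leq C\|a_{t,p,q}^2-a_{t,p,q}\|$, and the bound $\|a_{t,p,q}^2-a_{t,p,q}\|=O(L/t)$ via $(P_tp)^2-P_tp=P_t[p,P_t]p$ is fine), but the bookkeeping as stated is slightly off. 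The paper's route is shorter precisely because it bypasses the matrix-product formula.

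For part (3), what you've done is to make explicit a localization estimate that the paper only invokes implicitly (``by finite propagation there exists some $\lambda_4$ such that it suffices to estimate the lower bound of $D^2$ on $B(x_0,R+\lambda_4/t)$''). Your displayed inequality $\|(1-\chi(t^{-1}D_n)^2)u\|\leq 16t/(\pi\sqrt{k_0})\|u\|$ for $u$ supported in $B(x_0,R)$, proved via finite propagation speed, Lichnerowicz, and the supremum bound on $y\mapsto y(1-\chi(y/t)^2)$, is exactly the missing technical core, and the factorization $P_t-e_{1,1}=A(1-\chi(t^{-1}D_n)^2)=(1-\chi(t^{-1}D_n)^2)A$ with $A$ uniformly bounded is a clean way to transfer it to $P_t-e_{1,1}$. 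Two small caveats: first, after expanding $P_t=e_{1,1}+(P_t-e_{1,1})$, the surviving monomials are not always literally of the form $X(P_t-e_{1,1})Y$ with $Y$ (as opposed to $X$, or a middle factor) being the function supported in $B(x_0,R)$; one must also use the symmetric factorization and absorb unlocalized factors of $P_t-e_{1,1}$ by their global norm bound. Second, the passage from the pointwise estimate $\|Dv\|^2\geq(k_0/4)\|v\|^2$ requires $v$ in the domain of $D$, which holds by density since $u$ can be taken smooth with compact support. Neither is a real gap, and your argument establishes the stated bound with $\lambda_4=2$, matching (and in fact sharpening the implicitness of) the paper's proof.
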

\begin{proof}
	By construction,   $d_{t,p,q}$ is a $(16\times 16)$ matrix with entries in $M_{2n}(\cK^+)$ and $d_{t,p,q}-e$ lies in $ M_{16}(M_{2n}(\cK))$ for all $t\geqslant 1$. Furthermore, each entry of $d_{t,p,q}-e$ can be expressed by some polynomial of $P_t$, $e_{1,1}\otimes I_n$, $p$ and $q$.

Note that $P_t = P_{t, D_n}$ is uniformly bounded for  $t\in [1, \infty)$, thus follows part (1). 

 Observe that  $d_{t,p,q}$ would actually be an idempotent if   $P_t$ were to commute with $p$ and $q$. It follows that every non-zero entry of $d_{t,p,q}^2-d_{t,p,q}$ is of the form
	$$\sum_{j=1}^m a_j(P_t\cdot p-p\cdot P_t)b_j+\sum_{j=1}^{m'} a_j'(P_t\cdot q-q\cdot P_t)b_j'.$$
	for some $a_j, b_j, a'_j$ and $b'_j$ that are expressed by polynomials of $P_t$, $e_{1,1}\otimes I_n$, $p$ and $q$. 
	Now part (2) follows from part (4) of Lemma \ref{lemma:PtD}. 
	
	It remains to show part (3). If we view 
	\[ d(P_t\cdot p,P_t\cdot q)-e_{1,3}\otimes I_{2n} \textup{ and }  d(e_{1,1}\otimes p,e_{1,1}\otimes q)-e_{1,3}\otimes I_{2n} \] as $(4\times 4)$ matrices on $M_{2n}(\cK)$, then it follows from the explicit formula in line \eqref{eq:diffe} that every nonzero entry of both matrices contains a factor $(p-q)$. Therefore, every non-zero entry of the matrix $d_{t,p,q}-e$ is of the  form
	$$\sum_{j=1}^m f_j(p-q)g_j(P_t-e_{1,1})h_j+\sum_{j=1}^{m'} f_j'(P_t-e_{1,1})g_j'(p-q)h'_j$$
	for some $f_j, g_j, h_j, f'_j, g'_j$ and $h'_j$ that are expressed by polynomials of $P_t$, $e_{1,1}\otimes I_n$, $p$ and $q$. 
	Recall that $P_t$ has finite propagation with $\prop(P_t) \leq 5/t$ (cf. Lemma $\ref{lemma:PtD}$). In particular, it follows that there exists a positive number $c$ such that the propagations of  $f_j, g_j, h_j, f'_j, g'_j$ and $h'_j$ are all bounded by $c/t$. Now we shall apply part (3) of Lemma $\ref{lemma:PtD}$ to estimate the norm of $d_{t,p,q}-e$, which in turn requires us to estimate the lower bound for $D^2$.  If  $p-q$ is supported in $B(x_0,R)$, then by finite propagation there exists some $\lambda_4 >0$ such that it suffices to estimate the lower bound of $D^2$ on $B(x_0,R+\frac{\lambda_4}{t})$. Now by the Lichnerowicz's formula
	\[ D^2 = \nabla^\ast\nabla  + \frac{k}{4} \geqslant \frac{k}{4}, \]
	we have 
	\[ D^2 \geqslant \frac{k_{x_0}(R + \frac{\lambda_4}{t})}{4} \textup{ on } { \textstyle B(x_0,R+\frac{\lambda_4}{t}) },  \]
	  where $k_{x_0}(r)=\inf\{k(x): x\in B(x_0,r) \}.$
	Now part (3) follows from part (3) of Lemma \ref{lemma:PtD}. 
\end{proof}

\subsection{Quadratic decay of scalar curvature on CAT(0)-like spaces}
In this subsection, we relate the rate of decay for scalar curvature to the index pairing between $K$-homology and $K$-theory. As an immediate application, we  prove the quadratic decay inequality for scalar curvature on CAT(0)-like spaces.

\begin{proposition}\label{prop:pairing-scalarcurvature}
	Let $X$ be an even-dimensional complete spin Riemannian manifold and $x_0\in X$. Let $[D]$ be the K-homology class of the Dirac operator and $[p]-[q]\in K^0(X)=K_0(C_0(X))$  a K-theory class of $X$, where $p,q$ are projections in $M_n(C_0(X)^+)$. Then there are universal positive constants $C_1,C_2$ such that if
	\begin{itemize}
		\item $p-q$ is supported in $B(x_0,R)$,
		\item $p,q$ are $L$-Lipschitz functions on $X$,
		\item the index pairing between $[D]$ and $[p]-[q]$ is non-zero,
	\end{itemize}
then $$\textstyle k_{x_0}(R+\frac{C_1}{L})\leqslant C_2\cdot L^2,$$
where $k_{x_0}(r)=\inf_{x\in B(x_0,r)}k(x)$.
\end{proposition}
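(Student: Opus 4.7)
The idea is to run the index pairing computation from Lemma~\ref{lemma:key} at the specific scale $t = cL$ (for a universal small constant $c$) and to show that whenever the scalar curvature stays large on a slightly enlarged ball, the approximate idempotent $d_{t,p,q}$ is close enough to an actual idempotent that is in turn close to $e$, forcing the pairing to vanish. The statement of the proposition is then just the contrapositive.

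\textbf{Step 1 (choice of scale).} I set $t = cL$ where $c > 0$ is a small universal constant to be fixed. By parts (1) and (2) of Lemma~\ref{lemma:key},
\[
\|d_{t,p,q}\| \leqslant \lambda_1, \qquad \|d_{t,p,q}^2 - d_{t,p,q}\| \leqslant \frac{\lambda_2 L}{t} = \frac{\lambda_2}{c},
\]
uniformly in $L$. Choosing $c$ small makes $d_{t,p,q}$ an arbitrarily good approximate idempotent of bounded norm.

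\textbf{Step 2 (holomorphic functional calculus).} Once $\|d^2 - d\|$ is below a universal threshold (determined by $\lambda_1$), the spectrum of $d := d_{t,p,q}$ is contained in two small disks around $0$ and $1$. I define the idempotent
\[
\tilde{d} := \frac{1}{2\pi i}\oint_{\gamma}(z - d)^{-1}\,dz
\]
with $\gamma$ a small circle around $1$ disjoint from $0$, which lies in $M_{32n}((\cK_\infty)^+)$ and differs from $d_{t,p,q}$ by only a perturbation (in $M_{32n}(\cK_\infty)$). A standard resolvent estimate gives $\|\tilde d - d_{t,p,q}\| \leqslant c_0\,\lambda_2/c$ for a universal $c_0$, so by picking $c$ small I can ensure $\|\tilde d - d_{t,p,q}\|$ is as small as needed.

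\textbf{Step 3 (contrapositive).} Suppose the conclusion fails, that is, suppose $k_{x_0}(R + \lambda_4/(cL)) > C_2 L^2$ for a large universal $C_2$ still to be fixed. Part (3) of Lemma~\ref{lemma:key} at $t = cL$ yields
\[
\|d_{t,p,q} - e\| \leqslant \frac{\lambda_3\,cL}{\sqrt{C_2 L^2}} = \frac{\lambda_3 c}{\sqrt{C_2}},
\]
which combined with Step 2 gives
\[
\|\tilde d - e\| \leqslant \frac{c_0\lambda_2}{c} + \frac{\lambda_3 c}{\sqrt{C_2}}.
\]
I first fix $c$ so the first summand is $< 1/(4\|2e-1\|)$, and then fix $C_2$ so the second summand is also $< 1/(4\|2e-1\|)$; both choices are universal because $\|2e - 1\| = 1$ for $e = e_{1,3}\otimes I_{2n}\otimes I_4$.

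\textbf{Step 4 (K-theory conclusion).} Two idempotents $u,v$ in a unital Banach algebra with $\|u - v\| < 1/\|2v - 1\|$ are similar via $u = z v z^{-1}$ with $z = (1-u)(1-v) + uv$ invertible, so $[u] = [v]$ in $K_0$. Applied to $\tilde d$ and $e$, this gives $[\tilde d] = [e]$ in $K_0((\cK_\infty)^+)$, hence $[d_{t,p,q}] - [e] = 0 \in K_0(\cK_\infty)$. By Proposition~\ref{lemma:Ainf}, its image under $j_0\colon K_0(\cK_\infty) \xrightarrow{\cong} K_0(\cK) \cong \Z$ is zero, i.e., the index pairing between $[D]$ and $[p]-[q]$ vanishes, contradicting the third hypothesis.

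\textbf{Step 5.} Therefore $k_{x_0}(R + \lambda_4/(cL)) \leqslant C_2 L^2$, proving the proposition with $C_1 := \lambda_4/c$ and the $C_2$ fixed above, both universal.

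\textbf{Main obstacle.} The technical heart is Step 2, namely turning the approximate idempotent $d_{t,p,q}$ into an actual idempotent in the same K-theory class with a norm perturbation controlled only by $\|d^2 - d\|$ and $\|d\|$, using universal constants. This is standard but requires being careful that the resolvent estimate and the choice of contour depend only on $\lambda_1$, so that $c$ and $C_2$ can be selected independently of $p$, $q$, $n$, $L$, and $D$.
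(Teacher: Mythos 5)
Your proof is correct and follows essentially the same strategy as the paper: fix $t$ of order $L$, form the genuine idempotent $\Theta(d_{t,p,q})$ by holomorphic functional calculus, and argue by contraposition that if the scalar curvature were too large on the relevant ball then $\Theta(d_{t,p,q})$ would be within distance $1$ of $e$, hence $K_0$-equivalent to $e$, contradicting the nonvanishing of the pairing. The key lemma, the index-pairing formula, and the idempotent comparison criterion are all the same.

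The one technical divergence is the norm estimate. You split $\|\Theta(d)-e\| \leqslant \|\Theta(d)-d\| + \|d-e\|$ and bound the first summand via part (2) of Lemma~\ref{lemma:key}; this requires the bound $\|\Theta(d)-d\| \leqslant c_0(\lambda_1)\,\|d^2-d\|$, which you correctly flag as the main obstacle. The bound is true, but calling it ``a standard resolvent estimate'' is a bit loose, since $\Theta - \mathrm{id}$ is not holomorphic across $\mathrm{Re}(z)=\tfrac12$; the cleanest justification is the algebraic identity $\Theta(d) = \tfrac{1}{2}\big(1+(2d-1)(1+4(d^2-d))^{-1/2}\big)$, a power series in $d^2-d$ whose tail is controlled by $\|d\|\leqslant\lambda_1$. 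The paper sidesteps this detour entirely: since $e$ is a projection with $\Theta(e)=e$, it writes
$$\Theta(d)-e = \frac{1}{2\pi i}\int_\Gamma (d-\xi)^{-1}(e-d)(e-\xi)^{-1}\,d\xi,$$
bounds $\|(d-\xi)^{-1}\|<4$ on $\Gamma=\{|z-1|=\tfrac12\}$ by a Neumann series driven solely by $\|d-e\|<\tfrac14$, and so uses only part (3) of Lemma~\ref{lemma:key} for the estimate (part (2) is needed just to ensure $\Theta(d_{t,p,q})$ is defined). Both routes give universal $C_1,C_2$; the paper's one-shot estimate is simply shorter.
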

\begin{proof}
		Recall that an explicit formula for the index pairing between $[D]$ and $[p]-[q]$ is given by (cf. Lemma \ref{lemma:key})
	$$[d_{t,p,q}]-[e]\in K_0(\cK_\infty)\cong \Z, $$
	which is non-zero by our assumption. We define a function on $\C$ by
	$$\Theta(z)=\begin{cases}
	0& \mathrm{Re}(z)<\frac 1 2;\\
	1& \mathrm{Re}(z)\geqslant \frac 1 2.\\
	\end{cases}$$
	Let $\{\lambda_i : i = 1, 2, 3, 4\}$ be the constants from Lemma $\ref{lemma:key}$. By part (2) of Lemma \ref{lemma:key}, if $t>4\lambda_2L$, then
	$$\| d_{t,p,q}^2-d_{t,p,q}\|< \frac{1}{4}.$$
	It follows that $\Theta$ is a holomorphic function on the spectrum of $d_{t,p,q}$ in this case. By the holomorphic functional calculus, for any $t>4\lambda_2L$,  we define
	$$\Theta(d_{t,p,q})=\frac{1}{2\pi i}\int_\Gamma (d_{t,p,q}-\xi)^{-1}d\xi,$$
	where $\Gamma=\{z:|z-1|=1/2 \}$.  Note that 
	$\Theta(d_{t,p,q})$ is an idempotent for any $t>4\lambda_2L$. The image of $[d_{t,p,q}]-[e]$ in $ K_0(\cK)$ under the isomorphism $K_0(\cK_\infty)\cong K_0(\cK)$ can be represented by 
	$$ [\Theta(d_{t,p,q})]-[e]\in K_0(\cK)$$
	with any $t>4\lambda_2L$.
	\begin{claim*} For any $t>4\lambda_2L$, we  have 
		$$\frac{\lambda_3t}{\sqrt{k_{x_0}(R+\frac{\lambda_4}{t})}}\geqslant\frac 1 4.$$
	\end{claim*}	
If we assume the claim for the moment, then we can conclude that 
$$
\textstyle k_{x_0}(R+\frac{\lambda_4}{4\lambda_2L})\leqslant 256 \lambda_2^2\lambda_3^2 \cdot L^2
$$ 
by letting $t$ go to $4\lambda_2L$. This would finish the proof by setting $C_1 = \frac{\lambda_4}{4\lambda_2}$ and $C_2 = 256 \lambda_2^2 \lambda_3^2$. Hence it remains to prove the claim.

	Assume to the contrary that  
	$$\frac{\lambda_3t_0}{\sqrt{k_{x_0}(R+\frac{\lambda_4}{t_0})}}<\frac 1 4,$$  
	for some $t_0>4\lambda_2L$. We have 
	\begin{align}
	\Theta(d_{t_0,p,q})-e=&
	\frac{1}{2\pi i}\int_\Gamma ((d_{t_0,p,q}-\xi)^{-1}-(e-\xi)^{-1})d\xi \notag \\
	=&\frac{1}{2\pi i}\int_\Gamma (d_{t_0,p,q}-\xi)^{-1}(e-d_{t_0,p,q})(e-\xi)^{-1}d\xi. \label{eq:holo}
	\end{align}
	Note that
	$$
	(d_{t_0,p,q}-\xi)^{-1}=(e-\xi)^{-1}(1+(d_{t_0,p,q}-e)(e-\xi)^{-1})^{-1}.
 	$$ 
		By part (3) of Lemma \ref{lemma:key}, we have 
	$$\|d_{t_0,p,q}-e\|\leqslant \frac{\lambda_3t_0}{\sqrt{k_{x_0}(R+\frac{\lambda_4}{t_0})}}<\frac 1 4.$$
Also, $\|(e-\xi)^{-1}\|\leqslant 2$ for all $\xi\in \Gamma=\{z:|z-1|=1/2 \}$, since $e$ is a projection.  It follows that 
	$$\|(d_{t_0,p,q}-\xi)^{-1}\|<4.$$
Applying the above estimates to the integral in line $\eqref{eq:holo}$, we conclude that 
\begin{equation}\label{eq:idemestimate}
	\|\Theta(d_{t_0,p,q})-e\| <1.
\end{equation}
%The length of the circle $\Gamma$ is $\pi$.
Recall that if two idempotents $f_1$ and $f_2$ in a $C^\ast$-algebra  satisfies the inequality 
\[ \|f_1 - f_2\| < \frac{1}{\|2f_1 - 1\|}, \]
then $f_1$ is equivalent to $f_2$ (cf. \cite[Proposition  4.3.2]{Blackadar}). In our case, since $e$ is a projection, we have 
\[ \|2e-1\| = 1. \]	In particular, the inequality in line $\eqref{eq:idemestimate}$ implies that 
\[ \|\Theta(d_{t_0,p,q})-e\|<\frac{1}{\|2e -1\|}. \]
It follows that $\Theta(d_{t_0,p,q})$ is equivalent to $e$. 
	Thus $[\Theta(d_{t_0,p,q})]-[e] = 0 \in K_0(\cK)$. This contradicts the assumption that  $[\Theta(d_{t_0,p,q})]-[e]$ is nonzero in $K_0(\cK)$. This proves the claim, hence the proposition. 
\end{proof}

Now we are ready to apply Proposition \ref{prop:pairing-scalarcurvature} to obtain the quadratic decay  of scalar curvature for manifolds that are bi-Lipschitz equivalent to CAT(0) spaces. Let us recall the notion of CAT(0) spaces.

Let $X$ be a geodesic metric space, i.e., for any two points there is a geodesic between them. For any geodesic triangle $\triangle xyz$ in $X$, a triangle $\triangle \bar{x}\bar{y}\bar{z}$ in the standard Euclidean space $\R^2$ is called a comparison triangle of $\triangle xyz$ if 
\[ d_X(x,y)=d_{\mathbb R^2}(\bar{x}, \bar{y}), d_X(y,z)=d_{\mathbb R^2}(\bar{y}, \bar{z}), \textup{ and }  d_X(z,x)=d_{\mathbb R^2}(\bar{z},\bar{x}).  \]	
A point $\bar{a}$ on the geodesic segment $[\bar x, \bar y]$ is called a comparison point of $a\in [x, y]$ if $d_X(x, a) = d_{\mathbb R^2}(\bar x, \bar a)$. Comparison points on $[\bar y, \bar z]$ and $[\bar z, \bar x]$ are defined in the same way. 
\begin{definition}
Let $X$ be a geodesic metric space. Given a geodesic triangle $\triangle$ in $X$, let $\overline \triangle$ be a comparison triangle of $\triangle$. The triangle $\triangle$ is said to satisfy  the  CAT(0) inequality if for all $a, b\in \triangle$ and all comparison points $\bar a, \bar b \in \overline \triangle$, we have 
\[ d_X(a, b) \leqslant d_{\mathbb R^2}(\bar a, \bar b).  \]
Now $X$ is said to be a CAT(0) space all of its geodesic triangles  satisfy the  CAT(0) inequality.
\end{definition}

Suppose $X$ is a CAT(0) space. Fix a point $x_0\in X$. For any $x\in X$, there is a unique geodesic $\gamma_{x}\colon [0,d(x,x_0)]\to X$ with $\gamma(0)=x_0$ and $\gamma(d(x,x_0))=x$. This allows us to define the following homotopy 
\begin{equation}\label{eq:homotopyCAT0}
H\colon X\times[0,+\infty)\to X,\ 
H(x,t)=\gamma_x\big(t^{-1}d(x,x_0)\big).
\end{equation}
Since $X$ is a CAT(0) space, we have
$$
d(H(x,t),H(y,t))\leqslant t^{-1}d(x,y), 
$$
which implies $X$ is uniformly contractible. In particular,  we have the following proposition (cf.  \cite[Corollary 9.6.12]{willett2020higher}).
\begin{proposition}\label{prop:nontrivial_cat0}
	Let $X$ be an $n$-dimensional complete Riemannian manifold. If $X$ is bi-Lipschitz equivalent to a \textup{CAT(0)} space, then
	$$K_n(C^*_L(X))\cong K_n(X)\cong Hom(K^n(X),\mathbb Z)\cong\mathbb Z,$$
	which is generated by the local higher index of the Dirac operator on $X$.
\end{proposition}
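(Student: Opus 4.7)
The first isomorphism $K_n(C^*_L(X)) \cong K_n(X)$ is the standard identification of the $K$-theory of the localization algebra with the analytic $K$-homology of $X$ established in \cite{Yulocalization,QiaoRoe}; under this identification, $\ind_L(D)$ corresponds to the fundamental class $[D]$. The remaining two isomorphisms require computing $K_n(X)$ and $K^n(X)$ and verifying that the natural index pairing $K_n(X) \otimes K^n(X) \to \Z$ is unimodular with $[D]$ as a generator.

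My plan is to mimic the dual-Dirac argument of \cite[Corollary 9.6.12]{willett2020higher}. The key geometric input is the CAT(0) geodesic contraction $H$ in \eqref{eq:homotopyCAT0}, which satisfies $d(H(x,t),H(y,t)) \leqslant t^{-1} d(x,y)$. Transferring $H$ to $X$ via the bi-Lipschitz equivalence yields a homotopy $\widetilde H$ whose time-$t$ map is $(Ct^{-1})$-Lipschitz for a fixed constant $C$. Using $\widetilde H$, I would construct a Bott/dual-Dirac element $\beta$ on the $K$-theory side by: (a) pushing the standard Bott generator on $\R^n$ through a Euclidean coordinate chart around a chosen base point $x_0$ to obtain a local generator supported in a small ball; (b) using $\widetilde H$ to propagate this local construction consistently over all of $X$, where the decaying Lipschitz constant ensures that the resulting $K$-theory class remains compactly-supported-by-homotopy with controlled propagation. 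A Kasparov-product (or equivalently, index-pairing) calculation, reduced via the homotopy to a small Euclidean neighborhood, should then give $\langle [D],\beta \rangle = 1 \in \Z$.

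To show that $K_n(X)$ and $K^n(X)$ are exactly $\Z$ (not larger), the same contraction $\widetilde H$ acts on both groups: it shows that any class is represented by one supported in an arbitrarily small neighborhood of $x_0$, which reduces the computation to $K_n(\R^n)\cong\Z$ and $K^n(\R^n)\cong\Z$ by Bott periodicity. Combined with the unimodular pairing above, this yields $K_n(X) \cong \mathrm{Hom}(K^n(X),\Z) \cong \Z$ with $[D]$ as generator of $K_n(X)$. The main technical obstacle is the index-pairing computation $\langle[D],\beta\rangle = 1$: it is classical for $\R^n$, but in the CAT(0) setting one must ensure that the local-to-global extension of $\beta$ via $\widetilde H$ does not destroy the localized index calculation. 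The Lipschitz decay $t^{-1}$ of the CAT(0) contraction is precisely the quantitative control that makes this reduction work, and is the reason why the hypothesis ``bi-Lipschitz equivalent to a CAT(0) space'' is stronger than mere contractibility.
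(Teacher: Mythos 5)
Your proposal follows the same route as the paper: the paper provides no proof of its own here, instead citing \cite[Corollary 9.6.12]{willett2020higher}, and your outline is precisely the dual-Dirac argument underlying that corollary, with the CAT(0) contraction of controlled Lipschitz constant supplying the Bott/dual-Dirac element and the localization-algebra isomorphism handling $K_n(C^*_L(X))\cong K_n(X)$.

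One phrase to tighten: the claim that the contraction shows \emph{both} $K_n(X)$ and $K^n(X)$ are represented by classes supported in a small ball around $x_0$ is literally correct only on the $K$-theory side $K^n(X)=K_n(C_0(X))$, where one may push a compactly supported projection inward. The Dirac class in locally finite $K$-homology $K_n(X)$ is not compactly supported, so the reduction there is not by shrinking supports; rather, the dual-Dirac machinery you invoke shows directly that the Kasparov products $[D]\otimes\beta$ and $\beta\otimes[D]$ are identities, which simultaneously computes both groups as $\Z$ and identifies $[D]$ as the generator dual to $\beta$. Since you do invoke that product calculation in the next step, the argument goes through once this is stated as the mechanism (rather than a support-shrinking argument) on the $K$-homology side.
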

\begin{theorem}\label{thm:cat0}
	Let $X$ be a complete Riemannian manifold. If $X$ is bi-Lipschitz equivalent to a \textup{CAT(0)} space, then the scalar curvature of $X$ has quadratic decay, i.e., for any $x_0\in X$ there exists $C>0$ such that 
	$$\inf_{x\in B(x_0,r)} k(x)\leqslant \frac{C}{r^2}.$$
\end{theorem}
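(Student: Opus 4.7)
The plan is to combine Proposition \ref{prop:pairing-scalarcurvature} with the nontriviality statement of Proposition \ref{prop:nontrivial_cat0}, using the CAT(0) contracting homotopy \eqref{eq:homotopyCAT0} to manufacture representatives of a fixed generator of $K^n(X)$ that are spread over arbitrarily large balls while being Lipschitz with constant of order $1/r$. I will handle the even-dimensional spin case directly; the remaining cases reduce to it by replacing $X$ by $X\times\mathbb{R}$ with the product metric, which is still bi-Lipschitz equivalent to the CAT(0) space $Y\times\mathbb{R}$ and whose scalar curvature on slices equals that of $X$ (with the non-spin case handled by passing to a spin cover, which also embeds into a CAT(0) target).

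Assume then that $X$ is even-dimensional and spin. By Proposition \ref{prop:nontrivial_cat0}, $K^n(X)\cong \mathbb{Z}$ and pairs isomorphically with the Dirac class $[D]$, so I fix a generator $[p_1]-[q_1]$ with $p_1,q_1\in M_N(C_0(X)^+)$ built from a smooth compactly supported vector bundle near a basepoint $x_0$: $p_1-q_1$ is supported in some ball $B(x_0,r_0)$, and $p_1,q_1$ are $L_0$-Lipschitz. Let $\phi\colon X\to Y$ be a bi-Lipschitz equivalence to a CAT(0) space $Y$ with constant $\kappa\geqslant 1$, and let $H$ be the geodesic contraction \eqref{eq:homotopyCAT0} on $Y$ based at $\phi(x_0)$. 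For $t\geqslant 1$, define the proper map $\tilde H_t\colon X\to X$ by $\tilde H_t(x)=\phi^{-1}(H(\phi(x),t))$. The bound $\mathrm{Lip}(H(\cdot,t))\leqslant 1/t$ on $Y$ combined with the bi-Lipschitz estimates on $\phi$ yields
\[
\mathrm{Lip}(\tilde H_t)\leqslant \kappa^2/t \quad\text{and}\quad \kappa^{-2}t^{-1}d_X(x,x_0)\leqslant d_X(\tilde H_t(x),x_0)\leqslant \kappa^2 t^{-1}d_X(x,x_0).
\]
Setting $p_t:=p_1\circ \tilde H_t$ and $q_t:=q_1\circ \tilde H_t$, I obtain idempotents in $M_N(C_0(X)^+)$ with (i) $p_t-q_t$ supported in $B(x_0,\kappa^2 r_0 t)$; (ii) $p_t,q_t$ that are $(\kappa^2 L_0/t)$-Lipschitz; and (iii) $[p_t]-[q_t]=[p_1]-[q_1]$ in $K^0(X)$, via the path $s\mapsto (p_1\circ\tilde H_s,\, q_1\circ \tilde H_s)$, $s\in[1,t]$, whose pairwise differences are all supported in the fixed ball $B(x_0,\kappa^2 r_0 t)$.

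Applying Proposition \ref{prop:pairing-scalarcurvature} with $R=\kappa^2 r_0 t$ and $L=\kappa^2 L_0/t$ gives
\[
k_{x_0}\!\bigl(\kappa^2 r_0\, t+\tfrac{C_1}{\kappa^2 L_0}\, t\bigr)\;\leqslant\;\tfrac{C_2\kappa^4 L_0^2}{t^2},
\]
and re-parameterizing by $r=(\kappa^2 r_0+C_1/(\kappa^2 L_0))\,t$ produces the desired bound $k_{x_0}(r)\leqslant C/r^2$ for all sufficiently large $r$, hence (after absorbing small $r$ into $C$) for all $r>0$. The main technical point will be the sup-norm continuity underlying step (iii): since $\tilde H_s(x)$ can move arbitrarily far as $x$ escapes to infinity, one might fear that the path $s\mapsto p_1\circ\tilde H_s$ fails to be continuous in $M_N(C_0(X)^+)$. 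The key observation is that $p_1$ is constant outside a compact set, so the difference $p_1\circ\tilde H_s-p_1\circ\tilde H_{s'}$ is automatically supported in the bounded region $\{x:\tilde H_s(x)\text{ or }\tilde H_{s'}(x)\in B(x_0,r_0)\}\subset B(x_0,\kappa^2 r_0 t)$, on which $\tilde H_s$ varies uniformly continuously in $s$. A secondary point to verify is the existence of a smooth compactly supported Lipschitz representative of the generator furnished by Proposition \ref{prop:nontrivial_cat0}, which follows from the fact that compactly supported $K$-theory of the smooth manifold $X$ is generated by smooth virtual bundle classes.
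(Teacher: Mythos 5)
Your proposal is correct and follows essentially the same route as the paper: reduce to the even-dimensional case via $X\times\mathbb{R}$, transport the CAT(0) geodesic contraction through the bi-Lipschitz map to get a family of $1/t$-Lipschitz self-maps of $X$, pull back a fixed compactly supported generator of $K^0(X)$ along these maps to spread it over larger balls with Lipschitz constant $O(1/t)$, invoke Proposition \ref{prop:nontrivial_cat0} to keep the index pairing nonzero, and feed the resulting $(R,L)$ into Proposition \ref{prop:pairing-scalarcurvature}. The paper phrases the bi-Lipschitz data as a second metric $d_0$ on the same underlying set $X$ rather than a map $\phi$ to an external $Y$, but this is a cosmetic difference. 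Two small remarks: your aside about handling the non-spin case by a spin cover is unnecessary and not quite the right reason — a manifold bi-Lipschitz to a CAT(0) space is homeomorphic to a contractible space, hence simply connected, hence automatically spin — but this does not affect the argument; and your added paragraph verifying the sup-norm continuity of $s\mapsto p_1\circ\tilde H_s$ is a legitimate detail that the paper leaves implicit, so it strengthens rather than departs from the paper's proof.
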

\begin{proof}
	If $X$ is an odd dimensional manifold, we can consider $X\times\R$ instead, which is still bi-Lipschitz equivalent to a CAT(0) space. Thus without loss of generality, we assume that $\dim X$ is even. We fix a point $x_0$ in $X$ and write
	$$k_{x_0}(r)=\inf_{x\in B(x_0,r)} k(x).$$
	
	Let $d_X$ be the Riemannian metric on $X$ and $d_0$ a CAT(0) metric on $X$. By assumption, there are positive constants $L_1$ and $L_2$ such that 
	$$L_1d_0(x,y)\leqslant d_X(x,y)\leqslant L_2d_0(x,y),~\forall x,y\in X.$$
	
		Fix $R>0$. Let $[p]-[q]$ be a representative of a  generator  (usually called a Bott element) of $K_0(C_0(X)) \cong K_0(C_0(B(x_0,R))) \cong  \mathbb Z$  such that both $p$ and $q$ are $L$-Lipschitz functions in $M_n(C_0(X)^+)$, and $p-q\in M_n(C_0(X))$ is supported in $B(x_0,R)$. 
		
	Let $H$ be the homotopy on $X$ defined as in line \eqref{eq:homotopyCAT0} by using the metric $d_0$. Define the map $H_s \colon X\to X$ by setting $H_s(x) = H(x, s)$.  Let us denote 
	$$
	p_s= H^\ast_s(p) = p\circ H_s\text{ and } q_s=H^\ast_s(q) = q\circ H_s.
	$$
	Then we see that
	\begin{enumerate}
		\item $\supp(p_s-q_s)$ is contained in $B(x_0,\frac{sRL_2}{L_1})$;
		\item $p_s$ and $q_s$ are Lipschitz functions on $X$ with Lipschitz constant $\frac{L_2}{sL_1} \cdot L$;
		\item $[p_s]-[q_s]=[p]-[q] \neq 0$ in  $K_0(C_0(X))$, which in particular implies that the index pairing of $[D]\in K_0(X)$ between $[p_s]-[q_s]$ is non-zero, because of Proposition \ref{prop:nontrivial_cat0}.
	\end{enumerate}
It follows from Proposition \ref{prop:pairing-scalarcurvature} that 
$$\textstyle k_{x_0}\big((\frac{RL_2}{L_1}+\frac{C_1 L_1}{LL_2}) \cdot s\big)\leqslant \frac{C_2L^2L_2^2}{s^2L_1^2}.$$
Now the theorem follows by setting
$$\textstyle C=\frac{C_2L^2L_2^2}{L_1^2}\big( \frac{RL_2}{L_1}+\frac{C_1 L_1}{LL_2}\big)^2.$$
\end{proof}

\section{Lipschitz control for topological $K$-theory of simplicial complexes}\label{sec:Lipschitzcontrol}
In this section, we prove Theorem \ref{thm:introLipschitzC(X)}, which  gives a Lipschitz controlled $K$-theory for locally compact finite dimensional simplicial complexes.

\subsection{Lipschitz controlled $K$-groups}\label{subsec:LipQKT}

In this subsection, as a preparation, we introduce a notion of $C^*$-algebras with Lipschitz filtration and fix some notation. For simplicity, we shall mainly focus our discussion on commutative $C^\ast$-algebras, that is, $C_0(X)$ of some locally compact Hausdorff space $X$. For general $C^\ast$-algebras, we refer the reader to Appendix \ref{app:QKT-lipschitzfiltration}.

Let $(X,d)$ be a locally compact metric space and $C_0(X)$ the algebra of continuous function on $X$ that vanish at infinity (equipped with the sup-norm). An element in $M_n(C_0(X)) = M_n(\mathbb C)\otimes C_0(X)$ is said to be $L$-Lipschitz if 
\begin{equation}\label{eq:lipschitz}
\|f(x)-f(y)\|\leqslant L\cdot d(x,y),\ \forall x,y\in X,
\end{equation}
where the norm on the left hand side is the operator norm of matrices. We denote by $C_0(X)_L$ (resp. $M_n(C_0(X))_L$) the collection of $L$-Lipschitz functions in $C_0(X)$ (resp. $M_n(C_0(X))$). The collection $\{ C_0(X)_L\}_{L\geqslant 0}$ gives a Lipschitz filtration of $C_0(X)$ (cf. Definition \ref{def:Lipfiltration}). 

Denote the unitization of $C_0(X)$ by $C_0(X)^+$, which is precisely the algebra of continuous functions on the one point compactification of $X$. If $X$ is non-compact, we define $\pi\colon C_0(X)^+\to \C$ to be the homomorphism that maps the extra unit to $1$ and kills $C_0(X)$. Equivalently, $\pi$ is the evaluation map at the point of infinity.
\begin{itemize}
	\item For any $L\geqslant 0$, let $P^L_n(C_0(X)^+)$ be the set of projections in $M_n(C_0(X)^+)_L$, with the following natural inclusion given by
	$$P_n^L(C_0(X)^+)\to P_{n+1}^L(C_0(X)^+),\ p\mapsto \begin{pmatrix}
	p &0\\0&0
	\end{pmatrix}.$$
	\item For any $L\geqslant 0$, let $U_n^L(C_0(X)^+)$ be the set of unitaries in $M_n(C_0(X)^+)_L$, with the following natural inclusion given by
	$$U_n^L(C_0(X)^+)\to U_{n+1}^L(C_0(X)^+),\ u\mapsto \begin{pmatrix}
	u &0\\0&1
	\end{pmatrix}.$$
\end{itemize}
We define 
\[  P^L(C_0(X)^+) = \varinjlim P^L_n(C_0(X)^+) \textup{ and } U^L(C_0(X)^+) = \varinjlim U^L_n(C_0(X)^+)\]
under the natural inclusions from above. 
We define the following equivalence relations. 
\begin{itemize}
	\item For $(p,k)$ and $(q,k')$  in $P^L(C_0(X)^+)\times\N$, we say  $(p,k)\sim (q,k')$ if $p\oplus I_{j+k'}$ and $q\oplus I_{j+k}$ are homotopic in $P^{2L}(C_0(X)^+)$ for some $j\in\N$, where $\oplus$ means direct sum.
	\item For $u$ and $v$ in $U^L(C_0(X)^+)$, we say $u\sim v$ if $u$ and $v$ are homotopic in $U^{2L}(C_0(X)^+)$.
\end{itemize}
\begin{definition}
	We define the Lipschitz controlled $K$-groups of $X$ by 
	\begin{align*}
	    K_0^L(C_0(X))&\coloneqq \{(p,k)\in P^L(C_0(X)^+):\mathrm{rank}(\pi(p))=k \}/\sim,\\
	    K_1^L(C_0(X))&\coloneqq U^L(C_0(X)^+)/\sim.
	\end{align*}

\end{definition}
A prior, $K_0^L(C_0(X))$ and $K_1^L(C_0(X))$ are only semigroups under direct sum. 
The fact that $K_*^L(C_0(X))$ are actually abelian groups for any $L\geqslant 0$ will be verified in Lemma \ref{lemma:K_*^Lisabelian}. 

Let us now recall the definition of inductive systems. 
	\begin{definition}\label{def:inducsys1}
Suppose  $\{M_L\}_{L\geqslant 0}$ is a collection of abelian groups  equipped with a collection of group homomorphisms $\{i_{L,L'}\}$, where 
	$$i_{L,L'}\colon M_L\to M_{L'}$$
is defined whenever   $L<L'$.  We say $\{M_L\}_{L\geqslant 0}$ is 
	an \emph{inductive system}	 if 
	\[ i_{L,L''}=i_{L',L''}\circ i_{L,L'},\] whenever $L<L'<L''$. 
\end{definition}

We have the following notion of controlled homomorphisms between inductive systems. 
\begin{definition}\label{def:controlledhom1}
	A function $F\colon\R_{\geqslant 0}\to \R_{\geqslant 0}$ is called a \emph{control function} if $F$ is non-decreasing and  $F(x) \to \infty$, as $x\to \infty$. Let $\{M_L\}_{L\geqslant 0}$ and $\{M_L'\}_{L\geqslant 0}$ be two inductive systems. We say a collection of group homomorphisms $\{\xi_L\colon M_L\to M_{F(L)}'\}$ is a \emph{controlled homomorphism} from $\{M_L\}_{L\geqslant 0}$ to $\{M_L'\}_{L\geqslant 0}$   with control function $F$ if the following diagram commutes:
	$$\xymatrix{M_L\ar[rr]^{\xi_L}\ar[d]_{i_{L,L'}}&&M_{F(L)}'\ar[d]^{i'_{F(L),F(L')}}\\
		M_{L'}\ar[rr]^{\xi_{L'}}&&M_{F(L')}'
	}
	$$
	for all $L< L'$.  From now on, we shall denote  such a controlled homomorphism by  $\xi\colon \{M_L\}_{L \geqslant 0} \xlongrightarrow[F]{} \{M_L'\}_{L \geqslant 0}$ or
	${\{M_L\}_{L \geqslant 0}\xlongrightarrow[F]{\xi}\{M_L'\}_{L \geqslant 0} }$. If the control function $F$ is clear from the context, we will simply write  $\xi\colon \{M_L\}_{L \geqslant 0}\to \{M_L'\}_{L \geqslant 0}$ or
	${\{M_L\}_{L \geqslant 0}\xlongrightarrow{\xi}\{M_L'\}_{L \geqslant 0} }$ instead. 
\end{definition}
We shall also need the following notion of controlled equivalences between two controlled homomorphisms.  
	\begin{definition}\label{def:similar1}
	Let $F$, $G$ and $H$ be control functions such that $H(x)\geqslant G(x)$ and $H(x)\geqslant F(x)$ for all $x\in\mathbb R_{\geqslant 0}$. Given two controlled homomorphisms		$\xi\colon \{M_L\}_{L \geqslant 0}\xlongrightarrow[F]{}\{M_L'\}_{L \geqslant 0}$ and $\eta\colon \{M_L\}_{L \geqslant 0} \xlongrightarrow[G]{}\{M_L'\}_{L \geqslant 0} $, we say $\xi$ is controlled equivalent to $\eta$ with control function $H$ if the following diagram commutes:
	$$\xymatrix{M_L\ar[rr]^{\xi_L}\ar[d]_{\eta_L}&&M_{F(L)}'\ar[d]^{i'_{F(L),H(L)}}\\
		M_{G(L)}'\ar[rr]^{i'_{G(L),H(L)}}&&M_{H(L)}'
	}
	$$
	In this case, we write $\xi\sim_H\eta$ or simply $\xi\sim\eta$.
\end{definition}	
Now let us introduce the following notion of asymptotically exact sequences. 
\begin{definition}\label{def:asymexact1}
Let $F$, $G$, $F_1$ and $F_2$ be control functions.  Given two controlled homomorphisms		$\xi\colon \{M_L\}_{L \geqslant 0}\xlongrightarrow[F]{}\{M_L'\}_{L \geqslant 0}$ and $\eta\colon \{M'_L\}_{L \geqslant 0}\xlongrightarrow[G]{}\{M_L''\}_{L \geqslant 0} $, we say  the sequence 
$$\{M_L\}_{L \geqslant 0}\xlongrightarrow[F]{\xi}\{M_L'\}_{L \geqslant 0}\xlongrightarrow[G]{\eta}\{M_L''\}_{L \geqslant 0} $$
is \emph{asymptotically exact} at $\{M_L'\}_{L \geqslant 0}$ with control functions $F_1$ and $F_2$ if
\begin{itemize}
	\item $\eta\circ\xi\sim_{F_1}0$;
	\item for any $m'\in M_L'$ with $\eta_L(m')=0$, there exists $m\in M_{F_2(L)}$ such that 
	\[   \xi_{F_2(L)} (m) = i'_{L, F(F_2(L))}(m')\] 
	in $M_{F(F_2(L))}'$.
\end{itemize} 
\end{definition}

In the case of a locally compact metric space $X$, we have the natural inclusion $C_0(X)_L\subset C_0(X)_{L'}$ for $L<L'$, which induces a homomorphism 
$$i_{L,L'}\colon K_*^L(C_0(X))\longrightarrow K_*^{L'}(C_0(X)).$$
In particular, $\{K_*^L(C_0(X))\}_{L\geqslant 0}$ together with the homomorphisms $\{i_{L, L'}\}_{0\leqslant L<L'}$ is an inductive system, whose inductive limit is 
\[
\varinjlim K_*^L(C_0(X)) = K_*(C_0(X)), \]
since the union $\bigcup_{L \geqslant 0} C_0(X)_L$ is dense in $C_0(X)$,

Let $X$ and $Y$ be two locally compact metric spaces. Suppose $\xi\colon Y\to X$ is a $L_0$-Lipschitz proper map, then $\xi^*\colon C_0(X)\to C_0(Y)$ induces a controlled homomorphism $\xi^*\colon\{K_*^L(C_0(X))\}_{L \geqslant 0} \xlongrightarrow[F]{} \{K_*^L(C_0(Y))\}_{L \geqslant 0}$ with control function $F$ given by $F(L) = L_0\cdot L$. Furthermore, if $\xi_t \colon Y \to X$, $t\in [0, 1]$,  is a continuous family of $L_0$-Lipschitz proper maps,  then the induced controlled homomorphisms  
\[ \xi^*_t \colon\{K_*^L(C_0(X))\}_{L\geqslant 0}\to \{K_*^L(C_0(Y))\}_{L\geqslant 0}\] 
is independent of $t\in [0, 1]$. 

\begin{definition}\label{def:controlled surj}
	A homomorphism $\varphi \colon C_0(X)\to C_0(Y)$ is called a \emph{controlled surjection} with control function $F$ if for any $f\in M_n(C_0(Y))_L$ there exists a lift $g\in M_n(C_0(X))_{F(L)\cdot \|f\|}$ such that $\varphi(g)=f$ and $\|g\|\leqslant 2\|f\|$.	
\end{definition}

Let us now introduce a notion of uniform control for inductive systems, which will be useful for our construction of the Lipschitz controlled $K$-theory for locally compact finite dimensional simplicial complexes.
	\begin{definition}\label{def:fullyfaithful1} 
	Given an inductive system  $\{M_L\}_{L \geqslant 0}$, let us denote by $i_L$ the natural map $M_L\to \varinjlim M_L$. We say $\{M_L\}_{L\geqslant 0}$ is \emph{uniformly controlled} if there exist $L_0\geqslant 0$ and a control function $F\colon \mathbb R_{\geqslant 0}\to \mathbb R_{\geqslant 0} $ such that 
	\begin{itemize}
		\item for any $L\geqslant L_0$, the map $i_L\colon M_L\to \varinjlim M_L$ is surjective;
		\item if $i_L(x)=0$ in $\ilim M_L$ for some $x\in M_L$, then $i_{L,F(L)}(x)=0$ in $M_{F(L)}$.
	\end{itemize}
	We call $(L_0,F)$ a \emph{uniform control pair} of $\{M_L\}_{L \geqslant 0}$. And we shall say $\{M_L\}_{L \geqslant 0}$ is $(L_0,F)$-uniformly controlled if we want to specify the uniform control pair $(L_0,F)$.
\end{definition}

The following lemmas show that the above notion of uniform control for inductive systems is preserved by controlled isomorphisms and satisfies a five-lemma-type property.  We refer the reader to Appendix \ref{app:QKT-lipschitzfiltration} for the detailed proofs. 
\begin{lemma}\label{lemma:fullyfaithful-iso1}
	 Suppose two inductive systems $\{M_L\}_{L \geqslant 0}$ and $\{M_L'\}_{L \geqslant 0}$ are controlled isomorphic, i.e., there are controlled homomorphisms 
	\[ \xi\colon \{M_L\}_{L \geqslant 0}\xrightarrow{}\{M_L'\}_{L \geqslant 0} \textup{ and } \eta\colon \{M_L'\}_{L \geqslant 0}\xrightarrow{}\{M_L\}_{L \geqslant 0} \] such that $\xi\circ\eta$ and $\eta\circ\xi$ are controlled equivalent to the identity homomorphism respectively. If $\{M_L\}_{L \geqslant 0}$ is uniformly controlled, then $\{M_L'\}_{L \geqslant 0}$ is also uniformly controlled. Moreover,  the uniform control pair of $\{M_L'\}_{L \geqslant 0}$ only depends on the uniform control pair of $\{M_L\}_{L \geqslant 0}$, the control functions of $\xi,\eta$, and the control functions of $\xi\circ\eta\sim\id$ and $\eta\circ\xi\sim\id$.
\end{lemma}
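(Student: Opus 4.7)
The plan is to transport the uniform control pair $(L_0, F)$ of $\{M_L\}_{L\geqslant 0}$ through the given controlled isomorphism in order to produce one for $\{M_L'\}_{L\geqslant 0}$. Let $F_\xi$ and $F_\eta$ denote the control functions of $\xi$ and $\eta$, and let $H_1$ implement the controlled equivalence $\xi\circ\eta \sim \id$. The key observation is that both controlled equivalences descend to the inductive limits, so the induced maps $\xi_\infty \colon \varinjlim M_L \to \varinjlim M_L'$ and $\eta_\infty \colon \varinjlim M_L' \to \varinjlim M_L$ are mutually inverse isomorphisms of abelian groups; write $i_L$ and $i_L'$ for the canonical maps into the respective limits.

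First I will verify the surjectivity condition. Given $y \in \varinjlim M_L'$, set $x = \eta_\infty(y) \in \varinjlim M_L$ and, using the hypothesis that $i_L$ is surjective for $L \geqslant L_0$, choose $x_0 \in M_{L_0}$ with $i_{L_0}(x_0) = x$. Then
\[ i_{F_\xi(L_0)}'\bigl(\xi_{L_0}(x_0)\bigr) \;=\; \xi_\infty(x) \;=\; \xi_\infty\bigl(\eta_\infty(y)\bigr) \;=\; y, \]
so $i_L'$ is surjective for every $L \geqslant F_\xi(L_0)$. This identifies the candidate threshold $L_0' \coloneqq F_\xi(L_0)$.

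Next I will construct the control function $F'$. Fix $L$ large enough that $F_\eta(L) \geqslant L_0$, and let $y \in M_L'$ satisfy $i_L'(y) = 0$. Then $\eta_L(y) \in M_{F_\eta(L)}$ has $i_{F_\eta(L)}(\eta_L(y)) = \eta_\infty(i_L'(y)) = 0$, so the uniform control of $\{M_L\}_{L\geqslant 0}$ gives $i_{F_\eta(L),\, F(F_\eta(L))}(\eta_L(y)) = 0$. Applying $\xi_{F(F_\eta(L))}$ and using the naturality square of the controlled homomorphism $\xi$ produces
\[ i_{F_\xi(F_\eta(L)),\, F_\xi(F(F_\eta(L)))}'\bigl(\xi_{F_\eta(L)}(\eta_L(y))\bigr) \;=\; 0. \]
Setting $F'(L) \coloneqq \max\bigl(H_1(L),\, F_\xi(F(F_\eta(L)))\bigr)$ and propagating further through the inductive system, the element $\xi_{F_\eta(L)}(\eta_L(y))$ already vanishes in $M_{F'(L)}'$. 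The controlled equivalence $\xi\circ\eta \sim_{H_1} \id$ then yields
\[ i_{L,\, F'(L)}'(y) \;=\; i_{F_\xi(F_\eta(L)),\, F'(L)}'\bigl(\xi_{F_\eta(L)}(\eta_L(y))\bigr) \;=\; 0, \]
as required.

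Altogether, the pair $(L_0',\, F')$, with $L_0'$ chosen large enough that $L_0' \geqslant F_\xi(L_0)$ and $F_\eta(L) \geqslant L_0$ for every $L \geqslant L_0'$, and with $F'$ as above, is a uniform control pair for $\{M_L'\}_{L\geqslant 0}$ depending only on the data listed in the statement. The argument is essentially bookkeeping; the only thing to be careful about is arranging that $L$ is large enough at every stage to invoke both the uniform control of $\{M_L\}_{L\geqslant 0}$ and the controlled equivalence with function $H_1$. (A symmetric argument using $H_2$ would work with the roles of $\xi$ and $\eta$ exchanged, but is not needed here.)
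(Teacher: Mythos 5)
Your proof is correct. You verify the two conditions of Definition~\ref{def:fullyfaithful1} directly by transporting elements through $\xi$ and $\eta$ and tracking the control functions, which is sound; the only unnecessary caution is the requirement $F_\eta(L)\geqslant L_0$ in the second part, since the vanishing control in the definition of a uniform control pair is imposed for all $L$ and not only for $L\geqslant L_0$.

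The paper takes a more condensed, formal route: it observes (just before the statement) that $\{M_L\}$ is uniformly controlled if and only if the natural controlled homomorphism from $\{M_L\}$ to the constant inductive system $\{\ilim M_L\}$ is a controlled isomorphism, and then obtains the lemma simply by composing controlled isomorphisms. Your argument instead unfolds the definitions and chases elements explicitly. The paper's route is slicker, but the ``only if'' direction of its stated equivalence implicitly requires producing a controlled homomorphism $\{\ilim M_L\}\to\{M_L\}$, i.e., a compatible family of \emph{group} homomorphisms splitting $i_L$, which the surjectivity hypothesis alone does not obviously provide; your direct verification sidesteps that point. Either approach yields the quantitative claim that the resulting uniform control pair depends only on the input data, and your bookkeeping makes that dependence explicit.
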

\begin{proof}
	See Lemma \ref{lemma:fullyfaithful-iso}. 
\end{proof}
	\begin{lemma}\label{lemma:fivelemma1}
	Suppose we have an asymptotically exact sequence 
	$$\{M_L^1\}_{L \geqslant 0} \xrightarrow{\xi^1}\{M_L^2\}_{L \geqslant 0} \xrightarrow{\xi^2}\{M_L^3\}_{L \geqslant 0}\xrightarrow{\xi^3}\{M_L^4\}_{L \geqslant 0}\xrightarrow{\xi^4}\{M_L^5\}_{L \geqslant 0} $$
	of inductive systems. If $\{M_L^i\}_{L \geqslant 0}$ is uniformly controlled for $i=1,2,4,5$, then so is $\{M_L^3\}_{L \geqslant 0}$. Moreover,  the uniform control pair of $\{M_L^3\}_{L \geqslant 0}$ only depends on   the uniform control pairs of $\{M_L^i\}_{L \geqslant 0}$, $i=1,2,4,5$ and the control functions of the asymptotic homomorphisms $\{\xi_i\}_{i=1,2,3,4}$ and the exactness at $\{M_L^i\}_{L \geqslant 0}$, $i=2,3,4$. 
\end{lemma}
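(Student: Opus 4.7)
The plan is to mimic the classical five lemma by diagram chasing, but with all arrows being only controlled homomorphisms and with careful bookkeeping of how the various control functions compose. First, I would observe a preliminary fact: any asymptotically exact sequence of inductive systems induces a classically exact sequence of the inductive limits $\varinjlim M_L^i$. Indeed, the condition $\xi^{k+1}\circ\xi^k\sim_{F_1}0$ shows $\xi^{k+1}_\infty\circ\xi^k_\infty=0$, while the second clause of Definition \ref{def:asymexact1} combined with uniform control of $\{M_L^{k+1}\}$ shows that any $\alpha\in\ker(\xi_\infty^{k+1})$ lifts to a representative in some $M_{L}^{k+1}$ whose $\xi^{k+1}$-image vanishes at a controlled level, hence pulls back to $M^k$. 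This ordinary exactness of $\varinjlim M_L^\bullet$ is what lets the usual five-lemma argument start.

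Next I would verify the surjectivity condition of Definition \ref{def:fullyfaithful1} for $\{M_L^3\}$. Given $\alpha\in\varinjlim M_L^3$, push forward via $\xi^3$ to obtain $\xi^3_\infty(\alpha)\in\varinjlim M_L^4$. Using uniform control of $\{M_L^4\}$, lift this to $y\in M_{L_1}^4$. Now $\iota_\infty(\xi^4(y))=\xi^4_\infty(\xi^3_\infty(\alpha))=0$ in $\varinjlim M_L^5$, so by uniform control of $\{M_L^5\}$, the element $\xi^4(y)$ becomes zero at some controlled level $L_2$. Asymptotic exactness at $\{M_L^4\}$ then produces $z\in M_{L_3}^3$ whose image under $\xi^3$ equals $y$ at a further controlled level. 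At this stage $\iota^3_\infty(z)-\alpha$ lies in $\ker\xi^3_\infty=\operatorname{im}\xi^2_\infty$ by the preliminary step, so write $\iota^3_\infty(z)-\alpha=\xi^2_\infty(w)$, and use uniform control of $\{M_L^2\}$ to lift $w$ to $w'\in M^2_{L_4}$. Then $z-\xi^2(w')$, pushed to a sufficiently high level, represents $\alpha$. The resulting level is a finite composition of the given control functions, depending only on the listed data.

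The kernel condition is handled by a symmetric chase. Starting with $x\in M_L^3$ with $\iota^3_L(x)=0$, uniform control of $\{M_L^4\}$ yields that $\xi^3(x)=0$ at a controlled level, so asymptotic exactness at $\{M_L^3\}$ produces $y\in M_{L'}^2$ with $\xi^2(y)$ equal to (the appropriate transition of) $x$. Since $\xi^2_\infty(\iota^2_\infty(y))=\iota^3_\infty(x)=0$, ordinary exactness of the inductive limit sequence together with uniform control of $\{M_L^1\}$ gives $v'\in M_{L''}^1$ with $\iota^2_\infty(y)-\xi^1_\infty(\iota_\infty(v'))=0$. Uniform control of $\{M_L^2\}$ now makes $y-\xi^1(v')$ vanish at a controlled level, and the controlled vanishing of $\xi^2\circ\xi^1$ (from $\xi^2\circ\xi^1\sim 0$) then kills $x=\xi^2(y)=\xi^2(\xi^1(v'))$ at a final controlled level. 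Tracking composition throughout yields a control function for $\{M_L^3\}$ that depends only on the data enumerated in the statement.

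The substantive difficulty is purely bookkeeping: every step of the chase increases the Lipschitz parameter by some composition of the given control functions, and one must be careful that when aligning indices (so that compatibility squares $\xi_{L'}\circ\iota_{L,L'}=\iota_{F(L),F(L')}\circ\xi_L$ apply) one always moves further out in the inductive system rather than back. Aside from this, no new idea beyond the preliminary observation and the classical five-lemma pattern is needed; the conclusion then follows from Lemma \ref{lemma:fullyfaithful-iso1} combined with an explicit accounting of the composed control functions.
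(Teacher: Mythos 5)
Your proposal is correct and follows essentially the same route as the paper's proof: a diagram chase mimicking the classical five lemma, organized into a surjectivity chase and a kernel chase, with each step increasing the level by a composition of the given control functions. The paper's proof in Lemma \ref{lemma:fivelemma} carries out exactly this bookkeeping, using the same sequence of moves (lift along $\xi^3$, kill in $M^5$, pull back via exactness at $M^4$, correct by an image of $\xi^2$; and dually on the kernel side), so aside from a minor index slip in your preliminary observation and a superfluous citation of Lemma \ref{lemma:fullyfaithful-iso1} at the end, the argument matches.
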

\begin{proof}
	See Lemma \ref{lemma:fivelemma}. 
\end{proof}
\subsection{Lipschitz control for $K$-theory: compact case}
In this section, we will prove Theorem \ref{thm:introLipschitzC(X)} for compact simplicial complexes. 

Let us assume $X$ is a compact simplicial complex endowed with the standard simplicial metric. This means if $x$ and $y$ are in the same simplex expressed by convex combinations of its vertices $x=\sum_{j}t_jv_j$ and $y=\sum_j t_j'v_j$, then the distance from $x$ to $y$ is defined to be $\sum_j|t_j-t_j'|$;  if two points are in different simplices, their distance is defined to be the length of the shortest path between them. If there does not exist any path connecting two points, that is, the two points are in two different connected components, then we define their distance to be infinity. 

To prove Theorem \ref{thm:introLipschitzC(X)}   for compact simplicial complexes, we need the following six-term asymptotically exact sequence  of the Lipschitz controlled $K$-theory, which a special case of Theorem \ref{thm:six-term-for-pullback}. We refer the reader to the appendix for a detailed proof.  

\begin{proposition}\label{prop:pullbacksixterm1}
	Let $X$ be a compact metric space. Let $X_1,X_2$ be two compact  subspaces of $X$ such that  $X_1\cup X_2=X$. If either of the restriction maps 
	\[ \pi_i: C(X_i)\to C(X_1\cap X_2),  \quad  i = 1, 2, \]
	is a controlled surjection with control function $F$ in the sense of Definition \ref{def:controlled surj}, then we have the following six-term asymptotically exact sequence
	$$\scalebox{0.93}{\xymatrixcolsep{1pc}\xymatrix{
			\{K_0^L(C(X))\}_{L \geqslant 0}\ar[r]&\{K_0^L(C(X_1))\oplus K_0^L(C(X_2))\}_{L \geqslant 0} \ar[r]&\{K_0^L(C(X_1\cap X_2))\}_{L \geqslant 0} \ar[d]\\
			\{K_1^L(C(X_1\cap X_2))\}_{L \geqslant 0} \ar[u]&\{K_1^L(C(X_1))\oplus K_1^L(C(X_2))\}_{L \geqslant 0} \ar[l]&\{K_1^L(C(X))\}_{L \geqslant 0} \ar[l]
	}	}
	$$
	that is, the above sequence is asymptotically exact at each term with respect to certain control functions. Moreover, these control functions are solely determined by $F$.
\end{proposition}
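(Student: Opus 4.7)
The plan is to realize the desired hexagon as the controlled six-term sequence of a single short exact sequence. Consider the $*$-homomorphism
\[ \phi \colon C(X_1) \oplus C(X_2) \to C(X_1 \cap X_2), \qquad \phi(f_1, f_2) = \pi_1(f_1) - \pi_2(f_2), \]
whose kernel is precisely the pullback $C(X)$, yielding
\[ 0 \to C(X) \to C(X_1) \oplus C(X_2) \xrightarrow{\phi} C(X_1 \cap X_2) \to 0. \]
Given the controlled surjectivity of $\pi_1$ (or $\pi_2$), the map $\phi$ is itself a controlled surjection with essentially the same control function $F$: a Lipschitz lift $g$ of $h \in M_n(C(X_1 \cap X_2))$ along $\pi_1$ yields the lift $(g, 0)$ of $h$ along $\phi$, since the Lipschitz constant on a direct sum is the maximum of the coordinate Lipschitz constants, and the norm bound $\|(g,0)\| = \|g\| \leqslant 2\|h\|$ is automatic.

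The core task is therefore to establish a Lipschitz-controlled six-term asymptotically exact sequence for any short exact sequence $0 \to I \to A \to A/I \to 0$ whose quotient map is a controlled surjection in the sense of Definition~\ref{def:controlled surj}. The index and exponential maps follow the classical recipe: an $L$-Lipschitz projection or unitary representative in $M_n((A/I)^+)$ is first lifted into $M_n(A^+)$ using the controlled surjectivity, producing a lift whose Lipschitz constant is bounded by $F(L)\cdot\|f\|$; feeding this lift into the usual formulas (akin to the idempotent construction in line \eqref{eq:diffe}, or exponentials in the unitary case) yields the connecting element with a controlled Lipschitz bound. Asymptotic exactness at each of the three vertices is checked by running the classical exactness argument while keeping explicit bookkeeping on the amplification of Lipschitz constants through every null-homotopy. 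Applying this machinery to the displayed short exact sequence above --- and using that $K_*^L(C(X_1) \oplus C(X_2))$ splits naturally as $K_*^L(C(X_1)) \oplus K_*^L(C(X_2))$ --- delivers the Mayer--Vietoris hexagon, with the horizontal maps induced by the canonical inclusion $(r_1, r_2)\colon C(X) \hookrightarrow C(X_1) \oplus C(X_2)$ and by $\phi$, and with final control functions depending only on $F$.

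I expect the main obstacle to be the quantitative bookkeeping in the controlled six-term sequence of the second step: in the classical proof bounded lifts and bounded homotopies are used freely, whereas here one must ensure at each stage --- namely the lifting, the explicit index and exponential formulas, and each of the homotopies witnessing exactness --- that the amplification of Lipschitz constants is controlled by a function depending only on $F$. This is precisely where the controlled surjectivity hypothesis is indispensable, since an arbitrary bounded lift of a Lipschitz element need not be Lipschitz at all. Once this quantitative six-term machinery has been set up in the spirit of the inductive-systems formalism of Section~\ref{subsec:LipQKT} (Definitions~\ref{def:controlledhom1}, \ref{def:similar1} and \ref{def:asymexact1}), the Mayer--Vietoris assembly is essentially formal and the control functions are traced back to $F$ via Lemmas~\ref{lemma:fullyfaithful-iso1} and \ref{lemma:fivelemma1}.
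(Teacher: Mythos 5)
Your approach is correct but takes a genuinely different route from the paper's. The paper derives Proposition \ref{prop:pullbacksixterm1} as a special case of Theorem \ref{thm:six-term-for-pullback}, whose proof maps the controlled short exact sequence $0 \to \ker\rho_B \to P \to B \to 0$ onto $0 \to \ker\pi_A \to A \to Q \to 0$ via the vertical arrows of the pullback square (using that $\ker\rho_B = \ker\pi_A$ as filtered ideals), applies Theorem \ref{thm:six-term} to each row, and then diagram-chases the two resulting asymptotically exact hexagons into the Mayer--Vietoris hexagon. You instead work with the single controlled short exact sequence $0 \to \ker\phi \to C(X_1)\oplus C(X_2) \xrightarrow{\phi} C(X_1\cap X_2) \to 0$ with $\phi(f_1,f_2)=\pi_1(f_1)-\pi_2(f_2)$, verify (correctly) that $\phi$ inherits controlled surjectivity with the same control function $F$ from $\pi_1$ by lifting via $(g,0)$, and feed this once into Theorem \ref{thm:six-term}, together with the elementary controlled splitting $K_*^L(C(X_1)\oplus C(X_2))\cong K_*^L(C(X_1))\oplus K_*^L(C(X_2))$. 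Your route is more economical --- one invocation of the controlled six-term sequence instead of two plus a diagram chase --- and it lands directly in the Mayer--Vietoris hexagon; the paper's two-row comparison is somewhat more robust machinery (it proves the general pullback statement of Theorem \ref{thm:six-term-for-pullback} without needing to know which of $\pi_A,\pi_B$ is surjective when building $\phi$), but both ultimately reduce to the same engine, Theorem \ref{thm:six-term}, which you only sketch as ``the classical recipe with bookkeeping'' and which the paper develops in full via Propositions \ref{prop:longexact-v1}, \ref{prop:longexact} and Theorem \ref{prop:Bottper}.

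One caveat applies equally to your argument and to the paper's: $\ker\phi$ (respectively the pullback $P$) is isomorphic to $C(X)$ as a $C^*$-algebra, but within the controlled framework the six-term sequence a priori involves $\ker\phi$ with the Lipschitz filtration $\ker\phi\cap\big(C(X_1)\oplus C(X_2)\big)_L$, which contains $C(X)_L$ but need not equal it: for an arbitrary compact metric $X$, a function whose restrictions to $X_1$ and $X_2$ are each $L$-Lipschitz need not be $L$-Lipschitz on $X$, since points of $X_1\setminus X_2$ and $X_2\setminus X_1$ can be close without a comparably short route through $X_1\cap X_2$. This identification is harmless in the paper's intended application --- the standard simplicial metric is a length metric and $X_1,X_2$ are closed, so any path between $X_1\setminus X_2$ and $X_2\setminus X_1$ meets $X_1\cap X_2$, forcing the two filtrations to coincide --- but it is worth making explicit that the hexagon coming out of your short exact sequence is stated for the induced filtration on the kernel, and matching it with $\{K_*^L(C(X))\}$ requires this geometric input (or a controlled isomorphism between the two filtered systems).
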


Now we are ready to prove Theorem \ref{thm:introLipschitzC(X)} for compact simplicial complexes.
\begin{theorem}\label{prop:fullyfaithfulofC(X)}
	Let $X$ be a compact $m$-dimensional simplicial complex equipped with the standard simplicial metric. Then the inductive system $\{K_*^L(C(X))\}_{L \geqslant 0}$ is uniformly controlled in the sense of Definition $\ref{def:fullyfaithful1}$,  where the uniform control pair only depends on $m$. Consequently, there exists a constant $L_m$ \textup{(}depending only on $m$\textup{)} such that every class in $K_*(C(X))$ admits an $L_m$-Lipschitz representative. 
\end{theorem}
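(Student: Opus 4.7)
The plan is to prove the theorem by induction on the dimension $m$, using the Mayer--Vietoris-type six-term asymptotically exact sequence of Proposition \ref{prop:pullbacksixterm1} together with the five-lemma-type Lemma \ref{lemma:fivelemma1}. The base case $m=0$ is trivial: $X$ is then a finite disjoint union of points, so that any two points in distinct components lie at distance $+\infty$ in the simplicial metric, making every function vacuously $L$-Lipschitz for every $L\geqslant 0$; hence $K_*^L(C(X))=K_*(C(X))$ for all $L$, and the inductive system is uniformly controlled with pair $(0,\mathrm{id})$.

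For the inductive step, assume the statement holds in dimension $m-1$. Given an $m$-dimensional $X$, let $\sigma_1,\dots,\sigma_N$ be its $m$-simplices with barycenters $c_i$, and for a small parameter $\varepsilon=\varepsilon(m)>0$ (for instance $\varepsilon=\frac{1}{2(m+1)}$) let $B_i\subset\sigma_i^\circ$ be the ``central sub-simplex'' obtained by requiring all barycentric coordinates to be $\geqslant\varepsilon$. Set $X_1=\bigsqcup_i B_i$ and $X_2=\overline{X\setminus\bigsqcup_i B_i^\circ}$, so that $X_1$ is a disjoint union of contractible simplices, $X_1\cap X_2=\bigsqcup_i\partial B_i$ is a disjoint union of simplicial $(m-1)$-spheres, and $X_2$ admits an explicit radial deformation retraction onto the $(m-1)$-skeleton $X^{(m-1)}$, constructed fiberwise in each $m$-simplex by pushing points away from $c_i$; the Lipschitz constant of this retraction, together with the homotopy realizing it, can be bounded purely in terms of $m$.

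By the induction hypothesis applied to $X^{(m-1)}$ and to the disjoint union of simplicial $(m-1)$-spheres, the systems $\{K_*^L(C(X^{(m-1)}))\}$ and $\{K_*^L(C(X_1\cap X_2))\}$ are uniformly controlled with pair depending only on $m-1$. The Lipschitz homotopy equivalences constructed above, together with Lemma \ref{lemma:fullyfaithful-iso1}, transfer uniform control to $\{K_*^L(C(X_2))\}$; uniform control for $\{K_*^L(C(X_1))\}$ follows analogously from the Lipschitz deformation of each $B_i$ to a point. The restriction $C(X_1)\to C(X_1\cap X_2)$ is then verified to be a controlled surjection with control function depending only on $m$: a Lipschitz function on $\partial B_i$ admits a Lipschitz extension to $B_i$ via radial extension inward from $c_i$ with an appropriate cutoff, whose Lipschitz constant is bounded by a universal (in $m$) multiple of the original. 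Hence Proposition \ref{prop:pullbacksixterm1} yields the asymptotically exact six-term sequence associated to $X=X_1\cup X_2$, and Lemma \ref{lemma:fivelemma1} upgrades uniform control of the three flanking systems to uniform control of $\{K_*^L(C(X))\}$ with pair depending only on $m$.

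The consequence in the theorem is then immediate: once uniform control with pair $(L_m,F_m)$ is established, the surjectivity of $i_{L_m}\colon K_*^{L_m}(C(X))\to K_*(C(X))$ produces an $L_m$-Lipschitz representative (up to the factor of $2$ implicit in the equivalence relation defining $K_*^L$) for every class. The main technical obstacle is the careful bookkeeping of Lipschitz constants throughout the induction: the deformation retractions, the radial Lipschitz extensions, and the control functions output by Proposition \ref{prop:pullbacksixterm1} and Lemma \ref{lemma:fivelemma1} must all depend only on $m$, independently of the combinatorial complexity (in particular the number $N$ of $m$-simplices) of $X$. The essential geometric input making this possible is that the radial projection away from the barycenter of the standard $m$-simplex has Lipschitz distortion bounded purely in terms of $m$.
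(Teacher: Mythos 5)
Your proposal follows the same overall strategy as the paper's proof: induction on dimension, a Mayer--Vietoris decomposition of $X$ into central sub-simplices and a neighborhood of the $(m-1)$-skeleton, a controlled-surjection claim verified by radial extension, and the five-lemma (Lemma~\ref{lemma:fivelemma1}). Your variation of taking $X_1\cap X_2$ to be the sphere $\bigsqcup_i\partial B_i$ directly, rather than a collar around it as the paper does, is harmless: Proposition~\ref{prop:pullbacksixterm1} only needs two compact pieces covering $X$ together with a controlled restriction to the intersection.

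However, one substantive verification is missing and is not merely ``bookkeeping.'' When you apply the inductive hypothesis to $X^{(m-1)}$ and then transfer uniform control to $\{K_*^L(C(X_2))\}_{L\geqslant 0}$ via the Lipschitz retraction and Lemma~\ref{lemma:fullyfaithful-iso1}, you are silently conflating two different metrics on $X^{(m-1)}$: the \emph{intrinsic} standard simplicial metric of $X^{(m-1)}$, which is what the inductive hypothesis addresses, and the \emph{inherited} subspace metric from $X$, which is the one governing the Lipschitz filtration on $C(X_2)$ and the Lipschitz constant of your retraction. These genuinely differ --- a shortest path between two points of $X^{(m-1)}$ measured in $X$ may cut through the interior of an $m$-simplex and hence be strictly shorter than any path inside $X^{(m-1)}$ --- so one must prove that they are bi-Lipschitz equivalent with constants depending only on $m$. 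The paper carries out an explicit computation establishing $d_X\leqslant d_{X^{(m-1)}}\leqslant\tfrac{3}{2}\,d_X$; your proposal needs the same input, and without it the transfer of uniform control to $\{K_*^L(C(X_2))\}_{L\geqslant 0}$ is not justified. An entirely analogous (rescaling) identification is also implicitly used when you invoke the inductive hypothesis for $X_1\cap X_2=\bigsqcup_i\partial B_i$: each $\partial B_i$ carries the inherited metric from $X$, which must first be identified, uniformly in $m$, with the standard simplicial metric on a boundary-of-simplex.
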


\begin{proof}
We prove the theorem by induction on the dimension $m$. The case where $m=0$ is trivial. 

 Suppose that the theorem holds for all $(m-1)$-dimensional compact simplicial complexes. Let $X$ be an $m$-dimensional compact simplicial complex and $V$ the set of vertices in $X$. Then every point in $X$ can be written as convex combination
$\sum_{v\in V}t_v v$ with $\sum_{v\in V}t_v=1$ and at most $(m+1)$ members of $\{t_v\}$ are nonzero. 

Let $X_1$ be a compact neighborhood of the $(m-1)$-skeleton of $X$, and $X_2$  a compact neighborhood of the set of barycenters of $m$-simplices in $X$. More precisely, we define
\begin{align*}
X_1&=\Big\{\sum_{j=0}^{m}t_{v_{j}}v_{j}\in X:
\begin{matrix}
\{v_{0},\cdots,v_{m}\}\text{ spans an } m\text{-simplex }\\
\text{and }\min\{t_{v_j}\}\leqslant \frac{1}{3(m+1)}
\end{matrix}\Big\},\\
X_2&=\Big\{\sum_{j=0}^{m}t_{v_{j}}v_{j}\in X:
\begin{matrix}
\{v_{0},\cdots,v_{m}\}\text{ spans an } m\text{-simplex }\\
\text{and }\min\{t_{v_j}\}\geqslant \frac{1}{4(m+1)}
\end{matrix}\Big\},
\end{align*}
and equip $X_1$, $X_2$ and $X_1\cap X_2$ with the induced metric from $X$ (cf. Figure \ref{fig:X1X2}). 

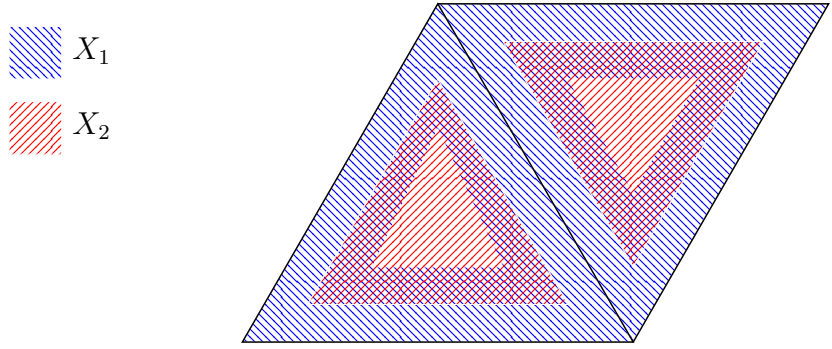
\begin{figure}
	\centering
	\begin{tikzpicture}
	%\filldraw[fill=red]
	\filldraw[pattern=north west lines, pattern color=blue,draw=white]
	(-{3*sin(60)},-{3*cos(60)}) -- (0,3) -- ({3*sin(60)},-{3*cos(60)}) -- cycle
	(-{3*sin(60)/3},-{3*cos(60)/3}) -- ({3*sin(60)/3},-{3*cos(60)/3}) -- (0,4/3) -- cycle;
	\filldraw[pattern=north east lines, pattern color=red,draw=white]
	(-{6*sin(60)/3},-{6*cos(60)/3}) -- (0,6/3) -- ({6*sin(60)/3},-{6*cos(60)/3}) -- cycle;
	\filldraw[pattern=north west lines, pattern color=blue,draw=white]
	({3*sin(60)},-3/2)-- (0,3) -- ({6*sin(60)},3) -- cycle
	({2*sin(60)},2) -- ({3*sin(60)},1/2) -- ({4*sin(60)},2) -- cycle;
	\filldraw[pattern=north east lines, pattern color=red,draw=white]
	({3*sin(60)},-1/2) -- ({sin(60)},5/2) -- ({5*sin(60)},5/2) -- cycle;
	\draw[line width=0.2mm]
	(-{3*sin(60)},-{3*cos(60)}) -- (0,3) -- ({3*sin(60)},-{3*cos(60)}) -- cycle;
		\draw[line width=0.2mm]
	({3*sin(60)},-{3*cos(60)})--({6*sin(60)},3) -- (0,3) ;
	\filldraw[pattern=north west lines, pattern color=blue, draw=white]
	(-5.7,2) rectangle (-5,2.7)  node[anchor=north west] {$X_1$};
	\filldraw[pattern=north east lines, pattern color=red, draw=white]
	(-5.7,1) rectangle (-5,1.7)  node[anchor=north west] {$X_2$};

	\end{tikzpicture}
	\caption{$X_1$ and $X_2$ in a simplicial complex}
	\label{fig:X1X2}
\end{figure}
 
\begin{claim*}
The restriction map $\pi_2\colon C(X_2)\to C(X_1\cap X_2)$ is a controlled surjection with control function
$F(L)=9m(m+1)L+12(m+1)$ for $L\geqslant 0$ in the sense of Definition \ref{def:controlled surj}.
\end{claim*} 
Let us prove the claim. Suppose we have $f\in M_n(C(X_1\cap X_2))_L$ with $\|f\|=1$. For each $\alpha\in(0,1]$, we define 
\begin{equation}\label{eq:S_alpha}
S_\alpha=\Big\{\sum_{j=0}^{m}t_{v_{j}}v_{j}\in X:
\begin{matrix}
\{v_{0},\cdots,v_{m}\}\text{ spans an } m\text{-simplex }\\
\text{and }\min\{t_{v_j}\}=\frac{\alpha}{(m+1)}
\end{matrix}\Big\}.
\end{equation}
Then $S_1$ is the disjoint union of the barycenters of all $m$-simplices of $X$, and $S_{\frac 1 3}\cup S_{\frac 1 4}$ is the boundary of $X_1\cap X_2$. 

Now we define a homotopy that deforms $S_{\frac 1 3}$ to $S_{\frac 1 2}$ by
\begin{equation}\label{eq:homotopy-simplicial}
\textstyle H(x,\lambda)\coloneqq\frac{1}{m+1} \sum_{j=0}^m v_j + (1-\frac{\lambda}{4})(x-\frac{1}{m+1} \sum_{j=0}^m v_j),
\end{equation}
for $\forall \lambda\in [0,1]$ and $\forall x=\sum_{j=0}^{m}t_{v_{j}}v_{j}\in S_{\frac1 3}$. 
For each $\lambda\in[0,1]$, $H(\cdot,\lambda)$ is a homeomorphism of $S_{\frac 1 3}$ and $S_{\frac{2+\lambda}{6}}$. Let $g$ be an element in $ M_n(C(X_2))$ given by
\begin{equation}\label{eq:g(y)}
    g(y)=\begin{cases}
f(y)& \textup{ if } y\in X_1\cap X_2,\\
\lambda f(x)& \textup{ if }  y=H(x,\lambda)\text{ for some }(x,\lambda)\in S_{\frac 1 3}\times[0,1],\\
0& \text{ otherwise.}
\end{cases}
\end{equation}

Obviously we have $\|g\|=\|f\|$ and the restriction of $g$ to $X_1\cap X_2$ is $f$. If  $y$ and $y'$ are in the image $H(S_\frac{1}{3}\times [0,1])$ with $y=H(x,\lambda)$ and $y'=H(x',\lambda')$, then it easily follows that 
$$d(y,y')\leqslant \frac{3}{4}d(x,x')+\frac{|\lambda-\lambda'|}{6(m+1)}.$$
On the other hand, we shall show that 
\begin{equation}\label{eq:dyy'>=lam-lam'}
d(y,y')\geqslant \frac{|\lambda-\lambda'|}{12(m+1)}
\end{equation}
and
\begin{equation}\label{eq:dyy'>=dxx'}
d(y,y')\geqslant \frac{d(x,x')}{9m(m+1)}.
\end{equation}

%Since the proof for the inequalities  \eqref{eq:dyy'>=lam-lam'} and \eqref{eq:dyy'>=dxx'} is rather elementary,  we shall only demonstrate the proof for the following special case. 
Let us first assume that $x, x'\in S_{\frac{1}{3}}$ lie in the same $m$-simplex and write 
\[  x=t_0v_0+t_1v_1+\cdots+t_mv_m
\text{ and } x'=t'_0v_0+t_1'v_1+\cdots+t_m'v_m.\] Let us further assume that $t_0$ (resp. $t'_0$) is the minimum of the coefficients of $x$ (resp. $x'$),  which is $\frac{1}{3(m+1)}$, i.e.,  
$$\textstyle x=\frac{1}{3(m+1)}v_0+t_1v_1+\cdots+t_mv_m
\text{ and }x'=\frac{1}{3(m+1)}v_0+t_1'v_1+\cdots+t_m'v_m.$$
Then $d(y,y')$ equals the $\ell^1$-norm of the following vector
\begin{align*}  y-y'=&{ \textstyle\frac{\lambda'-\lambda}{12(m+1)}v_0+\sum_{j=1}^m\Big(t_j(1-\frac\lambda 4)-t_j'(1-\frac{\lambda'}{4})+\frac{\lambda-\lambda'}{4(m+1)}\Big)v_j }\\
=& {\textstyle \frac{\lambda'-\lambda}{12(m+1)}v_0+\sum_{j=1}^m
\Big((t_j-t_j')(1-\frac\lambda 4)-\frac{t_j'(\lambda-\lambda')}{4}+\frac{\lambda-\lambda'}{4(m+1)}\Big)v_j }
\end{align*}
which in particular implies the inequality  \eqref{eq:dyy'>=lam-lam'}. Now observe that, if we have  $|\lambda-\lambda'|\geqslant \frac{4d(x,x')}{3m}$, then 
$$d(y,y')\geqslant \frac{|\lambda-\lambda'|}{12(m+1)}\geqslant \frac{d(x,x')}{9m(m+1)},$$
which implies the inequality  \eqref{eq:dyy'>=dxx'} in this case. 
So it remains to consider the case where $|\lambda-\lambda'|\leqslant \frac{4d(x,x')}{3m}$. Without loss of generality, we assume 
$$|t_1-t_1'|=\max\{ |t_j-t_j'|:j=1,2,\cdots m\}.$$
It follows that 
\[ \textstyle |t_1-t_1'|\geqslant \frac{1}{m}\big(\sum_{j=0}^{m} |t_j -t'_j|\big) = \frac{d(x,x')}{m}. \] 
We conclude that 
\begin{align*}
d(y,y')\geqslant&  |t_1-t_1'|(1-\frac\lambda 4)-\frac{t_1'|\lambda-\lambda'|}{4}-\frac{|\lambda-\lambda'|}{4(m+1)} \\
\geqslant& \frac{3d(x,x')}{4m}-\frac{d(x,x')}{3m}-\frac{d(x,x')}{3(m+1)}\geqslant\frac{d(x,x')}{2m}\geqslant  \frac{d(x,x')}{9m(m+1)}.
\end{align*}
which proves the inequality  \eqref{eq:dyy'>=dxx'} in this case.

Note that the distance between two general points of $X$ is the length of the shortest piecewise linear path between them. Hence applying the above special case to the end points of the line segments appearing in such a shortest piecewise linear path, the general case of the inequalities  \eqref{eq:dyy'>=lam-lam'} and \eqref{eq:dyy'>=dxx'} follows from the standard triangle inequality. To summarize, we see that the function  $g$ as defined in  line \eqref{eq:g(y)} is a $(9m(m+1)L+12(m+1))$-Lipschitz function on $X_2$. This proves the claim. 

By Proposition \ref{prop:pullbacksixterm1}, we obtain a six-term asymptotically exact sequence
\begin{equation}\label{cd:aes}
\begin{split}
\scalebox{0.85}{\xymatrixcolsep{.7pc}
	\xymatrix{
		\{K_0^L(C(X))\}_{L\geqslant 0}\ar[r]&\{K_0^L(C(X_1))\oplus K_0^L(C(X_2))\}_{L\geqslant 0}\ar[r]&\{K_0^L(C(X_1\cap X_2))\}_{L\geqslant 0}\ar[d]\\
		\{K_1^L(C(X_1\cap X_2))\}_{L\geqslant 0}\ar[u]&\{K_1^L(C(X_1))\oplus K_1^L(C(X_2))\}_{L\geqslant 0}\ar[l]&\{K_1^L(C(X))\}_{L\geqslant 0}\ar[l]
}}	
\end{split}
\end{equation}
where all control functions for the controlled homomorphisms and asymptotic exactness only depend on $m$, the dimension of $X$. In the following, we shall verify that the inductive systems $\{K_*^L(C(X_1\cap X_2))\}_{L\geqslant 0}$,  $\{K_*^L(C(X_1))\}_{L\geqslant 0}$ and   $\{K_*^L(C(X_2))\}_{L\geqslant 0}$ are all uniformly controlled in the sense of Definition $\ref{def:fullyfaithful1}$, where the uniform control pairs only depend on $m$. It then follows from Lemma   $\ref{lemma:fivelemma1}$ that $\{K_*^L(C(X))\}_{L\geqslant 0}$ is uniformly controlled, hence proves the theorem.

Let us first show that $\{K_*^L(C(X_1\cap X_2))\}_{L\geqslant 0}$ is uniformly controlled. Note that $X_1\cap X_2$ deformation retracts to $S_{\frac 1 3}$ with some Lipschitz homotopy, of which the Lipschitz constant only depends on $m$. By construction, the distance of each connected component of $S_{\frac 1 3}$ is uniformly positive and only depends on $m$, and each component is bi-Lipschitz equivalent to the $(m-1)$-dimensional simplicial sphere 
%$\mathbb S^{m-1}$
$$\mathbb S^{m-1}=\big\{(t_0,t_1,\cdots,t_m):\sum_{i=0}^{m}t_i=1 \textup{ and } \min_{i=0,1,\cdots,m} t_i=0 \big\},$$
 which is simply the boundary of a standard $m$-simplex. We equip $\mathbb S^{m-1}$
with the standard simplicial metric. 
By the inductive hypothesis, $\{K_*^L(\mathbb S^{m-1})\}_{L\geqslant 0}$ is uniformly controlled in the sense of Definition $\ref{def:fullyfaithful1}$,  since $\mathbb S^{m-1}$ is an $(m-1)$-dimensional simplicial complex, where the uniform control pair only depends on $m$. Therefore $\{K_*^L(C(X_1\cap X_2))\}_{L\geqslant 0}$ is also uniformly controlled by Lemma \ref{lemma:fullyfaithful-iso1}.

Now let us show that $\{K_*^L(C(X_2))\}_{L\geqslant 0}$  is uniformly controlled. Recall that $S_1$ from line \eqref{eq:S_alpha} is the set of  barycenters of $m$-simplices in $X$ equipped with the metric inherited from $X$.
Similar to the construction of $H$ in line \eqref{eq:homotopy-simplicial}, we can show that $S_1$ is a deformation retract of $X_2$ with a homotopy that is $(1+\frac{1}{3(m+1)})$-Lipschitz. 
As $S_1$ is a disjoint union of $\frac{2}{m+1}$-discrete points , $\{K_*^L(C(S_1)\}_{L\geqslant 0}$ is uniformly controlled in the sense of Definition $\ref{def:fullyfaithful1}$. Therefore $\{K_*^L(C(X_2))\}_{L\geqslant 0}$ is also uniformly controlled by Lemma \ref{lemma:fullyfaithful-iso1}.

Finally we show that $\{K_*^L(C(X_1))\}_{L\geqslant 0}$  is uniformly controlled. Note that $X_1$ deformation retracts to the $(m-1)$-skeleton $X^{(m-1)}$ of $X$ with a Lipschitz homotopy, of which the Lipschitz constant only depends on $m$. There are two natural metrics on $X^{(m-1)}$: one is the metric inherited from that of $X$ and the other is the standard simplicial metric on $X^{(m-1)}$ induced by the simplicial structure of $X^{(m-1)}$ itself. We denote the two metrics by $d_X$ and $d_{X^{(m-1)}}$ respectively. Although  $d_X$ and $d_{X^{(m-1)}}$ are different metrics in general, we show that they are bi-Lipschitz equivalent with Lipschitz constants that only depend on $m$. Indeed, by the definition of standard simplicial metrics (cf. the discussion at the beginning of this subsection), it suffices to prove there exist universal positive constants $c_1$ and $c_2$ such that 
\[  c_1\cdot  d_X(x,x') \leqslant d_{X^{(m-1)}}(x,x') \leqslant c_2 \cdot  d_X(x,x')   \]
for  those $x, x'\in X^{(m-1)}$ that lie in the same $m$-simplex of $X$, but in different $(m-1)$-simplices. Say both $x, x' \in X^{(m-1)}$ are in the simplex $[v_0, v_1, \cdots, v_m]$ of $X$. Without loss of generality, let us write 
$$x=t_1v_1+\sum_{i=2}^m t_iv_i\text{ and } x'=t_0'v_0+\sum_{i=2}^m t_i'v_i.$$
Obviously we have \[ d_X(x,x')\leqslant d_{X^{(m-1)}}(x,x'). \] On the other hand,  a direct computation shows that
$$d_X(x,x')=t_0'+t_1+\sum_{i=2}^m|t_i-t_i'|$$
and
$$d_{X^{(m-1)}}(x,x')=d_{X}(x,x')+\min\{0,1-\sum_{i=2}^m\max\{t_i,t_i'\}\}.$$
It follows that 
\begin{align*}
d_{X^{(m-1)}}(x,x')&\leqslant d_{X}(x,x')+\Big|\frac{2-\sum_{i=2}^mt_i-\sum_{i=2}^mt_i'-\sum_{i=2}^m|t_i-t_i'|
}{2}\Big|\\
&=d_{X}(x,x')+\Big|\frac{t_1+t_0'-\sum_{i=2}^m|t_i-t_i'|}{2}\Big|\\
&\leqslant\frac 3 2 d_{X}(x,x').
\end{align*}
By setting $c_1 = 1$ and $c_2 = \frac{3}{2}$, we have proved that $d_X$ and $d_{X^{(m-1)}}$ are bi-Lipschitz equivalent. Now by the inductive hypothesis,  $\{K_*^L(C(X^{(m-1)}))\}_{L\geqslant 0}$ is uniformly controlled in the sense of Definition $\ref{def:fullyfaithful1}$, where the uniform control pair only depends on $m$. By Lemma \ref{lemma:fullyfaithful-iso1}, $\{K_*^L(C(X^{(m-1)}))\}_{L\geqslant 0}$ is also uniformly controlled with the Lipschitz filtration on  $C(X^{(m-1)})$ given with respect to the metric $d_X$, where the new uniform control pair also depends only on $m$,  since $d_X$ and $d_{X^{(m-1)}}$ are bi-Lipschitz equivalent with Lipschitz constants that only depend on $m$.  Therefore $\{K_*^L(C(X_1))\}_{L\geqslant 0}$ is also uniformly controlled by Lemma \ref{lemma:fullyfaithful-iso1}. 

Now apply Lemma \ref{lemma:fivelemma1} to the six-term asymptotically exact sequence in $\eqref{cd:aes}$, and we conclude that $\{K_*^L(C(X))\}_{L\geqslant 0}$ is uniformly controlled, where the uniform control pair only depends on $m$. 
\end{proof}
\subsection{Lipschitz control for $K$-theory: general case}
In this subsection, we prove Theorem \ref{thm:introLipschitzC(X)} for all locally compact finite dimensional simplicial complexes.

We first prove a relative version of Theorem \ref{prop:fullyfaithfulofC(X)}.
Let $X$ be a compact metric space and $Y$ a compact subspace of $X$. Denote 
$$C(X,Y)=\{f\in C(X):f(y)=0,\ \forall y\in Y \}.$$
The algebra $C(X,Y)$ inherits  a natural Lipschitz filtration from that of $C(X)$, i.e., $C(X,Y)_L=C(X,Y)\cap C(X)_L$. 

For any pair of compact metric spaces $Y \subset X$, we have the following six-term asymptotically exact sequence of the Lipschitz controlled $K$-theory (compare to Proposition $\ref{prop:pullbacksixterm1}$). 
\begin{proposition}\label{prop:sixterm1}
	Let $X$ be a compact metric space and $Y$ a compact subspace of $X$. If the restriction map $r\colon C(X)\to C(Y)$ is a controlled surjection in the sense of Definition $\ref{def:controlled surj}$ with control function $F$, then we have a six-term asymptotically exact sequence
	$$\xymatrix{
		\{K_0^L(C(X,Y))\}_{L\geqslant 0}\ar[r]&\{K_0^L(C(X))\}_{L\geqslant 0}\ar[r]&\{K_0^L(C(Y))\}_{L\geqslant 0}\ar[d]\\
		\{K_1^L(C(Y))\}_{L\geqslant 0}\ar[u]&\{K_1^L(C(X))\}_{L\geqslant 0}\ar[l]&\{K_1^L(C(X,Y))\}_{L\geqslant 0}\ar[l]
	}	
	$$
	that is, the above sequence is asymptotically exact at each term with respect to certain control functions. Moreover, these control functions are solely determined by $F$.
\end{proposition}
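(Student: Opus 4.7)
The plan is to adapt the standard proof of the six-term $K$-theory exact sequence associated to the short exact sequence
\begin{equation*}
0 \longrightarrow C(X,Y) \hookrightarrow C(X) \xrightarrow{\ r\ } C(Y) \longrightarrow 0
\end{equation*}
to the Lipschitz filtered setting, carefully tracking Lipschitz constants at every step. The controlled surjection hypothesis on $r$ is precisely what allows us to lift any $L$-Lipschitz matrix-valued function $f$ on $Y$ to an $F(L)\|f\|$-Lipschitz lift on $X$ with norm at most $2\|f\|$, so every construction appearing in the standard proof (polar decompositions, holomorphic functional calculus, unitary lifts, connecting homotopies, etc.) can be carried out with a cost to the Lipschitz constant controlled only by $F$ and universal constants.

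I would first construct the boundary maps at the filtered level. For the index map $\partial_1 \colon K_1^L(C(Y)) \to K_0^{F_1(L)}(C(X,Y))$, given a unitary $u \in U_n^L(C(Y)^+)$, lift $u$ via the controlled surjection to some $v \in M_n(C(X)^+)$ whose Lipschitz constant and norm are controlled by $F(L)$; then assemble the standard unitary $V \in M_{2n}(C(X)^+)$ from polynomial expressions in $v$ and $v^*$ together with a bounded holomorphic functional calculus of $vv^*$ and $v^*v$, so that $V$ restricts to $\mathrm{diag}(u, u^*)$ on $Y$. The projection $V e_{1,1} V^{-1} - e_{1,1}$ then lies in $M_{2n}(C(X,Y))$ and has Lipschitz constant bounded by a universal function of $F(L)$; its class is $\partial_1[u]$. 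The exponential map $\partial_0 \colon K_0^L(C(Y)) \to K_1^{F_2(L)}(C(X,Y))$ is built analogously: lift a projection $p$ to a self-adjoint element $\tilde{p} \in M_n(C(X)^+)$ with controlled Lipschitz constant, and take $e^{2\pi i \tilde{p}}$; since $\|\tilde{p}\|$ is uniformly bounded, the Taylor series for the exponential yields a Lipschitz estimate controlled by $F_2(L)$.

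Next, I would verify asymptotic exactness at each of the six positions. By the cyclic symmetry of the six-term sequence, three representative positions suffice. Exactness at $K_*^L(C(X))$ is handled by lifting a trivializing homotopy from $Y$ back to $X$ via the controlled surjection, then modifying the original representative by the lifted homotopy to produce a class in $K_*^{L'}(C(X,Y))$ mapping to it; exactness at $K_*^L(C(Y))$ is verified by matching the asymptotic kernel against the image of $\partial_*$ using the explicit formulas from the previous paragraph. The main obstacle will be asymptotic exactness at $K_*^L(C(X,Y))$: given $[p]-[q] \in K_0^L(C(X,Y))$ that vanishes in $K_0^{L'}(C(X))$, the trivialization data on $X$ provides (after polar decomposition) a Lipschitz-controlled unitary path in $C(X)^+$ whose restriction to $Y$ is a unitary loop in $C(Y)^+$, giving the required $K_1^{L''}(C(Y))$ class; one then verifies the boundary map recovers $[p]-[q]$ up to stabilization. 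The delicate point throughout is ensuring that every lift, homotopy, and functional-calculus manipulation contributes only a controlled increment to the Lipschitz constant, so that all final bounds depend solely on $F$ and universal constants.
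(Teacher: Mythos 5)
Your overall instinct — track Lipschitz constants through the classical six-term argument — is sound, and much of your sketch of the index map construction and the exactness verifications mirrors what the paper's appendix does (compare with Lemma~\ref{lemma:liftunitary}, the boundary map construction after it, and Proposition~\ref{prop:longexact-v1}). However, your route diverges from the paper's in a way that introduces a genuine gap.

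The paper does \emph{not} construct both boundary maps directly. It constructs only the index map $\partial\colon \{K_1^L(Q)\}\to\{K_0^L(J)\}$, proves an asymptotically exact five-term sequence (Proposition~\ref{prop:longexact-v1}), and then closes the circle via a controlled suspension isomorphism (Proposition~\ref{prop:suspensioniso}) and, crucially, a \emph{controlled Bott periodicity} theorem (Theorem~\ref{prop:Bottper}), proved by adapting Cuntz' Toeplitz-extension argument to the filtered setting with explicit control functions. Your proposal, by contrast, defines both $\partial_1$ and $\partial_0$ directly and then asserts that ``by the cyclic symmetry of the six-term sequence, three representative positions suffice.'' That is the gap: cyclic symmetry of the six-term sequence is not a formal property of the diagram — it is precisely a consequence of Bott periodicity. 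In the filtered setting you therefore face a fork: either verify asymptotic exactness at all six positions directly (with separate Lipschitz bookkeeping for each, since the $K_0$-type verifications manipulate projections and the $K_1$-type verifications manipulate unitaries, and these are not interchangeable by any formal rotation of the diagram), or prove a controlled Bott periodicity theorem and invoke it. You cannot reduce to three positions for free, and you have not supplied the controlled Bott theorem that would let you do so.

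A secondary technical point: your $W$-matrix construction of the index map lifts $u$ to $v\in M_n(C(X)^+)$ with $\|v\|\leqslant 2$, but assembling a genuine unitary $V$ from $v$, $(1-vv^*)^{1/2}$, $(1-v^*v)^{1/2}$ requires $v$ to be a contraction; you would need to renormalize and re-estimate the resulting Lipschitz constant. The paper sidesteps this by instead lifting the unitary path from $u\oplus u^*$ to $1$ in the quotient, which is precisely the content of Lemma~\ref{lemma:liftunitary} and yields a unitary lift $w$ with explicit control $U^{14F_l(L)}(A)$ in one step.
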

\begin{proof}
	This is a  special case of Theorem \ref{thm:six-term}. We refer the reader to Appendix \ref{app:QKT-lipschitzfiltration} for a detailed proof.
\end{proof}
%We first consider the relative $K$-theory of compact simplicial complex.
\begin{theorem}\label{prop:relativeK-group}
	Let $X$ be a compact $m$-dimensional simplicial complex and $Y$ a subcomplex of $X$.
	 Then $\{K_*^L(C(X,Y))\}_{L\geqslant 0}$ is uniformly controlled in the sense of Definition $\ref{def:fullyfaithful1}$, where the uniform control pair only depends on $m$.
\end{theorem}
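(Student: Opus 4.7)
My strategy is to reduce Theorem \ref{prop:relativeK-group} to Theorem \ref{prop:fullyfaithfulofC(X)} by applying the five-lemma for uniformly controlled inductive systems (Lemma \ref{lemma:fivelemma1}) to the six-term asymptotically exact sequence supplied by Proposition \ref{prop:sixterm1}.

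First, I would verify the hypothesis of Proposition \ref{prop:sixterm1}, namely that the restriction homomorphism $r\colon C(X)\to C(Y)$ is a controlled surjection in the sense of Definition \ref{def:controlled surj} with a control function depending only on $m$. Given $f\in M_n(C(Y))_L$ with $\|f\|\leqslant 1$, I would produce a lift $g\in M_n(C(X))$ satisfying $r(g)=f$, $\|g\|\leqslant 2$, and Lipschitz constant at most $F_m(L)$ for some $F_m$ depending only on $m$. To build $g$, I would use a simplicial Lipschitz retraction $\rho\colon N_\varepsilon(Y)\to Y$ from a simplicial $\varepsilon$-neighborhood of $Y$ in $X$ onto $Y$, where both $\varepsilon$ and $\|\rho\|_{\mathrm{Lip}}$ depend only on $m$. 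Together with a Lipschitz cutoff $\phi\colon X\to[0,1]$ with $\phi|_Y=1$ and $\supp\phi\subset N_\varepsilon(Y)$, I set $g:=\phi\cdot(f\circ\rho)$ on $N_\varepsilon(Y)$ and $g:=0$ elsewhere; a direct computation then gives
\[ \|g\|_{\mathrm{Lip}}\leqslant \|\phi\|_\infty\cdot\|\rho\|_{\mathrm{Lip}}\cdot L+\|\phi\|_{\mathrm{Lip}}\cdot\|f\|=O_m(L). \]

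Having established the controlled surjectivity of $r$, Proposition \ref{prop:sixterm1} furnishes a six-term asymptotically exact sequence relating the Lipschitz controlled $K$-theory inductive systems of $C(X,Y)$, $C(X)$, and $C(Y)$, with all control functions depending only on $m$. By Theorem \ref{prop:fullyfaithfulofC(X)}, the neighbouring systems $\{K_*^L(C(X))\}_{L\geqslant 0}$ and $\{K_*^L(C(Y))\}_{L\geqslant 0}$ are uniformly controlled with uniform control pairs depending only on $m$. Applying Lemma \ref{lemma:fivelemma1} to the six-term sequence then yields that $\{K_*^L(C(X,Y))\}_{L\geqslant 0}$ is uniformly controlled with a uniform control pair depending only on $m$, completing the proof.

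The main obstacle is the construction of the Lipschitz retraction $\rho$ with Lipschitz constant depending only on $m$. The difficulty is that for a simplex $\sigma$ of $X$ the intersection $\sigma\cap Y$ is a union of faces of $\sigma$ which may be disconnected or non-convex, so a naive simplex-wise projection onto the nearest face of $Y$ need not glue consistently along shared faces of adjacent simplices. A clean way around this is to pass to a barycentric subdivision of $X$: one then takes $\rho$ to be the standard PL retraction of a derived neighborhood of the subdivided $Y$ onto itself, built simplex-by-simplex out of explicit affine redistributions of barycentric weights whose Lipschitz constants are controlled purely in terms of the dimension $m$, and glued together using a partition of unity adapted to the distance-to-$Y$ function.
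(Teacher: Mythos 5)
Your overall strategy — verify controlled surjectivity of the restriction $C(X)\to C(Y)$, invoke the six-term asymptotically exact sequence of Proposition \ref{prop:sixterm1}, and finish with the controlled five-lemma (Lemma \ref{lemma:fivelemma1}) — is exactly the route the paper takes, and your instinct to pass to the barycentric subdivision to build the lift is also what the paper does. After subdividing, every point $x\in X_1$ decomposes uniquely as $x=\lambda y+(1-\lambda)z$ with $y\in Y_1$, $z$ in the complementary full subcomplex, and $\lambda\in[0,1]$, and the paper takes the explicit lift $g(x)=\max(2\lambda-1,0)\cdot f(y)$, which is precisely the ``cutoff times retraction'' you describe, made concrete.

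There is, however, a genuine gap in the second half of your argument, namely the invocation of Theorem \ref{prop:fullyfaithfulofC(X)} for $\{K_*^L(C(Y))\}_{L\geqslant 0}$. That theorem concerns the Lipschitz filtration on $C(Y)$ coming from the \emph{standard simplicial metric} of $Y$ regarded as a simplicial complex in its own right. But the filtration on $C(Y)$ that appears in Proposition \ref{prop:sixterm1} — the one for which the restriction map is a filtered homomorphism and a controlled surjection with control function depending only on $m$ — is the one induced by the metric $Y$ inherits from $X$. These two metrics on $Y$ are bi-Lipschitz equivalent, but \emph{not} with constants depending only on $m$: as the paper emphasizes in a footnote, the bi-Lipschitz constants blow up as $X$ and $Y$ vary. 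So you cannot simply quote Theorem \ref{prop:fullyfaithfulofC(X)} for the $Y$ term with a control pair depending only on $m$. The paper's resolution is that, after passing to the barycentric subdivisions $X_1$ and $Y_1$, the inherited metric and the intrinsic simplicial metric on $Y_1$ \emph{agree} whenever either distance is at most $1$; since projections and unitaries have norm at most $1$, the inductive systems $\{K_*^L(C(Y_1))\}$ built from the two metrics coincide once $L\geqslant 2$, and this already suffices to transfer the uniform control provided by Theorem \ref{prop:fullyfaithfulofC(X)}. Barycentric subdivision thus does double duty — it enables the lift construction \emph{and} repairs this metric mismatch — and the second role, which your proposal omits, is essential.
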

\begin{proof}
	Let $X_1$ be the barycentric subdivision of $X$. Equip $X_1$ with its standard simplicial metric. Let $Y_1$ be the subcomplex of $X_1$ generated by $Y$ and equip $Y_1$ with the metric inherited from that of $X_1$. Since the standard simplicial metric on  $X$ and the standard simplicial metric on $X_1$ are bi-Lipschitz equivalent with Lipschitz constants depending only on $m$, it suffices to show that
	$\{K_*^L(C(X_1,Y_1))\}_{L\geqslant 0}$ is uniformly controlled. The proof below is similar to that of Theorem $\ref{prop:fullyfaithfulofC(X)}$. 
	
	\begin{claim*}
		The restriction map $r\colon C(X_1)\to C(Y_1)$ is a controlled surjection with control function $F(L)=6(m+1)L+2$ in the sense of Definition \ref{def:controlled surj}.
	\end{claim*}

	Let us prove the claim. Suppose we have $f\in M_n(C(Y_1))_L$ with $\|f\|=1$. Let $Z_1$ be the subcomplex of $X_1$ generated by the vertices that are not in $Y_1$. As $X_1$ is the barycentric subdivision of $X$, $Z_1$ is non-empty. Furthermore, we see that a simplex lies in $Y_1$ if and only if all of its vertices lie in $Y_1$. Therefore, for any point $x\in X_1$, there exist uniquely $y\in Y_1$, $z\in Z_1$ and $\lambda\in[0,1]$ such that $y=\lambda x+(1-\lambda)z$. Let $g$ be a function in $M_n(C(X_1))$ given by
	\begin{equation}\label{eq:g(x)2}
	    g(x)=\begin{cases}
	f(x),&x\in Y_1;\\
	(2\lambda-1)f(y),&x=\lambda y+(1-\lambda )z,~\lambda\in[\frac 1 2,1],~y\in Y_1,~z\in Z_1;\\
	0,&\text{otherwise.}
	\end{cases}
	\end{equation}
	
	Obviously we have $\|g\|=\|f\|$ and the restriction of $g$ to $Y_1$ is $f$. It remains to estimate the Lipschitz constant of $g$. We shall show that if $x=\lambda y+(1-\lambda )z$ and $x'=\lambda' y'+(1-\lambda')z'$ are two points in $X$ with $y,y'\in Y_1$, $z,z'\in Z_1$ and $\lambda,\lambda'\in[\frac 1 2,1]$, then
	\begin{equation}\label{eq:dxx'>=lam-lam'}
	d(x,x')\geqslant |\lambda-\lambda'|
	\end{equation}
	and
	\begin{equation}\label{eq:dxx'>=dyy'}
	d(x,x')\geqslant \frac{d(y,y')}{6(m+1)}.
	\end{equation}
	
	Let us first assume that $x$ and $x'$ are in the same simplex of $X_1$. More precisely, assume that
	$$x= \lambda\big(\sum_{i=1}^{m_1} t_i v_i\big)+(1-\lambda)\big(\sum_{j=1}^{m_2} s_j w_j\big)\text{ and }x'=\lambda'\big(\sum_{i=1}^{m_1}  t_i' v_i'\big)+(1-\lambda')\big(\sum_{j=1}^{m_2} s_j' w_j'\big),
	$$
	where $\{v_i,w_j\}_{1\leqslant i\leqslant m_1,1\leqslant j\leqslant m_2}$ generates a simplex in $X_1$. Then $d(x,x')$ equals the $\ell^1$-norm of the following vector
	$$x-x'=\sum_{i=1}^{m_1} (\lambda t_i-\lambda' t_i') v_i+\sum_{j=1}^{m_2}\Big((1-\lambda) s_j-(1-\lambda')s_j'\Big) w_j.$$
	
	As $\sum_{i=1}^{m_1}(\lambda t_i-\lambda' t_i')=\lambda-\lambda'$, we have
	$$d(x,x')\geqslant \sum_{i=1}^{m_1}|\lambda t_i-\lambda' t_i'|\geqslant |\lambda-\lambda'|,$$
    which implies line \eqref{eq:dxx'>=lam-lam'}. Now observe that, if $|\lambda-\lambda'|\geqslant\frac{d(x,x')}{3m_1}$, then
    $$d(x,x')\geqslant |\lambda-\lambda'|\geqslant \frac{d(y,y')}{3m_1}\geqslant \frac{d(y,y')}{6(m+1)}$$
    as $m_1\leqslant m+1$, which implies \eqref{eq:dxx'>=dyy'}.
    It remains to consider the case where $|\lambda-\lambda'|\leqslant\frac{d(y,y')}{3m_1}$.
	Without loss of generality, we assume that
	$$|t_1-t_1'|=\max\{|t_i-t_i'|:i=1,2,\cdots,m_1 \}$$
	It follows that 
	$$|t_1-t_1'|\geqslant \frac{1}{m_1}\sum_{i=1}^{m_1}|t_i-t_i'|=\frac{d(y,y')}{m_1}.$$
	We conclude that
	$$d(x,x')\geqslant \lambda|t_1-t_1'|-|\lambda-\lambda'||t_1'|\geqslant
	\frac{d(y,y')}{6m_1}\geqslant \frac{d(y,y')}{6(m+1)},
	$$
which proves inequality \eqref{eq:dxx'>=dyy'} in this case.

Note that the distance between two general points of $X$ is the length of the shortest piecewise linear path between them. Hence applying the above special case to the end points of the line segments appearing in such a shortest piecewise linear path, the general case of the inequalities  \eqref{eq:dxx'>=lam-lam'} and \eqref{eq:dxx'>=dyy'} follows from the standard triangle inequality.  Therefore, we see that the function $g$ in line \eqref{eq:g(x)2} is a $(6(m+1)L+2)$-Lipschitz function on $X_1$ with $r(g)=f$ and $\|g\|=\|f\|=1$. This proves the claim.
	
	By Proposition \ref{prop:sixterm1}, we obtain a six-term asymptotically exact sequence
	\begin{equation}\label{cd:ase-rel}
	\begin{split}
	     \xymatrix{
		\{K_0^L(C(X_1,Y_1))\}_{L\geqslant 0}
		\ar[r]&\{K_0^L(C(X_1))\}_{L\geqslant 0}
		\ar[r]&\{K_0^L(C(Y_1))\}_{L\geqslant 0}
		\ar[d]\\
		\{K_1^L(C(Y_1))\}_{L\geqslant 0}
		\ar[u]&\{K_1^L(C(X_1))\}_{L\geqslant 0}
		\ar[l]&\{K_1^L(C(X_1,Y_1))\}_{L\geqslant 0}\ar[l]
	}
	\end{split}
	\end{equation}
	where all control functions of the controlled homomorphisms and exactness only depend on $m$. It follows from Theorem \ref{prop:fullyfaithfulofC(X)} that $\{K_*^L(C(X_1))\}_{L\geqslant 0}$ is uniformly controlled in the sense of Definition \ref{def:fullyfaithful1}.
	Therefore, it suffices to verify that $\{K_*^L(C(Y_1))\}_{L\geqslant 0}$ is uniformly controlled.
	It then follows from Lemma $\ref{lemma:fivelemma1}$ that $\{K_*^L(C(X_1,Y_1))\}_{L\geqslant 0}$ is uniformly controlled, hence proves the theorem. 
	
	There are two natural metrics on $Y_1$: one is the metric inherited from $X_1$, and the other is the standard simplicial metric on $Y_1$ given by the simplicial structure of $Y_1$. 
	We denote the two metrics by $d_{X_1}$ and $d_{Y_1}$ respectively.
	By Theorem \ref{prop:fullyfaithfulofC(X)}, the inductive system
	$\{K_*^L(C(Y_1))\}_{L\geqslant 0}$ is uniformly controlled with the Lipschitz filtration on  $C(Y_1)$ given with respect to the metric $d_{Y_1}$, where the uniform control pair  depends only on $m$.
	\begin{claim*}
		The inductive system
$\{K_*^L(C(Y_1))\}_{L\geqslant 0}$ is also uniformly controlled with the Lipschitz filtration on  $C(Y_1)$ given with respect to the metric $d_{X_1}$, where the uniform control pair also depends only on $m$.
	\end{claim*}
Note that\footnote{Although $d_{Y_1}$ and $d_{X_1}$ are bi-Lipschitz equivalent,  there are no uniform bounds on the Lipschitz constants for  bi-Lipschitz equivalences between $d_{Y_1}$ and $d_{X_1}$,  when $Y_1$ and $X_1$ vary. } for any $y,y'\in Y_1$, 
	 $$d_{Y_1}(y,y')=d_{X_1}(y,y')$$
	 if either is no more than $1$. As both projections and unitaries have norm $\leq 1$, the inductive systems $\{K_*^L(C(Y_1))\}_{L\geqslant 0}$ with respect to the metric $d_{Y_1}$ and $d_{X_1}$ coincide as long as  $L\geqslant 2$, cf. line $\eqref{eq:lipschitz}$. Therefore $\{K_*^L(C(Y_1))\}_{L\geqslant 0}$ is also uniformly controlled with respect to the metric $d_{X_1}$, where the uniform control pair only depends on $m$. 
	 
	 Now apply Lemma \ref{lemma:fivelemma1} to the six-term asymptotically exact sequence in $\eqref{cd:ase-rel}$, and we conclude that $\{K_*^L(C(X_1,Y_1))\}_{L\geqslant 0}$ is uniformly controlled, where the uniform control pair only depends on $m$. 
\end{proof}

Now we are ready  to prove the general case of Theorem \ref{thm:introLipschitzC(X)}.

\begin{proof}[Proof of Theorem $\ref{thm:introLipschitzC(X)}$]

By assumption,  $X$ is a locally compact $m$-dimensional simplicial complex equipped with the standard simplicial metric.	Let $Y$ a compact simplicial subcomplex of $X$ and  $\partial Y$ is the boundary of $Y$ in $X$.  By Theorem \ref{prop:relativeK-group},  $\{K_*^L(C(Y, \partial Y))\}_{L\geqslant 0}$ is uniformly controlled in the sense of Definition $\ref{def:fullyfaithful1}$, where the uniform control pair only depends on $m$, for all compact simplicial subcomplexes $Y$ of $X$. A priori, the Lipschitz filtration on $C(Y, \partial Y)$ is given with respect to the standard simplicial metric on $Y$.  However, similar to the proof of Theorem \ref{prop:relativeK-group}, by working with the first barycentric subdivision of $X$ and $Y$, the metric on $Y_1$ inherited from $X_1$ and the standard simplicial metric on $Y_1$ itself, coincide for pairs of points with distance $\leqslant 1$. Now note that the $C^*$-algebra $C_0(X)$ is the inductive limit of $C(Y,\partial Y)$, where $Y$ runs through all compact simplicial subcomplexes of $X$. It follows that there exists a constant $L_m$ \textup{(}depending only on $m$\textup{)} such that every class in $K_*(C_0(X))$ admits an $L_m$-Lipschitz representative.
	
	 Furthermore, if  $\alpha \in K_*(C_0(X))$ can be represented by an element that is constant outside a compact set $K$, then $\alpha$ is in the image of the natural map $K_\ast(C(Y, \partial Y)) \to K_*(C_0(X)) $ for some compact simplicial subcomplex $Y$ of $X$ that is contained in the  $1$-neighborhood of $K$. This finishes the proof. 
	
\end{proof}
\section{Decay of scalar curvature on manifolds with finite asymptotic dimension}\label{sec:psc-fad}
In this section, we prove the main theorem of our article (Theorem \ref{thm:main}).
\subsection{Spaces with finite asymptotic dimension}\label{subsec:asymdim}
In this subsection, we recall the definition and some properties of spaces with finite asymptotic dimension.
\begin{definition}[Gromov, \cite{Gromov1993asymptotic}]\label{def:asymdim}
	The asymptotic dimension of a metric space $X$ is the smallest integer $m$ such that for any $r>0$, there exists a uniformly bounded cover $\sC_r=\{U_i\}_{i\in I}$ of $X$ for which the $r$-multiplicity of $\sC_r$ is at most $(m+1)$,  i.e., no ball of radius $r$ in $X$ intersects more than $(m+1)$ members of $\sC_r$.
\end{definition}

Let $X$ be a metric space with asymptotic dimension $m$. Denote by $d_X$ the metric on $X$. For any $r>0$, let $\sC_r = \{U_i\}_{i\in I}$ be a uniformly bounded cover of $X$ with $r$-multiplicity at most $(m+1)$ and the diameter of $U_i$ uniformly bounded by $R$.
Let $\sC'_r$ be the collection of $r$-neighborhoods $U_i':=B(U_i,r)$ of $U_i$. The cover $\sC'_r$ is also uniformly bounded with diameter $R+2r$, and has Lebesgue number at least $r$, i.e., every $r$-ball in $X$ is contained in some of the members of $\sC'_r$. Let $\sN_r$ be the \emph{nerve complex} of $\sC'_r$, that is, $\{U_{i_1}',\cdots,U_{i_k}' \}$ spans a simplex if they have non-empty intersection. Note that the dimension of $\sN_r$ is at most $m$. 

Equip $\sN_r$ with the standard simplicial metric, denoted by  $d_{\sN_r}$. We define a map  $f_r\colon X \to \sN_r$ by setting 
\begin{equation}\label{eq:f_r}
f_r(x)=\frac{\sum_{i\in I}d_X(x,X-U_i')U_i'}{\sum_{i\in I}d_X(x,X-U_i')},
\end{equation}
where it is not difficult to verify that $f_r(x)$ is indeed a point in the nerve complex $\sN_r$.
\begin{proposition}\label{prop:Lip&cobounded}
	With the same notation as above, we have the following. 
	\begin{enumerate}[$(1)$]
		\item $f_r$ is $\frac{(m+1)^2}{r}$-Lipschitz.
		\item $f_r$ is uniformly cobounded. More precisely,  if a bounded subset $K\subset \sN_r$ has diameter $d$, then $f^{-1}_r(K)$ has diameter no more than $Rd+2(R+2r)$.
	\end{enumerate}
\end{proposition}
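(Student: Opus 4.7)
The plan is to treat $f_r$ as a partition-of-unity construction. Set $\varphi_i(x):=d_X(x, X\setminus U_i')$ (which is $1$-Lipschitz in $x$), $\Phi(x):=\sum_i \varphi_i(x)$, and $t_i(x):=\varphi_i(x)/\Phi(x)$; then $f_r(x)=\sum_i t_i(x)\,U_i'$ expresses $f_r(x)$ in barycentric coordinates on $\sN_r$. Two combinatorial inputs drive both parts: \textbf{(i)} $\Phi(x)\geqslant r$ for every $x$, because the Lebesgue-number property of $\sC'_r$ supplies some $U_{i_0}'\supseteq B(x,r)$, forcing $\varphi_{i_0}(x)\geqslant r$; and \textbf{(ii)} at most $m+1$ of the $\varphi_i(x)$ are nonzero at any $x$, since $x\in U_i'$ iff $B(x,r)\cap U_i\neq\emptyset$ and the $r$-multiplicity of $\sC_r$ is $m+1$.

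For part (1), I would apply the standard quotient-rule estimate
\[
|t_i(x)-t_i(y)|\leqslant\frac{|\varphi_i(x)-\varphi_i(y)|}{\Phi(y)}+\frac{\varphi_i(x)\,|\Phi(x)-\Phi(y)|}{\Phi(x)\,\Phi(y)},
\]
sum over $i$, and invoke (i) to bound the denominators below by $r$, (ii) to bound the number of nontrivially contributing indices by $\leqslant 2(m+1)$, and the $1$-Lipschitz property of each $\varphi_i$. This produces the $\ell^1$ estimate $\sum_i |t_i(x)-t_i(y)|\leqslant (m+1)^2\,d_X(x,y)/r$. To pass from the $\ell^1$ coefficient bound to the standard simplicial metric on $\sN_r$, I would observe that for $d_X(x,y)$ small enough the supports of $\varphi_i(x)$ and $\varphi_i(y)$ jointly lie in a common simplex, where the $\ell^1$ coefficient distance coincides with the simplicial distance, and then chain local estimates together along a sequence of sufficiently close pairs joining $x$ to $y$.

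For part (2), the central identity is $f_r^{-1}(\mathrm{st}(U_i'))=U_i'$, so the preimage of every open star in $\sN_r$ is a single element of $\sC'_r$, of diameter $\leqslant R+2r$. Given $x,x'\in f_r^{-1}(K)$ and a length-$\leqslant d$ path $\gamma$ from $f_r(x)$ to $f_r(x')$ in $\sN_r$, I would cover $\gamma$ by a finite sequence of open stars $\mathrm{st}(U_{w_0}'),\ldots,\mathrm{st}(U_{w_p}')$ with consecutive stars intersecting. Since the intersection of two open stars in the nerve forces $U_{w_i}'\cap U_{w_{i+1}}'\neq\emptyset$, picking $z_i$ in each such overlap yields a chain
\[
x\in U_{w_0}'\ni z_0\in U_{w_1}'\ni z_1\in\cdots\in U_{w_p}'\ni x',
\]
whose total length in $X$ is controlled by the diameters of the $U_{w_i}'$ traversed. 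To obtain the precise form $Rd+2(R+2r)$, I would exploit that for each $x\in X$ there exists a vertex $U_{i_0}'\in\sigma(f_r(x))$ with $x\in U_{i_0}$ (since the original cover $\sC_r$ covers $X$): the two endpoint hops from $x,x'$ contribute $R+2r$ each, while the interior hops can be anchored inside the smaller original sets $U_i\subset U_i'$ of diameter $R$.

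The main obstacle is the careful constant-tracking in part (2): improving the transit contribution from $R+2r$ to $R$ per interior hop requires anchoring each intermediate chain-point inside some original $U_i\in\sC_r$ rather than in its $r$-neighborhood, and bounding the number $p$ of star-transitions along $\gamma$ by the length $d$ together with the Lebesgue number of the open-star cover on the $m$-dimensional simplicial complex $\sN_r$. A subsidiary subtlety in part (1) is the $\ell^1$-to-simplicial passage, which reduces to verifying that sufficiently close points of $X$ produce support sets within a common simplex of $\sN_r$.
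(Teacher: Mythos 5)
Your approach to part (1) is essentially the paper's. Both express $f_r$ through barycentric coordinates $t_i = \varphi_i/\Phi$, both invoke the Lebesgue-number bound $\Phi\geqslant r$ and the $r$-multiplicity bound on the number of active indices, and both reduce the simplicial-metric estimate to the same-simplex case, where it coincides with the $\ell^1$ coefficient estimate. The paper states the per-coordinate bound $(m+1)/r$ directly (one gets this by cancelling the $k$-th term of $\Phi(x)-\Phi(y)$ against the numerator before applying the triangle inequality), whereas your raw quotient rule gives $(m+2)/r$ per coordinate; however, summing the quotient-rule inequality over the simplex vertices before bounding gives $2(m+1)/r$, which is $\leqslant(m+1)^2/r$ for $m\geqslant 1$, so your claimed constant does follow. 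The reduction you flag as a ``subsidiary subtlety'' (passing from the $\ell^1$ coefficient bound to the simplicial metric via a common simplex) is exactly the step the paper declares suffices without elaborating; it is handled by subdividing a path in $X$ finely enough that on each piece $f_r$ stays in one simplex.

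For part (2) the paper declares the proof elementary and omits it, so there is nothing to compare against. Your chain-of-open-stars argument is a sensible route and the identity $f_r^{-1}(\mathrm{st}(U_i'))=\mathrm{int}(U_i')$ (not $U_i'$, a harmless slip) is the right starting point. The one step that would not survive scrutiny is the claim that ``the interior hops can be anchored inside the smaller original sets $U_i\subset U_i'$ of diameter $R$'': the intermediate chain points $z_i$ are forced to live in $U_{w_i}'\cap U_{w_{i+1}}'$, which need not meet the original $U_{w_i}$ at all, so the interior hop length is only bounded by $\diam(U_{w_i}')\leqslant R+2r$, not by $R$. You also need to justify the bound on the number $p$ of star-transitions in terms of $d$; the Lebesgue number of the open-star cover of an $m$-complex is of order $1/(m+1)$, which introduces an extra $m$-dependent factor unless one argues more carefully. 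None of this affects the intended application, where any bound linear in $d$ suffices, but the specific constant $Rd+2(R+2r)$ would need a tighter argument than the one you sketch.
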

\begin{proof}
The proof is elementary. We shall only provide the details for part (1). It suffices to show that
	$$d_{\sN_r}(f_r(x),f_r(y))\leqslant \frac{(m+1)^2}{r} d_X(x,y)$$
	when $f_r(x)$ and $f_r(y)$ lie in the same simplex, say $[U_{i_0},U_{i_1},\cdots,U_{i_{m'}}]$,  of $\sN_r$, where  $m'\leqslant m$. From the definition of standard simplicial metrics, we only need to show that the function on $X$ given by
	$$x\mapsto \frac{d_X(x,X-U_{i_k}')}{\sum_{j=0}^{m'}d_X(x,X-U_{i_j}')}$$
	is $\frac{(m+1)}{r}$-Lipschitz for each $k=0,1,\cdots,m'$. 
	Obviously, each term $d_X(x,X-U_{i_k}')$ is $1$-Lipschitz.
	By the definition of $f_r$, we see that $x$ and $y$ lie in the intersection $U_{i_0}'\cap U_{i_1}'\cap\cdots \cap U_{i_{m'}}'$. Since the cover $\{U_i'\}$ has Lebesgue number at least $r$, the denominator $\sum_{j=0}^{m'}d_X(x,X-U_{i_j}')$ is greater than $r$. Hence the proposition follows.
\end{proof}
\begin{remark}
	In fact, each of the following is equivalent to the definition of finite asymptotic dimension given in Definition $\ref{def:asymdim}$, cf.  \cite[Theorem 1]{MR2725304}.
	\begin{enumerate}[(a)]
		\item For every $r>0$, there exists a $m$-dimensional simplicial complex $\sN_r$ and a map $f_r$ as in in Proposition $\ref{prop:Lip&cobounded}$ satisfying properties (1) and (2). 
		\item For every $r>0$, there exists a uniformly bounded cover such as $\sC'_r = \{U'_i\}_{i\in I}$ above with Lebesgue number $r$ and multiplicity $\leq (m+1)$. 
	\end{enumerate}	
\end{remark}
\begin{definition}\label{def:F-asymdim}
	Let $\fD\colon\R_{\geqslant 0}\to\R_{\geqslant 0}$ be a non-decreasing function with $\fD(x)\geqslant x$. We say that a metric space $X$ has asymptotic dimension $\leqslant m$ with diameter control $\fD$ if for any $r>0$,  there exists a uniformly bounded cover $\sC_r=\{U_i\}_{i\in I}$ of $X$ such that the $r$-multiplicity of $\sC_r$ is at most $(m+1)$  and
	$$\sup_{i\in I}\diam(U_i)\leqslant \fD(r).$$
\end{definition}
\subsection{Uniformly contractible spaces with finite asymptotic dimension}
In this subsection, we show that if a metric space is uniformly contractible and has finite asymptotic dimension, then the Lipschitz map $f_r$ in Proposition \ref{prop:Lip&cobounded} admits a left homotopy inverse.

Let us first recall the definition of uniform contractibility.
\begin{definition}
	Let $\fR\colon\R_{\geqslant 0}\to\R_{\geqslant 0}$ be a non-decreasing function with $\fR(x)\geqslant x$. We say a metric space $X$ is uniformly contractible with contractibility radius $\fR$ if the metric ball $B(x, r)$ of radius $r$ centered at any point $x\in X$ becomes contractible in $B(x, \fR(r))$. More precisely,  for any $x\in X$, there exists a continuous map $H\colon B(x,r)\times [0,1]\to B(x,\fR(r))$ such that  $H(y,0)=y$ and $H(y,1)=x$ for all $y\in B(x,r)$.
\end{definition}

Assume that a metric space $X$ is uniformly contractible with contractbility radius $\fR$, and has asymptotic dimension $\leqslant m$ with diameter control $\fD$ (cf. Definition \ref{def:F-asymdim}). We shall construct a map $g_r\colon\sN_r\to X$ that satisfies the following condition:  for any $1\leqslant k\leqslant m$ and any $k$-simplex $[U_{i_0}',U_{i_1}',\cdots, U_{i_{k}}']$ of $\sN_r$, there exists a point $x\in U_{i_0}'\cap U_{i_1}'\cap \cdots\cap U_{i_{k}}'$ such that $g_r([U_{i_0}',U_{i_1}',\cdots, U_{i_{k}}'])\subset B(x,F_{k}(r))$, where
$F_{k}$ is defined  inductively with 
\begin{equation}\label{eq:F_k}
\begin{cases}
F_{1}(y)=\fR(\fD(r)+r);\\
F_{k}(y)=\fR(F_{k-1}(y)+\fD(r)+2r).
\end{cases}
\end{equation}
In particular, if the function $\fR$ is convex, i.e., 
$$\fR(x+y)\geqslant \fR(x)+\fR(y),\ \forall x,y\geqslant 0,$$
then we have $F_{k}(x)\leqslant \fR(2^{2k-1}\fD(x)).$

Let us construct $g_r$ by induction on the skeletons of $\sN_r$. For each vertex $U_i'$ of $\sN_r$, we choose a point $x_i$ in $U_i$ and define $g_r(U_i)=x_i$.

 Now if $U_i'$ and $U_j'$ span an $1$-simplex in $\sN_r$, then the intersection of $U_i'$ and $U_j'$ is non-empty in $X$. For any point $x\in U_i'\cap U_j'$,  $g_r(U_i')$ and $g_r(U_j')$ are contained in the $(\fD(r)+r)$-neighborhood of $x$. Since $X$ is uniformly contractible with contractbility radius $\fR$, we can connect $g_r(U_i')$ and $g_r(U_j')$ by a path within the $\fR(\fD(r)+r)$-neighborhood of $x$.

Assume that we have already defined $g_r$ on the $k$-skeleton of $\sN_r$ with the required properties. In particular, for each  $(k+1)$-simplex $[U_{i_0}',U_{i_1}',\cdots, U_{i_{k+1}}']$, $g_r$ has already been defined on its boundary and  there exists $x\in U_{i_0}'\cap U_{i_1}'\cap \cdots\cap U_{i_{k+1}}'$ such that
$$g_r(\partial [U_{i_0}',U_{i_1}',\cdots, U_{i_{k+1}}'])\subset B(x,F_{k}(r)+\fD(r)+2r).$$
Since $X$ is uniformly contractible with contractbility radius $\fR$, we can extend the map $g_r$ from $\partial [U_{i_0}',U_{i_1}',\cdots, U_{i_{k+1}}']$ to $[U_{i_0}',U_{i_1}',\cdots, U_{i_{k+1}}']$  within the $F_{k+1}(r)$-neighborhood of $x$.

By construction, we see that $g_r$ is also a uniformly cobounded map, i.e., for any $d>0$ and $x\in X$,
$$\diam(g_r^{-1}(B(x,d)))\leqslant \frac{1}{R}(d+2F_{m}(r)),$$
where $R=\sup_{i\in I}\diam(U_i)\leqslant \fD(r).$
\begin{lemma}\label{lemma:homotopy-id}
	Let $X$ be a Riemannian manifold with bounded geometry. Assume that under this Riemannian metric, $X$ is uniformly contractible and has finite asymptotic dimension, then $g_r f_r\colon X\to X$ is homotopic to the identity map.
\end{lemma}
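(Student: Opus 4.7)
The strategy is to show first that $g_r\circ f_r$ sits at uniformly bounded distance from $\mathrm{id}_X$, and then to build the homotopy skeleton by skeleton on a bounded-geometry triangulation of $X$, using uniform contractibility at each extension step. For the displacement bound, fix $x\in X$ and let $\sigma=[U'_{i_0},\ldots,U'_{i_k}]$ be the carrier simplex of $f_r(x)$ in $\sN_r$, i.e., the smallest simplex containing $f_r(x)$. Inspecting formula \eqref{eq:f_r}, the barycentric coordinate of $f_r(x)$ at $U'_{i_j}$ is a positive multiple of $d_X(x,X\setminus U'_{i_j})$, which is positive; hence $x\in U'_{i_0}\cap\cdots\cap U'_{i_k}$. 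By the inductive construction of $g_r$ preceding \eqref{eq:F_k}, there is a point $y_\sigma$ in this same intersection with $g_r(\sigma)\subset B(y_\sigma,F_k(r))$. Since both $x$ and $y_\sigma$ lie in $U'_{i_0}$, which has diameter at most $\fD(r)+2r$, the triangle inequality yields
\[ d_X\bigl(x,g_r(f_r(x))\bigr)\leqslant F_m(r)+\fD(r)+2r=:C. \]

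For the construction of the homotopy, bounded geometry provides a smooth triangulation $T$ of $X$ of mesh at most $1$. Set $R_0=\fR(C)$ and $R_k=\fR(R_{k-1}+C+1)$ for $k\geqslant 1$. I construct $H\colon X\times[0,1]\to X$ with $H(\cdot,0)=\id$ and $H(\cdot,1)=g_r\circ f_r$ inductively on the dimension of the product cells $\Delta\times[0,1]$, where $\Delta$ runs over simplices of $T$, maintaining the hypothesis that for every $k$-simplex $\Delta$ the image $H(\Delta\times[0,1])$ lies in a ball of radius $R_k$ about some point of $X$. For a vertex $v$, let $H(v,\cdot)$ be any path from $v$ to $g_r(f_r(v))$ inside $B(v,\fR(C))$; such a path exists by uniform contractibility, since $d_X(v,g_r(f_r(v)))\leqslant C$. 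For the inductive step, given a $k$-simplex $\Delta$ with vertex $v_0$, the set $(\partial\Delta\times[0,1])\cup(\Delta\times\{0,1\})$ is homeomorphic to the boundary sphere of the topological $(k{+}1)$-disk $\Delta\times[0,1]$, and a short check using $\diam(\Delta)\leqslant 1$ and the inductive hypothesis shows that the already-defined values of $H$ on this sphere lie in $B(v_0,R_{k-1}+C+1)$. Uniform contractibility of this ball inside its $\fR$-enlargement then lets me extend $H$ over $\Delta\times[0,1]$ with image contained in $B(v_0,R_k)$.

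\textbf{Main obstacle.} The principal technical point is the control of image diameters: each inductive extension enlarges the ambient ball by one more application of $\fR$, so the bounds $R_0,\ldots,R_n$ are finite only because $n=\dim X<\infty$. The uniform mesh bound on $T$ is crucial, as it ensures that the distance between vertices of any simplex contributes at most a bounded additive term per inductive step; and the hypothesis of bounded geometry is precisely what supplies such a triangulation. After $\dim X$ steps the construction terminates on all of $X\times[0,1]$, yielding the desired continuous homotopy from $\id_X$ to $g_r\circ f_r$.
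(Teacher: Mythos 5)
Your proof is correct, and it takes a genuinely different route from the paper's. The paper introduces an auxiliary nerve complex $\sN$ coming from a finite-multiplicity geodesically convex cover supplied by bounded geometry, defines maps $f\colon X\to\sN$, $g\colon\sN\to X$, and $h_r\colon\sN\to\sN_r$, and then assembles the homotopy from three commuting triangles: $gf\simeq\id$ (a nerve-theorem-type fact for good covers), $f_r\simeq h_r f$ (a linear homotopy inside simplices of $\sN_r$), and $g\simeq g_r h_r$ (skeleton induction on $\sN$). You instead prove an explicit displacement bound $d_X(x,g_r f_r(x))\leqslant C\coloneqq F_m(r)+\fD(r)+2r$ directly from the carrier-simplex description of $f_r(x)$ and the inductive construction of $g_r$, and then build the homotopy $H\colon X\times[0,1]\to X$ cell by cell over a bounded-mesh triangulation of $X$, using uniform contractibility to extend over each $\Delta\times[0,1]$ and finite dimension to guarantee that the nested radii $R_0,R_1,\ldots$ stay finite. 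This is more direct and more quantitative: it avoids the auxiliary nerve $\sN$ (and the implicit good-cover nerve argument that the paper invokes) and produces explicit control on the tracks of the homotopy. The paper's factorization is conceptually cleaner and reuses structure already present in the section, but yours is closer to a one-pass construction. One small point worth tightening in a final write-up: in your inductive hypothesis the ball of radius $R_k$ should be stated to be centered at a chosen vertex of $\Delta$ (not just ``some point''), so that passing from a face $\Delta'$ to $\Delta$ costs only an additive $\diam(\Delta)\leqslant 1$ rather than an extra $R_{k-1}$; with that phrasing your recursion $R_k=\fR(R_{k-1}+C+1)$ is comfortably sufficient.
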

\begin{proof}
As $X$ has bounded geometry, there exists $\varepsilon>0$ such that for any $x\in X$, $B(x,\varepsilon)$ is contained in a geodesically convex neighborhood of $x$. As $X$ is finite dimensional, there is a refinement $\{V_j \}_{j\in J}$ of the open cover $\{B(x,\varepsilon) \}$ such that each $V_j$ is still geodesically convex and  the multiplicity of $\{V_j \}_{j\in J}$ is at most $(\dim X+1)$. We may further assume that the open cover $\{V_j \}_{j\in J}$ has positive Lebesgue number, since $X$ has bounded geometry. 

 Consider the nerve complex $\sN$ of this open cover $\{V_j \}_{j\in J}$, which is a finite dimensional simplicial complex. We define maps
$$f\colon X\to \sN\text{ and }g\colon\sN\to X$$
similar to the construction of $f_r$ and $g_r$ above. As $\sN$ is the nerve complex of an open cover consisting of geodesically convex neighborhoods, it is clear that $gf$ is homotopic to the identity map.
 
Now we define the following map 
\begin{equation}
h_r\colon\sN\to \sN_r,\ \sum_{j\in J}t_jV_j\mapsto \sum_{j\in J}t_j\frac{\sum_{i\in I}d_X(V_j,X-U_i')U_i'}{\sum_{i\in I}d_X(V_j,X-U_i')}.
\end{equation}
When $r>\varepsilon$, $f_r(x)$ and $h_r(f(x))$ always lie in the same $m$-simplex in $\sN_r$ for every $x\in X$. Therefore there is a linear homotopy connecting $f_r$ and $h_r f$.

Furthermore, as $\sN$ is a finite dimensional complex, we also obtain a homotopy of $g\colon \sN\to X$ and $g_rh_r\colon\sN\to X$ constructed by induction on the skeletons of $\sN$. To summarize,  we have the following diagram
$$\begin{tikzcd}
X \arrow[rr, "\id"] \arrow[rd, "f"] \arrow[rdd, "f_r"'] &                                               & X \\
& \sN \arrow[d, "h_r"] \arrow[ru, "g"] &   \\
& \sN_r \arrow[ruu, "g_r"']           &  
\end{tikzcd}$$
where each of the three small triangles commutes up to homotopy. Therefore the large triangle also commutes up to homotopy, that is, $g_rf_r$ is homotopic to the identity map.
\end{proof}
\begin{remark}
	In general, $f_rg_r\colon \sN_r\to \sN_r$ is not necessarily homotopic to the identity map, and the simplicial complex $\sN_r$ may not be uniformly contractible.
\end{remark}
\subsection{Decay of scalar curvature}

In this subsection, we  prove the main theorem of this article (Theorem \ref{thm:main}) and discuss its applications.

\begin{proof}[Proof of Theorem $\ref{thm:main}$]
	If $X$ is odd dimensional, we consider $X\times\R$ instead. Thus we assume that $X$ is even dimensional. Fix $x_0\in X$. For any $\varepsilon>0$ small enough, let $B(x_0,\varepsilon)$ be the geodesic open neighborhood of $x_0$. Let $\beta$ be the Bott element that generates $K_0(C_0(B(x_0,\varepsilon)))$, then  we have
	$$\langle[D],\beta\rangle\ne 0,$$
cf. \cite[Proposition 3.33]{Roecoarse} and \cite[Lemma 9.6.9]{willett2020higher}. 
Given any $r>0$, let $f_r\colon X\to\sN_r$ be the map defined in line \eqref{eq:f_r}. By Proposition \ref{prop:Lip&cobounded}, $f_r$ is  $\frac{(m+1)^2}{r}$-Lipschitz. By Lemma \ref{lemma:homotopy-id},  $f_r$ admits a left homotopy inverse $g_r\colon \sN_r\to X$. Therefore, we have
	$$\langle[D],\beta\rangle=\langle[D],f_r^*g_r^*\beta\rangle
	=\langle (f_r)_*[D],g_r^*\beta\rangle.
	$$
	
	Since $\beta$ is constant outside $B(x_0,\varepsilon)$, $g_r^*\beta$ is constant outside $g_r^{-1}(B(x_0,\varepsilon))$. By Theorem \ref{thm:introLipschitzC(X)}, 
	there are projections $p,q$ in $M_n(C_0(\sN_r)^+)$ such that $p,q$ are $L_m$-Lipschitz, $p-q$ are supported in the $1$-neighborhood of $g_r^{-1}(B(x_0,\varepsilon))$, and $g_r^*\beta=[p]-[q]$.
	Therefore we have 
	$$\langle[D],[f_r^*p]-[f_r^*q]\rangle\ne 0.$$
	Note that
	\begin{enumerate}
		\item $f_r^*p$ and $f_r^*q$ are matrix-valued Lipschitz functions on $X$ with Lipschitz constant $\frac{(m+1)^2L_m}{r}$;
		\item $f_r^*p-f_r^*q$ is supported in $B(x_0,\varepsilon+2F_{m}(r)+2\fD(r))$, where $F_m$ is constructed by induction in line \eqref{eq:F_k}.
	\end{enumerate}
	Let $\varepsilon$ go to zero. By Proposition \ref{prop:pairing-scalarcurvature}, we obtain that 
	$$ \textstyle k_{x_0}(2F_{m}(r)+2\fD(r)+\frac{C_1(m+1)^2r}{L_m})\leqslant \frac{C_2(m+1)^2L_m}{r^2},$$
	where $k_{x_0}(r)$ be the infimum of the scalar curvature function on the ball $B(x_0,r)$.
	If we set 
	$$\textstyle G(r)=2F_{m}(\frac{\sqrt r}{(m+1)\sqrt{C_2L_m}})+
	2\fD(\frac{\sqrt r}{(m+1)\sqrt{C_2L_m}})+
	\frac{C_1(m+1)\sqrt r}{L_m\sqrt{C_2L_m}},$$
	then we have
	$k_{x_0}(G(r))\leqslant \frac{1}{r}.$
	The theorem now follows by setting
	$$F(x)\coloneqq \sup\{y:G(y)\leqslant x \},$$
	where we have adopted the convention $\sup\emptyset=0$.
\end{proof}

For a metric space, a subspace is called a net (or more precisely, $C$-net), if there exists $C>0$ such that its $C$-neighborhood covers the total space. As the function $F$ in Theorem \ref{thm:main} is independent of the base point $x_0$, we obtain the following corollary.
\begin{corollary}\label{coro:net2}
	Let $X$ be a complete Riemannian manifold with bounded geometry. If $X$ is uniformly contractible and has finite asymptotic dimension, then for any $\varepsilon>0$, the points of which the scalar curvature is smaller than $\varepsilon$ form a net in $X$.
\end{corollary}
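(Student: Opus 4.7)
The plan is to deduce this directly from Theorem \ref{thm:main}, exploiting the crucial fact that the function $F$ produced there depends only on the contractibility radius $\fR$, the diameter control $\fD$, and the asymptotic dimension $m$ — but not on the choice of base point $x_0$.

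First, I would fix $\varepsilon > 0$ and set $S_\varepsilon \coloneqq \{x \in X : k(x) < \varepsilon\}$. The goal is to produce a constant $C > 0$ such that the $C$-neighborhood of $S_\varepsilon$ equals $X$. By Theorem \ref{thm:main} applied to $X$, there exists a non-decreasing function $F\colon \R_{\geqslant 0} \to \R_{\geqslant 0}$ with $F(r) \to \infty$ as $r \to \infty$, depending only on $\fR$, $\fD$, and $m$, such that for every $x_0 \in X$ and every $r > 0$,
\[
\inf_{x \in B(x_0,r)} k(x) \leqslant \frac{1}{F(r)}.
\]

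Since $F(r) \to \infty$, I can pick $r_\varepsilon > 0$ large enough that $F(r_\varepsilon) > 1/\varepsilon$, so $1/F(r_\varepsilon) < \varepsilon$. Now for any $x_0 \in X$, the inequality above gives $\inf_{x \in B(x_0, r_\varepsilon)} k(x) < \varepsilon$, and since $B(x_0, r_\varepsilon)$ is relatively compact (by completeness and bounded geometry, it has compact closure), the infimum is attained or approximated by some $y \in B(x_0, r_\varepsilon)$ with $k(y) < \varepsilon$. Thus $y \in S_\varepsilon$ and $d(x_0, y) < r_\varepsilon$, which shows $x_0$ lies in the $r_\varepsilon$-neighborhood of $S_\varepsilon$. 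Since $x_0 \in X$ was arbitrary, $S_\varepsilon$ is an $r_\varepsilon$-net, completing the argument.

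There is essentially no obstacle here: the content of the corollary is really just an unpacking of the base-point-independence of $F$ in Theorem \ref{thm:main}. The only mild subtlety is to note that the infimum over $B(x_0, r_\varepsilon)$ being strictly less than $\varepsilon$ guarantees an actual point in $S_\varepsilon$ within distance $r_\varepsilon$; if one is worried about whether the infimum is attained, one simply replaces $r_\varepsilon$ by a slightly larger value or picks any near-infimizer — either way the net property follows immediately.
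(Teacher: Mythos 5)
Your proof is correct and follows the same route the paper intends: the corollary is a direct unpacking of the base-point independence of $F$ in Theorem \ref{thm:main}, and the paper's own discussion surrounding Corollary \ref{coro:net} says exactly this. The aside about compactness of $B(x_0,r_\varepsilon)$ is unnecessary (a strict inequality for the infimum already yields a point $y$ with $k(y)<\varepsilon$), but it does no harm.
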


 We remark that the conclusion is not true if we drop the uniform contractibility condition on $X$. See Section \ref{subsec:counterexample} for more details.

From the above proof of Theorem $\ref{thm:main}$, we have seen the function $F$ only depends on the contractibility radius function $\mathfrak R$, the diameter control function $\mathfrak D$ and the dimension of $X$. If both $\mathfrak R$ and $\mathfrak D$ are given explicitly, then $F$ can also be explicitly computed. As an example, we have the following corollary. 
\begin{corollary}
Let $X$ be a complete Riemannian manifold with bounded geometry. Let $\fR,\fD$ be non-decreasing functions on $\R_{\geqslant 0}$. Assume that $X$ is uniformly contractible with contractibility radius $\mathfrak R$, and has asymptotic dimension $\leqslant m$ with diameter control $\mathfrak D$. 
	\begin{itemize}
		\item If both $\fD$ and $\fR$ are proportional functions, i.e., $\fD(x)=C_d x$ and $\fR(x)=C_rx$, then there exists $C>0$ that only depends on $(m,C_d,C_r)$ such that
		$$\inf_{x\in B(x_0,r)}k(x)\leqslant \frac{C}{r^2}$$
		 for any $x_0\in X$ and $r>0$.
		\item If both $\fD$ and $\fR$ are linear functions, i.e., $\fR(x)=A_rx+B_r$ and $\fD(x)=A_dx+B_d$, then there exists $(A,B)$ that only depends on $(m,A_d,B_d,A_r,B_r)$ such that 
		$$\inf_{x\in B(x_0,r)}k(x)\leqslant \frac{1}{(\max\{Ar-B,0\})^2}$$
		for any $x_0\in X$ and $r>0$.
	\end{itemize}
\end{corollary}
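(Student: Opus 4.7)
The plan is to substitute the explicit forms of $\mathfrak{R}$ and $\mathfrak{D}$ into the construction of $G$ that appears in the proof of Theorem \ref{thm:main} and then invert, keeping careful track of how the shape of $F_m$ from line \eqref{eq:F_k} propagates through. Recall that in the proof of Theorem \ref{thm:main} one sets
\[
G(r) = 2F_m\!\Big(\tfrac{\sqrt r}{(m+1)\sqrt{C_2 L_m}}\Big) + 2\mathfrak{D}\!\Big(\tfrac{\sqrt r}{(m+1)\sqrt{C_2 L_m}}\Big) + \tfrac{C_1(m+1)\sqrt r}{L_m\sqrt{C_2 L_m}},
\]
and obtains $k_{x_0}(G(r)) \leqslant 1/r$, so that $F(x) = \sup\{y : G(y) \leqslant x\}$ satisfies the conclusion of Theorem \ref{thm:main}. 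The whole argument reduces to computing the shape of $G$ in the two cases, then inverting.

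For the first bullet, I would argue by induction on $k$ that if $\mathfrak{R}(x) = C_r x$ and $\mathfrak{D}(x) = C_d x$, then $F_k(x) = \alpha_k x$ for some constant $\alpha_k$ depending only on $(m, C_d, C_r)$. Indeed, $F_1(x) = C_r(C_d+1) x$, and the recursion $F_k(x) = C_r(F_{k-1}(x) + C_d x + 2x)$ preserves the property of being a homogeneous linear function. Substituting $F_m(x) = \alpha_m x$ and $\mathfrak{D}(x) = C_d x$ into the definition of $G$, one finds $G(r) = B\sqrt{r}$ for some $B = B(m, C_d, C_r)$. Thus $G(y) \leqslant x$ iff $y \leqslant x^2/B^2$, so $F(x) = x^2/B^2$ and $k_{x_0}(r) \leqslant 1/F(r) = B^2/r^2$. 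Setting $C = B^2$ yields the first statement.

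For the second bullet, I would again proceed by induction to show that if $\mathfrak{R}(x) = A_r x + B_r$ and $\mathfrak{D}(x) = A_d x + B_d$, then every $F_k$ is an affine function $F_k(x) = \alpha_k x + \beta_k$, where $\alpha_k, \beta_k$ depend only on $(m, A_d, B_d, A_r, B_r)$. This follows because the recursion $F_k(x) = A_r(F_{k-1}(x) + A_d x + B_d + 2x) + B_r$ preserves affineness. Substituting into the formula for $G$, one obtains $G(r) = A'\sqrt{r} + B'$ for some $A', B'$ depending only on $(m, A_d, B_d, A_r, B_r)$. Inverting gives $F(x) = \big((x-B')/A'\big)^2$ when $x > B'$, and $F(x) = 0$ otherwise. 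Setting $A = 1/A'$ and $B = B'/A'$, the conclusion $k_{x_0}(r) \leqslant 1/(\max\{Ar - B, 0\})^2$ follows.

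There is no real obstacle; the only work is the bookkeeping that shows homogeneous (resp.\ affine) linearity is preserved through the recursion defining $F_k$ and through the formula for $G$, and then the elementary inversion of $G$. In particular, both cases rely on the same observation: the composition and sum of finitely many affine functions is affine, and a homogeneous linear function in $\sqrt{r}$ inverts to a quadratic function in $r$.
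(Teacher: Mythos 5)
Your proof is correct and follows exactly the route the paper intends: the corollary is stated without explicit proof after Theorem \ref{thm:main}, and the natural argument is precisely to substitute the given forms of $\fR$, $\fD$ into the recursion \eqref{eq:F_k} (observing that homogeneous linearity, resp.\ affineness, is preserved), then into the formula for $G$, and invert. Your bookkeeping of the resulting $G(r)=B\sqrt{r}$ and $G(r)=A'\sqrt{r}+B'$, and the subsequent inversion including the $\sup\emptyset=0$ convention that produces $\max\{Ar-B,0\}$, is all accurate.
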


More specifically,  let us consider metrics on $\R^n$ (reps. $\bH^n$) with $n\geqslant 2$  that are quasi-isometric to  the standard Euclidean metric (resp. hyperbolic metric) 
%on $\R^n$  (resp. $\bH^n$). 
Recall that two metrics $d_1$ and $d_2$ on $X$ are quasi-isometric if there are positive constants $L$ and $C$ such that
\begin{equation}\label{eq:quasi-isometry}
L^{-1}d_1(x,y)-C\leqslant d_2(x,y)\leqslant Ld_1(x,y)+C,\ \forall x,y\in X.
\end{equation}
The growth rates of $\fD$ and $\fR$ essentially remain unchanged for  quasi-isometric metrics. In particular, if a metric is quasi-isometric to the standard Euclidean metric (resp. hyperbolic metric) on $\R^n$  (resp. $\bH^n$), then both $\fD$ and $\fR$ (for this metric) are linear functions.  As a consequence, we have the following corollary. 
\begin{corollary}
	If a complete Riemannian metric $d$ on $\R^n$ \textup{(resp. $\bH^n$)} with $n\geqslant 2$ is quasi-isometric to the standard metric on  $\R^n$ \textup{(resp. $\bH^n$)}, then the scalar curvature of $d$ has quadratic decay, i.e., there are positive constants $A$ and $B$ such that 
$$\inf_{x\in B(x_0,r)}k(x)\leqslant \frac{1}{(\max\{Ar-B,0\})^2}.$$
	for any $x_0\in \R^n$ \textup{(resp. $x_0\in \bH^n$)} and $r>0$.
\end{corollary}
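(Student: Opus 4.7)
The plan is to reduce this corollary to the preceding one (the linear case of Theorem \ref{thm:main}) by showing that quasi-isometry to the standard metric on $\R^n$ or $\bH^n$ transfers both the uniform contractibility (with linear radius $\fR$) and the finite asymptotic dimension (with linear diameter control $\fD$) from the standard metric to the metric $d$, only adjusting the linear constants.

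First, I would note that the standard Euclidean and hyperbolic metrics $d_0$ are CAT(0), so $(X,d_0)$ is uniformly contractible with $\fR_0(r)=r$. Also, it is well-known that $(X,d_0)$ has asymptotic dimension $n$ with a linear diameter control $\fD_0(r)=A_0 r$, for instance via tilings by cubes/balls of size proportional to $r$. Using the quasi-isometry bound $L^{-1} d_0(x,y)-C\leqslant d(x,y)\leqslant L d_0(x,y)+C$, I would establish the sandwich
$$B_d(x,r)\subseteq B_{d_0}(x,L(r+C)),\qquad B_{d_0}(x,R)\subseteq B_d(x,LR+C),$$
which will be the workhorse for transferring both properties.

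For uniform contractibility: given $B_d(x,r)$, the first inclusion places it inside $B_{d_0}(x,L(r+C))$, which is contractible within $B_{d_0}(x,\fR_0(L(r+C)))=B_{d_0}(x,L(r+C))$; the second inclusion then places the latter ball inside $B_d(x,L\cdot L(r+C)+C)$. Thus $\fR(r)=L^2 r+L^2 C+C$ works, giving linear contractibility radius. For finite asymptotic dimension with linear diameter control: given $r>0$, set $r'=L(r+C)$, and choose a cover $\{U_i\}$ of $(X,d_0)$ with $r'$-multiplicity $\leqslant n+1$ and $d_0$-diameters $\leqslant \fD_0(r')=A_0 r'$. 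Since every $d$-ball $B_d(x,r)$ sits inside the $d_0$-ball $B_{d_0}(x,r')$, the $r$-multiplicity of this cover in the $d$-metric is still $\leqslant n+1$, and each $d$-diameter is at most $L\fD_0(r')+C$, which is linear in $r$. This gives $\fD$ linear.

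Applying the preceding corollary's linear case now yields constants $A,B>0$, depending only on $n$ and on the quasi-isometry constants $L,C$, with
$$\inf_{x\in B(x_0,r)}k(x)\leqslant \frac{1}{(\max\{Ar-B,0\})^2}.$$
I do not expect any conceptual obstacle; the argument is essentially careful bookkeeping of quasi-isometry constants. The only delicate point is that Theorem \ref{thm:main} presumes bounded geometry, which is a local condition not preserved by quasi-isometries; this must therefore be an implicit standing hypothesis on $d$ (as is customary in this setting), and the constants $A,B$ will also depend on the bounded-geometry parameters of $d$.
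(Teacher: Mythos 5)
Your proposal is correct and matches the paper's intended argument: the paper derives this corollary from the preceding one (the linear case of Theorem \ref{thm:main}) by observing that quasi-isometry to the standard Euclidean or hyperbolic metric preserves the linearity of both the contractibility radius $\fR$ and the diameter control $\fD$, which you verify explicitly via the ball-sandwich inclusions. One small correction: the constants $A,B$ produced by Theorem \ref{thm:main} depend only on $\fR$, $\fD$ and $m$ and not on the bounded-geometry parameters of $d$ (bounded geometry is used only qualitatively, to build the left homotopy inverse $g_r$ in Lemma \ref{lemma:homotopy-id}), although you are right that bounded geometry must be a standing hypothesis.
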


We remark that a result of Gromov shows  that hyperbolic groups, or more generally hyperbolic spaces with bounded growth, have finite asymptotic dimension \cite{Gromov1993asymptotic}. See also Roe's paper \cite{Roehyperbolic} for an alternative proof.   Moreover, Roe's proof shows that the diameter control function can be chosen to be $\fD(x)=4x+4\delta$ for $\delta$-hyperbolic spaces.

Note that  CAT(0) spaces are uniformly contractible with contractibility radius $\fR(x)=x$ (cf. the proof of Theorem \ref{thm:cat0}). It is an open question whether every finite dimensional CAT(0) manifold  has finite asymptotic dimension. On the other hand, if $X$ is a finite dimensional CAT(0) cube complex, then $X$ has finite asymptotic dimension (cf. \cite{MR3748250,MR2916293}). Moreover, the diameter control function can be chosen to be $\fD(x)=C\cdot x^p$ in this case, where $C$ and $p$ only depend on the dimension of $X$, cf.\cite{MR2916293}.

\subsection{Decay rate of scalar curvature and contractibility radius function}\label{subsec:counterexample}
In this subsection, we construct explicit examples of uniformly contractible manifolds with finite asymptotic dimension such that their scalar curvature functions decay very slowly. The construction of these examples also gives information on how the decay rate of scalar curvature depends explicitly on the contractibility radius function $\mathfrak R$. On the other hand, although the decay of scalar curvature can be very slow in these examples, it follows from Theorem \ref{thm:main} that the decay is uniform with respect to choices of based points in the sense that there exists a control function $F\colon \R_{\geqslant 0} \to \R_{\geqslant 0}$ such that   	$$\inf_{x\in B(x_0,r)}k(x)\leqslant \frac{1}{F(r)}$$
for all $x_0\in X$. A direct consequence of this uniformity phenomenon is Corollary $\ref{coro:net}$.  Here we point out that the uniform contractibility condition is essential for this type of uniformity to hold.  More precisely,  we construct  examples of noncompact complete manifolds that are \emph{not} uniformly contractible such that the decay function for scalar curvature depends on the base point of the manifold. In particular,  these examples show that Corollary $\ref{coro:net}$ fails in general,  without the uniform contractibility condition.

We have seen in Theorem $\ref{thm:main}$ that the decay rate of scalar curvature on a uniformly contractible manifold with finite asymptotic dimension depends explicitly on the contractibility radius function $\mathfrak R$ and the diameter control function $\mathfrak D$. For many examples such as CAT(0)-like spaces (cf. Theorem  $\ref{thm:cat0}$) and those studied by Gromov \cite{Gromovinequalities2018} and Zeidler \cite{Rudolf2019}, the decay rate is known to be quadratic. In general, the decay rate can be much slower than quadratic. More precisely,  we shall construct explicit examples to show that  for any prescribed non-decreasing function $F\colon \R_{\geqslant 0}\to \R_{\geqslant 0}$ such that $F(r) \to \infty$ as $r\to \infty$,  there exists uniformly contractible manifold $X$ with finite asymptotic dimension such that $k_{x_0}(r) \coloneqq \inf_{x\in B(x_0,r)}k(x)$ decays slower than $\frac{1}{F(r)}$ as $r\to \infty$, or equivalently
$$\lim_{r\to+\infty} F(r)\cdot k_{x_0}(r) =+\infty,$$
for every $x_0\in X$.  

Recall that a function $F\colon\R_{\geqslant 0}\to \R_{\geqslant0}$ is called a control function if $F$ is non-decreasing and $F(r)\to \infty$ as $r\to\infty$. The following proposition shows that in general the decay of scalar curvature on a uniformly contractible manifold with finite asymptotic dimension can be very slow.

\begin{proposition}\label{prop:example1}
	Given $m\geqslant 3$ and any control function $F$, there exists a complete Riemannian metric with bounded geometry on $\R^m$ satisfying the following: 
	\begin{enumerate}[$(a)$]
		\item $\R^m$ with this metric is uniformly contractible;
		\item $\R^m$ with this metric has asymptotic dimension $\leq m$ whose diameter control function is linear and independent of $F$;
		\item the scalar curvature function $k$ of this metric is positive everywhere and satisfies
		$$\lim_{r\to+\infty}\big( F(r)\cdot\inf_{x\in B(x_0,r)}k(x)\big)=+\infty,$$
		for any $x_0\in\R^m$.
	\end{enumerate}
\end{proposition}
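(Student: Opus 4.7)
The plan is to realize the desired metric on $\R^m$ as the Riemannian product $X = P \times \R^{m-2}$, where $P$ is a carefully designed two-dimensional ``slow paraboloid'' surface of revolution and $\R^{m-2}$ carries its standard Euclidean metric. This generalizes the classical paraboloid example for $\R^2$ from the introduction, with the scalar curvature decay rate on $P$ engineered to be slower than $1/F$. Given the control function $F$, I first choose a smooth non-decreasing function $G\colon [0,\infty) \to [1,\infty)$ with $G(r) \to \infty$ as $r \to \infty$ but $F(r)/G(2r) \to \infty$; for instance, a smoothing of $G(r) = \max\{1, \sqrt{F(r/2)}\}$ works since then $F(r)/G(2r) = \sqrt{F(r)} \to \infty$. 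Then I construct $P = (\R^2, g_P)$ with $g_P = dr^2 + f(r)^2\, d\theta^2$, the rotationally symmetric warped product associated to a smooth profile $f\colon [0,\infty) \to [0,\infty)$ satisfying: an odd expansion $f(r) = r - \delta r^3 + O(r^5)$ near $r=0$ for some small $\delta > 0$ (so the metric is smooth at the origin and $f'' < 0$ for $r > 0$); $f > 0$, $0 < f' \leqslant 1$ on $(0,\infty)$; and $-f''(r)/f(r) \geqslant 1/G(r)$ for all $r$ sufficiently large. Such $f$ is produced by solving the linear ODE $f'' = -f/G$ on $[r_0,\infty)$ and smoothly interpolating with the expansion above near the origin, preserving concavity, monotonicity, and bounded geometry throughout.

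Set $X := P \times \R^{m-2}$ with the product Riemannian metric, which is a complete metric on a manifold diffeomorphic to $\R^m$; since the Euclidean factor is scalar flat, the scalar curvature at a point whose $P$-component has radial coordinate $r$ is $k_X = k_P = -2f''(r)/f(r) \geqslant 2/G(r) > 0$ by the standard Gauss formula for a surface of revolution. Both $P$ and $\R^{m-2}$ are uniformly contractible with explicit finite contractibility radius functions (the radial retraction $(r,\theta) \mapsto (tr,\theta)$ is a uniform homotopy on $P$ because $0 < f' \leqslant 1$), so the product inherits uniform contractibility. For the asymptotic dimension, $\R^{m-2}$ has asymptotic dimension $m-2$ with linear diameter control. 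For $P$, at scale $s$ I would cover using annular slabs $\{r \in [ks,(k+1)s]\}$ further partitioned into circular arcs of length $\leqslant s$ in a shifted brick pattern; the resulting cover has diameter $O(s)$ with multiplicity bounded by an absolute constant, giving $P$ asymptotic dimension $\leqslant 2$ with linear diameter control. The product $X$ therefore has asymptotic dimension $\leqslant m$ with linear diameter control, and the diameter-control constants are independent of $F$.

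For the scalar-curvature decay, given any $x_0 = (p_0, y_0) \in X$ with $p_0$ at radial coordinate $r_0$ in $P$, the ball $B_X(x_0, r)$ projects into $\{r' \leqslant r_0 + r\} \subset P$. Since $k_X$ is radially nonincreasing in $P$, we get $\inf_{B_X(x_0,r)} k_X \geqslant 2/G(r_0 + r) \geqslant 2/G(2r)$ for all $r \geqslant r_0$; hence $F(r) \cdot \inf_{B_X(x_0,r)} k_X \geqslant 2F(r)/G(2r) \to \infty$ as $r \to \infty$ by the choice of $G$. The principal technical difficulty lies in the careful construction of the profile $f$: one must simultaneously guarantee smoothness of the metric at the origin (enforcing the odd expansion $f(r) = r - \delta r^3 + \cdots$), bounded geometry via control of all higher derivatives and a positive lower bound on the injectivity radius, global concavity with the bound $f' \leqslant 1$, and the prescribed lower bound $-f''/f \geqslant 1/G$ coming from the linear ODE $f'' = -f/G$. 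This reduces to a concrete ODE analysis on $[r_0,\infty)$ combined with a smooth cut-off argument near the origin that preserves all the relevant inequalities.
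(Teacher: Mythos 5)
Your product-of-a-paraboloid approach runs into a fundamental obstruction that the paper's three-dimensional warped product is specifically designed to avoid, and as a result your construction cannot deliver the conclusion for \emph{arbitrary} control functions $F$.

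For the $2$-dimensional surface of revolution $P = (\R^2, dr^2 + f(r)^2\,d\theta^2)$ with $f$ increasing, concave, and $f(r)\to\infty$, the scalar curvature is $k_P(r) = -2f''(r)/f(r)$, and one has, for any fixed $r_0 > 0$,
\[
\int_{r_0}^{\infty} k_P(r)\,dr
\;=\; \int_{r_0}^{\infty} \frac{-2f''(r)}{f(r)}\,dr
\;\leqslant\; \frac{2}{f(r_0)} \int_{r_0}^{\infty} (-f''(r))\,dr
\;\leqslant\; \frac{2 f'(r_0)}{f(r_0)} \;<\; \infty,
\]
because $f \geqslant f(r_0)$ and $f'$ is nonnegative and nonincreasing. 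Thus $k_P$ is forced to be \emph{integrable} in $r$, i.e.\ essentially no slower than quadratic decay. If you impose $k_P(r) \geqslant c/G(r)$ for large $r$ you get $\int^\infty dr/G(r) < \infty$, and since you also need $F/G \to \infty$, this rules out every slowly growing $F$ (e.g.\ $F(r)=\log r$, for which any acceptable $G$ would satisfy $\int dr/G(r) = \infty$). The same obstruction appears at the ODE level: a solution of $f'' = -f/G$ with $f$ increasing and unbounded satisfies $f'(r) \leqslant f'(r_0) - f(r_0)\int_{r_0}^{r} ds/G(s)$, which goes negative in finite time unless $\int ds/G(s)$ converges. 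So the profile $f$ you describe simply does not exist for general $F$, and the claim ``$k_X$ is radially nonincreasing with $\inf_{B(x_0,r)} k_X \geqslant 2/G(2r)$'' cannot be achieved. Taking the Riemannian product with $\R^{m-2}$ does not help, since the flat factor contributes nothing to the scalar curvature.

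The paper avoids this by working with the $3$-dimensional warped product $g^\varphi = dt^2 + \varphi(t)^2 g^{\mathbb S^2}$ on $\R^3$ (and only then crossing with Euclidean space for $m>3$). Its scalar curvature is $\bigl(2 - 2(\varphi')^2 - 4\varphi\varphi''\bigr)/\varphi^2$, and the crucial constant $2$ comes from the positive scalar curvature of the unit $2$-sphere. With $\varphi$ concave, increasing, and $0 < \varphi' \leqslant 2/3$, the numerator stays bounded below by a universal positive constant, giving $k \geqslant c/\varphi(t)^2$ with no integrability constraint at all. Since $\varphi$ may grow as slowly as one likes (subject only to $\varphi \to \infty$, concavity, and $\varphi' \leqslant 2/3$, e.g.\ $\varphi(t) = (\log t)^{1/4}$ for large $t$), one can arrange $F(t)/\varphi(t)^2 \to \infty$ for \emph{any} prescribed control function $F$. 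Your outline is correct in spirit for the uniform contractibility, bounded geometry, and asymptotic dimension portions (which closely parallel the paper's argument), but the choice of a $2$-dimensional base surface is the step that breaks, and it cannot be patched within the surface-of-revolution framework.
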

\begin{proof}
	We prove the case where $m=3$. The general case can for example be proved by taking the direct product with the standard Euclidean spaces.
	
	 Given a smooth positive function $\varphi$, we consider the warped metric on $\R^3$ given by
	 \begin{equation}\label{eq:warped.metric}
	 g^\varphi=dt^2+\varphi^2(t)g^{\mathbb S^2}
	 \end{equation}
	where $g^{\mathbb S^2}$ is the standard metric on the unit sphere $\mathbb S^2$. In other word, the metric is inherited from the hypersurface in $\R^4$ given by
	$$(t,\theta,\rho)\mapsto (\psi(t),\varphi(t)\cos\theta,\varphi(t)\sin\theta\cos\rho,\varphi(t)\sin\theta\sin\rho)$$
	where $\varphi'^2+\psi'^2=1$.
	
	A direct computation shows that the scalar curvature of the warped metric in line \eqref{eq:warped.metric} is given by (cf. \cite[Proposition 7.33]{GromovLawson}, \cite[\S 2.4]{Gromov4lectures2019})
	\begin{equation}\label{eq:sc.example}
	\frac{2-2(\varphi')^2-4\varphi\varphi''}{\varphi^2}.
	\end{equation}
	Therefore, if $2-2(\varphi'(t))^2-4\varphi(t)\varphi''(t)>0$ for all $t$, then the scalar curvature is positive everywhere on $\mathbb R^3$.
	
	Given any control function $F\colon \R_{\geqslant 0}\to \R_{\geqslant 0}$, we claim that there exists a smooth function $$\varphi\colon \R_{\geqslant 0}\to \R_{\geqslant 0}$$
	with the following properties:
	\begin{enumerate}
		\item $\lim_{t\to+\infty}\varphi(t)=+\infty$;
		\item $\varphi(t)=\sin t$ for $0\leqslant t\leqslant \frac{\pi}{3}$;
		\item $\varphi'(t)>0$ and $\varphi''(t)<0$  for all $t>0$;
		\item  $\varphi'(t)\leqslant \frac 2 3$ for all  $t\geqslant \frac \pi 3$; 
		\item and 
		\[ \lim_{t\to+\infty}\frac{F(t)}{\varphi^2(t)}=+\infty.\]
	\end{enumerate}

	Let us sketch the construction of $\varphi$. We set $\varphi(t)=\sin t$ for $0\leqslant t\leqslant \frac{\pi}{3}$ as in (2). Note that for $0\leqslant t\leqslant \frac\pi 3$, we have
	$$\varphi'(t)=\cos t \textup{ and } \varphi''(t)=-\sin t.$$
	In particular, (3) and (4) are satisfied if $t$ is sufficiently close to $\frac\pi 3$. Now  it is not difficult to see that we can extend the function $\varphi''$ on  $[0, \frac{\pi}{3}]$ to a function on $[0,+\infty)$ so that $\varphi$ satisfies (1), (3), (4) and (5).
	
	Consider the metric $g^{\varphi}$ on $\R^3$ given by line \eqref{eq:warped.metric} with the function $\varphi$ above. We show that $(\R^3,g^\varphi)$ satisfies the required conditions $(a), (b)$ and $(c)$.
	
	 For $0\leqslant t\leqslant\frac \pi 3$, the metric is exactly the standard metric on the unit $3$-sphere, hence has positive scalar curvature. When $t\geqslant \frac \pi 3$, it follows from line \eqref{eq:sc.example} that
	$$\textstyle
	2-2(\varphi')^2-4\varphi\varphi''\geqslant 2-2\cdot (\frac 2 3)^2\geqslant \frac{10}{9}>0.$$
	Therefore, the scalar curvature is positive everywhere on $(\R^3,g^\varphi)$.

	Obviously $(\R^3,g^\varphi)$ is complete and has bounded geometry. Let us first prove part $(c)$. Without loss of generality, let us $x_0$ is the origin of $\mathbb R^3$, that is, $x_0$ is the point with $t=0$. A direct computation shows that for fixed $\theta_0$ and $\rho_0$,  the curve
	$$(t,\theta_0,\rho_0)\mapsto (\psi(t),\varphi(t)\cos\theta_0,\varphi(t)\sin\theta_0\cos\rho_0,\varphi(t)\sin\theta_0\sin\rho_0)$$ parameterized by $t$ is a geodesic. Therefore the geodesic ball centered at the origin $x_0$ with radius $r$ contains exactly the points with $0\leqslant t\leqslant r$. When $r$ is sufficiently large, we have
	$$
	\inf_{x\in B(x_0,r)}k(x)
	=\inf_{0\leqslant t\leqslant r}\frac{2-2(\varphi')^2(t)-4\varphi(t)\varphi''(t)}{\varphi^2(t)}
	\geqslant \inf_{0\leqslant t\leqslant r}\frac{10}{9\varphi^2(r)}
	$$
	It follows from property (5) of  $\varphi$ that
	$$F(r)\cdot\inf_{x\in B(x_0,r)}k(x)\geqslant\frac{10F(r)}{9\varphi^2(r)}\to +\infty$$
	as $r$ goes to infinity.
	
	Now we show that  $(\R^3,g^\varphi)$ is uniformly contractible with contractibility radius $\fR(r)=\varphi^{-1}(r)$, where $\varphi^{-1}\colon \R_{\geqslant 0} \to \R_{\geqslant 0}$ is the inverse function of $\varphi$. Given $r>0$, let $B(x,r)$ be the closed ball in $(\R^3,g^\varphi)$ with radius $r$ centered at a point $x\in\R^3$. If $\varphi(d(x,x_0))\geqslant r$, then $B(x,r)$ does not contain any pair of antipodal points of the spheres  $\mathbb S^2_t \coloneqq  \{ t=\text{constant}\}$. Hence $B(x,r)$ itself is contractible. It remains to consider the case where $\varphi(d(x,x_0))\leqslant r$. In this case, $B(x,r)$ is homotopic to the origin $x_0$ via the geodesics parameterized by $t$ from above. In other words, $B(x,r)$ is contractible within the closed ball $B(x,d(x,x_0))$. In conclusion, $B(x,r)$ is contractible within the closed ball centered at $x_0$ with radius
	$$\max\{r,\varphi^{-1}(r) \}=\varphi^{-1}(r)$$
	as $0<\varphi'\leqslant 1$.
	
	Let us now show that $(\R^3,g^\varphi)$ has finite asymptotic dimension $\leqslant 3$ with a linear diameter control $\fD(r)=100(r+10)$. In particular, $\fD$ is independent of the choice of $\varphi$. 
	
	For each $r\geqslant 10$, we shall construct a cover of $(\R^3,g^\varphi)$ with $r$-multiplicity at most $4$ and diameter $\leqslant 100r$. Set $s_0=0$ and $s_n=\varphi^{-1}(10\cdot 2^n r)$.
	For each $n\in\N$, we choose a partition of $[s_n,s_{n+1}]$ 
	$$s_n=t_{n,0}\leqslant t_{n,1}\leqslant \cdots\leqslant t_{n,m_n}=s_{n+1}$$
	 such that $r\leqslant |t_{n,i+1}-t_{n,i}|\leqslant 2r$. When $n=0$, the subsets $[t_{0,i},t_{0,i+1}]\times \mathbb S^2$ form a cover of $[s_0,s_1]\times \mathbb S^2$ with $r$-multiplicity at most $2$, each member of which has diameter no more than $100r$.
	
	Now we turn to $n\geqslant 1$. Since $\mathbb S^2$ has Lebesgue covering dimension $2$, there is a cover $\{ V_{n,j}\}_{j\in J_n}$ of $[s_n,s_{n+1}]\times \mathbb S^2$ with $r$-multiplicity $4$ and diameter $\leqslant 100r$, each of the members is of the form $[t_{n,i},t_{n,i+1}]\times V$ with $V\subset \mathbb S^2$.
	Furthermore, when $n\geqslant 2$, we can choose carefully the subsets 
	$$\{V_{n,j},~j\in J_n: V_{n,j}=[t_{n,0},t_{n,1}]\times V \text{ for some }V\subset \mathbb S^2 \}$$
	  so that the union $\{V_{n-1,j} \}_{j\in J_{n-1}}\cup \{V_{n,j} \}_{j\in J_{n}}$ which covers $[s_{n-1},s_{n+1}]\times \mathbb S^2$ has $r$-multiplicity at most $4$. Therefore, the following collection
	  $$\{[t_{0,i},t_{0,i+1}]\times \mathbb S^2 \}_{0\leqslant i\leqslant m_0}\cup 
	  \{V_{1,j} \}_{j\in J_1}\cup \{V_{2,j}\}_{j\in J_2}\cup\cdots
	  $$
	  covers $(\R^3,g^\varphi)$ with $r$-multiplicity $4$ and with diameters $\leqslant 100r$.
	  
	  For $m> 3$, we can consider the metric $g^\varphi+g^{\R^{m-3}}$ on $\R^m$, where $g^{\R^{m-3}}$ is the standard Euclidean metric on $\R^{m-3}$.
\end{proof}

We know that given a complete Riemannian spin manifold $(X, g)$, if the higher index of its Dirac operator (cf. Definition \ref{def:lind} and \ref{def:ind-odd}) is non-zero in $K_*(C^*(X,g))$ (cf. Definition \ref{def roe and localization}), then any Riemannian metric on $X$ that is  quasi-isometric (cf. line \eqref{eq:quasi-isometry}) or more generally coarsely equivalent to $g$ (cf. \cite[\S 1.4]{MR2986138}) cannot have uniformly positive scalar curvature. In other words, for any $x_0\in X$, there exists some control function $F_{x_0}\colon \mathbb R_{\geqslant 0 } \to \R_{\geqslant 0}$ such that 
$$\inf_{x\in B(x_0,r)}k(x)\leqslant \frac{1}{F_{x_0}(r)}$$
in this case. However, how fast the scalar curvature decays may depend on the base point $x_0$. This is in contrast with Theorem $\ref{thm:main}$, which states that the decay is uniform with respect to choices of based points in the sense that there exists a control function $F\colon \R_{\geqslant 0} \to \R_{\geqslant 0}$ such that   	$$\inf_{x\in B(x_0,r)}k(x)\leqslant \frac{1}{F(r)}$$
for all $x_0\in X$. A direct consequence of this uniformity phenomenon is Corollary $\ref{coro:net}$. In the following proposition, we show that if we drop the uniform contractibility condition in Theorem $\ref{thm:main}$ but only assume the nonvanishing of the higher index of the Dirac operator instead, then 
Corollary \ref{coro:net} fails in general.

\begin{proposition}\label{prop:example2}
    There exists a complete Riemannian metric $g$ with bounded geometry on $\R^m$, $m\geqslant 3$ such that the metric space $(\R^m,g)$ has asymptotic dimension no more than $m$ with a linear diameter control and the associated Dirac operator has non-zero higher index \textup{(}cf. Definition $\ref{def:lind},\ref{def:ind-odd}$\textup{)}. However, the conclusion of Corollary $\ref{coro:net}$ fails, i.e. there exists some $\varepsilon_m >0$ such that the set of points in $(\R^m,g)$  with scalar curvature $\leqslant \varepsilon_m$ is not a net in $(\R^m,g)$. 
\end{proposition}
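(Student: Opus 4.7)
\emph{Construction.} I would obtain $g$ by grafting long warped-product ``cigars'' of uniformly positive scalar curvature onto the Euclidean metric at a sparse sequence of base points. Fix a small constant $r_0>0$, a discrete sequence $\{x_n\}_{n\in\N}\subset \R^m$ with $|x_n|\to\infty$ and $|x_n-x_{n'}|\geqslant 10$ for $n\neq n'$, and a sequence $L_n\to\infty$. For each $n$, take a closed Riemannian disc $(D^m,h_n)$ with warped product $h_n=dr^2+\psi_n(r)^2\,g^{S^{m-1}}$ over $r\in[0,R_n]$, $R_n=L_n+O(1)$, where the profile $\psi_n$ satisfies $\psi_n(0)=0,\psi_n'(0)=1$ (smoothness at the tip), $\psi_n\equiv r_0$ on the long plateau $r\in[1,L_n+1]$, and is smoothly joined in a thin collar near $r=R_n$ so that the boundary data agrees with that of $\partial B(x_n,1)\subset\R^m$ to all orders, with all derivatives of $\psi_n$ uniformly bounded in $n$. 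Replace each Euclidean ball $B(x_n,1)$ by $(D^m,h_n)$ and keep the Euclidean metric outside $\bigcup_n B(x_n,1)$; the resulting complete metric $g$ on $\R^m$ has uniformly bounded sectional curvatures (by the standard warped-product formulas) and injectivity radius bounded below by $\pi r_0$, hence bounded geometry.

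\emph{Failure of the net property.} On the plateau of $h_n$ we have $\psi_n\equiv r_0$ and $\psi_n'=\psi_n''=0$, so the $m$-dimensional analogue of the scalar curvature formula in line \eqref{eq:sc.example} yields $k=(m-1)(m-2)/r_0^2$. Set $\varepsilon_m=(m-1)(m-2)/(2r_0^2)$. The geodesic ball of radius $L_n/3$ about the midpoint of the $n$-th plateau lies entirely in that plateau, so $k>\varepsilon_m$ throughout this ball. Since $L_n\to\infty$, no $C$-neighborhood of $\{x\in X:k(x)\leqslant \varepsilon_m\}$ can meet all of these balls, so this sublevel set fails to be a net in $(X,g)$.

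\emph{Asymptotic dimension.} Coarsely, $(X,g)$ is $\R^m$ with a ray of length $\sim L_n$ attached at each $x_n$. For any scale $r>0$, take a standard cover of Euclidean $\R^m$ by cubes of side $r$ with $r$-multiplicity $(m+1)$; for each $n$, select one cube $U_{j(n)}\ni x_n$ and enlarge it by annexing the initial segment of the $n$-th cigar up to intrinsic depth $3r$; cover the remaining length of each cigar by intervals of intrinsic length $3r$ with $2$-multiplicity. An $r$-ball near $x_n$ reaches the cigar only to depth $r<3r$, so is already absorbed by the enlarged cube and contributes no cover member beyond the $m+1$ cubes meeting its Euclidean part; an $r$-ball deep in a cigar does not reach $\R^m$ and meets at most two cover members. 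Every cover element has diameter $O(r)$, so this yields asymptotic dimension $\leqslant m$ with a linear diameter control $\fD$.

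\emph{Higher index and main obstacle.} For the nonvanishing of the Dirac higher index I would invoke Yu's theorem \cite{Yu}: since $(X,g)$ has bounded geometry and finite asymptotic dimension, the coarse Baum--Connes assembly $\mu\colon K_*(C^*_L(X))\to K_*(C^*(X))$ is an isomorphism. Taking a base point $x_0$ in the Euclidean part of $X$ and pairing $\ind_L(D)$ with the Bott element $\beta\in K_0(C_0(B(x_0,\varepsilon)))$ (passing to $X\times\R$ if $m$ is odd) gives the Fredholm index of the associated compactly twisted Dirac operator, which equals $\pm 1$ by Atiyah--Singer; hence $\ind_L(D)\neq 0$, and therefore $\ind(D)=\mu(\ind_L(D))\neq 0$ in $K_*(C^*(X))$. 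The main technical obstacle I anticipate is the covering argument in the asymptotic dimension step: arranging by the ``merging'' trick that the multiplicity at the attachment points is exactly $m+1$ and not $m+2$, together with ensuring that the smooth gluing of each $(D^m,h_n)$ into $\R^m$ preserves the uniform bounded geometry and linear diameter control across all $n$.
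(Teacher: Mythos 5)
Your construction (cigars of fixed radius $r_0$ and increasing length $L_n$ grafted onto Euclidean $\R^m$) differs from the paper's (a single warped metric $dt^2 + \varphi(t)^2 g^{\mathbb S^2}$ whose profile oscillates between bounded values near $t=a_n$ and values tending to infinity at the midpoints $t=\frac{a_n+a_{n+1}}{2}$). Your verifications of the failure of the net property, of bounded geometry, and of asymptotic dimension $\leqslant m$ with linear diameter control are all sound in spirit. The genuine gap is the higher-index step. You invoke Yu's theorem as saying that $\mu\colon K_*(C^*_L(X))\to K_*(C^*(X))$ is an isomorphism for bounded-geometry spaces of finite asymptotic dimension, but that is not what Yu's theorem asserts: it gives an isomorphism from the \emph{coarse} $K$-homology $\varinjlim_r K_*(\sN_r)$ to $K_*(C^*(X))$. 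The comparison map $K_*(C^*_L(X))\cong K_*(X)\to \varinjlim_r K_*(\sN_r)$ is an isomorphism when $X$ is uniformly contractible, but your $X$ is deliberately not uniformly contractible, since the cigars have fixed width $r_0$ and get collapsed at scale $r>r_0$. A concrete counterexample to your claimed isomorphism: $X=\Z_{\geqslant 0}\subset\R$ has asymptotic dimension $0$, $K_0(C^*_L(X))$ is an infinite direct sum of copies of $\Z$, yet $K_0(C^*(X))=0$ because the Roe algebra of a ray is flasque. Hence $\ind_L(D)\neq 0$ does not by itself imply $\ind(D)\neq 0$, and your chain of inference breaks at the final ``therefore.''

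This is precisely why the paper builds property (5) into the profile $\varphi$: since $\varphi(\frac{a_n+a_{n+1}}{2})\to\infty$, for every scale $r$ some sphere $\mathbb S^2$ maps homotopy-equivalently into the nerve complex $\sN_r$, and a Mayer--Vietoris argument shows $(f_r)_*[D]\neq 0$ in $K_1(\sN_r)$ for all $r$. Only then does the injectivity of the coarse assembly map yield $\ind(D)\neq 0$. In your construction the nerve complex $\sN_r$ at scale $r>r_0$ is coarsely $\R^m$ with finite rays attached, and you would need a separate argument that $(f_r)_*[D]$ survives in $K_*(\sN_r)$ for every $r$; this is plausible (finite rays are collapsible and contribute nothing in the relevant degree), but it is not something you have shown, and the uniform statement ``for all $r$'' is exactly the content the paper's big-sphere mechanism provides.
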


\begin{proof}
	We prove the case where $m=3$. The general case can for example be proved by taking the direct product with the standard Euclidean spaces.
		
	Similar to the proof of Proposition \ref{prop:example1}, we consider a warped metric
	 \begin{equation*}
		g^\varphi=dt^2+\varphi(t)^2g^{\mathbb S^2}
	\end{equation*}
 on $\R^3$. We claim that there exists a smooth function
    $$\varphi\colon \R_{\geqslant 0}\to \R_{\geqslant 0}$$
    as well as an increasing sequence of positive numbers $\{a_n\}_{n\in\N^+}$ with the following properties:
    \begin{enumerate}
        \item $\varphi(t)=\sin t$ for $0\leqslant t\leqslant \frac \pi 3$,
        \item  $|\varphi'(t)|\leqslant \frac 2 3$ for all $t\geqslant \frac \pi 3$;
        \item for each $n\in\N^+$, we have
        $$ \textstyle \varphi(t)=2-\cos\big(\frac{t-a_n}{10n}\big),~\forall t\in[a_n-15n\pi,a_n+15n\pi];$$
        \item $\varphi''(t)<0$ for all         $t\in \R_{>0}-\bigcup_{n\in\N^+}[a_n-15n\pi,a_n+15n\pi];$
        \item and  $$\lim_{t\to+\infty}\varphi(\frac{a_n+a_{n+1}}{2})=+\infty.$$
    \end{enumerate}
    
    Let us sketch the construction of the function $\varphi$ above. We first choose an increasing sequence of positive numbers $\{a_n\}_{n\in\N^+}$ such that $|a_{n}-a_{n-1}|\gg 30n\pi$. Define $\varphi$ near $t =0$ and $t = a_n$ as in conditions (1) and (3) respectively. It follows that on the intervals $[a_n-15n\pi,a_n+15n\pi]$, we have
    \[ \textstyle |\varphi'(t)|=
    \frac{1}{10n}|\sin\big(\frac{t-a_n}{10n}\big)|\leqslant \frac{1}{10n}<\frac 2 3, \]
    and 
    \[  \textstyle \varphi''(t)=\frac{1}{100n^2}\cos\big(\frac{t-a_n}{10n}\big). \]
    In particular, conditions (2) and (4) are  satisfied when $t$ is near $\frac\pi 3$ or $a_n\pm 15n\pi$. Now it is not difficult to see one can extend  $\varphi$ on $[\frac\pi 3,a_1-15\pi]$ and $[a_n+15n\pi,a_{n+1}-15(n+1)\pi]$ to a smooth function on $[0, \infty)$  such that conditions (2), (4), and (5) are satisfied.
    
	Consider the metric $g^\varphi$ on $\R^3$ given by 
	$$dt^2+\varphi(t)^2g^{\mathbb S^2}$$
	with the function $\varphi$ above. We will show that $(\R^3,g^\varphi)$ satisfies the required properties. 
    
    On the intervals $[a_n+15n\pi,a_{n+1}-15n\pi]$, we have 
    $$\textstyle |\varphi(t)\varphi''(t)|=\frac{1}{100n^2}|\big(2-\cos(\frac{t-a_n}{10n})\big)\cos(\frac{t-a_n}{10n})|\leqslant \frac{1}{100}.$$
    Therefore if $t$ lies in $[a_n+15n\pi,a_{n+1}-15n\pi]$, then the scalar curvature of $g$ satisfies 
    \begin{align*}
 \frac{2-2(\varphi')^2-4\varphi\varphi''}{\varphi^2}\geqslant 
    \frac{2-2\cdot (\frac 2 3)^2-4\cdot \frac{1}{100}}{3^2}\geqslant \frac 1 9.
    \end{align*}
     On the other hand, when $t$ is not in $\bigcup_{n\in\N^+}[a_n-15n\pi,a_n+15n\pi]$, we see from $\varphi''<0$ and $|\varphi'|\leqslant \frac 2 3$ that the scalar curvature is positive. Therefore, the scalar curvature function is positive everywhere on $(\R^3,g^\varphi)$.
     Furthermore, for any point $x$ in the sphere at $t=a_n$, the scalar curvature on the $15n\pi$-neighborhood of $x$ is uniformly bounded below by $\frac 1 9$. This shows that the conclusion of Corollary \ref{coro:net} fails for $(\R^3,g^\varphi)$. More precisely, the set of points in $(\R^3,g^\varphi)$  with scalar curvature $\leq \frac 1 9$  is not a net in $(\R^3,g^\varphi)$.  
    
    Similar to the proof of Proposition $\ref{prop:example1}$, one can also verify that the metric space $(\R^3,g^\varphi)$ has asymptotic dimension $\leqslant 3$ with a linear diameter control $\fD(r)=100(r+10)$. We omit the details.   We point out that although $(\R^3, g^\varphi)$ is contractible, it is not uniformly contractible, since the diameters of the spheres at
    $t=a_n$ are uniformly bounded for $n\in\N^+$.
    
    Now we show that the higher index of the Dirac operator $D$ on $(\R^3,g^\varphi)$ is non-zero.
Let $\sN_r$ be the nerve complex of $(\R^3,g^\varphi)$ constructed in Section \ref{subsec:asymdim}, as $(\R^3,g^\varphi)$ has finite asymptotic dimension.
Consider the coarse Baum--Connes assembly map for $(\R^3,g^\varphi)$:
$$\mu\colon \lim_{r\to\infty}K_\bullet(\sN_r)\to  \lim_{r\to\infty}K_\bullet (C^\ast(\sN_r)) \cong  K_\bullet(C^*(\R^3,g^\varphi)),$$
which is an isomorphism because $(\R^3,g^\varphi)$ has finite asymptotic dimension \cite{Yu}. Let us denote by $[D]$ the $K$-homology class of $D$ in $(\R^3,g^\varphi)$. Let $f_r\colon \R^3\to \sN_r$ be the map defined in line $\eqref{eq:f_r}$. Then  the family  $\{(f_{r})_*[D]\}_{r>0}$ determines a $K$-homology class  in $\lim_{r\to\infty}K_1(\sN_r)$, which we will denote by $[D]_{\sN}$. The assembly map $\mu$ maps $[D]_{\sN}$ to  the higher index $\ind(D)$ of $D$, an element in $K_1(C^*(\R^3,g^\varphi))$.

By the condition (5) of $\varphi$, the radii of the spheres at $t=\frac{a_n+a_{n+1}}{2}$ go to infinity as $n$ goes to infinity.
Hence for each $r>0$, there exists $n$ such that the sphere at $t=\frac{a_n+a_{n+1}}{2}$ is homotopy equivalent to its image in the nerve complex $\sN_r$. Recall that the $K$-homology group $K_0(\mathbb S^2)\cong \mathbb Z$ and  is generated by the Dirac operator on $\mathbb S^2$. Therefore,  by applying a standard Mayer-Vietoris sequence argument on the $K$-homology of $\sN_r$, we conclude that  $(f_{r})_{*}[D]$ does not vanish in  $K_1(\sN_r)$ for all $r>0$. Consequently, the injectivity of $\mu$ implies that the higher index $\ind(D)$ of $D$ is nonzero in $K_1(C^*(\R^3,g^\varphi))$. 

	  For $m> 3$, we can consider the metric $g^\varphi+g^{\R^{m-3}}$ on $\R^m$, where $g^{\R^{m-3}}$ is the standard Euclidean metric on $\R^{m-3}$. This finishes the proof. 
    \end{proof}

\appendix
	\section{Controlled $K$-theory for Lipschitz filtration}\label{app:QKT-lipschitzfiltration}
	In the main body of the article, one of the key ingredients for the proof of our main theorem is a Lipschitz controlled $K$-theory for locally compact metric spaces.  In this appendix, we shall provide detailed proofs for various properties of the Lipschitz controlled $K$-theory used in Section $\ref{sec:Lipschitzcontrol}$. In fact, since the exact same proofs also work for general $C^\ast$-algebras, we shall develop a Lipschitz controlled $K$-theory for general $C^\ast$-algebras, which we believe is of independent interest.

	\subsection{Lipschitz filtered $K$-groups}
Recall that a $C^\ast$-algebra $A$ is a complete normed $*$-algebra such that  $\|ab\|\leqslant \|a\|\|b\|$ and $\|aa^*\|=\|a\|^2$. For example, for every locally compact metric space $X$, the algebra $C_0(X)$ of continuous functions on $X$ vanishing at the infinity is a $C^\ast$-algebra with its norm given by the sup-norm and the $\ast$-operator given by the complex conjugation. 

\begin{definition}\label{def:Lipfiltration}
	Let $A$ be a unital $C^*$-algebra. Assume that $\{A_L\}_{L\geqslant 0}$ is an increasing sequence of subsets in $A$ and $\bigcup_{L\geqslant 0} A_L$ is dense in $A$. We say $\{A_L\}_{L\geqslant 0}$ is a Lipschitz filtration of $A$ if the following are satisfied:
	\begin{enumerate}[(L1)]
		\item $A_L$ is $*$-closed and norm-closed, and contains $\lambda\cdot 1_A$ for any $\lambda\in\mathbb C$;
		\item if $a\in A_{L}$ and $\lambda\in\C$, then $\lambda a\in A_{|\lambda|L}$;
		\item if $a_1\in A_{L_1}$, $a_2\in A_{L_2}$, then $a_1+a_2\in A_{L_1+L_2}$;
		\item if $a_1\in A_{L_1}$, $a_2\in A_{L_2}$, then $a_1a_2\in A_{\|a_1\|L_2+\|a_2\|L_1}$;
		\item for any $n\geqslant 1$, the $C^*$-algebra $M_n(A)$ filtered by 
		$$M_n(A)_L :=\{x\in M_n(A):\xi^Tx\eta\in A_L,\forall\xi,\eta\in\C^n,\|\xi\|,\|\eta\|\leqslant 1 \}$$ 
		also satisfies (L1)-(L4) above. In particular, $\lambda\cdot 1_A\in M_n(A)_L$ for all $L\geqslant 0$ and $\lambda\in M_n(\mathbb C)$.
	\end{enumerate}
\end{definition}

If $A$ is non-unital $C^\ast$-algebra, we say an increasing sequence of subsets $\{A_L\}_{L\geqslant 0}$ in $A$  is a Lipschitz filtration on $A$ if $\{A_L^+\}_{L\geqslant 0}$ is a Lipschitz filtration of the unitalization $A^+$ of $A$, where $A_L^+\coloneqq A_L+\C$.
\begin{example}\label{example:Lipfil}
	Let $X$ be a locally compact metric space and  $C_0(X)$ the algebra of continuous functions on $X$ vanishing at the infinity. Let $C_0(X)_L$ be the set of $L$-Lipschitz functions in $C_0(X)$. It is easy to verify that  $\{C_0(X)_L\}_{L\geqslant0}$  satisfies (L1)-(L5), hence a Lipschitz filtration on $C_0(X)$. This is the reason for our choice of the terminology ``Lipschitz filtration" in the general case. 
\end{example}
\begin{lemma}\label{lemma:powerseries}
	Let $\{A_L\}_{L\geqslant 0}$ be a Lipschitz filtration on $A$. Let
	$$f(x)=c_1x+c_2x^2+c_3x^3+\cdots$$
	be a power series. If $a\in M_n(A)_L$ and $\|a\|$ is strictly smaller than the convergence radius of $f$, then
	$f(a)\in M_n(A)_{\widetilde f(\|a\|)L}$, where
	$$
	\widetilde f(x)=|c_1|+2|c_2|x+3|c_3|x^2+\cdots.
	$$
	In particular, if $c_i\geqslant 0$ for all $i\geqslant 1$, then $\tilde f=f'$.
\end{lemma}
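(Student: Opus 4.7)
The plan is to reduce the claim to a power-by-power estimate, then pass through partial sums, and finally take a norm limit using the norm-closedness in (L1). The heart of the argument is an induction on $k$ showing that if $a \in M_n(A)_L$, then $a^k \in M_n(A)_{k\|a\|^{k-1}L}$. The base case $k=1$ holds tautologically since $1 \cdot \|a\|^0 = 1$. For the inductive step, assuming $a^k \in M_n(A)_{k\|a\|^{k-1}L}$, I apply property (L4) (extended to $M_n(A)$ via (L5)) to the product $a^{k+1} = a \cdot a^k$, obtaining
\[ a^{k+1} \in M_n(A)_{\|a\| \cdot k\|a\|^{k-1}L \,+\, \|a^k\| \cdot L}. \]
Using the submultiplicativity bound $\|a^k\| \leqslant \|a\|^k$ together with monotonicity of the filtration then gives $a^{k+1} \in M_n(A)_{(k+1)\|a\|^k L}$, completing the induction.

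Next, I combine this with properties (L2) and (L3). Property (L2) yields $c_k a^k \in M_n(A)_{k|c_k|\|a\|^{k-1}L}$, and iterating (L3) over the partial sums shows
\[ S_N \;\coloneqq\; \sum_{k=1}^N c_k a^k \;\in\; M_n(A)_{L_N}, \qquad L_N \;\coloneqq\; L \sum_{k=1}^N k|c_k|\|a\|^{k-1}. \]
Since the formal differentiation operation preserves the radius of convergence, $\widetilde{f}$ has the same radius of convergence as $f$, and the hypothesis that $\|a\|$ is strictly less than this radius ensures $L_N \nearrow \widetilde{f}(\|a\|)L$. By monotonicity of the filtration, each $S_N$ lies in $M_n(A)_{\widetilde{f}(\|a\|)L}$.

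Finally, because $\|a\|$ is strictly inside the radius of convergence of $f$, the partial sums $S_N$ converge in operator norm to $f(a)$ in $M_n(A)$. Property (L1) asserts that $M_n(A)_{\widetilde{f}(\|a\|)L}$ is norm-closed, so $f(a) \in M_n(A)_{\widetilde{f}(\|a\|)L}$, as required. There is no serious obstacle here; the argument is purely formal manipulation of the filtration axioms, with the only minor subtlety being the use of $\|a^k\| \leqslant \|a\|^k$ to absorb the $\|a^k\|L$ term into the bound $(k+1)\|a\|^k L$ in the inductive step.
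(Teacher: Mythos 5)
Your proof is correct and follows essentially the same route as the paper: an induction on $k$ giving $a^k\in M_n(A)_{k\|a\|^{k-1}L}$ via (L4) and $\|a^k\|\leqslant\|a\|^k$, then (L2)--(L3) for the partial sums, then norm-closedness from (L1) to pass to the limit. The paper leaves these steps implicit; you have simply spelled them out.
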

\begin{proof}
	By induction, we have $a^k\in M_n(A)_{k\|a\|^{k-1}L}$ for any $k\in \N$. Therefore
	if we set $f_N(x)=c_1x+c_2x^2+c_3x^3+\cdots+c_Nx^N$, then
	$$f_N(a)\in M_n(A)_{\widetilde{f}_N(\|a\|)L}\subset M_n(A)_{\widetilde f(\|a\|)L}.$$
	Now the lemma follows from the fact that $M_n(A)_{\widetilde f(\|a\|)L}$ is norm-closed.
\end{proof}

\begin{remark}\label{rk:extra}
	Lemma \ref{lemma:powerseries} suggests that a Lipschitz filtration is ``closed under functional calculus" in some sense. In the rest of the appendix, we will assume that a Lipschitz filtration should always satisfy the following extra condition:
	\begin{enumerate}[(L6)]
		\item If $a\in M_n(A)_L$ and $\lambda$ is not in the spectrum of $a$, then 
		$$(a-\lambda)^{-1}\in M_n(A)_{\|(a-\lambda)^{-1}\|^2L}.$$
	\end{enumerate}
	To be precise, (L6) is only assumed here for convenience. We can develop the Lipschitz controlled $K$-theory in this appendix without (L6) by using almost projections and almost unitaries, cf.  \cite{Oyono-OyonoYu} and \cite{Yu}.  On the other hand, the main examples (Example \ref{example:Lipfil}) of this article  do satisfy the condition (L6). And assuming (L6) will help streamline our proofs. \end{remark}

Assume that $A$ is a unital $C^*$-algebra equipped with a Lipschitz filtration.
	\begin{itemize}
	\item For any $L\geqslant 0$, let $P^L_n(A)$ be the set of projections in $M_n(A)_L$, with the following natural inclusion given by
	$$P_n^L(A)\to P_{n+1}^L(A),\ p\mapsto \begin{pmatrix}
	p &0\\0&0
	\end{pmatrix}.$$
	\item For any $L\geqslant 0$, let $U_n^L(A)$ be the set of unitaries in $M_n(A)_L$, with the following natural inclusion given by
	$$U_n^L(A)\to U_{n+1}^L(A),\ u\mapsto \begin{pmatrix}
	u &0\\0&1
	\end{pmatrix}.$$
\end{itemize}

	Let $P^L(A)$ (resp. $U^L(A)$) be the union of $P^L_n(A)$ (resp. $U^L_n(A)$). 
	In the following, we will denote by $0_n$ the zero matrix in $M_n(A)$, and by $I_n$ the identity matrix in $M_n(A)$ if $A$ is unital.
	We define the following equivalence relations.
	\begin{itemize}
		\item For $(p,k)$ and $(q,k)$ are in $P^L(A)\times\N$, we say $(p,k)\sim (q,k')$ if $p\oplus I_{j+k'}$ and $q\oplus I_{j+k}$ are homotopic in $P^{2L}(A)$ for some $j$, where $\oplus$ means direct sum.
		\item For $u$ and $v$ are in $U^L(A)$, we say $u\sim v$ if $u$ and $v$ are homotopic in $U^{2L}(A)$.
	\end{itemize}
	\begin{definition}\label{def:K*Lgroups}
		Let $A$ be a $C^*$-algebra and $\{A_L\}_{L\geqslant 0}$ be a Lipschitz filtration on $A$. If $A$ is non-unital, we denote by $\pi$ the homomorphism from $A^+$ to $\C$ with kernel $A$. We define the Lipschitz controlled $K$-groups of $A$ as follows.
		\begin{enumerate}
			\item If $A$ is unital, then
			$$K_0^L(A):=P^L(A)\times \N/\sim.$$
			If $A$ is non-unital, then
			$$K_0^L(A):=\{(p,k)\in P^L(A^+)\times \N: \mathrm{rank}(\pi(p))=k\}/\sim.$$
			\item If $A$ is unital, then
			$$K_1^L(A):=U^L(A)/\sim.$$
			If $A$ is non-unital, then
			$$K_1^L(A):=U^L(A^+)/\sim.$$
		\end{enumerate}
	\end{definition}
	\begin{lemma}\label{lemma:K_*^Lisabelian}
		$K_*^L(A)$ are abelian groups under the summation
		$$[p,k]+[p,k']=[p\oplus p',k+k']$$
		and
		$$[u]+[v]=[u\oplus v].$$
	\end{lemma}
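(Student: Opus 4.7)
The approach is to adapt the standard $C^\ast$-algebra $K$-theory arguments to the Lipschitz-controlled setting, tracking the filtration level carefully through each homotopy. The equivalence relation $\sim$ permits homotopies in $P^{2L}$ or $U^{2L}$ rather than only in $P^L$ or $U^L$, which gives some slack, but in fact most of the standard moves can be arranged to stay within the filtration level $L$.

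First I will verify that $\oplus$ descends to equivalence classes. The key observation is that for $p \in M_n(A)_L$ and $q \in M_m(A)_L$, the block-diagonal $p \oplus q$ lies in $M_{n+m}(A)_L$: testing against $\xi = (\xi_1,\xi_2)$ and $\eta = (\eta_1,\eta_2)$ with $\|\xi\|, \|\eta\| \leqslant 1$ yields
\[ \xi^{T}(p\oplus q)\eta \in A_{(\|\xi_1\|\|\eta_1\| + \|\xi_2\|\|\eta_2\|)L}, \]
and Cauchy--Schwarz gives $\|\xi_1\|\|\eta_1\| + \|\xi_2\|\|\eta_2\| \leqslant 1$. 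Hence block-diagonal combinations of homotopies remain within the prescribed filtration, so the operation is well-defined, and associativity is immediate. For commutativity I will use that $p\oplus q$ and $q\oplus p$ are conjugate by the scalar permutation matrix $\sigma \in M_{n+m}(\C) \subseteq M_{n+m}(A)_0$, which preserves the filtration level by (L4); after one stabilization $\sigma$ is joined to the identity by a rotation path in $U_{n+m}(\C)$, and conjugating $p \oplus q$ along this path produces a homotopy entirely within $P^L$. The same argument yields commutativity of $K_1^L$. The identity element is $(0,0)$ for $K_0^L$ and $1$ for $K_1^L$, with the required rearrangements again coming from scalar permutations.

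The substantive step is producing inverses. For $[p,k] \in K_0^L(A)$ with $p \in P_n^L(A^+)$ and $\mathrm{rank}(\pi(p))=k$, the candidate inverse is $[I_n-p, n-k]$, which I will verify by exhibiting the explicit path of projections
\[ q(t) = \begin{pmatrix} p + \sin^2(t)(I_n-p) & \sin(t)\cos(t)(I_n-p) \\ \sin(t)\cos(t)(I_n-p) & \cos^2(t)(I_n-p) \end{pmatrix}, \qquad t\in[0,\pi/2], \]
connecting $p\oplus (I_n-p)$ to $I_n\oplus 0_n$. The identities $q(t)^2 = q(t) = q(t)^*$ follow from $p(I_n-p) = 0$, and every entry is a scalar multiple of $p$ or $I_n-p$, so $q(t) \in P_{2n}^L(A^+)$ throughout. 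For $[u] \in K_1^L(A)$ the candidate inverse is $[u^*]$, verified by the classical rotation argument: the path
\[ R(t)\begin{pmatrix} u & 0 \\ 0 & 1 \end{pmatrix} R(t)^*\begin{pmatrix} 1 & 0 \\ 0 & u^*\end{pmatrix}, \]
where $R(t)$ is a rotation in $M_{2n}(\C)$ from $I_{2n}$ to $\begin{pmatrix} 0 & -I_n \\ I_n & 0 \end{pmatrix}$, connects $u\oplus u^*$ to $I_{2n}$ through unitaries. Since $R(t)$ lives in filtration $0$, axiom (L4) bounds the Lipschitz constant along this path by $2L$, so the homotopy lies in $U^{2L}$, which is exactly what $\sim$ requires.

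The main subtlety to watch for is to ensure that every auxiliary unitary or projection entering these homotopies is either a scalar matrix (filtration $0$) or a scalar polynomial in the original Lipschitz elements $p$, $I_n-p$, $u$, $u^*$. This is what allows (L4) to propagate the Lipschitz bound with at most a fixed multiplicative loss. Any use of functional calculus on a genuinely non-scalar operator (for example, a polar decomposition or a spectral projection) risks blowing up the filtration level uncontrollably and must be avoided in each of the constructions above.
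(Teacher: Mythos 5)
Your proof is correct and takes essentially the same approach as the paper: explicit rotation homotopies together with direct sums, tracking the filtration levels through each step. The paper uses the same candidate identity and inverse elements and almost identical explicit homotopies (its $K_0$ inverse path is $p\oplus 0 + (I-p)\oplus(I-p)$ conjugated by a rotation, which is your $q(t)$ written slightly differently, and its $K_1$ inverse path is your rotation-commutator read in the other direction).

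One small technical remark: your one-line justification that $q(t)\in P_{2n}^{L}$ --- ``every entry is a scalar multiple of $p$ or $I_n-p$'' --- does not by itself bound the filtration level, since summing the contributions entry-by-entry gives a factor of $2$ (which is what the paper actually claims, $P^{2L}$, and which is all the equivalence relation requires). Your sharper claim $P^{L}$ is nevertheless correct, but to see it one should decompose $q(t) = (B(t)\otimes I_n)(I_2\otimes p) + C(t)\otimes 1_A$ with $B(t),C(t)\in M_2(\mathbb{C})$, observe $\|B(t)\|\leqslant 1$, and then apply (L4) together with the Cauchy--Schwarz computation you already give for $I_2\otimes p = p\oplus p$. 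Either bound suffices since $\sim$ permits $P^{2L}$; the substance of the argument is the same as the paper's.
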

	\begin{proof}
		Without loss of generality, we assume that $A$ is unital. Note that if $p$ and $q$ are in $P^L_n(A)$, then the homotopy
		$$
				\begin{pmatrix}
		\cos\theta&\sin\theta\\-\sin\theta&\cos\theta
		\end{pmatrix}
		\begin{pmatrix}
		p&0\\0&q
		\end{pmatrix}
		\begin{pmatrix}
		\cos\theta&-\sin\theta\\\sin\theta&\cos\theta
		\end{pmatrix},\ \theta\in[0,\frac\pi 2]$$
		that connects $p\oplus q$ and $q\oplus p$ lies in $P^L_{2n}(A)$. This shows that the summation is well-defined and commutative.
		
		The identity element of $K_0^L(A)$ is 
		$$[0,0]\sim [I_j,j]$$
		for any $j\in\mathbb N$. The inverse of $[p,k]$ is $[I_j-p,j-k]$ for some large $j$, as the homotopy
		$$
		\begin{pmatrix}
		p&0\\0&0
		\end{pmatrix}
		+\begin{pmatrix}
		I_j-p&0\\0&I_j-p
		\end{pmatrix}\begin{pmatrix}
		\cos^2\theta&\cos\theta\sin\theta\\
		\cos\theta\sin\theta&\sin^2\theta
		\end{pmatrix},\ \theta\in[0,\frac\pi 2]
		$$
		that connects $p\oplus (I_j-p)$ and $I_j\oplus 0_j$ lies in $P^{2L}_{2j}(A)$. Therefore, $K_0^L(A)$ is an abelian group.
		
		As for the $K_1$-group, the homotopy between $u\oplus v$ and $v\oplus u$ is given by
		$$
		\begin{pmatrix}
		\cos\theta&\sin\theta\\-\sin\theta&\cos\theta
		\end{pmatrix}
		\begin{pmatrix}
		u&0\\0&v
		\end{pmatrix}	\begin{pmatrix}
		\cos\theta&-\sin\theta\\\sin\theta&\cos\theta
		\end{pmatrix}, \theta\in[0,\frac\pi 2],
		$$
		which lies in $U^{L}(A)$. This shows that the summation is well-defined and commutative. The identity element is $[1]$. The inverse of $[u]\in K_1^L(A)$ is $[u^*]$, as the homotopy
		$$
		\begin{pmatrix}
		u&0\\0&1
		\end{pmatrix}
		\begin{pmatrix}
		\cos\theta&\sin\theta\\-\sin\theta&\cos\theta
		\end{pmatrix}
		\begin{pmatrix}
		u^*&0\\0&1
		\end{pmatrix}	\begin{pmatrix}
		\cos\theta&-\sin\theta\\\sin\theta&\cos\theta
		\end{pmatrix},\ \theta\in[0,\frac\pi 2]
		$$
		connects $1$ and $u\oplus u^*$ in $U^{2L}(A)$.
		\end{proof}

Since $A_L\subset A_{L'}$ if $L<L'$, there is a natural map
$$i_{L,L'}\colon K_*^L(A)\longrightarrow K_*^{L'}(A).$$
The collection of abelian groups $\{K_*^L(A)\}_{L\geqslant 0}$ together with the homomorphisms $\{i_{L,L'}\}_{0\leqslant L<L'}$ form an inductive system over $\R_{\geqslant 0}$, which we will discuss in further detail in the next subsection.

Clearly, the Lipschitz controlled $K$-groups map naturally to the usual $K$-groups:
\begin{align*}
&K_0^L(A)\to K_0(A),\ [p,k]\mapsto [p]-[I_k];\\
&K_1^L(A)\to K_1(A),\ [u]\mapsto [u].
\end{align*}
Since the union of $\{A_L\}_{L\geqslant 0}$ is dense in $A$, we have that
$$
K_*(A)=\ilim K_*^L(A).
$$

We have the functoriality of $K_*^L(\cdot)$ in the following sense.
\begin{definition}\label{def:filtered hom}
Let $A,B$ be  $C^*$-algebras with Lipschitz filtration $\{A_L\}_{L\geqslant 0}$ and $\{B_L\}_{L\geqslant 0}$ respectively. A homomorphism $f\colon A\to B$ is said to be a \emph{filtered homomorphism} if $f(A_L)\subset B_{L}$ for any $L\geqslant0$.
\end{definition}
\begin{proposition}\label{prop:functoriality+homotopy-invariance}
Let $A,B$ be Lipschitz filtered $C^*$-algebras and $f\colon A\to B$ a filtered homomorphism.Then $f$ naturally induces homomorphisms $(f_{*})_{L}\colon K_*^L(A)\to K_*^L(B)$ \textup{(}more precisely, a controlled homomorphism $f_*\colon \{K_*^L(A)\}_{L\geqslant 0}\xrightarrow{}\{K_*^L(B)\}_{L\geqslant 0}$ with control function $L\mapsto L$ in the sense of Definition $\ref{def:controlledhom}$\textup{)}.
Furthermore, if $f_t\colon A\to B$ is a strongly continuous path of filtered homomorphisms, then $(f_{t,*})_{L}$ is independent of $t$ for any $L\geqslant 0$.
	\end{proposition}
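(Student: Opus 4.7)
The plan is to proceed in two stages: first construct the maps $(f_*)_L$ explicitly on representatives and verify they descend to well-defined homomorphisms compatible with the filtration, then deduce homotopy invariance from the continuity of $t\mapsto f_t(a)$.

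First I would set up the construction. The filtered homomorphism $f\colon A\to B$ extends entrywise to a $*$-homomorphism $M_n(f)\colon M_n(A)\to M_n(B)$; since $\xi^T M_n(f)(x)\eta = f(\xi^T x\eta)$ for all $\xi,\eta\in\mathbb C^n$, condition (L5) of Definition \ref{def:Lipfiltration} together with $f(A_L)\subset B_L$ implies $M_n(f)(M_n(A)_L)\subset M_n(B)_L$. In the non-unital case, $f$ extends to a unital filtered map $f^+\colon A^+\to B^+$ by the formula $a+\lambda\mapsto f(a)+\lambda$, which respects $A_L^+=A_L+\mathbb C$. Because $f$ (and $M_n(f)$) is a $*$-homomorphism, it carries projections to projections and unitaries to unitaries of the same rank. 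Thus on representatives we define
\[ (f_*)_L[p,k]\coloneqq [f(p),k],\qquad (f_*)_L[u]\coloneqq [f(u)]. \]

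Next I would check that $(f_*)_L$ is well defined. If $(p,k)\sim(q,k')$ in the sense of Definition \ref{def:K*Lgroups}, then there is a homotopy $p_s\in P^{2L}(A)$, $s\in[0,1]$, connecting $p\oplus I_{j+k'}$ to $q\oplus I_{j+k}$ (after stabilization). Applying $f$ entrywise yields a norm-continuous path $f(p_s)\in P^{2L}(B)$ that implements the corresponding equivalence in $B$; the unitary case is entirely analogous. The resulting map $(f_*)_L$ is clearly a semigroup homomorphism for the direct sum operation, hence a group homomorphism by Lemma \ref{lemma:K_*^Lisabelian}. Compatibility with the structure maps $i_{L,L'}$ follows at once from the fact that $f$ sends $A_L$ into $B_L$ for every $L$, giving a controlled homomorphism with control function $L\mapsto L$.

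The hardest (and most interesting) step is the homotopy invariance. Given a strongly continuous path $\{f_t\}_{t\in[0,1]}$ of filtered homomorphisms, fix a representative $p\in P_n^L(A)$ (or $u\in U_n^L(A)$). Strong continuity means that $t\mapsto f_t(a)$ is norm continuous for every $a\in A$; extending entrywise, $t\mapsto f_t(p)$ is a norm-continuous map from $[0,1]$ into $M_n(B)$. Since each $f_t$ is a filtered $*$-homomorphism, each $f_t(p)$ is a projection in $M_n(B)_L$, so the path lies entirely in $P_n^L(B)\subset P^{2L}(B)$. This is an admissible homotopy for the relation defining $K_0^L(B)$, so $[f_0(p),k]=[f_1(p),k]$ in $K_0^L(B)$. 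The same argument with $f_t(u)\in U_n^L(B)$ handles the $K_1$ case. I expect no genuine obstruction here as long as the ambient hypothesis that each $f_t$ is individually filtered is used; the only mild subtlety is that the path of images need not be smooth or piecewise-linear, but norm-continuity suffices for the homotopy relation in Definition \ref{def:K*Lgroups}.
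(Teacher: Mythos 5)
Your proposal is correct and takes essentially the same approach as the paper: define $(f_*)_L$ on representatives by applying $f$ (extended entrywise and to the unitization), check well-definedness by pushing forward a homotopy, and observe that for a strongly continuous family $\{f_t\}$ the path $t\mapsto f_t(p)$ (resp. $f_t(u)$) lies in $P^L_n(B)$ (resp. $U^L_n(B)$) and hence realizes the equivalence in $K_*^L(B)$. The paper states the two middle steps as ``easy to see'' whereas you spell them out, including the useful observation that $M_n(f)$ respects the matrix filtration via condition (L5), but the argument is the same.
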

	\begin{proof}
		Without loss of generality, we assume that both $A$ and $B$ are unital, and $f$ is a unital map. In this case, $f$ extends to a filtered homomorphism on the matrix algebra, i.e., $f(M_n(A)_L)\subset M_n(B)_L$. Hence $f$ maps $P^L(A)$ (resp. $U^L(A)$) to $P^L(B)$ (resp. $U^L(B)$).
		We define 
					\begin{itemize}
			\item $(f_{0})_{L}([p,k]):=[f(p),k],$
			\item $(f_{1})_{L}([u]):=[f(u)].$
		\end{itemize}

	It is easy to see that $f_*$ is well defined. The homotopy equivalence follows directly from the equivalence relation of $K_*^L(B)$.

	\end{proof}
	
	We know that if two projections (resp. unitaries) are sufficiently close to each other, then they are homotopic via a path of projections (resp. unitaries). The analogue also holds in the Lipschitz controlled setting. More precisely, we have the following lemma. 
	\begin{lemma}\label{lemma:tech}
		Let $A$ be a unital $C^*$-algebra with a Lipschitz filtration $\{A_L\}_{L\geqslant 0}$. 
		\begin{enumerate}[$(1)$]
			\item If $p,q\in P^{L}_n(A)$ and $\|p-q\|\leqslant1/12$, then there exists a  $3$-Lipschitz homotopy connecting $p$ and $q$, that is, a norm-continuous map $h\colon[0,1]\to P^{3L}_n(A)$ such that $h$ is $3$-Lipschitz with $h(0) = p$ and $h(1) = q$. 
			\item If $u,v\in U^{L}_n(A)$ and $\|u-v\|\leqslant 1/6$, then there exists a  $1$-Lipschitz homotopy $h\colon[0,1]\to U^{3L}_n(A)$ connecting $1$ and $vu^*$. In particular, 
			$h(t)u$ is a $1$-Lipschitz map that connects $u$ and $v$ in $U^{4L}(A)$.
		\end{enumerate}
	\end{lemma}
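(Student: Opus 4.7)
The plan for part (1) is to use the straight-line interpolation $h_t := (1-t)p + tq$, which lies in $M_n(A)_L$ by axioms (L2) and (L3). A direct computation gives $h_t^2 - h_t = -t(1-t)(p-q)^2$, hence $\|h_t^2-h_t\|\leqslant 1/576$. Since $h_t$ is self-adjoint, its spectrum lies in $[0,\alpha]\cup[1-\alpha,1]$ with $\alpha < 1/288$. I will then set $p_t$ to be the spectral projection of $h_t$ onto the part of the spectrum near $1$, written concretely as the contour integral
\[
p_t = \frac{1}{2\pi i}\oint_{|z-1|=1/2}(zI - h_t)^{-1}\, dz.
\]
By construction $p_0 = p$, $p_1 = q$, and $p_t$ is a projection for each $t$.

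To control the two Lipschitz bounds in (1), I plan to exploit axiom (L6) together with the resolvent identity. On the contour $|z-1|=1/2$ the spectral estimate gives $\|(zI-h_t)^{-1}\| < 2.01$, so (L6) places $(zI - h_t)^{-1}$ in $M_n(A)_{(2.01)^2 L}$; writing the contour integral as a norm-limit of Riemann sums and using (L2)--(L3) together with the norm-closedness of the filtration then shows $p_t \in M_n(A)_{3L}$. For Lipschitzness in $t$, the resolvent identity yields
\[
p_t - p_s = \frac{1}{2\pi i}\oint_{|z-1|=1/2}(zI - h_t)^{-1}(h_t - h_s)(zI - h_s)^{-1}\, dz,
\]
and combined with $\|h_t - h_s\|\leqslant |t-s|/12$ this gives $\|p_t - p_s\| < 3|t-s|$, even with room to spare.

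For part (2), I plan to work with $w := vu^* \in M_n(A)_{2L}$, where the filtration bound uses (L4) and $\|u\|=\|v\|=1$. Since $\|w - I\| = \|(v-u)u^*\| \leqslant 1/6$, the series $\log(I+x) = \sum_{k\geqslant 1}(-1)^{k+1}x^k/k$ converges on $w-I$; applying Lemma~\ref{lemma:powerseries} with $\tilde{f}(x)=1/(1-x)$ gives $\log(w) \in M_n(A)_{(6/5)\cdot 2L} = M_n(A)_{12L/5}$ and $\|\log(w)\| \leqslant -\log(5/6) < 0.19$. Since $w$ is unitary, $\log(w)$ is skew-adjoint, so
\[
h(t) := \exp(t\log(w)) = I + \sum_{k\geqslant 1}\frac{(t\log(w))^k}{k!}
\]
is a unitary path connecting $I$ to $w=vu^*$ with $\|h'(t)\| = \|\log(w)\,h(t)\| < 1$. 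Applying Lemma~\ref{lemma:powerseries} to $g(y) = e^y - 1$, for which $\tilde{g}(y) = e^y$, places $h(t) - I$ in $M_n(A)_{e^{0.19}\cdot 12L/5}$, so $h(t) \in M_n(A)_{3L}$ as required. The final assertion about $h(t)u$ is then immediate from (L4).

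The main obstacle is not the two constructions, which are standard, but the careful bookkeeping required to keep both the filtration levels and the $t$-Lipschitz constants within the advertised bounds $3L$ and $3$ (respectively $3L$ and $1$). One must track how (L6) interacts with the contour integral in (1), and how (L4) compounds with Lemma~\ref{lemma:powerseries} through the composition $\exp\circ\log$ in (2). The numerical margins are tight but adequate given the hypotheses $\|p-q\|\leqslant 1/12$ and $\|u-v\|\leqslant 1/6$, which appear to have been chosen precisely so that these bounds hold.
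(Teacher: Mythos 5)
Your proposal is correct and follows essentially the same route as the paper: straight-line interpolation followed by the holomorphic functional calculus via contour integral for part (1), and the exponential of $t\log(vu^*)$ with filtration control via Lemma~\ref{lemma:powerseries} for part (2); your arithmetic bookkeeping is in fact slightly sharper (you compute $a_t^2-a_t=-t(1-t)(p-q)^2$ exactly, whereas the paper uses a cruder bound) but lands on the same constants.
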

	\begin{proof}\
		
		\begin{enumerate}
			\item  We define 
			\[ a_t\coloneqq tp+(1-t)q\in M_n(A)_L \] 
			for $t\in [0, 1]$.
			Since $\|p-q\|\leqslant \frac{1}{12} $ by assumption,  we have
			$$\|a_t^2-a_t\|\leqslant (t^2+2t(1-t))\|p-q\|\leqslant \frac{1}{6}.$$
			Thus the spectrum of $a_t$ is contained in the $\frac{1}{12}$-neighborhood of $0$ and $1$.
			
			Let $\Theta$ be the function on $\C$ given by
			$$\Theta(z)=\begin{cases}
				0& \mathrm{Re}(z)<\frac 1 2;\\
				1& \mathrm{Re}(z)\geqslant \frac 1 2.\\
			\end{cases}$$
			then $\Theta(a_t)$ is well defined and is a path of projections such that $\Theta(a_0) = p$ and $\Theta(a_1) = q$. The functional calculus of $a_t$ is defined by
			$$\Theta(a_t)=\frac{1}{2\pi i}\int_\Gamma(a_t-\xi)^{-1}d\xi
			=\frac{1}{2i}\int_\Gamma(a_t-\xi)^{-1}\frac{d\xi}{\pi},$$
			where $\Gamma=\{|z-1|=1/2\}$. As 
			$$\|(a_t-\xi)^{-1}\|\leqslant \big|1-\frac{1}{12}-\frac 1 2\big|^{-1}=\frac{12}{5}$$
			 for any $\xi\in\Gamma$, we have 
			$$\Theta(a_t)\in P^{\frac 1 2(\frac{12}{5})^2L}_n(A)\subset P_n^{3L}(A)$$
			 by (L6) in Remark \ref{rk:extra}. Furthermore, we have 
			\begin{align*}
			\|\Theta(a_t)-\Theta(a_{t'})\|&\leqslant \frac{1}{2}\int_\Gamma\|(a_t-\xi)^{-1}-(a_{t'}-\xi)^{-1}\|\frac{d\xi}{\pi}\\
			&\leqslant \frac{1}{2}\int_\Gamma\|(a_t-\xi)^{-1}\||t-t'|\|(a_{t'}-\xi)^{-1}\|\frac{d\xi}{\pi}\\
			&\leqslant \frac{1}{2}(\frac{12}{5})^2|t-t'|\leqslant 3|t-t'|.
			\end{align*}
		Hence we have proved part (1) by setting $h(t) = \Theta(a_t)$. 
			\item If $u,v\in U^{L}_n(A)$ and $\|u-v\|\leqslant 1/6$, then  $\|vu^*-1\|\leqslant \frac{1}{6}$ and $vu^*\in U^{2L}_n(A)$. We define
			$$u_t=\exp(t\log(vu^*)).$$
			By Lemma \ref{lemma:powerseries}, we see that
			$$\log(vu^*)=\log(1-(1-vu^*))\in M_n(A)_{2L/(1-\frac{1}{6})}$$ and $\|\log(vu^*)\|\leqslant -\log(1-\frac{1}{6})$. Therefore $u_t$ is a path of unitaries in $U_n^{2L/(1-\frac 1 6)^2}(A)\subset U^{3L}_n(A)$ connecting $1$ and $vu^\ast$. Furthermore,  we have
			\begin{align*}
			\|u_t-u_s\|&= \|\exp(t\log(vu^*))-\exp(s\log(vu^*))\|\\
			&\leqslant \frac{1}{1- \frac{1}{6}}\log\big(\frac{1}{1-\frac{1}{6}}\big)|t-s|< |t-s|
			\end{align*} 
					for all $t,s\in [0, 1]$. 
			This proves part (2). 
		\end{enumerate}
	\end{proof}
	
	The following technical lemma shows that, by increasing the size of matrices,  we can turn an arbitrary  homotopy of projections (resp. unitaries) into a Lipschitz homotopy with some universal Lipschitz constant. 
	\begin{lemma}\label{lemma:Liphomotopy}
	Let $A$ be a unital $C^*$-algebra equipped with a Lipschitz filtration $\{A_L\}_{L\geqslant 0}$. 
		\begin{enumerate}
			\item If $p,q\in P^{L}_n(A)$ are homotopic in $P^{L}(A)$, then there exist integers $k,l$ and a $19$-Lipschitz homotopy $h\colon[0,1]\to P^{3L}_{n+k+l}(A)$ that connects $p\oplus I_k\oplus 0_l$ and $q\oplus I_k\oplus 0_l$.
			\item If $u,v\in U^{L}(A)$ are homotopic in $U^{L}(A)$, then there exists a $11$-Lipschitz homotopy $h\colon[0,1]\to U^{4L}(A)$ which connects $u$ and $v$.
		\end{enumerate}
	\end{lemma}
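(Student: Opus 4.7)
Both parts share the same strategy; I describe part (2) for unitaries, noting that part (1) is handled analogously. Given a continuous homotopy $h\colon[0,1]\to U^L_n(A)$ from $u$ to $v$, I exploit the compactness of $[0,1]$ and uniform continuity of $h$ to extract a partition $0=t_0<t_1<\cdots<t_N=1$ such that $\|h(t)-h(t_{i-1})\|\leqslant 1/6$ for every $t\in[t_{i-1},t_i]$. Setting $u_i=h(t_i)$ and $w_i=u_iu_{i-1}^{-1}$, each $w_i$ lies within $1/6$ of $1$, and Lemma~\ref{lemma:tech}(2) provides a $1$-Lipschitz path $\gamma_i\colon[0,1]\to U^{3L}_n(A)$ from $1$ to $w_i$.

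To avoid the $N$-dependent Lipschitz blow-up that would come from concatenating these local paths sequentially, I stabilize and perform the local corrections in parallel in disjoint blocks, taking the direct sum
\[
\Gamma(t)\coloneqq u_0\oplus\gamma_1(t)\oplus\gamma_2(t)\oplus\cdots\oplus\gamma_N(t)\in U^{3L}_{(N+1)n}(A).
\]
Since the operator norm of a block-diagonal matrix is the maximum of the block-wise operator norms, $\Gamma$ is $1$-Lipschitz, and it joins $u\oplus I_N$ to $u_0\oplus w_1\oplus\cdots\oplus w_N$. The final stage assembles this diagonal into $v\oplus I_N$, using the Whitehead rotation identity $(a\oplus 1)R_\theta(b\oplus 1)R_{-\theta}$ (interpolating in $\pi$-Lipschitz fashion between $ab\oplus 1$ and $a\oplus b$) together with scalar block permutations in $U_{N+1}(\C)\subset M_{(N+1)n}(A)$ connected to $I$ via $\exp(tX)$ for Hermitian $X$ with $\|X\|\leqslant\pi$. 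Because these rotation and permutation matrices are scalar, property (L5) ensures they do not inflate the filtration beyond $4L$, and a careful tally of the contributions yields the Lipschitz constant $11$.

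For part (1) the same strategy applies, with Lemma~\ref{lemma:tech}(1) providing $3$-Lipschitz paths of projections in $P^{3L}$ and with explicit $I_k$ and $0_l$ stabilization blocks tracking rank data; the analogous bookkeeping gives the constant $19$. The principal obstacle in either case is the assembly step: a naive sequential application of the rotation identity would cost $O(N)$, so the rotations must be organized combinatorially---bundling disjoint pairs, minimizing the total number of non-parallelizable steps, and leveraging the scalar nature of the rotations to control the filtration---so that the total Lipschitz contribution is bounded by a universal constant independent of the partition size $N$.
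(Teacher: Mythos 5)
Your setup through the parallel homotopy $\Gamma(t)=u_0\oplus\gamma_1(t)\oplus\cdots\oplus\gamma_N(t)$ is sound: using Lemma \ref{lemma:tech} to produce $1$-Lipschitz paths $\gamma_i$ from $1$ to $w_i=u_iu_{i-1}^*$, and running them simultaneously in disjoint diagonal blocks, does give a $1$-Lipschitz homotopy with controlled filtration from $u\oplus I_N$ to $u_0\oplus w_1\oplus\cdots\oplus w_N$. The gap is exactly where you flag it: the assembly step. You must homotope $u_0\oplus w_1\oplus\cdots\oplus w_N$ to $(w_N\cdots w_1u_0)\oplus I_N$, and you correctly observe that sequential Whitehead rotations cost $O(N)$ in Lipschitz constant, but then assert that ``combinatorial bundling'' brings this down to a universal constant. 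It does not. Pairing disjoint adjacent blocks and rotating them in parallel is one $O(1)$-Lipschitz stage, but after that stage you still have about $N/2$ nontrivial blocks to combine, so a binary-tree collapse requires $\lceil\log_2 N\rceil$ stages (each preceded by a scalar permutation to bring surviving blocks adjacent), for a total Lipschitz constant of order $\log N$. Since $N$ is dictated by the modulus of continuity of the given homotopy, which is completely uncontrolled, this is not a universal bound, and the claim of a constant depending only on the fixed data fails. The same obstruction affects your proposed treatment of part (1).

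The paper's proof avoids ever forming a product of $m$ unitaries, and this is the device you are missing. It stabilizes with $2mn$ extra slots and rotates (in parallel, $\pi$-Lipschitz) $I_{2n}$ to $u_i^*\oplus u_i$ for $i=1,\ldots,m$, so the diagonal reads $u_0,\,u_1^*,u_1,\,u_2^*,u_2,\ldots,u_m^*,u_m$. The crucial step is a single parallel $1$-Lipschitz \emph{shift} $u_i^*\to u_{i-1}^*$ (via Lemma \ref{lemma:tech}), which leaves a list of the same blocks but with the pairing offset by one: $u_0,u_0^*,\,u_1,u_1^*,\ldots,u_{m-1},u_{m-1}^*,\,u_m$. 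The leading pairs $(u_i,u_i^*)$ then collapse to $I_{2n}$ in one more parallel $\pi$-Lipschitz rotation, leaving $u_m$ as the sole survivor; a final permutation moves $u_m$ to the front. Every Whitehead rotation here is applied only to a pair of the form $(u_i^*,u_i)$ or $(u_i,u_i^*)$, whose product is the identity, so no multiplication of unitaries ever accumulates. This is what keeps the total Lipschitz constant at $3\pi+1<11$ (respectively $5\pi+3<19$ for projections) and the filtration at $4L$ (respectively $3L$), independently of $m$.
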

	\begin{proof}\ 
		\begin{enumerate}
			\item By assumption, there is a continuous path of projections in $P^L(A)$ connecting $p$ and $q$. It follows that there exist $p_0,p_1,\cdots,p_m$ such that $p_i\in P^L_j(A)$, $p_0=p\oplus 0_{j-n}$, $p_m=q\oplus 0_{j-n}$, and $\|p_i-p_{i+1}\|\leqslant 1/12$. Write $k=2mj$. We consider the following homotopy:
			\begin{align*}
			& p\oplus I_k\oplus 0_{k+(j-n)} & \\
			\sim & p_0\oplus I_j\oplus 0_j\oplus\cdots\oplus I_j\oplus 0_j
			&(\pi\text{-Lipschitz},P^L(A)) \\
			\sim &p_0\oplus(I_j-p_1)\oplus p_1\oplus\cdots \oplus(I_j-p_m)\oplus p_m
			&(\pi\text{-Lipschitz},P^{2L}(A)) \\
			\sim&p_0\oplus (I_j-p_0)\oplus p_1\oplus\cdots \oplus(I_j-p_{m-1})\oplus p_m
			&(3\text{-Lipschitz},P^{3L}(A))\\
			\sim&I_j\oplus 0_j\oplus\cdots\oplus I_j\oplus 0_j\oplus p_m
			&(\pi\text{-Lipschitz},P^{2L}(A))\\
			\sim& p_m\oplus 0_j\oplus I_j\oplus\cdots\oplus 0_j\oplus I_j
			&(\pi\text{-Lipschitz},P^{L}(A))\\
			\sim& q\oplus I_{k}\oplus 0_{k+(j-n)}&(\pi\text{-Lipschitz},P^{L}(A))
			\end{align*}
			where for example $(\pi\text{-Lipschitz}, P^L(A))$ means the first homotopy is a $\pi$-Lipschitz homotopy taking place in $P^L(A)$. By concatenating these homotopies together, we obtain a $19$-Lipschitz homotopy $h\colon[0,1]\to P^{3L}(A)$ connecting $p\oplus I_k\oplus 0_l$ and $q\oplus I_k\oplus 0_l$. 
			\item By assumption, there is a continuous path of unitaries in $U^L(A)$ connecting $u$ and $v$. It follows that there exist $u_0,u_1,\cdots,u_m$ such that $u_i\in U_n(A)_L $, $u_0=u$, $u_m=v$, and $\|u_i-u_{i+1}\|\leqslant 1/6$.  We consider the following homotopy:
			\begin{align*}
			&u_0\oplus I_{2mn}= u_0\oplus I_{2n}\oplus\cdots\oplus I_{2n}\\
			\sim&u_0\oplus u_1^*\oplus u_1\oplus\cdots \oplus u_m^*\oplus u_m
			&(\pi\text{-Lipschitz},U^{2L}(A))\\
			\sim&u_0\oplus u_0^*\oplus u_1\oplus u_1^*\oplus\cdots\oplus u_{m-1} \oplus u_{m-1}^*\oplus u_m
			&(1\text{-Lipschitz},U^{4L}(A))\\
			\sim&I_{2mn}\oplus u_m=I_{n}\oplus I_{(2m-1)n}\oplus u_m
			&(\pi\text{-Lipschitz},U^{2L}(A))\\
			\sim& u_m\oplus I_{(2m-1)n}\oplus I_n=u_m\oplus I_{2mn}
			&(\pi\text{-Lipschitz},U^{L}(A))
			\end{align*}
			where for example $(\pi\text{-Lipschitz}, U^{2L}(A))$ means the first homotopy is a $\pi$-Lipschitz homotopy taking place in $U^{2L}(A)$. By concatenating these homotopies together, we obtain a $11$-Lipschitz homotopy $h\colon[0,1]\to U^{4L}(A)$ connecting $u$ and $v$.
			
		\end{enumerate}
	\end{proof}
	
	We know if two projections are homotopic via a path of projections, then they are stably unitarily equivalent. The following is an analogue for Lipschitz controlled $K$-theory.
	\begin{lemma}\label{lemma:homotopy->unitaryeq}
	Let $A$ be a unital $C^*$-algebra and $\{A_L\}_{L\geqslant 0}$  a Lipschitz filtration on $A$.
		If $p,q\in P^L_n(A)$ are homotopic in $P^L(A)$, then there exist $k,l$ and a unitary $u\in U_{n+k+l}^{1635L}(A)$ such that $p\oplus I_k\oplus 0_l=u(q\oplus I_k\oplus 0_l)u^*$. Furthermore, $u$ can be chosen so that it is homotopic to the identity in $U_{n+k+l}^{2087L}(A)$.
	\end{lemma}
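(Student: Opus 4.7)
By Lemma~\ref{lemma:Liphomotopy}(1), we first choose integers $k,l$ and a $19$-Lipschitz homotopy $h\colon[0,1]\to P_N^{3L}(A)$, where $N=n+k+l$, with $h(0)=q\oplus I_k\oplus 0_l$ and $h(1)=p\oplus I_k\oplus 0_l$. It then suffices to construct a unitary $u\in U_N^{1635L}(A)$ with $u\cdot h(0)\cdot u^*=h(1)$. The strategy is to partition $[0,1]$ into $m$ equal subintervals with $m$ chosen so that $\delta\coloneqq 19/m$ lies below a small universal threshold (to be fixed by optimization), set $p_i\coloneqq h(i/m)$ so that $\|p_{i+1}-p_i\|\leqslant\delta$, and then produce unitaries $u_i$ implementing $u_ip_iu_i^*=p_{i+1}$ between each consecutive pair.

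For each $i$, let $z_i\coloneqq 1-p_i-p_{i+1}+2p_{i+1}p_i$. Direct computation yields the intertwining $p_{i+1}z_i=z_ip_i$ and the identity $z_i-1=(2p_{i+1}-1)(p_i-p_{i+1})$; taking adjoints shows that $p_i$ commutes with $z_i^*z_i$, hence with $(z_i^*z_i)^{-1/2}$. Property (L4) applied to the factorization of $z_i-1$ gives $z_i-1\in M_N(A^+)_{6L(1+\delta)}$ with $\|z_i-1\|\leqslant\delta$. Setting $u_i\coloneqq z_i(z_i^*z_i)^{-1/2}$ then yields a unitary satisfying $u_ip_iu_i^*=p_{i+1}$. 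Applying Lemma~\ref{lemma:powerseries} to the power series of $(1+w)^{-1/2}$ with $w=z_i^*z_i-1$, together with the decomposition $u_i-1=(z_i-1)(z_i^*z_i)^{-1/2}+((z_i^*z_i)^{-1/2}-1)$, produces constants $C_0=C_0(\delta)$ and $C_1=C_1(\delta)$ with $u_i\in U_N^{C_0L}(A)$ and $\|u_i-1\|\leqslant C_1\delta$. Define $u\coloneqq u_{m-1}u_{m-2}\cdots u_0$, which satisfies $u\cdot h(0)\cdot u^*=h(1)$ by telescoping the identities $u_ip_iu_i^*=p_{i+1}$. Iterated application of (L4), using $\|u_i\|=1$, bounds the filtration of $u$ by $mC_0L$, and optimizing over $\delta$ (hence $m$) in the quantity $mC_0(\delta)$ yields $u\in U_N^{1635L}(A)$.

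For the homotopy to the identity, note that $\|u_i-1\|<1$, so $\log u_i$ is defined by the convergent series $\sum_{k\geqslant 1}(-1)^{k+1}(u_i-1)^k/k$, and the continuous path $s\mapsto\exp(s\log u_i)$, $s\in[0,1]$, connects $1$ to $u_i$. Concatenating the subpaths $s\mapsto u_{m-1}\cdots u_{i+1}\exp(s\log u_i)u_{i-1}\cdots u_0$ produces a continuous path from $1$ to $u$ consisting of unitaries. Applying Lemma~\ref{lemma:powerseries} to the majorants of $\log$ and $\exp$, combined with the iterated product estimate above, shows that every unitary along this path lies in $U_N^{2087L}(A)$. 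The main technical burden is the optimization of the partition parameter $\delta$ and the careful bookkeeping of norm and filtration bounds through the polar decomposition and exponential/logarithm computations; the explicit constants $1635$ and $2087$ emerge from these computations, with the gap reflecting the additional cost of the $\log$ factor.
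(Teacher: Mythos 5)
Your proof takes essentially the same route as the paper: invoke Lemma~\ref{lemma:Liphomotopy}(1) for a $19$-Lipschitz homotopy, subdivide into steps of small norm, form $z_i$ (your formula $1-p_i-p_{i+1}+2p_{i+1}p_i$ is algebraically identical to the paper's $1+(p_{j+1}-p_j)(2p_j-1)$), take the polar part $u_i=z_i(z_i^*z_i)^{-1/2}$, multiply, and obtain the homotopy to the identity via $\exp(s\log u_i)$. The only difference is cosmetic: you leave the subdivision parameter $\delta$ free and assert an optimization, whereas the paper fixes $\delta=1/10$ (i.e.\ $190$ steps) and carries out the explicit arithmetic to arrive at $1635$ and $2087$.
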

	\begin{proof}
		By Lemma \ref{lemma:Liphomotopy}, there exists a $19$-Lipschitz homotopy $h$ in $P^{3L}_{n+k+l}(A)$ that connects  $p\oplus I_k\oplus 0_l$ and $q\oplus I_k\oplus 0_l$. Denote $m=n+k+l$.
		
		Write $p_j=h(j/190)$. Set
		$$z_j=1+(p_{j+1}-p_j)(2p_j-1) \textup{ and } u_j=z_j(z_j^*z_j)^{-\frac 1 2}.$$
		Then we have $p_{j+1}=u_jp_ju_j^*$. Hence if we set $u=u_{190}u_{189}\cdots u_1u_0$, then $p_{190}=up_0u^*$.

		Write $\varepsilon=1/10$ for short.
		As $\|p_{j+1}-p_j\|\leqslant \varepsilon$, we have 
		$z_j\in M_m(A)_{(1+\varepsilon)6L}$ and $\|z_j\|\leqslant 1+\varepsilon$. Besides, we have $z_j^*z_j=1-(p_{j+1}-p_j)^2$. Therefore we have $z_j^*z_j\in M_m(A)_{12\varepsilon L}$, $\|1-z_j^*z_j\|\leqslant \varepsilon^2$, and $\|(z_j^*z_j)^{-1/2}\|=\|z_j\|\leqslant1+\varepsilon $. By Lemma \ref{lemma:powerseries}, we have 
		$$(z^*_jz_j)^{-\frac 1 2}=(1-(1-z_j^*z_j))^{-\frac 1 2}\in 
		M_m(A)_{(1-\varepsilon^2)^{-\frac 3 2}\cdot 12\varepsilon L}.$$
		Therefore  
		$$u_j\in U_m(A)_{6(1+\varepsilon)^2L+(1+\varepsilon)(1-\varepsilon^2)^{-\frac 3 2}\cdot 12\varepsilon L}\subset U_m^{\frac{1635}{190}}.$$
		Write $\delta= 6(1+\varepsilon)^2+12\varepsilon(1+\varepsilon)(1-\varepsilon^2)^{-\frac 3 2}$ for short. 
		Thus $u$ is a unitary in $U_m^{\frac{19\delta}{\varepsilon}L}(A)\subset U_m^{1635}(A)$. 
		
		To show that this $u$ is homotopic to the identity in $U_{n+k+l}^{2087L}(A)$,  we first notice that
		$$\|1-(z_j^*z_j)^{-\frac 1 2}\|\leqslant (1-\varepsilon^2)^{-\frac 1 2}-1.$$
		Therefore
		$$\|u_j-1\|\leqslant \|(z_j^*z_j)^{-\frac 1 2}\|\|z_j-1\|+\|(z_j^*z_j)^{-\frac 1 2}-1\|\leqslant (1+\varepsilon)\varepsilon+(1-\varepsilon^2)^{-\frac 1 2}-1.$$
		Set $a_j=\log(u_j)$. Again by Lemma \ref{lemma:powerseries}, we have
		$$\|a_j\|\leqslant -\log(1-\|1-u_j\|)=-\log(2-(1+\varepsilon)\varepsilon-(1-\varepsilon^2)^{-\frac 1 2}),$$ 
		and $a_j\in M_m(A)_{(2-(1+\varepsilon)\varepsilon-(1-\varepsilon^2)^{-\frac 1 2})^{-1}\delta L}$. Now we see from Lemma \ref{lemma:powerseries} that
		$u_j$ is connected to the identity by 
		$$\exp(t a_j) \in U_m^{(2-(1+\varepsilon)\varepsilon-(1-\varepsilon^2)^{-\frac 1 2})^{-2}\delta L}(A)
		\subset U_m^{\frac{2087}{190}L}(A).$$
		Therefore $u$ is connected to the identity by
		$$\exp(t\log a_{190})\exp(t\log a_{189})\cdots\exp(t\log a_0)\in U_m^{2087L}(A).$$
	\end{proof}
	
	\subsection{Inductive systems and controlled homomorphisms}
	
	In this subsection, we discuss some basic properties of inductive systems and  controlled homomorphisms between them.
	\begin{definition}\label{def:inducsys}
		Let $\{M_L\}_{L\geqslant 0}$ be a collection of abelian groups index by the set $\R_{\geqslant 0} = [0, \infty)$. Assume there is a homomorphism
		$$i_{L,L'}\colon M_L\to M_{L'}$$
		for all $L<L'$.
		We call $\{M_L\}_{L\geqslant 0}$ an\emph{ inductive system} if $i_{L,L''}=i_{L',L''}\circ i_{L,L'}$ when $L<L'<L''$. 
	\end{definition}
	
We will be only concerned with inductive systems indexed by the set $\R_{\geqslant 0}$. For notational simplicity, we shall omit the subscript and write $\{M_L\}$ in place of $\{M_L\}_{L\geqslant 0}$ from now on.
	
	Let $\ilim M_L$ be the inductive limit of $\{M_L\}$. We will denote the natural map from $M_L $ to $\ilim M_L$ by $i_L$.
	\begin{definition}\label{def:controlledhom}
			A function $F\colon\R_{\geqslant 0}\to \R_{\geqslant 0}$ is called a \emph{control function} if $F$ is non-decreasing and  $F(x) \to \infty$, as $x\to \infty$. Let $\{M_L\}$ and $\{M_L'\}$ be two inductive systems. We say a collection of group homomorphisms $\{\xi_L\colon M_L\to M_{F(L)}'\}$ is a \emph{controlled homomorphism} from $\{M_L\}$ to $\{M_L'\}$   with control function $F$ if the following diagram commutes:
		$$\xymatrix{M_L\ar[rr]^{\xi_L}\ar[d]_{i_{L,L'}}&&M_{F(L)}'\ar[d]^{i'_{F(L),F(L')}}\\
			M_{L'}\ar[rr]^{\xi_{L'}}&&M_{F(L')}'
		}
		$$
		for all $L< L'$.  From now on, we shall denote  such a controlled homomorphism by  $\xi\colon \{M_L\} \xlongrightarrow[F]{} \{M_L'\}$ or
		${\{M_L\}_{L \geqslant 0}\xlongrightarrow[F]{\xi}\{M_L'\} }$. If the control function $F$ is clear from the context, we will simply write  $\xi\colon \{M_L\}\to \{M_L'\}_{L \geqslant 0}$ or
		${\{M_L\}\xlongrightarrow{\xi}\{M_L'\} }$ instead. 
	\end{definition}
	\begin{definition}\label{def:similar}
		Let $F$, $G$ and $H$ be control functions such that $H(x)\geqslant G(x)$ and $H(x)\geqslant F(x)$ for all $x\in\mathbb R_{\geqslant 0}$. Given two controlled homomorphisms		$\xi\colon \{M_L\}\xlongrightarrow[F]{}\{M_L'\}$ and $\eta\colon \{M_L\} \xlongrightarrow[G]{}\{M_L'\} $, we say $\xi$ is controlled equivalent to $\eta$ with control function $H$ if the following diagram commutes:
		$$\xymatrix{M_L\ar[rr]^{\xi_L}\ar[d]_{\eta_L}&&M_{F(L)}'\ar[d]^{i'_{F(L),H(L)}}\\
			M_{G(L)}'\ar[rr]^{i'_{G(L),H(L)}}&&M_{H(L)}'
		}
		$$
		In this case, we write $\xi\sim_H\eta$ or simply $\xi\sim\eta$.
	\end{definition}
\begin{remark}\label{rk:controlled-iso-com}
		A controlled homomorphism between two inductive systems naturally induces a homomorphism between the inductive limits, and equivalent controlled homomorphisms induces the same map. We say a controlled homomorphism is a \emph{controlled isomorphism} if it admits an inverse up to controlled equivalences in the sense of Definition \ref{def:similar}. In this case, the induced map on the inductive limit is an isomorphism. Similarly, we can define the commutativity of a diagram of controlled homomorphisms up to controlled equivalences.
\end{remark}
	\begin{definition}\label{def:fullyfaithful}
	Given an inductive system  $\{M_L\}$, let us denote by $i_L$ the natural map $M_L\to \varinjlim M_L$. We say $\{M_L\}$ is \emph{uniformly controlled} if there exist $L_0\geqslant 0$ and a control function $F\colon \mathbb R_{\geqslant 0}\to \mathbb R_{\geqslant 0} $ such that 
\begin{itemize}
	\item for any $L\geqslant L_0$, the map $i_L\colon M_L\to \varinjlim M_L$ is surjective;
	\item if $i_L(x)=0$ in $\varinjlim M_L$ for some $x\in M_L$, then $i_{L,F(L)}(x)=0$ in $M_{F(L)}$.
\end{itemize}
We call $(L_0,F)$ a \emph{uniform control pair} of $\{M_L\}$. And we shall say $\{M_L\}$ is $(L_0,F)$-uniformly controlled if we want to specify the uniform control pair $(L_0,F)$.
	\end{definition}

By definition, $\{M_L\}$ is uniformly controlled if and only if the natural map from $\{M_L\}$ to the constant inductive system\footnote{A constant inductive system $\{N_L\}$ is an inductive system where every $N_L$ is equal to   some fixed abelian group $A$.} 
$\{ \ilim M_L \}$  is a controlled isomorphism. Therefore we have the following lemma.

\begin{lemma}\label{lemma:fullyfaithful-iso}
	Let $\{M_L\}$ and $\{M_L'\}$ be inductive systems. Suppose that $\{M_L\}$ and $\{M_L'\}$ are controlled isomorphic, i.e., there are controlled homomorphisms 
	$\xi\colon\{M_L\}\xrightarrow{}\{M_L'\}$ and $\eta\colon\{M_L'\}\xrightarrow{}\{M_L\}$ such that $\xi\circ\eta$ and $\eta\circ\xi$ are controlled equivalent to the identity map respectively. If $\{M_L\}$ is uniformly controlled, then $\{M_L'\}$ is also uniformly controlled. Moreover,  the uniform control pair of $\{M_L'\}$ only depends on the uniform control pair of $\{M_L\}$, the control functions of $\xi,\eta$, and the control functions of $\xi\circ\eta\sim\id$ and $\eta\circ\xi\sim\id$.
\end{lemma}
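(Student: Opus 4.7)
The plan is to verify both clauses of Definition~\ref{def:fullyfaithful} for $\{M_L'\}$ by transferring them across the controlled isomorphism. Denote by $F_\xi$ and $F_\eta$ the control functions of $\xi$ and $\eta$ respectively, and by $H_1$ the control function witnessing $\xi\circ\eta\sim\id_{\{M_L'\}}$. As noted in Remark~\ref{rk:controlled-iso-com}, on passage to the inductive limit $\xi$ and $\eta$ induce mutually inverse isomorphisms $\xi_\infty:\varinjlim M_L\to\varinjlim M_L'$ and $\eta_\infty:\varinjlim M_L'\to\varinjlim M_L$, and this is the book-keeping device that drives everything.

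For surjectivity, given any $y\in\varinjlim M_L'$, consider $\eta_\infty(y)\in\varinjlim M_L$. By the uniform control of $\{M_L\}$, the map $i_{L_0}\colon M_{L_0}\to\varinjlim M_L$ is surjective, so there exists $x\in M_{L_0}$ with $i_{L_0}(x)=\eta_\infty(y)$. Then $\xi_{L_0}(x)\in M'_{F_\xi(L_0)}$ and, by naturality, $i'_{F_\xi(L_0)}(\xi_{L_0}(x))=\xi_\infty(\eta_\infty(y))=y$. Hence $i'_{L'}$ is surjective for every $L'\geqslant L_0':=F_\xi(L_0)$.

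For the vanishing clause, suppose $y\in M'_L$ with $i'_L(y)=0$. Since $\eta$ is a controlled homomorphism, $i_{F_\eta(L)}(\eta_L(y))=\eta_\infty(i'_L(y))=0$ in $\varinjlim M_L$, so by the uniform control of $\{M_L\}$ we obtain $i_{F_\eta(L),F(F_\eta(L))}(\eta_L(y))=0$ in $M_{F(F_\eta(L))}$. Applying the controlled homomorphism $\xi$ to this identity, we get that $(\xi\circ\eta)_L(y)$, pushed forward to $M'_{F_\xi(F(F_\eta(L)))}$, is zero. Now push both sides of the controlled equivalence $\xi\circ\eta\sim_{H_1}\id$ up to the common level
\[
F'(L):=\max\bigl(H_1(L),\,F_\xi(F(F_\eta(L)))\bigr),
\]
and one concludes $i'_{L,F'(L)}(y)=0$. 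Thus $(L_0',F')$ is a uniform control pair for $\{M_L'\}$, and by construction it depends only on $(L_0,F)$, $F_\xi$, $F_\eta$, and $H_1$, as required.

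There is no real obstacle here; the proof is essentially a diagram chase. The one thing to watch carefully is the nested stacking of control functions in the vanishing clause, where one first moves to $\{M_L\}$ via $\eta$, kills the image using the hypothesis, comes back via $\xi$, and then replaces $\xi\circ\eta$ by $\id$ at a sufficiently large level using $H_1$.
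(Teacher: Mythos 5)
Your proof is correct. The paper does not spell out a separate argument for this lemma; it only observes, immediately before the statement, that ``$\{M_L\}$ is uniformly controlled if and only if the natural map from $\{M_L\}$ to the constant inductive system $\{\varinjlim M_L\}$ is a controlled isomorphism,'' and then lets transitivity of controlled isomorphism do the work (after identifying $\varinjlim M_L\cong\varinjlim M_L'$). Your approach unpacks that abstract observation into a direct verification of the two clauses of Definition~\ref{def:fullyfaithful}, which is the same argument written concretely. One small bonus of your version: the diagram chase makes visible that only the controlled equivalence $\xi\circ\eta\sim_{H_1}\id$ (plus $F_\xi$, $F_\eta$, $L_0$, $F$) is actually consumed --- you never invoke $\eta\circ\xi\sim\id$ except to justify the loose statement that $\xi_\infty,\eta_\infty$ are ``mutually inverse,'' and the only identity you actually use is $\xi_\infty\circ\eta_\infty=\id$, which already follows from $\xi\circ\eta\sim\id$. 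So the dependence list in the ``moreover'' clause of the lemma can be trimmed. The bookkeeping in the vanishing clause is also correct: you need $F_\xi(F_\eta(L))\leqslant F_\xi(F(F_\eta(L)))$ (from $F(x)\geqslant x$ and $F_\xi$ nondecreasing) and $F_\xi(F_\eta(L))\leqslant H_1(L)$ (built into Definition~\ref{def:similar}) for the two pushforwards to $F'(L)$ to be well defined, and both hold.
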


	\begin{definition}\label{def:asymp}
		Let $F$, $G$, $F_1$ and $F_2$ be control functions.  Given two controlled homomorphisms		$\xi\colon \{M_L\}\xlongrightarrow[F]{}\{M_L'\}$ and $\eta\colon \{M'_L\} \xlongrightarrow[G]{}\{M_L''\} $, we say  the sequence 
		$$\{M_L\}\xlongrightarrow[F]{\xi}\{M_L'\}\xlongrightarrow[G]{\eta}\{M_L''\} $$
		is \emph{asymptotically exact} at $\{M_L'\}$ with control functions $F_1$ and $F_2$ if
		\begin{itemize}
			\item $\eta\circ\xi\sim_{F_1}0$;
			\item for any $m'\in M_L'$ with $\eta_L(m')=0$, there exists $m\in M_{F_2(L)}$ such that 
			\[   \xi_{F_2(L)} (m) = i'_{L, F(F_2(L))}(m')\] 
			in $M_{F(F_2(L))}'$.
		\end{itemize} 
	\end{definition}

We have the following five lemma type result on the uniform controls of inductive systems.   
	\begin{lemma}\label{lemma:fivelemma}
		Assume that the following sequence
		$$\{M_L^1\}\xrightarrow{\xi^1}\{M_L^2\}\xrightarrow{\xi^2}\{M_L^3\}\xrightarrow{\xi^3}\{M_L^4\}\xrightarrow{\xi^4}\{M_L^5\} $$
		is asymptotically exact. If $\{M_L^i\}$ is uniformly controlled for $i=1,2,4,5$, then $\{M_L^3\}$ is uniformly controlled. Moreover, the uniform control pair of $\{M_L^3\}$ only depends on the control functions of $\xi^i$, $i=1,2,3,4$, the control functions of the asymptotic exactness at $\{M_L^i\}$, $i=2,3,4$, and the uniform control pairs of $\{M_L^i\}$, $i=1,2,4,5$.
	\end{lemma}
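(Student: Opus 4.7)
The strategy is a five-lemma argument in the commutative diagram
\[
\xymatrix{
\{M_L^1\}\ar[r]^{\xi^1}\ar[d] & \{M_L^2\}\ar[r]^{\xi^2}\ar[d] & \{M_L^3\}\ar[r]^{\xi^3}\ar[d] & \{M_L^4\}\ar[r]^{\xi^4}\ar[d] & \{M_L^5\}\ar[d] \\
\varinjlim M^1 \ar[r] & \varinjlim M^2 \ar[r] & \varinjlim M^3 \ar[r] & \varinjlim M^4 \ar[r] & \varinjlim M^5
}
\]
whose top row is asymptotically exact by hypothesis. The bottom row is exact in the ordinary sense, since any class in $\ker(\varinjlim\xi^{k+1})$ admits a representative at a sufficiently high filtration that is genuinely annihilated by $\xi^{k+1}$ and can then be lifted along $\xi^k$ using asymptotic exactness at $\{M_L^{k+1}\}$. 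The uniform control hypothesis on $\{M^i_L\}$ for $i=1,2,4,5$ provides, for each such $i$, a pair $(L^i_0, G_i)$ such that every element of $\varinjlim M^i$ has a preimage in $M^i_{L^i_0}$ and any class that dies in the colimit already dies after the transition map $i_{L,G_i(L)}$.

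To establish the surjective half of the uniform control for $\{M^3_L\}$, I start with $y\in\varinjlim M^3$ and lift $\xi^3(y)\in\varinjlim M^4$ via the uniform control of $\{M^4_L\}$ to some $z\in M^4_{L^4_0}$. The element $\xi^4(z)$ maps to $\xi^4\xi^3(y)=0$ in $\varinjlim M^5$, so by the uniform control of $\{M^5_L\}$ it already vanishes at filtration $G_5(F^4(L^4_0))$, where $F^k$ denotes the control function of $\xi^k$. Asymptotic exactness at $\{M^4_L\}$ then produces $w\in M^3$ at a controlled level with $\xi^3(w)=z$. The difference $i(w)-y\in\ker(\varinjlim\xi^3)=\mathrm{Im}(\varinjlim\xi^2)$ admits a preimage in $\varinjlim M^2$, which lifts by the uniform control of $\{M^2_L\}$ to some $\tilde\alpha\in M^2_{L^2_0}$; then $w-\xi^2(\tilde\alpha)$ is a preimage of $y$ in $M^3$ at a definite controlled filtration.

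For the kernel control, take $x\in M^3_L$ with $i_L(x)=0$. Then $\xi^3(x)$ maps to zero in $\varinjlim M^4$, so the uniform control of $\{M^4_L\}$ makes it vanish at filtration $G_4(F^3(L))$. Asymptotic exactness at $\{M^3_L\}$ produces $u\in M^2$ at a controlled level with $\xi^2(u)=x$. Since $i(u)\in\ker(\varinjlim\xi^2)=\mathrm{Im}(\varinjlim\xi^1)$, the uniform control of $\{M^1_L\}$ lifts its preimage to some $\tilde v\in M^1_{L^1_0}$, and the difference $\xi^1(\tilde v)-u$ dies in $\varinjlim M^2$, hence already in $M^2$ at a controlled filtration by the uniform control of $\{M^2_L\}$. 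Finally, asymptotic exactness at $\{M^2_L\}$, i.e.\ $\xi^2\xi^1\sim 0$, forces $x=\xi^2(u)=\xi^2\xi^1(\tilde v)$ to vanish at a still larger controlled filtration, closing the argument.

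The only real obstacle is quantitative bookkeeping: each of the two chains composes four or five of the control functions among $\{L^i_0\}$, $\{G_i\}$, $\{F^k\}$, and the asymptotic exactness constants at $\{M^i_L\}$, so the challenge is to present the final uniform control pair $(L^3_0, F)$ explicitly in terms of these inputs. I would organize the proof by naming each intermediate constant in the order it appears and then writing $L^3_0$ and $F$ as explicit monotone compositions; the diagram chase itself is entirely standard.
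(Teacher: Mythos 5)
Your proof is correct and follows essentially the same diagram-chase argument as the paper: the same decomposition into a surjectivity chase (through $\{M_L^4\}$, $\{M_L^5\}$, exactness at $\{M_L^4\}$, exactness at $\{M_L^3\}$, and $\{M_L^2\}$) and a kernel-control chase (through $\{M_L^4\}$, exactness at $\{M_L^3\}$, $\{M_L^1\}$, $\{M_L^2\}$, and the composition $\xi^2\xi^1\sim 0$). If anything, you are slightly more explicit than the paper at one point: in the kernel-control step you invoke the uniform control of $\{M^2_L\}$ to force $\xi^1(\tilde v)$ and $u$ to agree at a finite controlled filtration rather than merely in the colimit, a push-up that the paper's write-up glosses over.
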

	\begin{proof}
		Let $F_j$ be the control function of $\xi^j$ (cf. Definition \ref{def:controlledhom}) and $Z_{j+1,j}$ the control function of the controlled equivalence $\xi^{j+1}\xi^j\sim 0$ (cf. Definition \ref{def:similar}). 
		Let $E_{j-1,j}$ be the control function of the exactness at $\{M_L^{j}\}$ (cf. Definition $\ref{def:asymp}$). Suppose the inductive system $\{M_L^i\}$ is $(L_j,U_j)$-uniformly controlled for $j=1,2,4,5$ (cf. Definition \ref{def:fullyfaithful}. Let us write  $M^j=\ilim M^j_L$ and denote the map $M^j \to M^{j+1}$ induced by  $\xi^j\colon \{M_L^j\} \to \{M_L^{j+1}\}$ still by $\xi^j$.
		
		Given $a\in M^3$, as $\{M_L^4\}$ is uniformly controlled, there exists $b\in M^4_{L_4}$ such that $b$ eventually equals $\xi^3(a)$ in $M^4$. Therefore $\xi^4_{L_4}(b)$ is eventually zero in $M^5$, thus zero in $M^5_{U_5(F_4(L_4))}$. Set 
		$$L_{4+}=\inf\{t : F_4(t)\geqslant U_5(F_4(L_4)) \}+1,$$
		 so that $F_4(L_{4+})\geqslant U_5F_4(L_4)$. Therefore $i_{L_{4},L_{4+}}^4(b)$ is in the kernel of the map $\xi^4_{L_{4+}}$. It follows that there exists  $c\in M^3_{E_{3,4}(L_{4+})}$ such that $\xi^3(c) = i_{L_{4},L_{4+}}^4(b)$ due to the exactness at $\{M_L^4\}$.
		
		Set $d=a-i^3_{E_{3,4}(L_{4+})}(c)$. We see that $\xi^3(d)=0$ as $\xi^3i^3(c)=\xi^3(a)$. Therefore $d$ is the image of $\xi^2\colon M^2 \to M^3$, hence there exists $e\in M^2_{L_2}$ such that $\xi^2(e) = d$. Set 
		$$L_3=\max\{F_2(L_2),E_{3,4}(L_{4+})\}.$$
		Then 
		$$i_{L_3}^3(i^3_{E_{3,4}(L_{4+}),L_3}(c)+i^3_{F_2(L_2),L_3}\xi^2_{L_2}(e))=a.$$
		Thus $i^3_{L_3}\colon M^3_{L_3}\to M^3$ is surjective, so is $i_{L}^3$ for all  $L\geqslant L_3$.
		
		On the other hand, if $x\in M_L^3$ is eventually zero in $M^3$, then $\xi^3_L(x)$ is eventually zero in $M^4$, hence zero in $M^4_{U_2F_3(L)}$. Set 
		$$L_+=\inf\{t : F_3(t) \geqslant U_2F_3(L)\}+1$$
		so that $F_3(L_+)\geqslant U_2F_3(L)$.
		Then we have  $\xi^3_{L_+}i^3_{L,L_+}(x)=0$.  So $i^3_{L,L^+}(x)$ is the image of some $y\in M^2_{E_{2,3}(L_+)}$ under the map $\xi^2$. As $i^2_{E_{2,3}(L_+)}(y)\in M^2$ is in the kernel of the map $\xi^2 \colon M^2 \to M^3$, it is in the image of $\xi^1\colon M^1 \to M^2$, hence the image of some $z\in M^1_{L_1}$. Set
		 $$L_{++}=\inf\{t : t \geqslant L_1,\ F_1(t)\geqslant E_{2,3}(L_+) \}+1$$
		 so that $F_1(L_{++})\geqslant E_{2,3}(L_+)$. Set $w=i^1_{L_1,L_{++}}(z)$. Then we have  \[ \xi^1_{L_{++}}(w)=i^2_{E_{2,3}(L_+),F_1(L_{++})}(y), \] hence
		$$i^3_{L,F_2F_1(L_{++})}(x)= \xi^2_{F_1(L_{++})}\xi^1_{L_{++}}(w).$$
		As $\xi^2\xi^1$ is controlled equivalent to zero with control function $Z_{2,1}$, we see that $x$ equals zero in $M^3_{F_3(L)}$, where
		$F_3(L)=Z_{2,1}(L_{++})$.
	\end{proof}

The following two lemmas can be proved in a similar way, by a standard diagram chasing. We omit the proofs. 
	\begin{lemma}\label{lemma:splitexact}
		Assume that the following sequence
		$$\{M_L^1\}\xrightarrow{\xi^1}\{M_L^2\}\xrightarrow{\xi^2}\{M_L^3\}\xrightarrow{\xi^3}\{M_L^4\}$$
		is asymptotically exact. If $\xi^1$ is controlled equivalent to zero and there exists a controlled homomorphism $s\colon\{M_L^4\}\rightarrow\{M_L^3\}$ such that $\xi^3\circ s$ is controlled equivalent to the identity map, then there exists a controlled homomorphism $p\colon\{M_L^3\}\rightarrow \{M_L^2\}$ such that $p\circ\xi^2$ is controlled equivalent to identity. Moreover, the control function of $p$ and the control function for $p\circ\xi^2\sim\mathrm{id}$ only depend on the  control functions of $\xi^i$, $i=1,2,3$, the  control function of the controlled equivalence $\xi^1\sim 0$ and the  control functions of  the asymptotic exactness.
	\end{lemma}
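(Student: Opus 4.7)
The plan is to mimic the standard retraction of a split short exact sequence of abelian groups, while carefully tracking how the control functions propagate through each step. The algebraic content is essentially trivial; all the work is bookkeeping.

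First, I would extract from the hypothesis $\xi^1 \sim 0$ a controlled injectivity statement for $\xi^2$. Suppose $y \in M_L^2$ satisfies $\xi_L^2(y) = 0$ after passing to a suitable filtration. By asymptotic exactness at $\{M_L^2\}$, there exists $z \in M^1$ with $\xi^1(z)$ equal to $y$ in an appropriate level. Since $\xi^1 \sim 0$ with some control function $Z$, the image $\xi^1(z)$ is zero at a further level, so $y$ itself is killed in some $M^2_{K(L)}$. This gives a single control function $K$, depending only on the control functions of $\xi^1$, of $\xi^1 \sim 0$, and of the exactness at $\{M_L^2\}$, such that $\ker \xi^2_L$ vanishes in $M^2_{K(L)}$.

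Next, I would define $p$. For $x \in M_L^3$, set
\[ a(x) \coloneqq x - s_L(\xi^3_L(x)), \]
an element of $M^3_{L_0}$ for some $L_0 = L_0(L)$ depending on the control functions of $s$ and $\xi^3$. Applying $\xi^3$ to $a(x)$ and invoking $\xi^3 \circ s \sim \mathrm{id}$ shows that $\xi^3(a(x))$ vanishes at a sufficiently high $M^4$-filtration. Asymptotic exactness at $\{M_L^3\}$ then provides $y \in M^2_{L_1}$ with $\xi^2(y) = a(x)$ in the corresponding $M^3$-level. I would define $p_L(x) \coloneqq y$, viewed in $M^2_{K(L_1)}$. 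By the controlled injectivity established in the previous paragraph, any two admissible choices of $y$ differ by an element of $\ker \xi^2$, which dies after one application of $K$; so this $p_L(x)$ is unambiguous. Linearity of $\xi^3$ and $s_L$ gives $a(x + x') = a(x) + a(x')$, and then the uniqueness of $p_L$ modulo $K$ promotes $p_L$ to an honest group homomorphism after pushing to the chosen level. Compatibility with the structure maps $i^3_{L,L'}$ is checked in the same way.

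Finally, to see $p \circ \xi^2 \sim \mathrm{id}$, given $y \in M^2_L$ put $x = \xi^2_L(y)$; the composition $\xi^3 \circ \xi^2$ is controlled-equivalent to zero by asymptotic exactness at $\{M^3_L\}$, so $\xi^3(x) = 0$ at some level, hence $s(\xi^3(x)) = 0$ and $a(x) = x = \xi^2(y)$ at an appropriate level. Controlled injectivity of $\xi^2$ then forces $p_L(\xi^2_L(y)) = y$ in $M^2_{F(L)}$ for the resulting control function $F$. The main (and only) difficulty is the purely mechanical one of assembling, from the given control functions of $\xi^i$, of $\xi^1 \sim 0$, of $\xi^3 \circ s \sim \mathrm{id}$, and of the asymptotic exactness at $\{M^2_L\}$ and $\{M^3_L\}$, a single control function that governs both $p$ and the equivalence $p \circ \xi^2 \sim \mathrm{id}$, and in checking that the set-theoretic choices involved in defining $p_L(x)$ can be made consistently enough to yield a genuine homomorphism (rather than merely a map up to control).
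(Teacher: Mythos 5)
Your proof is correct and follows precisely the approach the paper intends: the paper omits the proof of this lemma, remarking only that it follows by "a standard diagram chasing" in the style of Lemma \ref{lemma:fivelemma}, and your argument supplies exactly that diagram chase, with the key technical ingredients (controlled injectivity of $\xi^2$ from $\xi^1 \sim 0$ plus exactness at $\{M_L^2\}$, the element $a(x) = x - s(\xi^3(x))$, lifting via exactness at $\{M_L^3\}$, and resolving the indeterminacy of the lift by pushing to the level where controlled injectivity takes effect) all in place. The one point you flag as a worry at the end—whether the set-theoretic choices of lift assemble into an honest homomorphism—is in fact already handled by your controlled-injectivity observation, since after pushing to level $K(\cdot)$ the value of $p_L(x)$ is independent of the choice, and additivity and compatibility with the structure maps $i^3_{L,L'}$ then follow by the same uniqueness; so the concern is resolved by what you already wrote rather than being an open gap.
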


\begin{remark}\label{rk:naturalofsplit}
	Also by diagram chasing, we see that the map $p\colon\{M_L^3\}\rightarrow \{M_L^2\}$ in Lemma \ref{lemma:splitexact} is natural in the following sense. If the following diagram
		$$\xymatrix{\{M_L^1\}\ar[r]^{\xi^1}\ar[d]_{\eta^1}&\{M_L^2\}\ar[r]^{\xi^2}\ar[d]_{\eta^2}&\{M_L^3\}\ar[r]^{\xi^3}\ar[d]_{\eta^3}&\{M_L^4\}\ar[d]_{\eta^4}\\
			\{N_L^1\}\ar[r]^{\zeta^1}&\{N_L^2\}\ar[r]^{\zeta^2}&\{N_L^3\}\ar[r]^{\zeta^3}&\{N_L^4\}
}
$$
commutes up to controlled equivalences and  each row satisfies the conditions in Lemma \ref{lemma:splitexact}, then the diagram
		$$\xymatrix{\{M_L^2\}\ar[d]_{\eta^3}&\{M_L^3\}\ar[l]_{p}\ar[d]_{\eta^2}\\
	\{N_L^2\}&\{N_L^3\}\ar[l]_{q}
}
$$
commutes up to controlled equivalences, of which the control function only depends on the given control functions.
\end{remark}
	\begin{lemma}\label{lemma:epi-mono}
		Assume that the following sequence
		$$\{M_L^1\}\xrightarrow{\xi^1}\{M_L^2\}\xrightarrow{\xi^2}\{M_L^3\}\xrightarrow{\xi^3}\{M_L^4\}$$
		is asymptotically exact. 
		\begin{enumerate}[$(1)$]
			\item If there exists $s\colon\{M_L^3\}\rightarrow\{M_L^2\}$ such that $\xi^2\circ s$ $($resp. $s\circ\xi^2$$)$ is
			 controlled equivalent to the identity map, then $\xi^3$ $($resp. $\xi_1$$)$ is controlled equivalent to zero. Moreover, the control function of the controlled equivalence $\xi^1\sim 0$ $($resp. $\xi^3\sim 0$$)$ only depends on the control functions of $\xi^1,\xi^2,\xi^3,s$, $\xi^2\circ s\sim\id$ $($resp. $s\circ\xi^2\sim\id$$)$ and the control functions of the asymptotic exactness.
			\item If $\xi^1$ and $\xi^3$ are both controlled equivalent to zero, then there exists a controlled homomorphism $s\colon\{M_L^3\}\rightarrow\{M_L^2\}$ such that $\xi^2\circ s$ 
			and $s\circ \xi^2$ are controlled equivalent to the identity map respectively. Moreover, the control functions of $s$ and of the controlled equivalences only depend on the control functions of $\xi^1,\xi^2,\xi^3$, $\xi^1\sim 0$, $\xi^3\sim 0$ and of the asymptotic exactness.
		\end{enumerate}
	\end{lemma}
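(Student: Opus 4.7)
The plan is to prove both parts by diagram chasing, using the asymptotic exactness together with the given controlled equivalences. All control functions will be obtained by composing the given data, analogously to Lemma \ref{lemma:fivelemma} and Lemma \ref{lemma:splitexact}.

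For Part $(1)$, I would argue as follows. Assume first that $\xi^2\circ s\sim_H\id$ on $\{M_L^3\}$. Composing on the left with $\xi^3$ and using functoriality of controlled homomorphisms, we obtain $\xi^3\circ(\xi^2\circ s)\sim\xi^3$. The asymptotic exactness at $\{M_L^3\}$ gives $\xi^3\circ\xi^2\sim 0$, so $\xi^3\sim\xi^3\circ\xi^2\circ s\sim 0$ with a control function obtained by composing $H$, the control function of $\xi^3$, and the exactness control function at $\{M_L^3\}$. Symmetrically, if $s\circ\xi^2\sim\id$ on $\{M_L^2\}$, then precomposing with $\xi^1$ gives $\xi^1\sim s\circ\xi^2\circ\xi^1\sim s\circ 0\sim 0$, using the asymptotic exactness at $\{M_L^2\}$.

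For Part $(2)$, the plan is to construct a splitting $s$ by a lifting procedure. Let $Z$ denote the control function for $\xi^3\sim 0$ and $E_{3,4}$ the exactness control function at $\{M_L^3\}$. Given $m\in M_L^3$, the element $\xi^3(m)$ vanishes in $M^4_{Z(L)}$ by hypothesis, so the asymptotic exactness at $\{M_L^3\}$ produces an element $n\in M^2_{E_{3,4}(Z(L))}$ whose image under $\xi^2$ equals the image of $m$ in a higher filtration. I would define $s_L(m):=n$ by making a fixed choice of lift on a generating set of $M_L^3$ and extending $\mathbb Z$-linearly. Then $\xi^2\circ s\sim\id$ is built into the construction, with control function a composition of $Z$, $E_{3,4}$, and the control function of $\xi^2$. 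For $s\circ\xi^2\sim\id$, given $m\in M_L^2$, both $s(\xi^2(m))$ and the image of $m$ in $M^2_{E_{3,4}(Z(F_2(L)))}$ lie in the preimage of $\xi^2(m)$; their difference is in $\ker\xi^2$.

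The main obstacle will be handling the non-canonicity of the lift $n$ and showing $s$ is truly a controlled homomorphism rather than just a set-valued procedure. Two different lifts differ by an element of $\ker\xi^2$, and the asymptotic exactness at $\{M_L^2\}$ together with $\xi^1\sim 0$ ensures that such an element vanishes in sufficiently high filtration. This provides two pieces of output: first, the defining ambiguity of $s_L$ disappears at the level of controlled equivalences, so any linear extension yields the same controlled equivalence class and $s$ is well-defined as a controlled homomorphism; second, the same argument applied to $s(\xi^2(m))-m$ shows $s\circ\xi^2\sim\id$. Throughout, careful bookkeeping is required to ensure all resulting control functions depend only on the data listed in the statement, namely the control functions of $\xi^1,\xi^2,\xi^3$, of the controlled equivalences $\xi^1\sim 0$ and $\xi^3\sim 0$, and of the asymptotic exactness at each term.
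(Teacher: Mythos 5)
The paper omits the proof of this lemma entirely, remarking only that it ``can be proved in a similar way, by a standard diagram chasing'' in the spirit of Lemma~\ref{lemma:fivelemma} and Lemma~\ref{lemma:splitexact}. So there is no paper proof to compare against directly; I will assess your argument on its own terms.

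Your Part~(1) is correct and is exactly the expected argument: compose with $\xi^3$ (resp.\ precompose with $\xi^1$), use $\xi^3\circ\xi^2\sim 0$ (resp.\ $\xi^2\circ\xi^1\sim 0$) from the asymptotic exactness, and track the control functions. (Note the paper's ``Moreover'' clause has $\xi^1\sim 0$ and $\xi^3\sim 0$ swapped relative to the first sentence; this is a typo in the statement, not an issue with your proof.)

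In Part~(2) the overall strategy is right, but there is a genuine gap in the construction of $s$. Two separate issues arise with ``make a fixed choice of lift on a generating set of $M^3_L$ and extend $\mathbb Z$-linearly.'' First, $M^3_L$ need not be free abelian, so a $\mathbb Z$-linear extension of an arbitrary assignment on a generating set is not well-defined: if $\sum a_i g_i = 0$ is a relation among the chosen generators, nothing forces $\sum a_i\, s_L(g_i) = 0$. Second, and more to the point, even granting that each $s_L$ can be made a group homomorphism, you still need the collection $\{s_L\}$ to commute with the structure maps $i^3_{L,L'}$ and $i^2$ \emph{on the nose}, as required by Definition~\ref{def:controlledhom1} (= Definition~\ref{def:controlledhom}); showing that two choices of $s$ are controlled equivalent to each other does not give this. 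Your observation that ``two different lifts differ by an element of $\ker\xi^2$, and the asymptotic exactness at $\{M_L^2\}$ together with $\xi^1\sim 0$ ensures that such an element vanishes in sufficiently high filtration'' is the right phenomenon, but it needs to be deployed differently: rather than fixing a lift and hoping for compatibility, you should \emph{define} $s'_L(m)$ as the image in $M^2_{G(L)}$ of \emph{any} lift $n\in M^2_{H(L)}$ of $m$, where $G = G'\circ H$ and $G'$ is a control function large enough that any element of $M^2_{H(L)}$ killed by $\xi^2$ dies in $M^2_{G(L)}$ (this $G'$ exists by exactness at $\{M^2_L\}$ plus $\xi^1\sim 0$). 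With this definition, $s'_L$ is automatically independent of the choice of lift, is a group homomorphism (a lift of $m+m'$ is a sum of lifts, and pushing forward commutes with sums), and commutes with the structure maps exactly (both $i^2\circ s'_L(m)$ and $s'_{L'}\circ i^3_{L,L'}(m)$ are pushes of lifts of the same element, hence agree in $M^2_{G(L')}$, after enlarging $G$ once more by the same mechanism). The remaining controlled equivalences $\xi^2\circ s'\sim\id$ and $s'\circ\xi^2\sim\id$ then follow as you indicate.
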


	\subsection{Asymptotically exact sequence for Lipschitz controlled $K$-theory}
	In this subsection, we prove the asymptotically exact sequence for Lipschitz controlled $K$-theory. 
	
	Let $A$ be a Lipschitz filtered $C^*$-algebra with a Lipschitz filtration $\{A_L\}_{L\geqslant 0}$.
	Let $J$ be an ideal of $A$. Denote $Q=A/J$ and let $r\colon J \to A$ and $\pi\colon A\to Q$ be the inclusion and quotient map respectively. We assume that $\{J\cap A_L\}_{L\geqslant 0}$ is a Lipschitz filtration on $J$. Or equivalently, assume that $J$ is Lipschitz filtered and the inclusion $r\colon J \to A$ is a filtered homomorphism. Also, assume that we have a Lipschitz filtration $\{Q_L\}_{L\geqslant 0}$ on $Q$.
	\begin{definition}\label{def:filtered-exact-sequence}
		Let $F_l\colon \R_{\geqslant 0}\to \R_{\geqslant 0}$ be a control function such that $F_l(x)\geqslant 0$.	
	With the above notation, we say the short exact sequence
		$$0\longrightarrow J\xlongrightarrow{r}A\xlongrightarrow{\pi}Q\longrightarrow0
		$$
		is controlled if the following are satisfied: 
		\begin{enumerate}
			\item $\pi$ is a \emph{filtered homomorphism}, i.e., $\pi(A_L )\subset Q_L$ for any $L\geqslant 0$,
			\item  $\pi$ is a \emph{controlled surjection} with  control function $F_l$, i.e., for any $x\in M_n(Q)_L$ there exists some $y\in M_n(A)_{F_l(L)\cdot \|x\|}$ such that 	$\|y\|\leqslant 2\|x\|$ and  $\pi(y)=x$. 
		\end{enumerate}
	\end{definition}

We need the following technical lemma to define the boundary maps in the long asymptotically exact sequence for  Lipschitz controlled $K$-theory.
	\begin{lemma}\label{lemma:liftunitary}
		Let $A$ and $Q$ be unital $C^\ast$-algebras and  $\pi\colon A\to Q$ a unital controlled surjection with control function $F_l$ in the sense of Definition $\ref{def:filtered-exact-sequence}$.
		If $v\in U^L(Q)$ is homotopic to the identity in $U^{L}(Q)$, then there exists $u\in U^{14F_l(L)}(A)$ which is homotopic to identity in $U^{14F_l(L)}(A)$ such that $\pi(u)=v$.
	\end{lemma}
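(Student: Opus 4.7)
The plan is to produce $u$ as a finite product of unitary exponentials $\exp(\widetilde{b}_i)$ whose projections under $\pi$ telescope to give $v$, where each $\widetilde{b}_i \in M_n(A)$ is a skew-adjoint lift of $b_i := \log(v_{t_{i+1}} v_{t_i}^*)$ along a suitable partition of a \emph{Lipschitz} homotopy from $1$ to $v$.

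First, I would apply Lemma \ref{lemma:Liphomotopy}(2) to replace the given continuous homotopy by an $11$-Lipschitz path $v_t \colon [0,1] \to U^{4L}_n(Q)$ with $v_0 = 1$ and $v_1 = v$. Choosing a uniform partition $0 = t_0 < t_1 < \cdots < t_N = 1$ with $N$ large enough that $\|v_{t_{i+1}} - v_{t_i}\| \leqslant 1/6$ (the Lipschitz bound $11$ lets one take $N = 66$), each $w_i := v_{t_{i+1}} v_{t_i}^*$ lies in $U^{8L}_n(Q)$ and satisfies $\|w_i - 1\| \leqslant 1/6$. By Lemma \ref{lemma:powerseries} applied to the series $-\log(1-x) = \sum_{k \geqslant 1} x^k/k$, the element $b_i := \log(w_i)$ is skew-adjoint, belongs to $M_n(Q)_{48L/5}$, and satisfies $\|b_i\| \leqslant \log(6/5)$. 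A telescoping computation gives $v = \exp(b_{N-1}) \cdots \exp(b_0)$.

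Next, using the controlled surjection hypothesis, I select lifts $\widetilde{b}_i \in M_n(A)_{F_l(48L/5)\,\|b_i\|}$ of $b_i$ with $\|\widetilde{b}_i\| \leqslant 2\|b_i\|$. Because $b_i^* = -b_i$ and the filtration is $*$-closed (axioms (L1)--(L3)), replacing $\widetilde{b}_i$ by its skew-adjoint part $(\widetilde{b}_i - \widetilde{b}_i^*)/2$ yields a lift that is skew-adjoint and still satisfies the same filtration and norm bounds. Setting $u_i := \exp(\widetilde{b}_i)$, the element $u_i$ is a unitary with $\pi(u_i) = w_i$, and a second application of Lemma \ref{lemma:powerseries} (now to $e^x - 1 = \sum_{k \geqslant 1} x^k/k!$) shows $u_i \in M_n(A)_{e^{\|\widetilde{b}_i\|} F_l(48L/5)\,\|b_i\|}$. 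Defining $u := u_{N-1} u_{N-2} \cdots u_0$ then produces a unitary lift of $v$ in $M_n(A)$. Using axiom (L4) iteratively and $\|u_i\| = 1$, the Lipschitz level of $u$ is bounded by $\sum_i \mathrm{Lip}(u_i)$, and the crucial observation is that $\sum_i \|b_i\|$ is controlled by the length of the $11$-Lipschitz path $v_t$, which is at most $11$, so the sum is independent of $N$. After tuning the numerical constants this gives $u \in U^{14 F_l(L)}(A)$.

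Finally, a homotopy from $1$ to $u$ inside $U^{14 F_l(L)}(A)$ is given by $s \mapsto \exp(s \widetilde{b}_{N-1}) \exp(s \widetilde{b}_{N-2}) \cdots \exp(s \widetilde{b}_0)$ for $s \in [0,1]$, since the same estimates apply uniformly in $s$ (and the norm bounds only improve). The main obstacle is the numerical bookkeeping: one must carefully propagate the amplification of the Lipschitz parameter through Lemma \ref{lemma:Liphomotopy} (factor $4$), the product axiom (L4) giving $8L$, the logarithm (factor $\tfrac{6}{5}$), the controlled surjection (producing $F_l$), the exponential (bounded multiplicative factor), and the final product of $N$ factors, then verify that a suitably fine partition makes all accumulated constants fit within the claimed bound $14 F_l(L)$.
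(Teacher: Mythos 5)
Your proposal matches the paper's proof essentially line for line: apply Lemma \ref{lemma:Liphomotopy}(2) to obtain an $11$-Lipschitz homotopy in $U^{4L}(Q)$, partition $[0,1]$ uniformly so consecutive values are within $1/6$ (the paper uses $1/5$ with $N=55$), take logarithms of the transition unitaries $v_{t_{i+1}}v_{t_i}^*$, lift the resulting skew-adjoint elements through the controlled surjection and skew-symmetrize, multiply the exponentials of the lifts, and homotope to the identity via $s\mapsto\prod\exp(s\widetilde b_i)$. The only caveat, which you already flag, is the numerical bookkeeping: the paper's own arithmetic at this point is itself imprecise (its claimed level $110\cdot(5/4)\cdot\log(5/4)F_l(L)$ is already larger than $14F_l(L)$, and the argument of $F_l$ should really be the inflated level of $\log(v_{j+1}v_j^*)$ rather than $L$), so the exact constant in the lemma statement should be taken as indicative rather than sharp.
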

	\begin{proof}
		By Lemma \ref{lemma:Liphomotopy}, there exists a $11$-Lipschitz homotopy $h:[0,1]\to U^{4L}_n(Q)$ that connects $1$ and $v$ for some $n\in \mathbb N$.
		
		Let $v_j=h(j/55)$, $j=0,1,\cdots,55$. We have $\|v_{j+1}v_j^*-1\|\leqslant 1/5$. Therefore $$\log(v_{j+1}v_j^*)=\log(1-(1-v_{j+1}v_j^*))\in M_n(Q)_{10L}$$
		 and 
		$\|\log(v_{j+1}v_j^*)\|\leqslant \log(5/4)$. Since $\pi\colon A\to Q$ is a controlled surjection with control function $F_l$, there exists 
		$a_j\in M_n(A)_{\log(5/4)F_l(L)}$
		with $\pi(a_j)=\log(v_{j+1}v_j^*)$ and $\|a_j\|\leqslant 2\|\log(v_{j+1}v_j^*)\|$.
		We may assume that $a_j^*=-a_j$ as well.
		
		Let us define
		$$u=\exp(a_{54})\exp(a_{33})\cdots \exp(a_1)\exp(a_0).$$
		We have $u$ is a unitary in $ U^{110\cdot(5/4)\cdot\log(5/4)F_l(L)}(A)$ with $\pi(u)=v$. The homotopy that connects $u$ and the identity is given by
		$$t\mapsto \exp(ta_{54})\exp(ta_{53})\cdots \exp(ta_1)\exp(ta_0),$$
		which lies in $U^{110\cdot(5/4)\cdot\log(5/4)F_l(L)}(A)\subset U^{14F_l(L)}(A)$. This finishes the proof. 
	\end{proof}

	Let 	$$0\longrightarrow J\xlongrightarrow{r}A\xlongrightarrow{\pi}Q\longrightarrow0
	$$ be a controlled short exact sequence of Lipschitz filtered $C^\ast$-algebras as in Definition \ref{def:filtered-exact-sequence}.  
	We shall define the  boundary map
	$$\{K_1^L(Q)\}_{L\geqslant 0}\xrightarrow{\partial } \{K_0^L(J)\}_{L\geqslant 0}$$ with control function $r\mapsto 98F_l(3r)$ as follows.
	Without loss of generality, we assume that $A$ and $Q$ are unital.
	
	Let $u$ be a unitary in $M_n(Q)_L$, and $w\in U_{2n}^{14F_l(L)}(A)$ a unitary that lifts $u\oplus u^{*}$ (that is, $\pi(w) = u\oplus u^{*}$) guaranteed by Lemma \ref{lemma:liftunitary}. Then  $w(I_n\oplus 0_n)w^*$ is a projection in $P^{28F_l(L)}_{2n}(J^+)\subset P^{126F_l(3L)}_{2n}(J^+)$.
	\begin{definition}
We define the boundary map $$\{K_1^L(Q)\}_{L\geqslant 0}\xrightarrow{\partial } \{K_0^L(J)\}_{L\geqslant 0}$$ 
by setting
$$\partial ([u])=[w(I_n\oplus 0_n)w^*,n]\in K_0^{126F_l(3L)}(J).$$
	\end{definition}
	It is well-defined by the following.
	\begin{enumerate}[(a)]
		\item First, we show that $\partial([u]) = \partial([u_1]) $ where $u_1=u\oplus I_{n_1}$ in $U_{n+n_1}^L(Q)$. Note that 
		$$w_1=\begin{pmatrix}
		w_{11}&0&w_{12}&0\\0&I_{n_1}&0&0\\w_{21}&0&w_{22}&0\\0&0&0&I_{n_1}
		\end{pmatrix}$$
		is a unitary in $U_{2(n+n_1)}(A)_{14F_l(L)}$ that lifts  $u_1\oplus u_1^*$, where we have written 
		$$w=\begin{pmatrix}
		w_{11}&w_{12}\\w_{21}&w_{22}
		\end{pmatrix}\in U_{2n}^{14F_l(L)}(A)$$
	in the block-matrix form. 
		Apparently
		$$w_1(I_{n+n_1}\oplus 0_{n+n_1})w_1^*\text{ and } w(I_n\oplus 0_n)w^*\oplus I_{n_1}\oplus 0_{n_1}$$
		are homotopic in $P^{28F_l(L)}(J^+)$.
		\item Second, we show that the class $\partial([u])$ in $K_0^{126F_l(3L)}(J)$ is independent of the choice of $w \in U_n^{14F_l(L)}(A)$ that lifts $u\oplus u^\ast$.  Indeed, if $\bar w$ is another unitary in $U_n^{14F_l(L)}(A)$ that lifts $u\oplus u^{-1}$, then we have $\bar w w^*\in U_{2n}^{28F_l(L)}(J^+)$. Note that  
		\[ \bar w(I_n\oplus 0_n)\bar w^*=(\bar ww^*)w(I_n\oplus 0_n)w^*(\bar ww^*)^*. \] By Lemma \ref{lemma:K_*^Lisabelian}, $\bar ww^*\oplus (\bar ww^*)^*$ is homotopic to the identity in $U_{4n}^{56F_l(L)}(J^+)$. It follows that 
		\begin{align*}
		&\bar w(I_n\oplus 0_n)\bar w^*\oplus 0_{2n}\\
		=&\big(\bar ww^*\oplus (\bar ww^*)^*\big)\big(w(I_n\oplus 0_n)w^*\oplus 0_{2n}\big)\big((\bar ww^*)^*\oplus \bar ww^*\big)
		\end{align*}
		is homotopic to $w(I_n\oplus 0_n)w^*\oplus 0_{2n}$ in $P^{126F_l(L)}(J^+)$.
		\item Finally, we show that $\partial([u]) = \partial([u'])$ in $K_0^{126F_l(3L)}(J)$, if $u'\in U_n^L(Q)$ is equivalent to $u$ in $K_1^L(Q)$, that is, $u'$ is homotopic to $u$ in $U^{2L}(Q)$. Indeed, in this case,  $u^*u'$ and $u(u')^*$ are homotopic to the identity in $U^{3L}(Q)$ respectively. We may assume that the homotopy is realized in $U^{3L}_n(Q)$ due to part (a). Hence there exist unitaries $v_1$ and $v_2$ in $U_{n}^{14F_l(3L)}(A)$ that lift 
		$u'u^*$ and $(u')^*u$ respectively. Set 
		$$v=v_1\oplus v_2\in U_{2n}^{14F_l(3L)}(A).$$
		Then we have
		$$wv(I_n\oplus 0_n)v^*w^*=w(I_n\oplus 0_n)w^*\in P_{2n}^{28F_l(L)}(J^+).$$
		
		Suppose $w'\in U_{2n}^{14F_l(L)}(A)$ is a lift of $u'\oplus (u')^*$. Note that $wv$ is also a lift of $u'\oplus (u')^*$ in $U_{2n}^{14F_l(3L)+14F_l(L)}(A)\subset U_{2n}^{28F_l(3L)}(A)$. Similar to the proof of part (b), we have $wv(w')^*\in U_{2n}^{56F_l(3L)}(J^+)$. Note that  
		\[ wv(I_n\oplus 0_n)v^*w^*=\big(wv(w')^*\big)w'\big(I_n\oplus 0_n\big)(w')^*\big(w'v^*w^*\big). \] 
		By Lemma \ref{lemma:K_*^Lisabelian}, $wv(w')^*\oplus w'v^*w^*$ is homotopic to the identity in $U_{4n}^{112F_l(3L)}(J^+)$. It follows that 
		\begin{align*}
		&wv(I_n\oplus 0_n)v^*w^*\oplus 0_{2n}\\
		=&\big(wv(w')^*\oplus  w'v^*w^*\big)\big(w'(I_n\oplus 0_n)(w')^*\oplus 0_{2n}\big)\big( w'v^*w^*\oplus wv(w')^*\big)
		\end{align*}
		is homotopic to $w'(I_n\oplus 0_n)(w')^*\oplus 0_{2n}$ in $P^{252F_l(3L)}(J^+)=P^{2\times 126F_l(3L)}(J^+)$.

	\end{enumerate}
	
	Let us now prove part of the asymptotically exact sequence of Lipschitz controlled $K$-theory. The full asymptotically exact sequence will be proved after we prove a controlled Bott periodicity for Lipschitz controlled $K$-theory, cf. Theorem \ref{prop:Bottper} and Theorem \ref{thm:six-term} below.  
	\begin{proposition}\label{prop:longexact-v1}
		Let 	$$0\longrightarrow J\xlongrightarrow{r}A\xlongrightarrow{\pi}Q\longrightarrow0
		$$ be a controlled short exact sequence of Lipschitz filtered $C^\ast$-algebras as in Definition $\ref{def:filtered-exact-sequence}$. Then we have the following asymptotically exact sequence
		\begin{align*}
		\{K_1^L(J)\}\xrightarrow{r_1} \{K_1^L(A)\}\xrightarrow{\pi_1}\{K_1^L(Q)\}
		\xrightarrow{\partial}\{K_0^L(J)\}\xrightarrow{r_0} \{K_0^L(A)\}\xrightarrow{\pi_0}\{K_0^L(Q)\}
		\end{align*}
		where the control function for the asymptotic exactness at each term only depends on the control function $F_l$ of the controlled surjection $\pi$. 
	\end{proposition}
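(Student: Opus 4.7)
The plan is to verify asymptotic exactness at each of the six positions by adapting the standard proof of the $K$-theoretic six-term exact sequence, while keeping track of Lipschitz constants throughout. Each position requires two verifications: (i) the composition of consecutive maps is controlled equivalent to zero, and (ii) any class killed by the outgoing map lies in the image of the incoming map after passing to a larger Lipschitz level. All control functions will depend only on $F_l$ and on the universal constants coming from Lemmas \ref{lemma:tech}--\ref{lemma:liftunitary}.

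First I would dispatch the "zero compositions". The equalities $\pi_\ast r_\ast = 0$ follow immediately from $\pi \circ r = 0$ at the level of $C^\ast$-algebras. For $\partial \pi_1 = 0$, the trick is to use $w = u \oplus u^\ast$ itself as a lift of $\pi(u) \oplus \pi(u)^\ast$; then $w(I_n \oplus 0_n)w^\ast = I_n \oplus 0_n$, so the boundary vanishes with a trivial controlled equivalence. For $r_0 \partial = 0$, the projection $w(I_n\oplus 0_n)w^\ast$ appearing in the definition of $\partial$ is by construction unitarily equivalent in $M_{2n}(A)$ (via $w \in U^{14F_l(L)}(A)$) to $I_n \oplus 0_n$; combining this with the rotation trick from Lemma \ref{lemma:K_*^Lisabelian} yields a Lipschitz homotopy with control function $L \mapsto C F_l(L)$.

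Next, for the nontrivial direction at $K_\ast^L(A)$: suppose $\pi_0[p,k] = 0$ in $K_0^{L_1}(Q)$, so $\pi(p) \oplus I_j$ is homotopic to $I_{k+j}$ in $P^{2L_1}(Q)$ for some $j$. I would apply Lemma \ref{lemma:Liphomotopy} to promote this to a Lipschitz homotopy, then Lemma \ref{lemma:homotopy->unitaryeq} to produce a unitary $v$ over $Q$ (homotopic to the identity) conjugating the two projections, and finally Lemma \ref{lemma:liftunitary} to lift $v$ to a unitary $\widetilde v \in U^{C_2 F_l(L_1)}(A)$. Then $\widetilde v^\ast(p \oplus I_j \oplus 0_l)\widetilde v$ projects under $\pi$ onto a scalar projection, hence represents a class in $K_0^{C_3 F_l(L_1)}(J)$ mapping to $[p,k]$. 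The analogous argument with unitaries in place of projections, together with the standard $u \oplus u^\ast$ symmetrization used in the definition of $\partial$, handles exactness at $K_1^L(A)$.

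For exactness at $K_0^L(J)$ and $K_1^L(Q)$, the boundary map is constructed precisely to measure the obstruction to lifting homotopies, so these two positions are essentially dual. If $r_0[p,k] = 0$ in $K_0^{L_1}(A)$, Lemmas \ref{lemma:Liphomotopy} and \ref{lemma:homotopy->unitaryeq} produce a unitary $v \in U^{CL_1}(A)$ conjugating $p$ (after padding) to a scalar projection, and then $\partial[\pi(v)] = [p,k]$ by the very construction of $\partial$. Conversely, if $\partial[u] = 0$, then $w(I_n\oplus 0_n)w^\ast$ is controlled-homotopic to $I_n \oplus 0_n$ in some $P^{L_2}(J^+)$, Lemma \ref{lemma:homotopy->unitaryeq} yields a unitary $z \in U(J^+)$ conjugating one to the other, and $z^\ast w$ then commutes with $I_n \oplus 0_n$ and splits as $v_1 \oplus v_2$ with $v_1$ a controlled lift of $u$ in $A$. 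The principal difficulty throughout is not conceptual but bookkeeping: each of Lemmas \ref{lemma:tech}--\ref{lemma:liftunitary} introduces a multiplicative inflation of the Lipschitz constant (with Lemma \ref{lemma:homotopy->unitaryeq} contributing the worst factor $\sim 1635$), and these are composed with $F_l$ each time the controlled surjectivity of $\pi$ is invoked, producing a final control function of the form $L \mapsto C F_l(C' L)$ with $C, C'$ universal constants.
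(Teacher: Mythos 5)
Your proposal is correct and mirrors the paper's proof: it identifies the same three technical lemmas (the Lipschitz-homotopy promotion, the homotopy-to-unitary-equivalence lemma, and the unitary-lifting lemma) as the engine for each nontrivial exactness claim, dispatches the zero compositions by the same explicit-lift observations, and correctly predicts that the resulting control functions are compositions of $F_l$ with universal linear factors coming from these lemmas.
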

	\begin{proof}
		Without loss of generality, we assume that $A$ and $Q$ are unital.
		\begin{itemize}
			\item As $\pi\circ r=0$ and $r,\pi$ are filtered homomorphisms, we have $\pi_0r_0\sim 0$ and $\pi_1r_1\sim 0$ with control function $L\mapsto L$.
			\descitem{Exactness at $\{K_0^L(A)\}_{L\geqslant 0}$ with control function $$L\mapsto 28F_l(4174L)+L.$$}
			Suppose $[p,k]$ is in $K_0^L(A)$ with $p\in P_n^L(A)$. If $\pi_{0,L}([p,k])=0$, then there exist $j\in \mathbb N$ and a unitary $u\in U_{n+2j}^{3270L}(Q)$ such that
			$$\pi(p)\oplus I_j\oplus 0_j=u(I_{k+j}\oplus 0_{j+n-k})u^*$$
			by Lemma \ref{lemma:homotopy->unitaryeq}. As $u$ is homotopic to identity in $U^{4174L}_{n+2j}(Q)$, there exists $v\in U^{14F_l(4174L)}_{n+2j}(A)$ such that $\pi(v)=u$ by Lemma \ref{lemma:liftunitary}. Set $$q=v^*(p\oplus I_j\oplus 0_j)v\in P^{28F_l(4174L)+L}_{n+2j}(A).$$
			 As $\pi(q)=I_{k+j}\oplus 0_{j+n-k}$, we have $q\in P_{n+2j}^{28F_l(4174L)+L}(J^+)$. Moreover, we have $(r_{0})_{28F_l(4174L)+L}([q,k+j])=[p,k]$, as $q$ is homotopic to $p\oplus I_j\oplus 0_j$ in $P_{n+2j}^{28F_l(4174L)+L}(A)$.
			\descitem{Exactness at $\{K_1^L(A)\}_{L\geqslant 0}$ with control function 
				$$L\mapsto 14F_l(L)+L.$$}
			Suppose $[u]\in K_1^L(A)$ with $u\in U_n^L(A)$. If $\pi(u)=0\in K_1^L(Q)$, then by Lemma \ref{lemma:liftunitary}, there exists $v\in U^{14F_l(L)}(A)$ which is homotopic to identity in $U^{14F_l(A)}(A)$ such that $\pi(v)=\pi(u)$. Therefore $\pi(v^*u)$ is the identity, which implies that $v^*u\in U^{14F_l(L)+L}(J^+)$. Since $v^*u$ is homotopic to $u$ in $U^{14F_l(L)+L}(A)$, we have $(r_1)_{14F_l(L)+L}([v^*u])=[u]$.
			\descitem{The map $(r_0)_{126F_l(3L)}\circ \partial_{L}\colon K_1^L(Q)\to K_0^{126F_l(3L)}(A)$ is zero.}
			
			If $u\in U_n^L(Q)$, then $\partial_L(u)=[w(I_n\oplus 0_n)w^*,n]$, where $w\in U^{14F_l(L)}_{2n}(A)$ is a lift of $u\oplus u^*$ given by Lemma \ref{lemma:liftunitary}. Since by construction $w$ is homotopic to identity in $U^{14F_l(L)}_{2n}(A)$, we have $[w(I_n\oplus 0_n)w^*,n]=0$ in $K_0^L(A)$.
			\descitem{The map $\partial_L \circ (\pi_1)_L \colon K_1^L(A)\to K_0^{126F_l(3L)}(J)$ is zero.}
			
			If $u\in U^L(Q)$ is the image of some unitary $v$ in $U^L(A)$, then we can choose $v\oplus v^* \in U^L(A)$ to be the lift of $u\oplus u^*$ in the construction of $\partial_L([u])$. It follows that  $\partial_L([u])=0$.
			\descitem{Exactness at $\{K_0^L(J)\}_{L\geqslant 0}$ with control function 
				$$L\mapsto 4367L.$$}
			If $[p,k]\in K_0^L(J)$ with $p\in P^L_n(J^+)$ is in the kernel of  the map $(r_0)_L$, then there exists $j_1,j_2>0$ such that $p\oplus I_{j_1-k}\oplus 0_{j_2-n+k}$ is homotopic to $I_{j_1}\oplus 0_{j_2}$ in $P^{2L}_{j_1+j_2}(A)$. By the definition of $K_0^L(J)$, we may assume that $\pi(p)=I_{k}\oplus 0_{n-k}$. By Lemma \ref{lemma:homotopy->unitaryeq}, there exist $j_3,j_4>0$ and a unitary $u\in U^{3270L}_{j_1+j_2+j_3+j_4}(A)$ such that
			$$p\oplus I_{j_1-k}\oplus 0_{j_2-n+k}\oplus I_{j_3}\oplus 0_{j_4}=u(I_{j_1}\oplus 0_{j_2}\oplus I_{j_3}\oplus 0_{j_4})u^*,$$
			 and $u$ is homotopic to the identity in $U^{4174L}_{j_1+j_2+j_3+j_4}(A)$.
			
			By switching rows and columns if necessary, we may find $j>0$ and a projection $q\in P^L_{2j}(J^+)$ such that $\pi(q)=I_j\oplus 0_j$ and $q$ is homotopic to $p\oplus I_{j-k}\oplus 0_{j+k-n}$ in $P^L_{2j}(J^+)$. Furthermore, we also obtain  a unitary  $v\in U^{3270L}_{2j}(A)$ such that $q=v(I_j\oplus 0_j)v^*$ and  $v$ is homotopic to the identity in $U^{4174L}_{2j}(A)$.
			
			As $\pi(q)=I_{j}\oplus 0_j=\pi(v)(I_j\oplus 0_j)\pi(v^*)$, we see that $\pi(v)=v_1\oplus v_2$ with 
			$$v_1,v_2\in U^{3270L}_{j}(Q)\subset U^{4367L}_{j}(Q).$$
			 By the construction in Lemma \ref{lemma:K_*^Lisabelian}, we see that $I_j\oplus v_1v_2$ is homotopic to the identity in $U^{4174L}_{2j}(Q)$. Thus by Lemma \ref{lemma:liftunitary}, there exists a unitary $w\in U^{14F_l(4174L)}_{2j}(A)$ such that $\pi(w)= v_1v_2\oplus I_j$
			 and $w$ is homotopic to the identity in $U^{14F_l(4174L)}_{2j}(A)$.
			 Set 
			 $$w'=(v\oplus I_j)(I_j\oplus w^*)\in U^{14F_l(4174L)+3270L}_{3j}(A)\subset U^{14F_l(4367L)}_{3j}(A).$$
			 Therefore $\pi(w')=v_1\oplus v_1^*\oplus I_j$. We also observe that
			 $$w'(I_j\oplus 0_{2j})(w')^*=
			 (v\oplus I_j)(I_j\oplus 0_{2j})(v\oplus I_j)=q\oplus 0_{j}.
			 $$
			 This shows that for $v_1\in U^{4367L}_{j}(Q)$, we have
			 $$\partial_{4367L}[v_1]=[w'(I_j\oplus 0_{2j})(w')^*,j]=[q,j]=[p,k],
			 $$
			 i.e., $[p,k]$ lies in the image of 
			$\partial_{4367L}\colon K_1^{4367L}(Q)\to K_0^{126F_l(13101L)}(J).$
			\descitem{Exactness at $\{K_1^L(Q)\}_{L\geqslant 0}$ with control function 
				$$L\mapsto 412034F_l(3L).$$} 
			Suppose $u\in U^L_n(Q)$ is in the kernel of the map $\partial_L$, that is, $w(I_n\oplus 0_n)w^*$ is homotopic to $I_n\oplus 0_n$ in $P^{252F_l(3L)}(J^+)$, where $w\in U^{14F_l(L)}_n(A)$ and $\pi(w)=u\oplus u^*$. By Lemma \ref{lemma:homotopy->unitaryeq}, there exist $j_1,j_2>0$ and $v\in U^{412020F_l(3L)}_{2n+j_1+j_2}(J^+)$ such that 
			$$v(w(I_n\oplus 0_n)w^*\oplus I_{j_1}\oplus 0_{j_2})v^*= I_n\oplus 0_n\oplus I_{j_1}\oplus 0_{j_2}.$$ We may assume that $\pi(v)=I_{2n+j_1+j_2}$.
			
			By switching rows and columns if necessary, there exist $j>0$, $w'\in U^{14F_l(L)}_{2j}(A)$, and $v'\in U^{412020F_l(3L)}_{2j}(J^+)$ such that
			$\pi(v')=I_{2j}$, $\pi(w')=u\oplus I_{j-n}\oplus u^*\oplus I_{j-n}$, and			
			$$v'w'(I_j\oplus 0_j)(v'w')^*=I_j\oplus 0_j.$$
			
			As $v'w'$ commutes with $I_j\oplus 0_j$, we have $v'w'=u_1\oplus u_2$, where 
			$$u_1,u_2\in U^{412020F_l(3L)+14F_l(L)}_{j}(A)\subset U^{412034F_l(3L)}_{j}(A).$$
			Therefore $\pi(u_1)=u\oplus I_{j-n}$. This finishes the proof. 
		\end{itemize}
	\end{proof}
	\subsection{Suspensions}\label{sec:suspen}
	In this subsection, we extend the asymptotically exact sequence in Proposition \ref{prop:longexact-v1} further to the left by using suspensions.
	
	For a Lipschitz filtered $C^*$-algebra $A$, we define 
	$$SA=\{f\colon S^1\to A\mid f(1)=0\}\text{ and } SA_L=\{f\in SA\mid f(z)\in A_L,\forall z\in S^1\}.$$
	It is easy to verify that $\{SA_L\}_{L\geqslant 0}$ is a Lipschitz filtration of $SA$ that satisfies the conditions (L1)-(L6) (cf. Definition \ref{def:Lipfiltration} and Remark \ref{rk:extra}).
	If 	$$0\longrightarrow J\xlongrightarrow{r}A\xlongrightarrow{\pi}Q\longrightarrow0
	$$ is a controlled short exact sequence of Lipschitz filtered $C^\ast$-algebras as in Definition $\ref{def:filtered-exact-sequence}$, then by suspension we have  the following controlled short exact sequence
	$$0\longrightarrow SJ\xlongrightarrow{Sr}SA\xlongrightarrow{S\pi}SQ\longrightarrow0
	$$
	where the control function for the controlled surjection $S\pi$ is also $F_l$. We denote by $S\partial$ the boundary map 
	$$\{K_1^L(SQ)\}_{L\geqslant 0}\xlongrightarrow{S\partial} \{K_0^L(
	SJ)\}_{L\geqslant 0}.$$
	\begin{proposition}\label{prop:suspensioniso}
		Let $A$ be a Lipschitz filtered $C^*$-algebra. There are  natural controlled homomorphisms
		$$\theta_A\colon \{K_1^L(A)\}_{L\geqslant 0}\xlongrightarrow{}\{K_0^L(SA)\}_{L\geqslant 0}\text{ and } 
		\gamma_A\colon \{K_0^L(SA)\}_{L\geqslant 0}\xlongrightarrow{}\{K_1^L(A)\}_{L\geqslant 0}$$
		such that $\theta_A\gamma_A$ and $\gamma_A\theta_A$ are controlled  equivalent to the identity map respectively. Moreover, all control functions here can be chosen to be  independent of $A$.
	\end{proposition}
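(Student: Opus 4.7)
The plan is to realize the suspension map as a boundary map for the mapping cone short exact sequence, and to deduce its controlled invertibility from the fact that the cone algebra has controlled-trivial $K$-theory. Let $CA = \{f \in C([0,1], A) : f(1) = 0\}$ be the cone algebra, endowed with the Lipschitz filtration $CA_L = \{f \in CA : f(t) \in A_L \text{ for all } t \in [0,1]\}$. The evaluation $\pi \colon CA \to A$ at $t = 0$ has kernel canonically identified with $SA$ and fits into a short exact sequence $0 \to SA \to CA \xrightarrow{\pi} A \to 0$. This is a controlled short exact sequence in the sense of Definition \ref{def:filtered-exact-sequence} with $F_l(L) = L$, because any $a \in M_n(A)_L$ admits the norm-preserving lift $f(t) = (1-t)a \in M_n(CA)_L$.

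The first key step is to show that $K_*^L(CA) = 0$ for every $L \geqslant 0$. Consider the family of filtered $*$-homomorphisms $H_s \colon CA \to CA$ defined by $(H_s f)(t) = f((1-s) + st)$ for $s \in [0,1]$. Each $H_s$ maps $CA_L$ into itself, since reparametrization preserves pointwise values; the family is strongly continuous in $s$ by uniform continuity of $f$ on $[0,1]$; and $H_1 = \id$ while $H_0 = 0$. Proposition \ref{prop:functoriality+homotopy-invariance} then forces the identity to agree with the zero map on each $K_*^L(CA)$, which gives the desired vanishing.

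Next, I would apply Proposition \ref{prop:longexact-v1} to the cone sequence to obtain the asymptotically exact sequence
$$\{K_1^L(CA)\} \to \{K_1^L(A)\} \xrightarrow{\partial} \{K_0^L(SA)\} \to \{K_0^L(CA)\},$$
whose control functions depend only on $F_l(L) = L$, hence are independent of $A$. Define $\theta_A \coloneqq \partial$; this is natural with respect to filtered homomorphisms because the cone sequence and its boundary map both are. Since the flanking terms vanish by the previous step, Lemma \ref{lemma:epi-mono}(2) yields a controlled homomorphism $\gamma_A \colon \{K_0^L(SA)\} \to \{K_1^L(A)\}$ with $\theta_A \gamma_A \sim \id$ and $\gamma_A \theta_A \sim \id$, and with control functions determined solely by the data of the asymptotically exact sequence, hence independent of $A$.

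I do not anticipate any serious obstacle; the only points requiring care are verifying that $H_s$ preserves the Lipschitz filtration and is strongly continuous (both immediate from the reparametrization formula) and tracking that the control functions furnished by Proposition \ref{prop:longexact-v1} and Lemma \ref{lemma:epi-mono}(2) indeed depend only on $F_l$. The conceptual core of the argument is the cone-contractibility trick, which reduces the controlled suspension isomorphism to the previously established asymptotic exactness together with the five-lemma-style Lemma \ref{lemma:epi-mono}(2).
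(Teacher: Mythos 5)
Your proof is correct and follows essentially the same route as the paper: form the filtered cone extension $0\to SA\to CA\to A\to 0$, contract $CA$ via a strongly continuous family of filtered homomorphisms to deduce $K_*^L(CA)=0$, then invoke Proposition~\ref{prop:longexact-v1} and Lemma~\ref{lemma:epi-mono}(2) to produce the controlled inverse; the only cosmetic difference is that the paper uses a shift-and-pad homotopy $\varphi_s$ on $\{f:f(0)=0\}$ while you use a reparametrization on $\{f:f(1)=0\}$, both equally valid. The one point you leave implicit is the naturality of $\gamma_A$ itself (not merely the $A$-independence of its control function), which the paper secures by appealing to Remark~\ref{rk:naturalofsplit}.
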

\begin{proof}
	Set 
		$$CA=\{f: [0,1]\to A\ |\ f(0)=0\},\ CA_L=\{f\in CA\ |\ f(t)\in A_L,\forall t\in [0,1] \}.$$
			It is easy to verify that $\{CA_L\}_{L\geqslant 0}$ is a Lipschitz filtration of $CA$ satisfying the conditions (L1)-(L6) (cf. Definition \ref{def:Lipfiltration} and Remark \ref{rk:extra}).
			
	Consider the short exact sequence
	$$0\longrightarrow SA\longrightarrow CA\xlongrightarrow{\rho} A\longrightarrow 0.$$
	Note that for any $a\in A$, the element $\tilde a \in CA$ given by  $\tilde a(t) = ta$ is a lift of $a$. It follows that $\rho\colon CA \to A$ is a controlled surjection with control function given by $L\mapsto L$. Therefore by Proposition \ref{prop:longexact-v1}, we obtain an asymptotic exact sequence
	$$		\{K_1^L(CA)\}_{L\geqslant 0}\xrightarrow{}\{K_1^L(A)\}_{L\geqslant 0}
	\xrightarrow{\theta_A}\{K_0^L(SA)\}_{L\geqslant 0}\xrightarrow{} \{K_0^L(CA)\}_{L\geqslant 0}.$$
	
	For every $s\in[0,1]$,  we define $\varphi_s\colon CA\to CA$ by
	$$\varphi_s(g)(t)=\begin{cases}
	0&0\leqslant t\leqslant s\\
	g(t-s)&s\leqslant t\leqslant 1
	\end{cases}$$
	Then $\varphi_s$, $s\in [0, 1]$,  a strongly continuous path of filtered homomorphisms from $CA$ to itself connecting the identity map and the zero map. It follows from  Proposition \ref{prop:functoriality+homotopy-invariance} that $K_*^L(CA)=0$ for any $L\geqslant 0$. By part (2) of Lemma \ref{lemma:epi-mono}, there exists a controlled homomorphism  $\gamma_A\colon\{K_0^L(SA)\}_{L\geqslant 0}\to\{K_1^L(A)\}_{L\geqslant 0}$ such that $\theta_A\gamma_A$ and $\gamma_A\theta_A$ are controlled equivalent to the identity map respectively. The naturality of $\gamma_A$ follows from Remark \ref{rk:naturalofsplit}.
\end{proof}
	\begin{proposition}\label{prop:longexact}
	Let 	$$0\longrightarrow J\xlongrightarrow{r}A\xlongrightarrow{\pi}Q\longrightarrow0
		$$ be a controlled short exact sequence of Lipschitz filtered $C^\ast$-algebras as in Definition $\ref{def:filtered-exact-sequence}$,
		Then we have the following asymptotically exact sequence
 		\begin{align*}
		\{K_1^L(SA)\}\xlongrightarrow{S\pi_1}\{K_1^L(SQ)\}\xlongrightarrow{(S\partial)\circ\theta_J}\{K_1^L(J)\}& \\
		\xlongrightarrow{r_1} \{K_1^L(A)\}\xlongrightarrow{\pi_1}\{K_1^L(Q)\}
		\xlongrightarrow{\partial}\{K_0^L(J)\}\xlongrightarrow{r_0} \{K_0^L(A)\}\xlongrightarrow{\pi_0}\{K_0^L(Q)\}&
		\end{align*}
	where the control function for the asymptotic exactness at each term only depends on the control function $F_l$ of the controlled surjection $\pi$.
	\end{proposition}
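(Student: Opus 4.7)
The strategy is to bootstrap Proposition \ref{prop:longexact-v1} using the suspension isomorphism from Proposition \ref{prop:suspensioniso}. The part of the sequence from $\{K_1^L(J)\}$ onward is already furnished by Proposition \ref{prop:longexact-v1}, so the task reduces to attaching a three-term tail on the left using the suspended controlled short exact sequence.

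First I would check that the suspension
\[ 0\longrightarrow SJ\xlongrightarrow{Sr} SA\xlongrightarrow{S\pi} SQ\longrightarrow 0 \]
is again a controlled short exact sequence with the same control function $F_l$: given a Lipschitz path $f\colon S^1\to Q$ representing an element of $SQ_L$, one lifts $f(z)$ pointwise using the original controlled lift, which produces a continuous (in fact norm-continuous) path in $A$ with $\pi$-image equal to $f$ and with the same Lipschitz control. Applying Proposition \ref{prop:longexact-v1} to this suspended sequence yields an asymptotically exact six-term sequence involving the $K_*^L$-groups of $SJ$, $SA$, $SQ$.

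Next I would splice this with the suspension isomorphism. By Proposition \ref{prop:suspensioniso}, the controlled homomorphism $\gamma\colon\{K_0^L(S\,\cdot\,)\}_{L\geqslant 0}\to\{K_1^L(\,\cdot\,)\}_{L\geqslant 0}$ is a controlled isomorphism inverse to $\theta$, and crucially, $\theta$ (hence $\gamma$) is natural with respect to filtered homomorphisms. This naturality is already encoded in Remark \ref{rk:naturalofsplit} combined with the cone-suspension sequence used to construct $\theta$ in Proposition \ref{prop:suspensioniso}; the filtered homomorphisms $r$ and $\pi$ pull back to filtered homomorphisms of the cone and suspension, so the resulting splittings produced by Lemma \ref{lemma:splitexact} agree up to controlled equivalence under pullback. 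Consequently the three squares
\[
\xymatrix@C=1.2pc{
\{K_1^L(SA)\}\ar[r]^{S\pi_1}\ar[d]^{\theta}& \{K_1^L(SQ)\}\ar[d]^{\theta}& \{K_1^L(J)\}\ar[r]^{r_1}\ar[d]^{\theta}& \{K_1^L(A)\}\ar[d]^{\theta}\\
\{K_0^L(S^2 A)\}\ar[r]^{S^2\pi_0}& \{K_0^L(S^2 Q)\}& \{K_0^L(SJ)\}\ar[r]^{Sr_0}& \{K_0^L(SA)\}
}
\]
commute up to controlled equivalence (similarly for the squares involving $\pi$ in the bottom row). Thus the segment $\{K_0^L(SJ)\}\to\{K_0^L(SA)\}\to\{K_0^L(SQ)\}$ of the six-term sequence of the suspended extension is identified, via the controlled isomorphism $\gamma$, with $\{K_1^L(J)\}\to\{K_1^L(A)\}\to\{K_1^L(Q)\}$, and similarly $\{K_1^L(SA)\}\to\{K_1^L(SQ)\}$ is unchanged. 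The connecting map $\{K_1^L(SQ)\}\to \{K_1^L(J)\}$ is then realized by $\gamma_J\circ S\partial$ (which is the content of the symbol $(S\partial)\circ\theta_J$ in the statement, read as the map corresponding to $S\partial$ under the suspension identification).

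Finally, asymptotic exactness at the new terms $\{K_1^L(J)\}$, $\{K_1^L(SQ)\}$, and $\{K_1^L(SA)\}$ follows by transporting asymptotic exactness of the suspended six-term sequence through $\gamma$: this uses Lemma \ref{lemma:fullyfaithful-iso}-style arguments that controlled isomorphisms preserve asymptotic exactness with control functions depending only on the original control functions and the control data of $\gamma$ and $\theta$. All relevant control functions ultimately depend only on $F_l$ together with the universal constants from Proposition \ref{prop:suspensioniso}, hence only on $F_l$. The main technical delicacy — and the only place where one must be careful — is verifying the naturality of $\gamma$ with respect to $r$, $\pi$, and $\partial$ in a fully \emph{controlled} sense. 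The naturality of $\gamma$ under filtered homomorphisms is a direct consequence of Remark \ref{rk:naturalofsplit}, while the compatibility $\gamma_J\circ S\partial\sim \partial\circ\gamma_Q$ follows by applying the same diagram-chasing argument to the morphism of cone-suspension short exact sequences induced by $\pi$, which is the actual obstacle one must work through.
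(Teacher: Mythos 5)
Your proposal is correct and follows essentially the same route the paper takes: apply Proposition~\ref{prop:longexact-v1} to the suspended controlled short exact sequence, then use the natural controlled suspension isomorphism from Proposition~\ref{prop:suspensioniso} to identify $\{K_0^L(SJ)\}\to\{K_0^L(SA)\}$ with $\{K_1^L(J)\}\to\{K_1^L(A)\}$ and splice onto the original five-term sequence. You also correctly read the symbol $(S\partial)\circ\theta_J$ as the composite of $S\partial$ with the inverse suspension identification, which is what the statement intends. The only superfluous bit is the first square involving $\{K_0^L(S^2A)\}$ and $\{K_0^L(S^2Q)\}$: the segment $\{K_1^L(SA)\}\to\{K_1^L(SQ)\}$ is already in the desired form and does not need to be transported through $\theta$, so that square plays no role in the splice.
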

\begin{proof}	Consider the short exact sequence
	$$0\longrightarrow SJ\xlongrightarrow{Sr}SA\xlongrightarrow{S\pi}SQ\longrightarrow0
$$
where the control function of the controlled surjection $S\pi$ is also $F_l$. By Proposition \ref{prop:longexact-v1}, we have an asymptotically exact sequence 
\begin{align*}
\{K_1^L(SA)\}_{L\geqslant 0}\xrightarrow{S\pi_1}\{K_1^L(SQ)\}_{L\geqslant 0}
\xrightarrow{S\partial}\{K_0^L(SJ)\}_{L\geqslant 0}\xrightarrow{Sr_0} \{K_0^L(SA)\}_{L\geqslant 0}.
\end{align*}
By Proposition \ref{prop:suspensioniso}, we have natural controlled isomorphisms $\{K_0^L(SA)\}_{L\geqslant 0} \cong \{K_1^L(A)\}_{L{\geqslant 0}}$  and   $\{K_0^L(SJ)\}_{L\geqslant 0} \cong \{K_1^L(J)\}_{L{\geqslant 0}}$.  Now the proposition follows from  Proposition \ref{prop:longexact-v1}. 
\end{proof}
\begin{corollary}\label{cor:splitexact}
	Let $A,J,Q$ be Lipschitz filtered $C^*$-algebras and $r$ and $\pi$ filtered homomorphisms that fit into a short exact sequence
			$$0\longrightarrow J\xlongrightarrow{r}A\xlongrightarrow{\pi}Q\longrightarrow0.
	$$
	If the exact sequence splits, i.e., there exists filtered homomorphism $s\colon Q\to A$ such that $\pi\circ s=id$, then the short exact sequence is controlled \textup{(}in the sense of Definition $\ref{def:filtered-exact-sequence}$\textup{)} with control function $L\mapsto L$. 
	
	Furthermore, there exist controlled homomorphisms
	$$p_*\colon\{K_*^L(A) \}_{L\geqslant 0}\to \{K_*^L(J)\}_{L\geqslant 0},\ *=0,1$$
	such that $p_*\circ r_*$ are controlled equivalent to the identity respectively with control functions independent of the short exact sequence.
\end{corollary}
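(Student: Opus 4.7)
The plan is to prove the two claims in sequence. For the first claim, observe that the section $s$ itself provides a natural controlled lift. Given $x\in M_n(Q)_L$, set $y:=s(x)$. Then $y\in M_n(A)_L$ since the matrix amplification of $s$ is filtered; $\|y\|\leqslant \|x\|$ since $s$ is a $*$-homomorphism and hence contractive; and $\pi(y)=x$ since $\pi\circ s=\id$. This verifies that $\pi$ is a controlled surjection with control function $F_l(L)=L$ in the sense of Definition \ref{def:filtered-exact-sequence}, establishing the first claim.

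For the second claim, the plan is to apply Lemma \ref{lemma:splitexact} to appropriate segments of the asymptotically exact sequence from Proposition \ref{prop:longexact}. For $K_0$, consider the segment
\[
\{K_1^L(Q)\}_{L\geqslant 0}\xrightarrow{\partial}\{K_0^L(J)\}_{L\geqslant 0}\xrightarrow{r_0}\{K_0^L(A)\}_{L\geqslant 0}\xrightarrow{\pi_0}\{K_0^L(Q)\}_{L\geqslant 0}.
\]
The section $s$ induces a controlled homomorphism $s_0\colon\{K_0^L(Q)\}_{L\geqslant 0}\to\{K_0^L(A)\}_{L\geqslant 0}$ with $\pi_0\circ s_0$ equal to the identity on the nose (and in particular controlled equivalent to it). The remaining hypothesis of Lemma \ref{lemma:splitexact} is $\partial\sim 0$. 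Given $[u]\in K_1^L(Q)$ with $u\in U_n^L(Q)$, the element $w:=s(u)\oplus s(u)^*$ is a unitary lift of $u\oplus u^*$ lying in $U_{2n}^L(A)\subset U_{2n}^{14F_l(L)}(A)$, and hence is a valid choice in the construction of $\partial$. Since $s(u)s(u)^*=s(uu^*)=1$, a direct computation gives $w(I_n\oplus 0_n)w^*=I_n\oplus 0_n$, so $\partial([u])=[I_n\oplus 0_n,n]=0$ in $K_0^L(J)$. The well-definedness of $\partial$ (independence of the chosen lift, in the controlled sense) shows this vanishing is universal, so $\partial\sim 0$. Lemma \ref{lemma:splitexact} then produces the required $p_0\colon\{K_0^L(A)\}_{L\geqslant 0}\to\{K_0^L(J)\}_{L\geqslant 0}$ with $p_0\circ r_0\sim \id$.

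For $K_1$, I apply the same argument to the segment
\[
\{K_1^L(SQ)\}_{L\geqslant 0}\xrightarrow{(S\partial)\circ\theta_J}\{K_1^L(J)\}_{L\geqslant 0}\xrightarrow{r_1}\{K_1^L(A)\}_{L\geqslant 0}\xrightarrow{\pi_1}\{K_1^L(Q)\}_{L\geqslant 0}
\]
coming from Proposition \ref{prop:longexact}. The suspension $Ss\colon SQ\to SA$ is a filtered splitting of $S\pi$ with the same control $L\mapsto L$, so the analogous computation in the suspended sequence yields $S\partial\sim 0$, hence $(S\partial)\circ\theta_J\sim 0$. Combined with $\pi_1\circ s_1=\id$, Lemma \ref{lemma:splitexact} again yields the desired $p_1$ with $p_1\circ r_1\sim\id$.

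The main obstacle is purely bookkeeping: verifying that the control functions produced by Proposition \ref{prop:longexact} and Lemma \ref{lemma:splitexact} depend only on $F_l=\id$, and therefore yield constants independent of the particular short exact sequence. The substantive content is concentrated in the identity $s(u)s(u)^*=1$, which forces the boundary map to vanish already at the Lipschitz controlled level, so no delicate approximation argument is needed.
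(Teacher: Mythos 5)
Your proof is correct, and it follows the same global strategy as the paper: verify that $s$ supplies lifts exhibiting $\pi$ as a controlled surjection with control $L\mapsto L$, run the long asymptotically exact sequence of Proposition \ref{prop:longexact}, and then extract the retraction $p_*$ from Lemma \ref{lemma:splitexact}. The one place you diverge is in how you verify the remaining hypothesis $\partial\sim 0$ (and its suspended analogue). The paper's proof deduces this abstractly from $\pi_*\circ s_*=\id$ via part (1) of Lemma \ref{lemma:epi-mono} applied to the segment $\{K_1^L(J)\}\to\{K_1^L(A)\}\to\{K_1^L(Q)\}\to\{K_0^L(J)\}$, whereas you compute $\partial\sim 0$ directly from the explicit description of the boundary map: taking $w=s(u)\oplus s(u)^*$ as the unitary lift of $u\oplus u^*$ gives $w(I_n\oplus 0_n)w^*=I_n\oplus 0_n$ on the nose, and the well-definedness clause (item (b) of the construction of $\partial$) propagates this to any choice of lift. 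Your route is a bit more concrete and in fact gives a slightly sharper bound on the control function for the equivalence $\partial\sim 0$, while the paper's route via Lemma \ref{lemma:epi-mono} is uniform and does not require unpacking the formula for $\partial$. Both are equally valid, since all that is needed downstream is $\partial\sim 0$ for some control function depending only on $F_l=\id$.
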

\begin{proof}
	For any $x\in M_n(Q)_L$, $s(x)$ is a required lift of $x$ as in (2) of Definition \ref{def:filtered-exact-sequence} with control function $L\mapsto L$. The same holds for the suspension
	$$0\longrightarrow SJ\xlongrightarrow{Sr}SA\xlongrightarrow{S\pi}SQ\longrightarrow0.
	$$
	Consider the long exact sequence in Proposition \ref{prop:longexact}. As $\pi_*\circ s_*$ is the identity map for $*=0, 1$, we have $(S\partial)\circ\theta_J$ and $\partial$ are controlled equivalent to zero respectively by Lemma \ref{lemma:epi-mono}. Therefore the existence of $p_*$ follows from Lemma \ref{lemma:splitexact}.
\end{proof}
	
	\subsection{Controlled Bott periodicity}
	In this subsection, we prove the controlled Bott periodicity for Lipschitz controlled $K$-theory . 
	
	Let $\cK$ be the algebra of compact operators. First we prove that Lipschitz controlled $K$-theory is invariant (in a controlled way) under Morita equivalences. More precisely, we have the following proposition. 
	\begin{proposition}\label{prop:Morita}
	Given a Lipschitz filtered $C^*$-algebra $A$, we equip $A\otimes \K$ with a lipschitz filtration by setting
		$$(A\otimes \K)_L:=\overline{\bigcup_{n\geqslant 1}M_n(A)_L }.$$
		Then 
		$\{K_*^L(A)\}_{L\geqslant 0}$ and $\{K_*^L(A\otimes \K)\}_{L\geqslant 0}$ are naturally controlled isomorphic with control functions independent of $A$. 
	\end{proposition}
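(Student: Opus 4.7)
The plan is to construct two controlled homomorphisms $\iota_* \colon \{K_*^L(A)\} \to \{K_*^L(A\otimes \K)\}$ and $\rho_* \colon \{K_*^L(A\otimes \K)\} \to \{K_*^L(A)\}$ that are mutually inverse up to controlled equivalence, with all control functions being absolute constants independent of $A$. The construction of $\iota_*$ is immediate: the inclusion $\iota\colon A \to A\otimes \K$ given by $a \mapsto a \otimes e_{11}$ is a filtered $*$-homomorphism because $a \in A_L$ implies $a \otimes e_{11} \in M_1(A)_L \subset (A\otimes \K)_L$, and by Proposition \ref{prop:functoriality+homotopy-invariance} it induces a controlled homomorphism $\iota_*$ with control function $L \mapsto L$.

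To define $\rho_*$, let $P_n \in \K$ denote the rank-$n$ projection onto the span of the first $n$ basis vectors. The plan rests on the claim, discussed below as the main obstacle, that the compression $y \mapsto (1\otimes P_n)\, y\, (1\otimes P_n)$ maps $M_m((A\otimes \K))_L$ into $M_m(M_n(A))_L$ without enlarging $L$, and converges in norm to the identity as $n\to\infty$. Granting this, given $[p,k]\in K_0^L(A\otimes \K)$ with $p\in P_m^L((A\otimes \K)^+)$, I would write $p=p_0+\lambda$ with $p_0\in M_m(A\otimes \K)_L$ and $\lambda\in M_m(\C)$, and set $p^{(n)} := (1\otimes P_n)\, p_0\, (1\otimes P_n) + \lambda \in M_m(M_n(A)^+)_L$. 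For $n$ large enough that $\|p - p^{(n)}\|$ falls below some absolute small constant (say $1/48$), applying the holomorphic functional calculus $\Theta$ used in the proof of Lemma \ref{lemma:tech} produces a projection $q := \Theta(p^{(n)}) \in P_m^{3L}(M_n(A)^+)$. Under the canonical inclusion $M_m(M_n(A)^+) \hookrightarrow M_{mn}(A^+)$ (which sends a scalar $\lambda \in M_m(\C)$ to the block-diagonal matrix $\lambda \otimes I_n \in M_{mn}(\C)$), $q$ becomes a projection in $P_{mn}^{3L}(A^+)$ with scalar rank $kn$, and I would set $\rho_L([p,k]) := [q, kn] \in K_0^{3L}(A)$. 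The $K_1$ case is strictly parallel, using unitaries in place of projections.

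Well-definedness of $\rho_*$ with respect to $n$ and the choice of representative follows from Lemma \ref{lemma:tech}: two valid approximations give projections close in norm, hence by that lemma are connected by a Lipschitz homotopy whose Lipschitz-constant blow-up is a universal constant. The composition $\rho_* \iota_*$ is essentially the identity, since $\iota(p) = p \otimes e_{11}$ already lies in $M_m(M_1(A)^+)$ so the approximation step is vacuous. The composition $\iota_* \rho_*$ is controlled equivalent to the identity because the inclusion $M_n(A) \hookrightarrow A\otimes \K$ sends $q$ to a projection homotopic in $P_m^{F(L)}((A\otimes \K)^+)$ to $p$ for some absolute linear $F$, again by Lemma \ref{lemma:tech} applied to the estimate $\|p - q\| \leq \|p - p^{(n)}\| + \|p^{(n)} - \Theta(p^{(n)})\|$; the rank bookkeeping is consistent because the two inclusions $M_m(\C) \hookrightarrow M_{mn}(\C)$ and $M_n(A)^+ \hookrightarrow (A\otimes \K)^+$ are compatible.

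The main obstacle is the compression claim itself, namely that $y \mapsto (1\otimes P_n)\, y\, (1\otimes P_n)$ preserves the Lipschitz filtration $M_m((A\otimes \K))_L \to M_m(M_n(A))_L$. By property (L5) and the norm continuity of compression, this reduces to the statement that on the dense subalgebra $\bigcup_{n'} M_{n'}(A)_L \subset (A\otimes \K)_L$ the compression simply extracts the principal $n\times n$ submatrix (for $n \leq n'$), and this operation preserves $L$-Lipschitzness by testing the submatrix against vectors $\xi,\eta \in \C^{n'}$ of norm at most $1$ that are supported on the first $n$ coordinates in the characterization of $M_{n'}(A)_L$ via (L5). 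All control functions emerging from the proof are absolute constants independent of $A$, yielding the naturality asserted in the proposition.
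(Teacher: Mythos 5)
Your proposal takes essentially the same approach as the paper's proof: both send $a\mapsto a\otimes e_{11}$ for the forward direction, and for the inverse compress by the rank-$n$ projection $p_n$, apply the holomorphic functional calculus $\Theta$ to repair the compressed element into an exact projection (resp.\ unitary), and invoke Lemma~\ref{lemma:tech} to verify well-definedness and the controlled equivalences $\xi\eta\sim\mathrm{id}$, $\eta\xi\sim\mathrm{id}$. Your version is slightly more explicit than the paper about separating off the scalar part $\lambda$ before compressing and about the scalar rank changing from $k$ to $kn$, but the underlying argument is the same.
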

	\begin{proof}
		Without loss of generality, we assume that $A$ is non-unital. We regard $\K$ as the collection of compact operators on $l^2(\N)$. In particular, we view $M_n(A)$ as a subalgebra of $A\otimes \K$. The controlled homomorphism from $\{K_*^L(A)\}_{L\geqslant 0}$ to $\{K_*^L(A\otimes \K)\}_{L\geqslant 0}$ is given by:
		\begin{itemize}
			\item $
\xi_0\colon\{K_0^L(A)\}_{L\geqslant 0}\to \{K_0^L(A\otimes \K)\}_{L\geqslant 0},\ 
[p,k]\mapsto[p\oplus 0,k]$
			\item$\xi_1\colon\{K_1^L(A)\}_{L\geqslant 0}\to \{K_0^L(A\otimes \K)\}_{L\geqslant 0},\ [u]\mapsto [u\oplus 1]$
		\end{itemize}
	The control function of $\xi_*$ is the identity function $L\mapsto L$.
	
	Conversely, let $p_n$ be the projection from $l^2(\N)$ to $l^2(\{1,2,\cdots,n\})$. If $p$ is a projection in $M_k((A\otimes \K)^+)_L$, then there exists $n$ such that $\|p_npp_n-p\|\leqslant \frac{1}{165}$. Thus the spectrum of $p_npp_n$ is contained in the $\frac{1}{165}$-neighborhood of the set $\{0, 1\}$.
	If we define
	$$\Theta(x)=\begin{cases}
	0& x<\frac 1 2;\\
	1& x\geqslant \frac 1 2.\\
	\end{cases},$$
	then by (L6) in Remark \ref{rk:extra} we have $\Theta(p_npp_n)\in P^{3L}_k(M_n(A^+))$. By switching the rows and columns if necessary, we obtain a homomorphism
	$$(\eta_{0})_L:K_0^L(A\otimes \K)\to K_0^{3L}(A)$$ by $[p,k]\mapsto [\Theta(p_npp_n),k]$.
	
	Furthermore, by (L6) in Remark \ref{rk:extra} we have
	$$\|\Theta(p_npp_n)-p\|\leqslant \frac{1}{12}.$$
	 By Lemma \ref{lemma:tech}, there is a homotopy in $P^{9L}_k(M_n(A^+))$ that connects $\Theta(p_npp_n)$ and $p$ in  $P^{9L}_k(M_n(A^+))$. Therefore $\xi_0\eta_0$ is controlled equivalent to the identity map,   with control function $L\mapsto 9L/2$. On the other hand, the controlled equivalence  $\eta_0\xi_0\sim\id$ is obvious.
	 
	 As for the $K_1$-case, if we have a unitary $u\in U^L_k((A\otimes \K)^+)$, then there exists $n$ such that $\|p_nup_n-u\|\leqslant \frac{1}{14}$. Set
	 $$w=(p_nup_n)(p_nu^*p_nup_n)^{-\frac 1 2}.$$
	 As $\|p_nu^*p_nup_n-1\|\leqslant \frac 1 7$, we have 
	 $$\|(p_nu^*p_nup_n)^{-\frac 1 2}-1\|\leqslant \frac{\sqrt{7}}{\sqrt{6}}-1 
	 \text{ and } 
	 (p_nu^*p_nup_n)^{-\frac 1 2} \textup{ lies in } U^{L(\frac{7}{6})^{\frac 3 2}}_k(M_n(A^+)).$$
	 Therefore $w$ is a unitary in $U_k^{\frac{2\sqrt 7}{\sqrt 6}L+L(\frac{7}{6})^{\frac 3 2}}(M_n(A^+))\subset U^{4L}_k(M_n(A^+))$. 
	 Hence by switching rows and columns if necessary, we obtain a homomorphism $$(\eta_{1})_L\colon K_1^L(A\otimes\K)\to K_1^{4L}(A)$$ by $[u]\mapsto [w]$. 
	 
	 Furthermore, we see that 
	 $$\Big\| (w\oplus 1)-u\Big\|\leqslant \frac{1}{\sqrt{12\cdot 14}}+\big(\frac{\sqrt 7}{\sqrt 6}-1\big)\leqslant \frac 1 6.$$
	 By Lemma \ref{lemma:tech}, we see that $(w\oplus 1)$ is homotopic to $u$ in $U^{16L}((A\otimes \K)^+)$. Therefore $\xi_1\eta_1$ is controlled equivalent to the identity map, with control function $L\mapsto 8L$. On the other hand, the controlled equivalence  $\eta_1\xi_1\sim\id$ is obvious. This finishes the proof. 
	\end{proof}
	
Let us now prove the controlled Bott periodicity for Lipschitz controlled $K$-theory. We adapt Cuntz' proof of the classical Bott periodicity into our current setting \cite[\S4]{Cuntz1894} (cf. \cite[\S9.4]{Blackadar}).
	\begin{theorem}\label{prop:Bottper}
		Let $A$ be a Lipschitz filtered $C^*$-algebra. There are natural controlled homomorphisms
		$$\beta_A\colon \{K_0^L(A)\}_{L\geqslant 0}\xlongrightarrow{}\{K_1^L(SA)\}_{L\geqslant 0}\text{ and } 
		\phi_A\colon\{K_1^L(SA)\}_{L\geqslant 0}\xlongrightarrow{}\{K_0^L(A)\}_{L\geqslant 0}$$
		such that $\phi_A\beta_A$ and $\beta_A\phi_A$ are controlled equivalent to the identity map respectively. Moreover, all control functions here can be chosen to be  independent of $A$.
	\end{theorem}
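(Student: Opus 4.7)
The strategy will be to adapt Cuntz' Toeplitz-extension proof of classical Bott periodicity to the Lipschitz controlled setting. Let $\mathcal{T}$ denote the unital Toeplitz algebra and $\mathcal{T}_0$ the kernel of its canonical augmentation $\mathcal{T}\to\C$. We have the short exact sequence
$$0 \to \mathcal{K} \to \mathcal{T}_0 \to SC \to 0,$$
where $SC=C_0(S^1\setminus\{1\})$. Tensoring with $A$ and equipping the middle term with a Lipschitz filtration inherited from that of $A$ (together with the fixed generating isometry of $\mathcal{T}$), one obtains a sequence
$$0 \to \mathcal{K} \otimes A \to \mathcal{T}_0 \otimes A \to SA \to 0$$
which I will verify to be a controlled short exact sequence in the sense of Definition \ref{def:filtered-exact-sequence}, with control functions independent of $A$.

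The Bott map $\beta_A$ will be defined explicitly on generators. Given a class $[p,k]\in K_0^L(A)$ with $p\in P_n^L(A^+)$ and $\pi(p)=I_k\oplus 0_{n-k}$, I set
$$\beta_A([p,k])=\bigl[\bigl(zp+(1-p)\bigr)\cdot\bigl(\bar z(I_k\oplus 0_{n-k})+(0_k\oplus I_{n-k})\bigr)\bigr]\in K_1^{CL}(SA),$$
for a universal constant $C$. Because $|z|=1$ on $S^1$, the pointwise filtration and norm bounds of the resulting loop are controlled by those of $p$, so $\beta_A$ is a controlled homomorphism with linear control function, natural in $A$.

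For the inverse, I apply Proposition \ref{prop:longexact} to the Toeplitz extension above and combine the outcome with Proposition \ref{prop:Morita}, which identifies $\{K_*^L(\mathcal{K}\otimes A)\}_{L\geqslant 0}$ controllably with $\{K_*^L(A)\}_{L\geqslant 0}$. The key technical input is the controlled vanishing of $\{K_*^L(\mathcal{T}_0 \otimes A)\}_{L\geqslant 0}$: the identity endomorphism of this inductive system is controlled equivalent to zero. Granted this, the boundary map $\partial$ becomes a controlled isomorphism by Lemma \ref{lemma:epi-mono}, and I set $\phi_A$ to be $\partial$ post-composed with the controlled Morita isomorphism. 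The two controlled equivalences $\phi_A\beta_A\sim\mathrm{id}$ and $\beta_A\phi_A\sim\mathrm{id}$ can then be verified by unwinding the standard formula for the Toeplitz boundary map on the explicit unitary produced by $\beta_A$, which at the level of $K$-theory recovers the Fredholm index of a Toeplitz operator with symbol $\beta_A([p,k])$ and reproduces $[p,k]$ after Morita identification.

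The main obstacle will be the controlled vanishing of $\{K_*^L(\mathcal{T}_0\otimes A)\}_{L\geqslant 0}$, which I expect to establish via a controlled Eilenberg swindle. Classically the swindle uses the absorbing isometry $v\oplus v^2\oplus\cdots$ implementing $T\oplus T\sim T$ inside $M_2(\mathcal{T}_0\otimes A)$, but the implicit infinite sums do not a priori respect any Lipschitz filtration. To render the argument controlled, I will need to exhibit, for each $L$, a Lipschitz unitary in $(\mathcal{T}_0\otimes A)^+$ implementing this absorption with Lipschitz constant bounded by a universal function of $L$, independent of $A$. A second delicate point is verifying that the quotient $\mathcal{T}_0\otimes A\to SA$ is a controlled surjection in the sense of Definition \ref{def:filtered-exact-sequence}: given a Lipschitz loop in $A$ vanishing at $1\in S^1$, one must produce a Lipschitz lift into $\mathcal{T}_0\otimes A$ with uniformly controlled Lipschitz constant. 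I expect both issues to be handled by a careful choice of generators for $\mathcal{T}$ together with functional-calculus estimates in the spirit of Lemma \ref{lemma:tech}.
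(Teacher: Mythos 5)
Your proposal correctly identifies the Cuntz Toeplitz-extension strategy, the need to tensor with $A$ and filter appropriately, and crucially the key lemma: the controlled vanishing of $\{K_*^L(\mathcal{T}_0\otimes A)\}_{L\geqslant 0}$. Your plan to then invoke the controlled long exact sequence and Morita equivalence to obtain $\phi_A$ also matches the paper. The explicit Bott element formula for $\beta_A$ is sound and one can check it is controlled with a linear control function.

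However, there is a genuine gap precisely where you flag it: proving the controlled vanishing of $\{K_*^L(\mathcal{T}_0\otimes A)\}_{L\geqslant 0}$ by a controlled Eilenberg swindle. The absorbing isometry $v\oplus v^2\oplus\cdots$ involves an intrinsically infinite construction, and there is no reason to expect it, or any unitary implementing $T\oplus T\sim T$ in $M_2(\mathcal{T}_0\otimes A)^+$, to lie in any $(\mathcal{T}_0\otimes A)_L$; indeed the propagation/Lipschitz behavior of an infinite sum of shifted copies of a filtered element is generally unbounded. You acknowledge the difficulty but the resolution you sketch (``exhibit, for each $L$, a Lipschitz unitary implementing this absorption'') does not have an evident realization and is not attempted in the paper. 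What the paper does instead is precisely Cuntz's device for avoiding any swindle: it passes to $T\otimes T$ and the pullback algebra $\overline T$, constructs two explicit self-adjoint unitaries $z_0,z_1\in T\otimes T$ connected by a path of unitaries with constant image in $C(S^1)\otimes T$, and deduces from the resulting pair of homomorphisms $\psi_0=\psi+\ell w j q$ and $\psi_1=\psi+\ell w$ (with $\psi$ and $\ell$ having orthogonal ranges) that $(jq\otimes\id)_*$ is controlled equivalent to the identity on $\{K_*^L(T\otimes A)\}_{L\geqslant 0}$. Combined with the controlled split exact sequence $0\to T_0\otimes A\to T\otimes A\to A\to 0$ this yields the controlled vanishing of $\{K_*^L(T_0\otimes A)\}_{L\geqslant 0}$ with no swindle at all. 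You would need to replace the swindle step by this $T\otimes T$/$\overline T$ argument (or some other finitary mechanism) for your proof to go through; as written, the central lemma is unestablished. The verification that the quotient $\mathcal T_0\otimes A\to SA$ is a controlled surjection, on the other hand, is unproblematic: lifting $\sum f_i\otimes a_i$ to $\sum T_{f_i}\otimes a_i$ with $T_{f_i}$ the Toeplitz operator gives control function $L\mapsto L$, as in the paper.
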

\begin{proof}

	Let $T$ be the Toeplitz algebra, which is generated by the unilateral shift $u$ on the Hilbert space $l^2(\N)$ with $e=1-uu^*$ a rank-one projection. Then $T$ contains $\K$ as an ideal and $T/\K\cong C(S^1)$. Let $T_0$ be the non-unital $C^*$-subalgebra of $T$ generated by $(u-1)$. Set
	\begin{equation}\label{eq:bartoep}
\overline T=\{(x,y)\in (T\otimes T)\oplus T: (\pi\otimes\id)(x)=\pi(y)\otimes 1\in C(S^1)\otimes T \},
	\end{equation}
	where $\pi$ is the quotient map $T\to C(S^1)$.
	
	Let $B$ be any of the $C^*$-algebras mentioned above: $T,T\otimes T,T_0,\overline T,C(S^1)$. We equip $B\otimes A$ with the filtration:
	$$(B\otimes A)_L:=\{t\in B\otimes A:\chi(t)\in A_L,\ \forall\chi\in B^*,\ \|\chi\|\leqslant1\}.$$
	As each of $T,T\otimes T,T_0,\overline T,C(S^1)$ is a nuclear $C^\ast$-algebra, there is no ambiguity as to which $C^\ast$-norm $B\otimes A$ is equipped with.  
	
	This filtration is Lipschitz in the sense of Definition \ref{def:Lipfiltration}, due to the GNS construction and particularly (L5) of $A$. We furthermore assume (L6) in Remark \ref{rk:extra} for $A\otimes B$, which is satisfied by the concrete examples we work with in this paper (e.g. when $A = C_0(X)$ of some locally compact metric space $X$). Here the condition (L6) is only assumed here for convenience. Without the condition (L6) from Remark \ref{rk:extra}, the same proof below still proves the theorem, except we need to use almost projections or unitaries instead (cf. \cite{Oyono-OyonoYu}).
	
	Note that if $B$ is $S\C$, the filtration on $S\C\otimes A\cong SA$ coincides with the one we defined in the previous subsection (Section \ref{sec:suspen}), because the convex hull generated by the evaluation maps at points in $S^1$ is weakly dense in the unital ball of $(S\C)^*$.
	
Now  let $q\colon T\to \C$ be the composition of the quotient map from $T$ to $C(S^1)$ followed by the evaluation map at $x = 1\in S^1$, and $j\colon\C\to T$ the map that takes $1\in \C$ to $1\in T$. As $q\circ j=\id_\C$, we have
	$$(qj\otimes\id)_*\colon\{K_*^L(A)\}_{L\geqslant 0}\to \{K_*^L(A)\}_{L\geqslant 0}$$
	 is equal to the identity map. We shall show that $(jq\otimes\id)_*$ is controlled equivalent to the identity map on $\{K_*^L(T\otimes A)\}_{L\geqslant 0}$.
	
	Set 
	\begin{align*}
	z_0=&u(1-e)u^*\otimes 1+eu^*\otimes u+ue\otimes u^*+e\otimes e,\\
	z_1=&u(1-e)u^*\otimes 1+eu^*\otimes 1+ue\otimes 1,
	\end{align*}
	in $T\otimes T$. A direct computation shows that  $z_0$ and $z_1$ are self-adjoint unitaries. Hence both $z_0$ and $z_1$ are homotopic to the identity within the unitaries by applying a standard rotation argument to their spectral projections at $-1$ and at $1$. Consequently,  there exists a path of unitaries $z_t\in T\otimes T$ that connects $z_0$ and $z_1$. Since $(\pi\otimes\id)(z_0)$ and $(\pi\otimes\id)(z_1)$ are both equal to  the identity in $C(S^1)\otimes T$, we may assume that
	$(\pi\otimes\id)(z_t)$ is equal to the identity in $C(S^1)\otimes T$ for all $t\in [0, 1]$.
	
	Set $v_t=z_t(u\otimes 1)$. Recall that $T$ is the universal $C^\ast$ algebra by an isometry, that is, if $v$ is an isometry of any $C^\ast$-algebra, there is a canonical homomorphism from T onto $C^\ast(v)$ which maps $u$ to $v$. It follows that there exists a canonical homomorphism $\phi_t\colon T\to T\otimes T$ sending $u$ to $v_t$. Let $\overline T$ be the $C^\ast$-algebra in line \eqref{eq:bartoep}.  Define $\psi_t,\psi\colon T\to \overline T$ by setting
	$$\psi_t(u)=(\phi_t(u),u)\text{ and }\psi(u)=(u(1-e)\otimes 1,u).$$
	Define $\ell\colon \K\otimes T\to \overline T$ by $\ell(x)=(x,0)$, and $w\colon T\to \K\otimes T$ by $w(u)=e\otimes u$. As
	\begin{align*}
z_0(u\otimes 1)=&u(1-e)\otimes 1+e\otimes u=\psi(u)+\ell wjq(u),\\
z_1(u\otimes 1)=&u(1-e)\otimes 1+e\otimes 1=\psi(u)+\ell w(u),
\end{align*}
we have $\psi_0=\psi+\ell w j q$ and $\psi_1=\psi+\ell w$. As $\psi$ and $\ell$ have orthogonal images, we conclude 	
$$(\ell wjq\otimes\id)_*=(\ell w\otimes\id)_*\colon \{K_*^L(\bar T\otimes A)\}_{L\geqslant 0}\to \{K_*^L(T\otimes A)\}_{L\geqslant 0}.$$

Consider the short exact sequence
$$0\longrightarrow \K\otimes T\otimes A\xlongrightarrow{\ell\otimes\id} \overline T\otimes A\longrightarrow T\otimes A\longrightarrow 0,$$
which  splits by $\psi\otimes\id\colon T\otimes A\to \overline T\otimes A$. By Corollary \ref{cor:splitexact}, there exist controlled homomorphisms
$$\ell^{-1}_*\colon\{ K_*^L(\overline T\otimes A)\}_{L\geqslant 0}\to\{K_*^L(\K\otimes T\otimes A)\}_{L\geqslant 0},~*=0,1$$
such that $\ell^{-1}_*\circ (\ell\otimes\id)_*$ is controlled equivalent to the identity map.

By Proposition \ref{prop:Morita}, there also exist controlled homomorphisms
$$w^{-1}_*\colon\{ K_*^L(\K\otimes T\otimes A)\}_{L\geqslant 0}\to\{K_*^L(T\otimes A)\}_{L\geqslant 0},~*=0,1$$
such that $w^{-1}_*\circ (w\otimes\id)_*$ is controlled equivalent to the identity map. Therefore
$$(jq\otimes\id)_*: \{K_*^L(T\otimes A)\}_{L\geqslant 0}\to \{K_*^L(T\otimes A)\}_{L\geqslant 0},~*=0,1$$
is controlled equivalent to the identity map.

Similarly, consider the short exact sequence
$$0\longrightarrow T_0\otimes A\xlongrightarrow{i\otimes\id} T\otimes A\xlongrightarrow{q\otimes\id} A\longrightarrow 0,$$
which splits by $j\otimes\id\colon A = \C\otimes A \mapsto T\otimes A$. Therefore by Corollary \ref{cor:splitexact}, there exist controlled homomorphisms
$$i^{-1}_*\colon\{K_*^L(T\otimes A)\}_{L\geqslant 0}\to \{K_0^L(T_0\otimes A)\}_{L\geqslant 0},~*=0,1$$
such that $i_*^{-1}(i\otimes\id)_*$ is controlled equivalent to the identity map. Since the identity map on $\{K_*^L(T\otimes A)\}_{L\geqslant 0}$ is controlled equivalent to $(jq\otimes\id)_*$, we see that the identity map on $\{K_*^L(T_0\otimes A)\}_{L\geqslant 0}$ is controlled equivalent to zero. 

Now consider the short exact sequence
$$0\longrightarrow \K\otimes A\longrightarrow T_0\otimes A\xlongrightarrow{\pi\otimes\id} SA\longrightarrow 0.$$
For $x=\sum_{i=1}^n f_i\otimes a_i\in SA$, we lift $x$ under the map $\pi\otimes \id$ to $y=\sum_{i=1}^n T_{f_i}\otimes a_i$, where $T_{f_i} \in T_0$ is the Toeplitz operator with symbol $f_i$ satisfying $\pi(T_{f_i}) = f_i$. Using this choice of lift, we see that $\pi\otimes\id$ is a controlled surjection with control function $L\mapsto L$ in the sense of Definition \ref{def:filtered-exact-sequence}. As the identity map on $\{K_*^L(T_0\otimes A)\}_{L\geqslant 0}$ is controlled equivalent to zero, it follows from Proposition \ref{prop:longexact-v1} and Lemma \ref{lemma:epi-mono} that there exist controlled homomorphisms
$$\beta_A\colon \{K_0^L(A)\}_{L\geqslant 0}\xlongrightarrow{}\{K_1^L(SA)\}_{L\geqslant 0}\text{ and } 
\phi_A\colon\{K_1^L(SA)\}_{L\geqslant 0}\xlongrightarrow{}\{K_0^L(A)\}_{L\geqslant 0}$$
such that $\phi_A\beta_A$ and $\beta_A\phi_A$ are controlled equivalent to the identity map respectively. Their naturality follows from Remark \ref{rk:naturalofsplit}. This finishes the proof. 
\end{proof}

By combining Proposition \ref{prop:longexact} and Proposition \ref{prop:Bottper}, we obtain the following six-term asymptotically exact sequence.
\begin{theorem}\label{thm:six-term}
	Suppose we have a controlled short exact sequence 
	$$0\longrightarrow J\xlongrightarrow{r}A\xlongrightarrow{\pi}Q\longrightarrow0
	$$
	where $\pi$ is a controlled surjection with control function $F_l$ in the sense of Definition $\ref{def:filtered-exact-sequence}$. Then there is a six-term asymptotically exact sequence
	$$\xymatrix{
\{K_0^L(J)\}_{L\geqslant 0}\ar[r]&\{K_0^L(A)\}_{L\geqslant 0}\ar[r]&\{K_0^L(Q)\}_{L\geqslant 0}\ar[d]\\
\{K_1^L(Q)\}_{L\geqslant 0}\ar[u]&\{K_1^L(A)\}_{L\geqslant 0}\ar[l]&\{K_1^L(J)\}_{L\geqslant 0}\ar[l]
}	
	$$
	where the control functions of all controlled homomorphisms and the control function for the asymptotic exactness at each term only depend on $F_l$.
\end{theorem}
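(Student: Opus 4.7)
The plan is to deduce Theorem \ref{thm:six-term} by splicing the long asymptotically exact sequence of Proposition \ref{prop:longexact} with the controlled Bott periodicity of Theorem \ref{prop:Bottper}. Since Proposition \ref{prop:longexact} already supplies all of the six arrows except the connecting map $\{K_0^L(Q)\}_{L\geqslant 0}\to\{K_1^L(J)\}_{L\geqslant 0}$, the work is to produce this one connecting map and then repackage a three-term asymptotically exact piece as a six-term cyclic diagram.

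First, I would recall from Proposition \ref{prop:longexact} the asymptotically exact sequence
$$\{K_1^L(SQ)\}\xlongrightarrow{(S\partial)\circ\theta_J}\{K_1^L(J)\}\xlongrightarrow{r_1}\{K_1^L(A)\}\xlongrightarrow{\pi_1}\{K_1^L(Q)\}\xlongrightarrow{\partial}\{K_0^L(J)\}\xlongrightarrow{r_0}\{K_0^L(A)\}\xlongrightarrow{\pi_0}\{K_0^L(Q)\},$$
whose exactness control functions are determined solely by $F_l$. Next I would apply Theorem \ref{prop:Bottper} to obtain natural controlled isomorphisms $\beta_Q\colon\{K_0^L(Q)\}\to\{K_1^L(SQ)\}$ and $\phi_Q\colon\{K_1^L(SQ)\}\to\{K_0^L(Q)\}$, mutually inverse up to controlled equivalence, with universal control functions independent of $Q$. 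I then \emph{define} the desired connecting map by
$$\widetilde\partial\;\coloneqq\;(S\partial)\circ\theta_J\circ\beta_Q\colon\{K_0^L(Q)\}_{L\geqslant 0}\longrightarrow\{K_1^L(J)\}_{L\geqslant 0},$$
whose control function is a composition of the control function of $(S\partial)\circ\theta_J$ (depending only on $F_l$) with the universal control function of $\beta_Q$, hence depending only on $F_l$.

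To verify the six-term cycle is asymptotically exact, I would check exactness at the two new positions $\{K_0^L(Q)\}$ and $\{K_1^L(J)\}$; exactness at the other four positions is immediate from Proposition \ref{prop:longexact} since the identities $\pi_0\circ r_0\sim 0$, $r_0\circ\partial\sim 0$, $\partial\circ\pi_1\sim 0$, $\pi_1\circ r_1\sim 0$, together with the corresponding asymptotic lifting statements, carry over verbatim. For exactness at $\{K_0^L(Q)\}$ (i.e.\ of the composite $\pi_0\circ\widetilde\partial^{-1}\cdots$ in the cyclic sense), I would translate the statement \emph{$\widetilde\partial\circ\pi_0\sim 0$ and every class killed by $\widetilde\partial$ is asymptotically in the image of $\pi_0$} into the corresponding statement for $(S\partial)\circ\theta_J\circ\pi_0$ via the controlled equivalence $\phi_Q\beta_Q\sim\mathrm{id}$, then invoke exactness of Proposition \ref{prop:longexact} at $\{K_1^L(SQ)\}$; similarly for exactness at $\{K_1^L(J)\}$, which translates to exactness at $\{K_1^L(J)\}$ in Proposition \ref{prop:longexact} after post-composing with the controlled isomorphism $\beta_Q$. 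Because both $\beta_Q$ and $\phi_Q$ are natural and have universal control, these translations only incur bounded loss in the control functions.

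The main technical step is bookkeeping rather than invention: one must check that each controlled equivalence and each ``lift up to control'' survives post-/pre-composition with $\beta_Q$ and $\phi_Q$, and that the resulting control functions depend only on $F_l$ (absorbing the universal constants from Bott periodicity). This is routine given Lemma \ref{lemma:fullyfaithful-iso} and the transitivity properties of controlled equivalences, but care is needed because we are using the asymptotic analog of the fact that a natural isomorphism preserves exactness of sequences. The only genuinely non-routine input is controlled Bott periodicity itself, which is already proved in Theorem \ref{prop:Bottper}; once that is in hand, the six-term sequence falls out with no further geometric content.
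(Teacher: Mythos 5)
Your proposal is correct and is essentially the same as the paper's: the paper proves Theorem \ref{thm:six-term} by combining Proposition \ref{prop:longexact} with the controlled Bott periodicity of Theorem \ref{prop:Bottper}, exactly as you do. You have filled in the routine but real bookkeeping (defining the connecting map $\{K_0^L(Q)\}\to\{K_1^L(J)\}$ as the composite of $\beta_Q$ with the boundary of the suspended sequence, and checking exactness at the two new spots via the naturality of $\beta$ and $\phi$ with universal control) that the paper leaves implicit.
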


By checking carefully the proofs of Proposition \ref{prop:longexact} and Theorem \ref{prop:Bottper}, we see that the control functions of all controlled homomorphisms and the control function for the asymptotic exactness at each term in Theorem \ref{thm:six-term} can be chosen to be of  the form $F_1\circ F_l\circ F_2$, where $F_1,F_2$ are linear functions that are universal and independent of the controlled short exact sequence.
\subsection{Controlled Mayer-Vietoris sequence}
In this subsection, we prove a controlled Mayer-Vietoris sequence. 
\begin{definition}
	A pullback diagram of $C^*$-algebras is a diagram of the following form
	$$\xymatrix{
P\ar[r]^-{\rho_A}\ar[d]_-{\rho_B}&A\ar[d]^-{\pi_A}\\
B\ar[r]^-{\pi_B}&Q	
}$$
where $P=\{(a,b) \in A\oplus B:\pi_A(a)=\pi_B(b)\}$ and $\pi_A$ is surjective.
\end{definition}
Let $A$ and $B$ be Lipschitz filtered $C^*$-algebras. We equip $A\oplus B$ a Lipschitz filtration given by $(A\oplus B)_L=A_L\oplus B_L$. If we equip $P$ with the Lipschitz filtration given by  $P_L=P\cap (A\oplus B)_L$, then  $\rho_A$ and $\rho_B$ become filtered homomorphisms.
\begin{definition}
	 We say a pullback diagram is controlled if $\rho_A,\rho_B,\pi_A,\pi_B$ are filtered homomorphisms and $\pi_A$ is a controlled surjection with control function $F_l$ in the sense of Definition \ref{def:filtered-exact-sequence}.
\end{definition}

\begin{theorem}\label{thm:six-term-for-pullback}
	If we have a controlled pullback diagram of $C^\ast$-algebras 
	$$\xymatrix{
	P\ar[r]^-{\rho_A}\ar[d]_-{\rho_B}&A\ar[d]^-{\pi_A}\\
	B\ar[r]^-{\pi_B}&Q	
}$$
	with $\pi_A$ a controlled surjection with control function $F_l$, then there is a six-term asymptotically exact sequence
	$$\xymatrix{
		\{K_0^L(P)\}_{L\geqslant 0}\ar[r]&\{K_0^L(A)\oplus K_0^L(B)\}_{L\geqslant 0}\ar[r]&\{K_0^L(Q)\}_{L\geqslant 0}\ar[d]\\
		\{K_1^L(Q)\}_{L\geqslant 0}\ar[u]&\{K_1^L(A)\oplus K_1^L(B)\}_{L\geqslant 0}\ar[l]&\{K_1^L(P)\}_{L\geqslant 0}\ar[l]
	}	
	$$
	where the control functions of all controlled homomorphisms and the control function for the asymptotic exactness at each term only depend on $F_l$.
\end{theorem}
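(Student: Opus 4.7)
The plan is to deduce Theorem \ref{thm:six-term-for-pullback} directly from Theorem \ref{thm:six-term} by realizing the pullback $P$ as the kernel of a single controlled surjection $A\oplus B\to Q$, thus reducing the Mayer--Vietoris sequence to an ordinary six-term sequence of a controlled short exact sequence.

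First, I would equip $A\oplus B$ with the Lipschitz filtration $(A\oplus B)_L=A_L\oplus B_L$; this is straightforwardly a Lipschitz filtration in the sense of Definition \ref{def:Lipfiltration}, and the canonical inclusions and projections are filtered homomorphisms. A short routine verification (using split short exact sequences and Corollary \ref{cor:splitexact}) shows that $\{K_*^L(A\oplus B)\}_{L\geqslant 0}$ is controlled isomorphic to $\{K_*^L(A)\oplus K_*^L(B)\}_{L\geqslant 0}$ with control functions that are universal constants.

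Next I would consider the homomorphism
$$\Phi\colon A\oplus B\longrightarrow Q,\qquad \Phi(a,b)=\pi_A(a)-\pi_B(b).$$
Since $\pi_A$ and $\pi_B$ are filtered, so is $\Phi$. Surjectivity of $\pi_A$ implies that $\Phi$ is surjective, and by construction $\ker\Phi=P$, so we obtain a short exact sequence of Lipschitz filtered $C^*$-algebras
$$0\longrightarrow P\xlongrightarrow{(\rho_A,\rho_B)} A\oplus B\xlongrightarrow{\Phi} Q\longrightarrow 0,$$
where the first map is filtered by the very definition of the filtration on $P$. The key observation is that $\Phi$ is itself a controlled surjection with control function $F_l$: given $q\in M_n(Q)_L$, the hypothesis on $\pi_A$ produces $a\in M_n(A)_{F_l(L)\cdot\|q\|}$ with $\pi_A(a)=q$ and $\|a\|\leqslant 2\|q\|$; then $(a,0)\in M_n(A\oplus B)_{F_l(L)\cdot\|q\|}$ lifts $q$ under $\Phi$ with norm $\leqslant 2\|q\|$.

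Finally, I would apply Theorem \ref{thm:six-term} to this controlled short exact sequence to obtain a six-term asymptotically exact sequence whose control functions depend only on $F_l$, and then use the controlled isomorphism from the first step to identify $\{K_*^L(A\oplus B)\}_{L\geqslant 0}$ with $\{K_*^L(A)\oplus K_*^L(B)\}_{L\geqslant 0}$; this turns the induced maps into $(\rho_{A,*},\rho_{B,*})$ and $\pi_{A,*}-\pi_{B,*}$ respectively, yielding exactly the Mayer--Vietoris sequence claimed in the theorem. There is no serious obstacle in this approach: the entire argument is a direct application of Theorem \ref{thm:six-term} once the correct short exact sequence is identified, and the only computation required is the controlled-surjectivity estimate for $\Phi$, which is immediate.
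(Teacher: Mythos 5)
The proposed short exact sequence
$$0\longrightarrow P\xlongrightarrow{(\rho_A,\rho_B)} A\oplus B\xlongrightarrow{\Phi} Q\longrightarrow 0,\qquad \Phi(a,b)=\pi_A(a)-\pi_B(b),$$
is not a short exact sequence of $C^*$-algebras, so Theorem \ref{thm:six-term} cannot be applied to it. The map $\Phi$ is linear but not multiplicative: $\Phi\big((a,b)(a',b')\big)=\pi_A(a)\pi_A(a')-\pi_B(b)\pi_B(b')$, whereas $\Phi(a,b)\Phi(a',b')=\pi_A(a)\pi_A(a')-\pi_A(a)\pi_B(b')-\pi_B(b)\pi_A(a')+\pi_B(b)\pi_B(b')$, and these disagree in general. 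Equivalently, $P=\ker\Phi$ is a $C^*$-subalgebra but not an ideal of $A\oplus B$: if $(a,b)\in P$ and $(a',b')\in A\oplus B$ then $\pi_A(aa')=\pi_A(a)\pi_A(a')$ need not equal $\pi_B(bb')=\pi_A(a)\pi_B(b')$. This is precisely why the Mayer--Vietoris sequence for a pullback of $C^*$-algebras is never obtained as the six-term sequence of a single ideal in $A\oplus B$; it requires a genuinely different argument.

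The paper's proof instead observes that $\ker(\rho_B)\cong\ker(\pi_A)$ and that the two rows of the commutative diagram
$$0\to\ker(\rho_B)\to P\xrightarrow{\rho_B}B\to 0\quad\text{and}\quad 0\to\ker(\pi_A)\to A\xrightarrow{\pi_A}Q\to 0$$
are each controlled short exact (the first inheriting controlled surjectivity of $\rho_B$ from that of $\pi_A$ via the pullback structure). Applying Theorem \ref{thm:six-term} to each row yields two six-term asymptotically exact sequences linked by the identity on $\{K_*^L(\ker(\pi_A))\}$, and a diagram chase assembles these into the Mayer--Vietoris sequence. That diagram chase is the essential content your proposal skips. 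Your first step (identifying $\{K_*^L(A\oplus B)\}$ with $\{K_*^L(A)\oplus K_*^L(B)\}$ controlledly) is fine and the controlled-surjectivity estimate for $\Phi$ is also fine, but neither salvages the argument once $\Phi$ fails to be a $*$-homomorphism.
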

\begin{proof}
	Consider the following commutative diagram
$$\xymatrix{
0\ar[rr]&&\ker(\rho_B)\ar@{=}[d]\ar[rr]&& P\ar[rr]^{\rho_B}\ar[d]^{\rho_A}&&B\ar[rr]\ar[d]^{\pi_B}&&0\\
0\ar[rr]&&\ker(\pi_A)\ar[rr]&& A\ar[rr]^{\pi_A}&&Q\ar[rr]&&0
}$$
where  $\ker(\rho_B)=\ker(\pi_A)$ is equipped with the Lipschitz filtration inherited from that of $P$ (or equivalently from that of $A$). Therefore both rows are controlled short exact sequences. Hence by Theorem \ref{thm:six-term}, we have the following diagram that commutes up to controlled equivalences:
$$
%\scriptsize
\xymatrixcolsep{0.65pc}
\xymatrix{
\{K_0^L(\ker(\rho_B))\}\ar[rr]\ar@{=}[dr]&&\{K_0^L(P)\}\ar[rr]\ar[d]&&\{K_0^L(B)\}\ar[ddd]\ar[dl]\\
&\{K_0^L(\ker(\pi_A))\}\ar[r]&\{K_0^L(A)\}\ar[r]&\{K_0^L(Q)\}\ar[d] \\
&\{K_1^L(Q)\}\ar[u]&\{K_1^L(A)\}\ar[l]&\{K_1^L(\ker(\pi_A))\}\ar[l]\ar@{=}[dr]\\
\{K_1^L(B)\}\ar[uuu]\ar[ur]\ar[rr]&&\{K_1^L(P)\}\ar[u]\ar[rr]&&\{K_1^L(\ker(\rho_B))\}
}
$$
where both the inner and outer six-term sequences are asymptotically exact. Now a standard diagram chasing proves the theorem. 
\end{proof}
\nocite{Rudolf2019,Rudolf2020,GXY2020,Simone1,Simone2,Gromov4lectures2019,MR1389019, Gromov5fold,MR3822551}

%\bibliographystyle{plain}
%\bibliography{eta}

\end{document}